\newif\ifHideFoot
\makeatletter\@input{fakediagonal.tex}\makeatother
\makeatletter\@input{fakebloch.tex}\makeatother
\setlist[enumerate]{leftmargin=8.5mm}
\numberwithin{equation}{section}
\newtheorem{teo}{Theorem}[section]
\newtheorem{pro}[teo]{Proposition}
\newtheorem{lem}[teo]{Lemma}
\newtheorem{cor}[teo]{Corollary}
\newtheorem{teoalpha}{Theorem}
\theoremstyle{definition}
\newtheorem{dfn}[teo]{Definition}
\newtheorem{exa}[teo]{Example}
\theoremstyle{remark}
\newtheorem{rem}[teo]{Remark}
\newcommand{\Yano}[1]{}
\newcommand{\Jeff}[1]{}
\newcommand{\Charles}[1]{}
\newcommand{\marg}[1]{\normalsize{{
   \color{red}\footnote{{\color{blue}#1}}}{\marginpar[\vskip
   -.25cm{\color{Maroon}\hfill\thefootnote$\implies$}]{\vskip
    -.2cm{\color{Maroon}$\impliedby$\tiny\thefootnote}}}}}
\newcommand{\Yano}[1]{\marg{(Yano) #1}}
\newcommand{\Jeff}[1]{\marg{(Jeff) #1}}
\newcommand{\Charles}[1]{\marg{(Charles) #1}}
\def\res{{\bf\rm R}}
\def\sep{{\text{sep}}}
\def\cris{{\text{cris}}}
\DeclareMathOperator{\im}{im}
\newcommand{\til}[1]{{\widetilde{#1}}}
\def\cx{{\mathbb C}}
\def\rat{{\mathbb Q}}
\def\integ{{\mathbb Z}}
\def\cali{{\mathcal I}}
\def\iso{\cong}
\renewcommand{\bar}[1]{{\overline{#1}}}
\newcommand{\ubar}[1]{{\underline{#1}}}
\DeclareMathOperator{\aut}{Aut}
\DeclareMathOperator{\alb}{Alb}
\DeclareMathOperator{\End}{End}
\DeclareMathOperator{\Hom}{Hom}
\DeclareMathOperator{\spec}{Spec}
\DeclareMathOperator{\pic}{Pic}
\DeclareMathOperator{\Ab}{Ab}
\DeclareMathOperator{\A}{A}
\DeclareMathOperator{\chow}{CH}
\newcommand{\LKtrace}[1]{\underline{\underline{#1}}}
\title{A functorial approach to regular homomorphisms}
\author{Jeffrey D. Achter}
\address{Colorado State University, Department of Mathematics,
 Fort Collins, CO 80523,
 USA}
\email{j.achter@colostate.edu}
\author{Sebastian Casalaina-Martin }
\address{University of Colorado, Department of Mathematics,
 Boulder, CO 80309, USA }
\email{casa@math.colorado.edu}
\author{Charles Vial}
\address{Universit\"at Bielefeld, Fakult\"at f\"ur Mathematik, Germany}
\email{vial@math.uni-bielefeld.de}
\thanks{Research of the first and second authors is supported in part by grants from the Simons Foundation (637075 and 581058, respectively).}
\begin{document}

 \begin{abstract}
  Classically, regular homomorphisms have been defined as a replacement for Abel--Jacobi maps for smooth varieties
  over an algebraically closed field. In this work, we interpret regular
  homomorphisms as morphisms from the functor of families of algebraically
  trivial
  cycles to abelian varieties and thereby define regular homomorphisms
in the relative setting, \emph{e.g.}, families of schemes
parameterized by a smooth variety over a given field.
 In that general setting, we establish
  the
  existence of an initial regular homomorphism, going by the name of algebraic
  representative, for codimension-2 cycles on a smooth proper scheme over the
  base. This extends a result of Murre for codimension-2 cycles on a smooth
  projective scheme over an algebraically closed field. In addition,
  we prove
  base change results for algebraic representatives as well as
  descent properties for algebraic representatives along separable field
  extensions.
  In the case where the base is a smooth
   variety over a subfield of the complex numbers we identify
  the
  algebraic representative for relative codimension-2 cycles with a subtorus of
  the intermediate Jacobian fibration which was constructed in previous work.
 \end{abstract}

  \maketitle
 \setcounter{tocdepth}{1}

 \tableofcontents

 \section*{Introduction}

Abel--Jacobi maps provide a fundamental tool in the study of smooth complex projective varieties.
  For algebraic curves, the Abel--Jacobi map on divisors essentially encodes all of the data of the curve, while for higher dimensional varieties, the Abel--Jacobi map provides a basic instrument for studying higher codimension algebraic cycles.  The most well-studied examples are the Abel map, taking algebraically trivial divisors to the connected component of the Picard scheme, and the Albanese map, taking algebraically trivial $0$-cycles to the Albanese variety.  In fact, in these instances, the theory has been developed for smooth projective varieties over an arbitrary field.   One of the main goals of this paper is to develop a framework for defining generalizations of  the remaining intermediate Abel--Jacobi maps for smooth projective varieties over arbitrary fields.   At the same time, studying Abel--Jacobi maps for families of smooth complex projective varieties has also been extremely fruitful, leading for instance to the theory of motivated normal functions.   The theory of the Abel and Albanese maps has been generalized to the relative setting in the category of schemes, and another of our goals is to develop a framework for defining generalizations of  the remaining Abel--Jacobi maps in the relative setting of morphisms of schemes.
 In what follows, we recall the classical definition of a regular homomorphism, which has been used as a replacement for Abel--Jacobi maps over algebraically closed fields, explain the relationship with our new functorial definition of a regular homomorphism, and then discuss our main results on existence, and on base change in the relative setting.

\smallskip

 Let $X$ be a scheme of finite type over a field $K$. A codimension-$i$ cycle
 class $a\in \mathrm{CH}^i(X)$ is said to be \emph{algebraically trivial} if it
 is ``parameterized'' by a smooth integral scheme over $K$. Precisely, the cycle class $a$ is
 algebraically trivial if and only if there exist a smooth integral scheme $T$
 of finite type
 over $K$, a pair of $K$-points $t_0,t_1 \in T(K)$ and a cycle $Z \in
 \operatorname{CH}^i(T\times_K X)$, which is referred to as a family of
 codimension-$i$ cycle classes on
 $X$ parameterized by $T$, such that $a = Z_{t_1} - Z_{t_0}$, where $Z_{t_i}$
 denote the Gysin fibers. It is in fact equivalent to require $T$ to be a smooth
 quasi-projective curve over $K$\,; see \emph{e.g.},
 \cite[Prop.~3.10]{ACMVabtriv}.
 Henceforth, the subgroup of $\operatorname{CH}^i(X)$ consisting of
 algebraically
 trivial cycles will be denoted $\operatorname{A}^i(X)$.\medskip

 Let $k$ be an algebraically closed field and let $X$ be a scheme of finite type
 over $k$. Classically, a  \emph{regular homomorphism}   consists of a
 group homomorphism $\phi : \operatorname{A}^i(X) \to A(k)$ to the closed points
 of an abelian variety $A$ over $k$ satisfying the property\,:
 for a smooth integral scheme $T$ of finite type over $k$, a point $t_0\in T(k)$
 and a cycle class
 $Z\in \operatorname{CH}^i(T\times_k X)$,
 the map defined on $k$-points by
 $$T(k)\to A(k),\quad  t \mapsto \phi(Z_t-Z_{t_0})$$ is induced
 by a $k$-morphism $T\to A$.  The standard example of a regular homomorphism is the Abel--Jacobi map for a smooth complex projective variety.
 Observe that replacing $Z$ with $Z-
 (Z_{t_0}\times T)$, one can equivalently define a  regular homomorphism to
 consist of a group homomorphism $\phi : \operatorname{A}^i(X) \to A(k)$ to the
 closed points of an abelian variety $A$ over $k$ satisfying the property\,:
 for a smooth scheme $T$ of finite type over $k$ and a cycle class
 $Z\in  \operatorname{CH}^i(T\times_k X)$ that is fiberwise algebraically
 trivial,
 the map defined on $k$-points by
 $$T(k)\to A(k),\quad  t \mapsto \phi(Z_t)$$ is induced by a
 $k$-morphism $T\to A$. Our starting point is that a regular homomorphism can
 thus be seen as a morphism of functors
 $$\Phi : \mathscr{A}^i_{X/k} \to A$$ by the recipe $\Phi(k) = \phi$, where
 $\mathscr{A}^i_{X/k} $ is the functor  of ``families of algebraically trivial
 cycle classes'', \emph{i.e.}, the functor assigning to each scheme $T$ smooth and
 of
 finite type over $k$ the subgroup of $\operatorname{CH}^i(T\times_k X)$
 consisting of cycle classes that are fiberwise (relative to $T$) algebraically
 trivial.
 The advantage of this functorial approach is that now it makes sense to define
 regular homomorphisms without the restriction that $k$ be algebraically closed.
 With the functorial language, an \emph{algebraic representative} for a scheme
 $X$ of finite type over a field $K$, if it exists, is a morphism
 $$\Phi^i_{X} : \mathscr{A}^i_{X/K} \to \mathrm{Ab}^i_{X/K}$$ that is initial
 among all morphisms from $\mathscr{A}^i_{X/K}$ to abelian varieties over $K$.
 In
 particular, it is unique up to unique isomorphism. We refer to \S
 \ref{S:AlgRepFun} for definitions.   When $X$ is a smooth projective
 variety over $K$,  the algebraic representative for divisors is given
 by the Abel map to $(\operatorname{Pic}^0_{X/K})_{\text{red}}$, and
 the algebraic representative for $0$-cycles is given by the Albanese
 variety.

In the end, our goal is to extend the notion of regular homomorphism to the relative setting.  However, as it turns out, many of the arguments in the relative setting can be reduced to the case of schemes over a field, and to a base change of field statement.  Moreover, even in the case of schemes over fields, we can improve some of our previous results over fields  \cite{ACMVdcg} using our new framework.
In this spirit, our first main result is a
 base-change/descent result for algebraic representatives along separable field
 extensions\,:

 \begin{teoalpha}  \label{T2:mainalgrep}
  Let $X$ be a scheme of finite type over a  field $K$ and let
  $\Omega/K$ be a (not necessarily algebraic) separable field extension.
  Then an algebraic representative
  $\Phi^i_{X_\Omega} : \mathscr A^i_{X_\Omega/\Omega} \to
  \mathrm{Ab}^i_{X_\Omega/\Omega}$ exists if and only if an algebraic
  representative $\Phi^i_{X} : \mathscr A^i_{X/K} \to \mathrm{Ab}^i_{X/K}$
  exists.
  If this is the case, we have in addition\,:
  \begin{enumerate}[label=(\roman*)]
   \item There is a canonical isomorphism $\mathrm{Ab}^i_{X_\Omega/\Omega} \stackrel{\sim}{\longrightarrow}
   (\mathrm{Ab}^i_{X/K})_\Omega$\,;
   \item
   $\Phi^i_{X_\Omega}(\Omega) : \mathrm{A}^i(X_\Omega) \to
   \mathrm{Ab}^i_{X_\Omega/\Omega}(\Omega)$ is
   $\mathrm{Aut}(\Omega/K)$-equivariant, relative to the above identification.
  \end{enumerate}
 \end{teoalpha}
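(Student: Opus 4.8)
The plan is to exploit the universal (initial) property defining the algebraic representative, together with two canonical identifications available for every $\Omega$-scheme $T$. First, there is a canonical isomorphism $T\times_\Omega X_\Omega\cong T\times_K X$, whence $\mathrm{CH}^i(T\times_\Omega X_\Omega)=\mathrm{CH}^i(T\times_K X)$ and $\mathscr A^i_{X_\Omega/\Omega}(T)$ is \emph{literally} a subgroup of $\mathrm{CH}^i(T\times_K X)$. Second, there is the adjunction $\mathrm{Hom}_\Omega(T,A_\Omega)=\mathrm{Hom}_K(T,A)$ for any abelian variety $A$ over $K$. The only subtlety in deploying these is that $T$ is smooth over $\Omega$ but, $\Omega$ not being of finite type over $K$, is not a smooth finite-type $K$-scheme, so it lies outside the domain of $\mathscr A^i_{X/K}$. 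This is precisely where separability enters: writing $\Omega=\varinjlim_\alpha R_\alpha$ as a filtered colimit of smooth finitely generated $K$-subalgebras (possible exactly because $\Omega/K$ is separable), one spreads out $T$ to a smooth finite-type $K$-scheme $\mathcal T_\alpha$ over some $R_\alpha$ with $T=\mathcal T_\alpha\times_{R_\alpha}\Omega$.

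For the direction ``$\Phi^i_X$ exists $\Rightarrow$ $\Phi^i_{X_\Omega}$ exists,'' I would construct the base change $(\Phi^i_X)_\Omega:\mathscr A^i_{X_\Omega/\Omega}\to(\mathrm{Ab}^i_{X/K})_\Omega$ directly. Given $T$ smooth of finite type over $\Omega$ and $Z\in\mathscr A^i_{X_\Omega/\Omega}(T)$, spread out $(T,Z)$ to a pair $(\mathcal T_\alpha,\mathcal Z_\alpha)$ with $\mathcal Z_\alpha\in\mathscr A^i_{X/K}(\mathcal T_\alpha)$ — that fiberwise algebraic triviality spreads out follows from the curve characterization recalled in the introduction plus a standard spreading-out argument — apply $\Phi^i_X$ to get a $K$-morphism $\mathcal T_\alpha\to\mathrm{Ab}^i_{X/K}$, and base change it to $T\to(\mathrm{Ab}^i_{X/K})_\Omega$, i.e.\ an element of $(\mathrm{Ab}^i_{X/K})_\Omega(T)$ via the adjunction. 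After checking independence of the spreading-out choices (two spreadings agree on a common refinement, using functoriality of $\Phi^i_X$) and compatibility with pullback, this yields a morphism of functors; its \emph{initiality}, which would simultaneously identify it with the algebraic representative over $\Omega$ and prove (i), is the crux of this direction.

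The converse and the initiality just mentioned are where I expect the main obstacle, and this is exactly where the Albanese base-change result advertised in the abstract intervenes. Since any morphism from a smooth $T$ to an abelian variety factors, up to translation, through the Albanese torsor $\mathrm{Alb}_{T/K}$, the datum of a regular homomorphism is governed by Albanese torsors of the parametrizing schemes; base change of these along $\Omega/K$ is what allows one to compare the $K$- and $\Omega$-pictures and to force any $\Psi:\mathscr A^i_{X_\Omega/\Omega}\to B$ to be controlled by $K$-data. To descend the abelian variety $\mathrm{Ab}^i_{X_\Omega/\Omega}$ itself to $K$, I would first reduce by a colimit argument to $\Omega/K$ finitely generated, then use a separating transcendence basis to factor $\Omega/K$ into a purely transcendental part $K(t_1,\dots,t_n)/K$ followed by a finite separable part. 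The finite separable (in particular separable algebraic) case is handled by Galois descent, for which statement (ii) — the $\mathrm{Aut}(\Omega/K)$-equivariance, itself a formal consequence of the canonicity of the construction above — supplies the required descent datum on $\mathrm{Ab}^i_{X_\Omega/\Omega}$. The purely transcendental case $\Omega=K(V)$, with $V$ a smooth integral $K$-variety, is the delicate one: here I would spread $\mathrm{Ab}^i_{X_\Omega/\Omega}$ out to an abelian scheme over an open of $V$ and invoke the separable base change for Albanese torsors to show it is constant, i.e.\ pulled back from $K$, thereby descending both the abelian variety and its universal morphism.

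Finally I would assemble the pieces: the forward construction gives existence over $\Omega$ together with the identification $\mathrm{Ab}^i_{X_\Omega/\Omega}\cong(\mathrm{Ab}^i_{X/K})_\Omega$ and the equivariance (ii); the descent arguments give existence over $K$ from existence over $\Omega$; and uniqueness of initial objects makes the two constructions mutually inverse, yielding the canonical isomorphism in (i). The single hardest step is the descent of the abelian variety along the transcendental part of $\Omega/K$, which is precisely what the base-change theorem for Albanese torsors along separable extensions is designed to provide.
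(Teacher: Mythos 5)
Your high-level ingredients are the right ones (limits of smooth $K$-algebras, the Albanese base-change theorem as the external input, Galois descent for the algebraic part), but there is a genuine gap at exactly the step you yourself flag as the hardest, and your choice of decomposition makes that step harder than it needs to be. For the purely transcendental part $\Omega=K(t_1,\dots,t_n)$ you propose to spread $\mathrm{Ab}^i_{X_\Omega/\Omega}$ out to an abelian scheme over an open $U\subseteq\mathbb{A}^n_K$ and ``invoke the separable base change for Albanese torsors to show it is constant.'' That is not an argument: an abelian scheme over $U$ need not be pulled back from $K$, and the constancy of this particular one is essentially equivalent to statement (i) in this case, so you are assuming what is to be proved. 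Theorem~\ref{T2:SepBC-Alb} does not yield such constancy directly; in the paper it enters only through the construction of the \emph{trace} of a regular homomorphism (\S\ref{S:algclosed}): given $\Psi:\mathscr A^i_{X_\Omega/\Omega}\to B$ and $Z\in\mathscr A^i_{X/k}(T)$, one factors $\Psi(T_\Omega)(Z_\Omega)$ through $\operatorname{Alb}_{T_\Omega/\Omega}\cong(\operatorname{Alb}_{T/k})_\Omega$ and then, by the universal property of the Chow $\Omega/k$-trace, obtains $\LKtrace{\Psi}:\mathscr A^i_{X/k}\to\LKtrace{B}$. This, combined with Saito's criterion (existence of an algebraic representative is equivalent to a uniform bound on $\dim A$ over all surjective regular homomorphisms, Proposition~\ref{P:saito}), the existence of miniversal cycles (Lemma~\ref{L:MinVZK}), descent of surjectivity (Propositions~\ref{P:surjtrace} and~\ref{P:surjGal}), and finiteness of the kernel of the trace, is what actually proves both existence and the isomorphism (i). None of this machinery appears in your proposal; in particular you give no mechanism for producing an initial object in either direction --- you call initiality ``the crux'' and never discharge it.

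There is also a structural reason your decomposition cannot simply absorb the paper's machinery: the trace construction requires base points $t_0\in T(k)$ on parameter varieties, which exist because $k$ is separably closed. This is precisely why the paper decomposes as $K\subset K^{\sep}\subset\Omega^{\sep}\supset\Omega$, so that the regular (transcendental) part is handled only between \emph{separably closed} fields, while the two algebraic steps are Galois and handled by descent. Your decomposition $K\subset K(t_1,\dots,t_n)\subset\Omega$ places the transcendental descent over the arbitrary field $K$, where smooth parameter spaces may have no $K$-points; the torsor version of Albanese base change then only produces difference-type data (morphisms $T\times T\to\LKtrace{B}$), not the required morphisms $T\to\LKtrace{B}$, so the trace of a regular homomorphism is not defined in your setting without additional ideas. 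Three smaller problems: $\Omega/K(t_1,\dots,t_n)$ is finite separable but not necessarily Galois, so you need a Galois closure and hence also the ascent direction for finite separable extensions, which you have not established; using statement (ii) as the Galois descent datum is circular, since (ii) is formulated relative to the identification (i) being proved --- the non-circular substitute is that an algebraic representative over $L$ is automatically Galois-equivariant by uniqueness of initial objects (Proposition~\ref{P:algrepGalois}); and the preliminary reduction to finitely generated $\Omega/K$ is itself unjustified, since descending from $\Omega$ to a finitely generated subextension $K_\alpha$ again involves the infinitely generated extension $\Omega/K_\alpha$, so nothing is gained without a Saito-type dimension-bound argument.
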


 Theorem~\ref{T2:mainalgrep} generalizes and strengthens our previous result \cite[Thms.~3.7 \&
 4.4]{ACMVdcg} not only by eliminating the assumption that $K$ be perfect, but also by
 showing that the natural homomorphism $\mathrm{Ab}^i_{X_\Omega/\Omega} \to
 (\mathrm{Ab}^i_{X/K})_\Omega$ is an isomorphism rather than merely a purely
 inseparable isogeny.
 Theorem~\ref{T2:mainalgrep} is proven in Theorem \ref{T:mainalgrep}.



 \medskip

We now turn our attention to regular homomorphisms in the relative setting.   The natural framework for our definition  (see Definition \ref{D:fun} and \S\ref{SSS:def}) is to consider a  morphism $X\to S$ of finite type where $S$ is a smooth separated scheme of finite type over a regular Noetherian scheme $\Lambda$.  Then, as in the case of schemes over fields discussed above, one can define the sheaf  $\mathscr A^i_{X/S/\Lambda}$ of  ``families of algebraically trivial cycle classes"  (Definition \ref{D:AlgFun}), and with this one can then define a regular homomorphism $\Phi:\mathscr A^i_{X/S/\Lambda}\to A$ as a morphism of functors to an abelian $S$-scheme~$A$ (Definition \ref{D:reghom}).   An algebraic representative is then defined as an initial regular homomorphism (Definition \ref{D:fun}).

  A natural starting point is to establish existence of algebraic representatives.
 In general, even for smooth projective  varieties
 over an algebraically closed field it  is still an open problem to decide
 whether algebraic representatives exist.  However, there are three cases where existence is known for smooth projective varieties over an algebraically closed field\,:  for cycles of dimension-$0$ and for cycles of
 codimension-$1$ or $2$.
Indeed, as mentioned above, for a smooth projective variety over an algebraically closed field, the algebraic representative in codimension-$1$ is given by
 the reduced connected component of the Picard scheme, while the algebraic
 representative in dimension-$0$ is given by the  Albanese variety. Murre established
 the existence
of  algebraic
 representatives for codimension-$2$ cycles for smooth projective varieties over an algebraically closed field in \cite{murre83}. In \cite{ACMVdcg}, we showed that such
 algebraic representatives admit  models over any perfect field.
The following theorem provides a uniform statement for the case $i=1,2,\dim X$, generalizing the results to the relative setting.
Note that even specializing to
 the case where
 $\Lambda = S = \spec K$ for some field $K$, the following theorem in particular provides a new result, namely
 the existence of algebraic representatives for $i=2$ for smooth proper varieties over any
 field.

 \begin{teoalpha}\label{T2:mainAb2}
  Let $S$ be a scheme that is smooth separated and of finite type over a
  regular
  Noetherian scheme~$\Lambda$.
  Suppose~$X$ is a
  smooth proper scheme of finite type over $S$. Fix  $i=1,2$, or $\dim_S X$.  Then $X/S/\Lambda$
  admits an algebraic representative $$\Phi^i_{X/S} : \mathscr{A}^i_{X/S/\Lambda} \to
  \operatorname{Ab}^i_{X/S}$$
  for codimension-$i$ cycles.

\noindent  Moreover,
  if $\Lambda = \spec K$ for some subfield $K\subseteq \cx$,
  then
  \begin{enumerate}[label=(\roman*)]
   \item  For any dominant $\Lambda$-morphism $S' \to S$ of smooth separated schemes of finite type over $\Lambda$, there is a canonical isomorphism  of
   $S'$-abelian schemes $$ \operatorname{Ab}^i_{X_{S'}/S'} \stackrel{\sim}{\longrightarrow} (
   \operatorname{Ab}^i_{X/S})_{S'}.$$
   \item   The homomorphism
   $\Phi^i_{X/S}(\Omega) : \A^i(X_\Omega) \to \operatorname{Ab}^i_{X/S}(\Omega)$
   is an isomorphism
   on  torsion  for $s:\spec \Omega = \spec \kappa(S)^{\sep} \to S$ a separable
   closure of a generic point of $S$.
  \end{enumerate}
 \end{teoalpha}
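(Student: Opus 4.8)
The plan is to reduce the relative statement to the case of a base field, via the generic point and a spreading-out argument, and to treat the field case by descent from $\overline K$. I first record the field case obtained by specializing $\Lambda=S=\spec K$. For $i=1$ the algebraic representative is $(\pic^0_{X/K})_{\mathrm{red}}$ with the Abel map, and for $i=\dim_S X$ it is the Albanese variety with the Albanese map; both are classical. For $i=2$ one begins with Murre's existence over $\overline K$ \cite{murre83} (extended from the projective to the proper case by dominating $X_{\overline K}$ by a smooth projective variety). To descend to an arbitrary field $K$ I would factor $\overline K/K$ as $\overline K/K^{\sep}/K$: the separable step $K^{\sep}/K$ is handled by Theorem~\ref{T2:mainalgrep}, which asserts that an algebraic representative exists over $K$ iff it exists over $K^{\sep}$, whereas the purely inseparable step $\overline K/K^{\sep}$ must be treated separately --- this is exactly the point at which \cite{ACMVdcg}, assuming $K$ perfect, is being improved. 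In each case Lemma~\ref{L:MinVZK} provides a miniversal cycle for the surjective regular homomorphism, which I reuse below.

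For existence over a general base I may assume $S$ integral, arguing on connected components and regluing. Let $\eta$ be the generic point of $S$; the field case over $\kappa(\eta)$ supplies $\operatorname{Ab}^i_{X_\eta/\kappa(\eta)}$ and a miniversal cycle. I would then spread this data out to obtain a dense open $U\subseteq S$, an abelian scheme $A_U\to U$ with $(A_U)_\eta\iso\operatorname{Ab}^i_{X_\eta}$, and a relative cycle on $U\times_S X$ defining a regular homomorphism $\mathscr A^i_{X_U/U/\Lambda}\to A_U$. The crucial step is to extend $A_U$ to an abelian scheme $A\to S$. Since $S$ is smooth over the regular scheme $\Lambda$, it is regular, so over the codimension-$\geq 2$ locus the abelian scheme extends automatically by purity; in codimension one one must establish good reduction, which I expect to follow from the smoothness and properness of $X/S$ over all of $S$, so that no degeneration occurs at codimension-one points. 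With $A\to S$ in hand I would verify that $\Phi^i_{X/S}:\mathscr A^i_{X/S/\Lambda}\to A=:\operatorname{Ab}^i_{X/S}$ is initial: any regular homomorphism to an abelian $S$-scheme $B$ factors through $A$ over $\eta$ by the field-case universal property, and a homomorphism of abelian schemes over the normal integral base $S$ is determined by, and extends uniquely from, its generic fibre; this yields the factorization over all of $S$.

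Assume now $\Lambda=\spec K$ with $K\subseteq\cx$, so that every residue-field extension is separable. For the base-change statement (i), pulling families of cycles back along $S'\to S$ produces a regular homomorphism $\mathscr A^i_{X_{S'}/S'/\Lambda}\to(\operatorname{Ab}^i_{X/S})_{S'}$, hence by the universal property of $\operatorname{Ab}^i_{X_{S'}/S'}$ a canonical homomorphism of $S'$-abelian schemes $\operatorname{Ab}^i_{X_{S'}/S'}\to(\operatorname{Ab}^i_{X/S})_{S'}$. To prove it is an isomorphism I would check this on generic fibres, where it becomes the base change along the separable (since $\operatorname{char}=0$) extension $\kappa(S')/\kappa(S)$ and is an isomorphism by Theorem~\ref{T2:mainalgrep}\,(i) --- resting, in the case $i=\dim_S X$, on Theorem~\ref{T2:SepBC-Alb} for the Albanese. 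A homomorphism of abelian schemes over the regular, hence normal, integral base $S'$ that is an isomorphism on the generic fibre is an isomorphism, which concludes (i).

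For (ii) note that $\Omega=\kappa(S)^{\sep}=\overline{\kappa(S)}$ is algebraically closed in characteristic zero. Base change along $\spec\Omega\to S$ together with (i) identifies $\Phi^i_{X/S}(\Omega)$ with the field-case homomorphism $\A^i(X_\Omega)\to\operatorname{Ab}^i_{X_\Omega/\Omega}(\Omega)$ over the algebraically closed field $\Omega$. I would then invoke the classical isomorphism-on-torsion statements: the Abel map for $i=1$, Roitman's theorem for $0$-cycles when $i=\dim_S X$, and Murre's theorem \cite{murre83} for $i=2$; compatibility of torsion subgroups with extensions of algebraically closed fields permits reduction to $\cx$ where needed. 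The main obstacle throughout is the existence argument for $i=2$ --- specifically the extension of the abelian scheme across the codimension-one locus of $S$ and the globalization of the universal property --- together with the purely inseparable descent in the underlying field case.
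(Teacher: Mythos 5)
There are two genuine gaps, both at the points you yourself flag as ``to be treated'' or ``expected.'' First, in the field case for $i=2$ you route the existence argument through $\bar{K}$ and then need to descend along the purely inseparable extension $\bar{K}/K^{\sep}$; you correctly note this step is not covered by Theorem~\ref{T2:mainalgrep} (which is for separable extensions only), but you never supply it, and no such descent argument appears anywhere in your proposal. The paper avoids this problem entirely: it never passes to the algebraic closure. Existence is proved directly over the separably closed field $\kappa(S)^{\sep}$ by Saito's criterion (Proposition~\ref{P:saito}): a miniversal cycle (Lemma~\ref{L:MinVZK}) together with Lemma~\ref{L:Tor} bounds the rank of $A(\eta_S^{\sep})[\ell]$ for any surjective regular homomorphism $\Phi:\mathscr A^i_{X_{\eta_S}/\eta_S}\to A$ by the rank of $\A^i(X_{\bar\eta_S})[\ell]$, and the latter is finite for $i=2$ by the Merkurjev--Suslin theorem in the Colliot-Th\'el\`ene--Raskind form valid over \emph{separably} closed fields (Theorem~\ref{T:MS}). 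This is precisely how the perfectness hypothesis of \cite{ACMVdcg} is removed; your route re-creates the problem that the paper's route is designed to eliminate.

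Second, your spreading-out construction of $\operatorname{Ab}^i_{X/S}$ rests on extending the abelian scheme $A_U$ across the codimension-one points of $S$, which you assert ``follows from the smoothness and properness of $X/S$, so that no degeneration occurs.'' This is not a formal consequence and is the technical crux: smooth properness of $X/S$ controls $H^{2i-1}(X_{\bar\eta_S},\integ_\ell(i))$, not the Galois representation $T_\ell\operatorname{Ab}^i_{X_{\eta_S}/\eta_S}$ directly, and Remark~\ref{R:specialfiber} shows the algebraic representative genuinely can fail to specialize well. The missing link is Proposition~\ref{P:BlochMap}: a miniversal cycle plus injectivity of the $\ell$-adic Bloch map (again Merkurjev--Suslin for $i=2$) embeds $T_\ell\operatorname{Ab}^i_{X_{\eta_S}/\eta_S}$ $\operatorname{Gal}(\eta_S)$-equivariantly into $H^{2i-1}(X_{\bar\eta_S},\integ_\ell(i))_\tau$, which is unramified in codimension one; only then does N\'eron--Ogg--Shafarevich apply. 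Moreover, gluing the codimension-one extensions to an abelian scheme over all of $S$ uses the Faltings--Chai theorem, which is a characteristic-zero statement --- so your construction cannot prove the existence assertion in its stated generality (arbitrary regular Noetherian $\Lambda$, in particular positive or mixed characteristic). The paper's existence proof needs no abelian-scheme extension at all: by Corollary~\ref{C:surjbasechange}, existence over $S$ follows formally from existence over $\eta_S^{\sep}$, because Saito's dimension bound pulls back along base change. The Bloch-map/N\'eron--Ogg--Shafarevich/Faltings--Chai chain you elided is deployed in the paper only where it is unavoidable, namely for the generic-fiber identification $\operatorname{Ab}^i_{X_{\eta_S}/\eta_S}\cong(\operatorname{Ab}^i_{X/S})_{\eta_S}$ behind item $(i)$ (Theorems~\ref{T:basechangechar0} and~\ref{T:dvr}), and exactly there the characteristic-zero restriction enters. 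Your arguments for items $(i)$ and $(ii)$ are otherwise in line with the paper's, but both are conditional on the existence construction, so they inherit the gap.
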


The most interesting case is the case $i=2$. The existence statement of Theorem~\ref{T2:mainAb2}
 is proven in
 Theorem~\ref{T:mainAb2},
 item~$(i)$ is proven in Theorem~\ref{T:basechangechar0} and item~$(ii)$ is
 proven in Theorem~\ref{T:mainAb2char0}.
 The latter is obtained by using the fact proved by
 Murre \cite[\S 10]{murre83} that $\Phi^2_{X_\Omega/\Omega}$ coincides with the
 Abel--Jacobi map when $\Omega = \cx$, which explains why we have to restrict to
 characteristic zero.

  We note nonetheless that even in the cases $i=1$ or $\dim_S X$,
  Theorem \ref{T:mainAb2} provides some interesting new
  results. First, consider the case $i=1$.  Then the theorem does not
  require the existence of the connected component of the Picard
  scheme $\operatorname{Pic}^0_{X/S}$ to get the existence of the
  algebraic representative (we discuss cases where
  $\operatorname{Pic}^0_{X/S}$ and the algebraic representative agree
  in Theorem \ref{T:AlgRep-Co1} and Remark \ref{R:Pic0Kred}).  In
  other words, in this case the theorem does not just reduce to the
  theory of Picard schemes, and we get a more general relative
  existence theorem for the algebraic representative. Next consider
  the case $i=\dim_S X$.   Then the theorem does not require the
  existence of Grothendieck's relative Albanese scheme \cite{FGA},
  which is predicated on the existence of
  $\operatorname{Pic}^0_{X/S}$, to get the existence of the algebraic
  representative.  Again, we discuss cases where
  $\operatorname{Alb}_{X/S}$ and the algebraic representative agree in
  Theorem \ref{T:AlbAb} and Lemma \ref{L:AlbAb}.    In other words, in
  this case, the theorem does not just reduce to the theory of the
  Albanese scheme, and we get a more general relative existence theorem for the algebraic representative.

\medskip

Having established existence results for the algebraic representative,
we mention that nowhere in the above discussion did we assert that the
algebraic representative is nontrivial.  For $i=1$ and $i=\dim_S X$,
there are well-established results on the dimensions of the Picard
scheme and the Albanese scheme that can be used.  However, for codimension-$2$ cycles, the question is much more subtle.  In particular, one can see that the main issue is to establish the existence of a single non-trivial regular homomorphism.  In positive characteristic, this is quite subtle, and we refer the reader to \cite{ACMVBlochMap} where we discuss this \emph{via} the geometric coniveau filtration (for the case of fibrations in conics over the projective plane see also \cite{beauville77}).
However, in characteristic~$0$, due to the base change result in Theorem \ref{T2:mainAb2},  the existence of nontrivial regular homomorphisms in the relative setting is ensured, \emph{via}  base-changing along a dominant morphism $\spec \cx \to S$ and then   by using the fact that the Abel--Jacobi map induces a regular homomorphism (Example~\ref{Ex:AJ}, Theorem \ref{T:geomNF}).
In fact, for $i=1,2,\dim X$, we can show that the Abel-Jacobi map provides an algebraic representative in the relative setting.
Precisely, thanks to our functorial approach to regular homomorphisms, we can
 extend
 \cite[\S 10]{murre83} to the relative setting and compare the base-change of
 the algebraic representative $\Phi^i_{X/S}$ to $\cx$ with normal functions
 attached to families of fiberwise algebraically trivial codimension-$i$ cycles.
 The following theorem, which can be read off Theorems~\ref{T:geomNF}
 and~\ref{T:MK},  is obtained by combining our previous work on normal functions
 attached to cycles that are fiberwise algebraically trivial \cite{ACMVsigma}
 with Theorems~\ref{T2:mainalgrep} and~\ref{T2:mainAb2}.

 \begin{teoalpha}\label{T2:NF}
  Let $S$ be a smooth separated  scheme of finite type over a field $K\subseteq \cx$ and
  let
  $f : X\to S$ be a smooth projective morphism.   Denote $\mathrm{J}^{2i-1}(X_\cx/S_\cx)$
  the relative intermediate Jacobian attached to the variation of Hodge
  structures
  $\mathrm{R}^{2i-1}(f_\cx)_*\integ$.
 Then
 \begin{enumerate}[label=(\roman*)]
\item{\em (The relative $AJ$ map is a regular homomorphism)} There exist a surjective regular homomorphism  $\Phi^i_{AJ_{X/S}} :  \mathscr{A}^i_{X/S}\to \mathrm{J}^{2i-1}_{a,X/S}$ and a canonical inclusion $\iota : (\mathrm{J}^{2i-1}_{a,X/S})_\cx \hookrightarrow\mathrm{J}^{2i-1}(X_\cx/S_\cx)$ of relative complex tori. The inclusion $\iota$ has the property that for any $Z \in
\mathscr{A}^i_{X_\cx/S_\cx}(S_\cx)$ the composition $\iota \circ
\Phi^2_{X_\cx/S_\cx}(S_{\mathbb C})(Z)$ coincides with the normal function $\nu_Z$ attached to~$Z$. In particular, restricting the co-domain of the normal function to
$(\mathrm{J}^{2i-1}_{a,X/S})_\cx$, the normal function $\nu_Z$ is algebraic and if
$Z$
is defined over $K$ then $\nu_Z$ is defined over~$K$.
\item{\em (The relative $AJ$ map is an algebraic representative for $i=1,2,\dim_SX$)} For $i=1,2,\dim_SX$, the natural homomorphism $\mathrm{Ab}^i_{X/S} \to \mathrm{J}^{2i-1}_{a,X/S}$, provided by Theorem~\ref{T2:mainAb2}, is an isomorphism.
 \end{enumerate}
 \end{teoalpha}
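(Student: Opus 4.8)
The plan is to construct the relative Abel--Jacobi regular homomorphism of part~(i) from the Hodge-theoretic study of normal functions in \cite{ACMVsigma}, and then to deduce part~(ii) from the universal property of the algebraic representative together with a fibrewise comparison to Murre's theorem that is propagated over the whole base by the base-change statement of Theorem~\ref{T2:mainAb2}.

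For part~(i), I would begin with the relative intermediate Jacobian $\mathrm{J}^{2i-1}(X_\cx/S_\cx)$ attached to $\mathrm{R}^{2i-1}(f_\cx)_*\integ$ and isolate inside it the \emph{algebraic part} $\mathrm{J}^{2i-1}_a$, the maximal abelian subscheme. The substantive input, which I take from \cite{ACMVsigma}, is that this subscheme admits a model $\mathrm{J}^{2i-1}_{a,X/S}$ over $K$ together with the inclusion $\iota$, that the normal function $\nu_Z$ of any fiberwise algebraically trivial family $Z$ takes values in $\mathrm{J}^{2i-1}_a$, and that $\nu_Z$ is algebraic and is defined over the field of definition of $Z$. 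Granting this, the new step is to assemble these normal functions into a morphism of functors: to a test object $T\to S$ and a class $Z\in\mathscr{A}^i_{X/S}$ over $T$ one assigns the $K$-morphism $T\to\mathrm{J}^{2i-1}_{a,X/S}$ underlying $\nu_Z$. Additivity in $Z$ and functoriality in $T$ follow from the corresponding properties of the Abel--Jacobi map, so this defines a regular homomorphism $\Phi^i_{AJ_{X/S}}$, and the compatibility $\iota\circ\Phi^i_{AJ_{X/S}}(S_\cx)(Z)=\nu_Z$ is a restatement of the construction. Surjectivity holds because $\mathrm{J}^{2i-1}_a$ is, by its very definition, generated by the Abel--Jacobi images of algebraically trivial cycles on the fibers, so the image of $\Phi^i_{AJ_{X/S}}$ cannot lie in a proper abelian subscheme.

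For part~(ii), the existence of the algebraic representative (Theorem~\ref{T2:mainAb2}) and its universal property furnish a unique homomorphism $g:\mathrm{Ab}^i_{X/S}\to\mathrm{J}^{2i-1}_{a,X/S}$ of abelian $S$-schemes over $K$ through which $\Phi^i_{AJ_{X/S}}$ factors. As $g$ is defined over $K\subseteq\cx$, faithfully flat descent reduces the claim to showing that $g_\cx$ is an isomorphism. By Theorem~\ref{T2:mainAb2}(i) applied to the dominant morphism $S_\cx\to S$ one has $(\mathrm{Ab}^i_{X/S})_\cx\cong\mathrm{Ab}^i_{X_\cx/S_\cx}$, so $g_\cx$ is a homomorphism of abelian $S_\cx$-schemes that I would analyze on the geometric generic fiber $X_{\bar\eta}$. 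There Theorem~\ref{T2:mainAb2}(ii) gives that the source map is an isomorphism on torsion, while the classical Abel--Jacobi map on algebraically trivial cycles is an isomorphism on torsion onto the algebraic part; since $\Phi^i_{AJ_{X/S}}=g\circ\Phi^i_{X/S}$, the map $g$ is therefore an isomorphism on torsion of that fiber, hence (in characteristic~$0$, where a homomorphism of abelian varieties bijective on torsion is an isomorphism) an isomorphism on the geometric generic fiber. Because $S$ is regular, the inverse of $g_\cx$ over the generic point extends uniquely to a homomorphism of abelian $S_\cx$-schemes, so $g_\cx$, and with it $g$, is an isomorphism. For $i=1$ and $i=\dim_SX$ the fibrewise input is the classical identification of $\mathrm{J}^1$ with the Picard variety and of $\mathrm{J}^{2\dim_SX-1}$ with the Albanese; the essential case $i=2$ rests on Murre's theorem \cite[\S10]{murre83} that over $\cx$ the codimension-$2$ algebraic representative coincides with the Abel--Jacobi map.

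The main obstacle is the globalization in part~(ii): Murre's computation and the torsion statement of Theorem~\ref{T2:mainAb2}(ii) only determine $g$ on the geometric generic fiber, and upgrading this to an isomorphism of abelian $S$-schemes and then descending it from $\cx$ to $K$ is precisely where the base-change isomorphism of Theorem~\ref{T2:mainAb2}(i) does the work. One must also take care that the comparison is a genuine isomorphism and not merely an isogeny, which is exactly why the torsion-isomorphism refinement of Theorem~\ref{T2:mainAb2}(ii), rather than a bare equality of dimensions, is needed; the deepest analytic input---algebraicity and the field of definition of normal functions---is imported from \cite{ACMVsigma} and is not reproved here.
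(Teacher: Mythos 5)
Your part~(i) is essentially the paper's own argument (Theorem~\ref{T:geomNF}): the hard analytic content---that the fiberwise algebraic intermediate Jacobians form an abelian scheme with a model over $K$, that normal functions of fiberwise algebraically trivial families land in it, are algebraic, and are defined over the field of definition of the cycle---is imported from \cite{ACMVsigma}, and the assembly into a regular homomorphism together with the surjectivity test at a separably closed point (Proposition~\ref{prop:surjdef}, Corollary~\ref{C:surjdef}) is exactly how the paper proceeds. One small slip: $\mathrm{J}^{2i-1}_{a}$ is not the ``maximal abelian subscheme'' of $\mathrm{J}^{2i-1}(X_\cx/S_\cx)$; it is, fiberwise, the \emph{image} of the Abel--Jacobi map on algebraically trivial cycles, which is a priori smaller (equality with the maximal abelian subtorus is a generalized-Hodge-conjecture-type statement). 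This does not affect your argument, since every property you use is quoted from \cite{ACMVsigma}.

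Part~(ii), however, contains a genuine circularity. Your key step deduces that $g$ is an isomorphism on torsion at the geometric generic fiber from Theorem~\ref{T2:mainAb2}(ii). But in the paper that statement is proven in Theorem~\ref{T:mainAb2char0}, whose proof \emph{uses} Theorem~\ref{T:MK}---that is, precisely the isomorphism $\mathrm{Ab}^i_{X/S}\to\mathrm{J}^{2i-1}_{a,X/S}$ you are trying to establish: the only access to torsion in the abstractly-defined $\operatorname{Ab}^2$ is through first identifying it with $\mathrm{J}^3_a$ (over $\cx$ this is Murre's Theorem~C) and then invoking Murre's torsion theorem \cite[Thm.~10.3]{murre83} for the Abel--Jacobi map. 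So Theorem~\ref{T2:mainAb2}(ii) cannot be used as a black box here. Relatedly, your appeal to ``the classical Abel--Jacobi map is an isomorphism on torsion'' at the geometric generic fiber is not classical either: Murre's theorem is a statement over $\cx$, and transporting it to $\overline{\kappa(S_\cx)}$ requires the same base-change/rigidity machinery whose justification is at issue. The repair---which is the paper's actual proof of Theorem~\ref{T:MK}---avoids torsion altogether: by Proposition~\ref{P:BLR}$(i)$ it suffices to prove the isomorphism on the generic fiber; then fix an embedding $\kappa(S)\hookrightarrow\cx$ and use Theorems~\ref{T:Omega/k} and~\ref{T:barK/K} (i.e., Theorem~\ref{T2:mainalgrep}, applicable since $\cx/\kappa(S)$ is separable in characteristic zero) to identify the base change of $\operatorname{Ab}^2_{X_{\eta_S}/\eta_S}\to\mathrm{J}^3_{a,X_{\eta_S}/\eta_S}$ along $\kappa(S)\subseteq\cx$ with $\operatorname{Ab}^2_{X_\cx/\cx}\to\mathrm{J}^3_{a}(X_\cx)$, which is an isomorphism by Murre's Theorem~C \cite{murre83}. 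With that substitution, the rest of your skeleton (universal property producing $g$, descent of the isomorphism property from $\cx$ to $K$, and spreading out over the regular base) goes through unchanged.
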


 Note that for $i=1$ or $\dim_S X$, Theorem~\ref{T2:NF} is classical and the inclusion $\iota$ is an isomorphism.\medskip

 Regarding item $(i)$ of Theorem~\ref{T2:mainAb2}, we have to restrict to
 characteristic zero in order to use the Faltings--Chai extension theorem for
 abelian schemes. However, a first step consists in using the
 N\'eron--Ogg--Shafarevich criterion, which holds without any restrictions on the
 characteristic.
 Consequently, we can show the following base-change result concerning the
 algebraic representative.

 \begin{teoalpha}\label{T2:dvr}
  Let $S/ \Lambda$ be either the spectrum of a DVR (with $S=\Lambda$), or a
  smooth quasi-projective variety over a field $K\subseteq \cx$, and denote
  $\eta$
  its generic point. Suppose $X$ is a smooth and proper scheme over $S$.  Fix $i=1,2$, or $\dim_S X$.
  Then
  the natural homomorphism of $\eta$-abelian varieties
  $$\mathrm{Ab}^i_{X_{\eta}/\eta} \longrightarrow (\mathrm{Ab}^i_{X/S})_{\eta}$$
  is an isomorphism.
 \end{teoalpha}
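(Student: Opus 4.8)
The plan is to prove that the natural homomorphism $\psi\colon A\to B_\eta$ is an isomorphism, where I write $f\colon X\to S$ for the structure morphism, $B:=\mathrm{Ab}^i_{X/S}$ for the relative algebraic representative (an abelian $S$-scheme by Theorem~\ref{T2:mainAb2}), and $A:=\mathrm{Ab}^i_{X_\eta/\eta}$ for the algebraic representative of the smooth proper $\kappa(\eta)$-scheme $X_\eta$ (which exists by the same theorem over the field $\kappa(\eta)$). Restricting $\Phi^i_{X/S}$ to the generic point gives a regular homomorphism $\mathscr A^i_{X_\eta/\eta}\to B_\eta$, and $\psi$ is the map furnished by the universal property of $A$, so that $\psi\circ\phi_\eta=(\Phi^i_{X/S})_\eta$ for $\phi_\eta\colon\mathscr A^i_{X_\eta/\eta}\to A$ the structural homomorphism. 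At the outset I record that $\Phi^i_{X/S}$ is surjective and carries a miniversal cycle (Lemma~\ref{L:MinVZK}); its restriction to $X_\eta$ shows that $(\Phi^i_{X/S})_\eta$, and hence $\psi$, is surjective, which I will use to pin down endomorphisms of $B_\eta$.

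When $S$ is smooth over $K\subseteq\cx$ the field $\kappa(\eta)$ has characteristic zero, and I would conclude directly: applying item~(ii) of Theorem~\ref{T2:mainAb2} both to $X/S$ and to $X_\eta/\eta$ (whose base fields share the separable closure $\Omega=\kappa(\eta)^{\sep}$) identifies the torsion of $A$ and of $B_\eta$ with $\A^i(X_\Omega)_{\mathrm{tors}}$, compatibly with $\psi$; so $\psi$ is an isomorphism on all torsion and therefore an isomorphism. The remaining, and genuinely characteristic-free, case is a discrete valuation ring $S$, and here the first step is the N\'eron--Ogg--Shafarevich criterion. For every prime $\ell$ invertible on $S$, the $\ell$-adic realization of the algebraic representative identifies $T_\ell A$ with a Galois-stable lattice in a subrepresentation of $H^{2i-1}_{\et}(X_{\bar\eta},\rat_\ell(i))$ --- all of $H^{1}_{\et}(X_{\bar\eta},\rat_\ell(1))$ for $i=1$, all of $H^{2\dim_S X-1}_{\et}(X_{\bar\eta},\rat_\ell(\dim_S X))$ for $i=\dim_S X$, and the coniveau subspace $\coniveau^1 H^{3}_{\et}(X_{\bar\eta},\rat_\ell(2))$ for $i=2$. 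Since $f$ is smooth and proper, smooth-proper base change makes $R^{2i-1}f_*\integ_\ell$ lisse on $S$, so this ambient representation is unramified, and with it the subrepresentation $T_\ell A$. By N\'eron--Ogg--Shafarevich, $A$ has good reduction over $S$; let $\mathcal A\to S$ be the abelian scheme extending it.

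The finish is functorial. The key is to extend the structural regular homomorphism $\phi_\eta$ of $A=\mathcal A_\eta$ to a relative regular homomorphism $\Psi\colon\mathscr A^i_{X/S}\to\mathcal A$ over all of $S$; the good-reduction model $\mathcal A$ supplied by N\'eron--Ogg--Shafarevich is precisely the target that makes such a spreading-out possible, since rational maps from the regular base (and its smooth test schemes) to a proper $S$-abelian scheme extend. Granting $\Psi$, initiality of $B=\mathrm{Ab}^i_{X/S}$ yields a unique $S$-homomorphism $\theta\colon B\to\mathcal A$ with $\theta\circ\Phi^i_{X/S}=\Psi$, whence $\theta_\eta\circ(\Phi^i_{X/S})_\eta=\phi_\eta$ on the generic fibre. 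Combining with $\psi\circ\phi_\eta=(\Phi^i_{X/S})_\eta$ gives $(\theta_\eta\circ\psi)\circ\phi_\eta=\phi_\eta$, so the uniqueness in the universal property of $A$ forces $\theta_\eta\circ\psi=\mathrm{id}_A$; symmetrically $(\psi\circ\theta_\eta)\circ(\Phi^i_{X/S})_\eta=(\Phi^i_{X/S})_\eta$, and since the image of $(\Phi^i_{X/S})_\eta$ generates $B_\eta$ this forces $\psi\circ\theta_\eta=\mathrm{id}_{B_\eta}$. Thus $\psi$ and $\theta_\eta$ are mutually inverse and $\psi$ is an isomorphism.

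I expect the extension of $\phi_\eta$ to a relative regular homomorphism $\Psi$ over $S$ to be the main obstacle: one must show that the morphism-of-functors datum over the generic point spreads out over the whole base, with target the abelian scheme $\mathcal A$, using properness of $\mathcal A/S$ and the extension of rational maps to abelian schemes. This is exactly the point at which the argument remains characteristic-free, in contrast with the full base-change statement of Theorem~\ref{T2:mainAb2}(i) along a dominant $S'\to S$, which instead relies on the Faltings--Chai extension theorem and hence on characteristic zero.
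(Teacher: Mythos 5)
Your proposal is correct, and on the DVR case --- the substantive half of the statement --- it follows the paper's proof (Theorem~\ref{T:dvr}) essentially step for step: a good-reduction model $\mathcal A$ of $\mathrm{Ab}^i_{X_\eta/\eta}$ is produced by the N\'eron--Ogg--Shafarevich criterion via the Galois-equivariant embedding $T_\ell \mathrm{Ab}^i_{X_\eta/\eta}\hookrightarrow H^{2i-1}(X_{\eta^{\sep}},\integ_\ell(i))_\tau$ of Proposition~\ref{P:BlochMap}; the generic-fibre regular homomorphism is spread out over $S$ by extending, for each test scheme $T$ and each cycle class, the morphism $T_\eta\to \mathcal A_\eta$ across the regular scheme $T$ (Proposition~\ref{P:BLR}$(i)$); and the conclusion is the same interplay of the two universal properties (your $\psi$ and $\theta_\eta$ are the paper's $f$ and $g_\eta$). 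Two minor points of comparison. First, Proposition~\ref{P:BlochMap} requires $\ell$ to be prime to the integer $r$ attached to a miniversal cycle (Lemma~\ref{L:MinVZK}) as well as invertible on $S$, so ``for every prime $\ell$ invertible on $S$'' is a slight overstatement; this is harmless, since N\'eron--Ogg--Shafarevich needs only one suitable $\ell$, and the coniveau refinement you invoke for $i=2$ is likewise unnecessary --- a Galois-stable submodule of the unramified module $H^{2i-1}$ suffices. Second, for the characteristic-zero case the paper simply observes that the statement is the special case $S'=\eta$ of Theorem~\ref{T2:mainAb2}$(i)$ in its limit form (Theorem~\ref{T:basechangechar0}), whereas you route through the torsion comparison of Theorem~\ref{T2:mainAb2}$(ii)$ applied to both $X/S$ and $X_\eta/\eta$; your argument is valid (in characteristic zero a homomorphism of abelian varieties that is bijective on geometric torsion points is an isomorphism), but it gains nothing, since item $(ii)$ (Theorem~\ref{T:mainAb2char0}) is itself deduced in the paper from item $(i)$, so your detour rests on the same input and is less direct.
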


 In the case where $S$ is a DVR, this is Theorem~\ref{T:dvr}, while in the other
 case this is
 Theorem~\ref{T2:mainAb2}$(i)$ (by going to the limit over all open subsets of
 $S$) or can be seen as
 a combination of Theorems~\ref{T:geomNF} and~\ref{T:MK}.
 In the
 first case, the key point is to show that $ \mathrm{Ab}^2_{X_{\eta}/\eta}$
 admits a model over $S$\,; this is achieved \emph{via} the N\'eron--Ogg--Shafarevich
 criterion by showing (Proposition~\ref{P:BlochMap}) that $T_\ell
 \mathrm{Ab}^2_{X_{\eta}/\eta}$ admits a $\operatorname{Gal}(\eta)$-equivariant
 embedding as a $\integ_\ell$-module in the torsion-free quotient of
 $H^3(X_{\eta^{\sep}},\integ_\ell(2))$. In the second case, we can extend $
 \mathrm{Ab}^2_{X_{\eta}/\eta}$ to an abelian scheme over $S$ thanks to the
 Faltings--Chai extension theorem \cite[Cor.~6.8, p.185]{FC}.\medskip

 \subsection*{Acknowledgements}
We thank Olivier
Wittenberg for useful comments, and Kiran Kedlaya and Adrian Vasiu for
helpful conversations about crystals and $p$-divisible groups.

 \subsection*{Notation and Conventions} Given a field $K$, $K^{\sep}$ will
 denote
 an algebraic separable closure of $K$. Since we will deal with separable
 extensions of fields that are not necessarily algebraic,  we make it clear that
 a field $K$ is said to be \emph{separably closed} if the only separable and
 algebraic extension of $K$ is $K$ itself.
 Given a scheme $X$ of finite type over a field $K$,
 we denote $\mathrm{A}^i(X)$ the subgroup of $\operatorname{CH}^i(X)$ consisting
 of cycle classes that are algebraically trivial.

 Unless stated otherwise, $\Lambda$ denotes an integral regular Noetherian
 scheme, $S$ denotes an integral scheme that is smooth separated and of finite
 type over $\Lambda$, and $X$ denotes a scheme of finite type over~$S$. Given an
 integral scheme $T$ with function field $\kappa(T)$, we denote $\eta_T : \spec
 \kappa(T) \to T$ its generic point and we denote $\eta_T^{\sep} : \spec
 \kappa(T)^{\sep} \to T$ the corresponding separably closed point over $\eta_T$.

 \section{Regular homomorphisms as functors}\label{S:AlgRepFun}

 \subsection{The functor of algebraically trivial cycle classes}

 We fix an integral  regular Noetherian  scheme $\Lambda$,
 and an integral
 scheme $S$ that is smooth separated and of finite type over $\Lambda$.
 We will denote the  generic points of $S$ and $\Lambda$, respectively, as
 $\eta_S$ and $\eta_\Lambda$.

 \begin{rem}
  The assumption that  $S$ and
  $\Lambda$ be irreducible (and hence integral, since $\Lambda$ is regular and
  since $S$  is smooth over the regular scheme $\Lambda$) is not
  essential\,; all statements in the paper hold without this integrality
  assumption simply by working component-wise.
 \end{rem}

 We denote
 by $$\mathsf {Sm}_\Lambda/S$$ the
category whose objects are
separated
$S$-schemes that are smooth and of finite type over $\Lambda$ with structure map to $S$ being \emph{dominant}, and whose morphisms are given by morphisms of $S$-schemes.
 For later reference, we recall that any morphism $t:T'\to T$ in $\mathsf
 {Sm}_\Lambda/S$ -- in fact any morphism of smooth separated schemes of finite
 type
 over $\Lambda$ -- factors as
 $$
 \xymatrix{
  T' \ar[r]^<>(0.5){\gamma_t} & T'\times_\Lambda T \ar[r]^<>(0.5){pr_T} & T
 }
 $$
 where $\gamma_t$, the graph of $t$, is a regular embedding
 \cite[B.7.3]{fulton},
 and $\operatorname{pr}_T$ is the second projection, and is smooth by base
 change.   In particular, $t$ is locally complete intersection (lci) in the sense of \cite[B.7.6]{fulton},
 extended to the the situation of schemes over $\Lambda$, and if both $T$ and
 $T'$ are equidimensional over $\Lambda$, then $t$ is of relative dimension
 $d_T-d_{T'}$ , where $d_T=\dim_\Lambda T$ and $d_{T'}=\dim_\Lambda T'$ are the
 relative dimensions of $T$ and $T'$ over $\Lambda$.

 Let $X\to S$ be a
  morphism
 of finite type over $S$.
 For each $T\to S$ in $\mathsf {Sm}_\Lambda/S$, denote by $X_T$ the base-change
 $X\times _ST$.
 Note that $X_T$ is of finite type over $\Lambda$.    Following \cite[\S
 20]{fulton}, we have $\operatorname{CH}^i(X_T/\Lambda)$, which, abusing
 notation, we will henceforth simply write as $\operatorname{CH}^i(X_T)$.

 \begin{rem}[The case $S=\Lambda=\operatorname{Spec}K$]
  An important special case is when $S=\Lambda=\operatorname{Spec}K$ for some
  field~$K$, and $f:X\to \spec K$ is  smooth projective. The classical case is
  the case where we assume in addition
  that
  $K=k$ is algebraically closed (see \S \ref{SS:reg}).
 \end{rem}

 Recall that for any morphism $t:T'\to T$ in $\mathsf {Sm}_\Lambda/S$, being lci
 as described above, there is a refined Gysin pull-back \cite[p.395]{fulton}
 $$
 t^!:\operatorname{CH}^i(X_T)\to \operatorname{CH}^i(X_{T'}),
 $$
 which, as pointed out on \cite[p.395]{fulton}, satisfies all the fundamental
 properties of \cite[\S 3, \S 6]{fulton}.

 \begin{dfn}[Functor of algebraically trivial cycles] \label{D:AlgFun}
  In the notation above, for a nonnegative  integer $i$, the \emph{functor of
   codimension-$i$ algebraically trivial cycles on $X$ over $S$} is the
  contravariant functor
  $$\mathscr {A}^i_{X/S}
  :\mathsf {Sm}_\Lambda/S \longrightarrow \mathsf {AbGp},
  $$
  sometimes denoted $\mathscr A^i_{X/S/\Lambda}$ for clarity,
  to the category of abelian groups $\mathsf {AbGp}$ given by families of
  algebraically
  trivial cycles on $X/S$.
  Precisely, given $T$ in $\mathsf {Sm}_\Lambda/S$, we take  $\mathscr
  {A}^i_{X/S}(T)$ to be the group of cycles classes $Z \in
  \operatorname{CH}^i(X_T)$ such that
  for  every  separably closed field $\Omega$ and every
  $\Omega$-point $t:
  \operatorname{Spec}(\Omega) \to  T$ obtained as an inverse limit of dominant
  morphisms $(t_n:T_n \to T)_{n\in \mathbb N}$ in $\mathsf {Sm}_\Lambda/T$,
  \emph{i.e.}, $\operatorname{Spec}\Omega=\varprojlim_n T_n$,
  the cycle $Z_t:=t^!Z:=\varinjlim_{n}t_n^! Z \in
  \varinjlim_{n} \operatorname{CH}^i(X_{t_n})= \operatorname{CH}^i(X_t)$
  is algebraically trivial.
  The functor is defined on  morphisms $t:T'\to T$ in
  $\mathsf {Sm}_\Lambda/S$  \emph{via}  the refined Gysin pullback $t^!$ for lci
  morphisms  \cite[\S 6.6]{fulton}\,;    we will sometimes use the notation
  $Z_{T'}=t^!Z$.
 \end{dfn}

 We now establish a result that helps to clarify the meaning of
 Definition~\ref{D:AlgFun}.  The first observation is that if $\Omega$ is a
 separably closed
 field, and
 $t:
 \operatorname{Spec}(\Omega) \to  T$ is obtained as an inverse limit of
 morphisms $(t_n:T_n \to T)_{n\in \mathbb N}$ in $\mathsf {Sm}_\Lambda/T$, and 
 if $T$ is integral, then the image of $\operatorname{Spec}\Omega$ is the generic
 point of  $T$.
 Moreover, under the composition   $\spec \Omega \stackrel{t}{\to} T \to
 \Lambda$, the image of $\operatorname{Spec}\Omega$ is the generic point
 $\eta_\Lambda$, and $\Omega/\kappa(\eta_\Lambda)$ is separable\,;
 this is an immediate consequence of the fact
 \cite[\href{https://stacks.math.columbia.edu/tag/037X}{Tag
  037X}]{stacks-project} that
 a field extension $L/K$ is
 separable
 if and only if $L$ is a direct limit of rings that are smooth
 and of finite type over $K$.
 Note also that if $P\in |T|$ is the image of $t$ in the underlying topological
 space, then $\kappa(P)/\kappa(\eta_\Lambda)$ is separable.  This is a
 consequence of the fact that  if $L/E/K$ is a tower of field extensions, with
 $L/K$ separable, then $E/K$ is separable (\emph{e.g.},
 ~\cite[Prop.~8, p.V.116]{bourbakifields}).

 \begin{rem}
  Given any field extension $L/\kappa(\Lambda)$,
  in \cite[Rem.~3.2]{ACMVabtriv} we recalled the pullback of $Z\in
  \operatorname{CH}^i(X_T)$ over the morphism $ t:\operatorname{Spec}
  L\to T$, defining $Z_L=Z_{\bar t}\in \operatorname{CH}^i(X_{\bar t})$.
  In particular, if $L'/L$ is a further field extension then, if $Z_L$ is
  algebraically trivial, then so is $Z_{L'}$.
 \end{rem}

 The following lemma describes Definition~\ref{D:AlgFun} in terms of more easily
 identified fibers\,:

 \begin{lem}\label{L:WhatPoints}
  Let $T$ be an integral scheme in $\mathsf {Sm}_\Lambda/S$ with generic point
  $\eta_T: \kappa(T) \to T$ and let $Z$ be a cycle class in
  $\operatorname{CH}^i(X_T)$.
  The separable algebraic closure $\eta_T^{\sep}$ of the generic point $\eta_T$
  can be obtained as an inverse limit of morphisms $(t_n:T_n \to T)_{n\in
   \mathbb
   N}$ in $\mathsf {Sm}_\Lambda/T$.

 \noindent Moreover, the following conditions are equivalent\,:
  \begin{enumerate}[label=(\roman*)]
   \item $Z\in \mathscr A_{X/S}^i(T)$\,;
   \item $Z_{\eta_T^{sep}} \in \operatorname{CH}^i(X_{\eta_T^{\sep}})$ is
   algebraically trivial\,;
   \item  $Z_t \in \operatorname{CH}^i(X_t)$ is algebraically trivial for all
   points $t : \spec \kappa(\Lambda)^{\sep} \to T$\,;
   \item[(iii)']  $Z_t \in \operatorname{CH}^i(X_t)$ is algebraically trivial
   for
   some point $t : \spec \kappa(\Lambda)^{\sep} \to T$\,;
   \item [(iv)]$Z_t \in \operatorname{CH}^i(X_t)$ is algebraically trivial for
   all
   dominant points $t : \spec \Omega \to T$ with $\Omega$ separably closed.
  \end{enumerate}
 \end{lem}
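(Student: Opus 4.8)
The plan is to prove the limit presentation first and then to run the equivalences through two clusters: the \emph{generic} conditions (i), (ii), (iv), which are governed by the generic point of $T$, and the \emph{special} conditions (iii), (iii'), which involve $\kappa(\Lambda)^{\sep}$-points. For the limit presentation, I would realize $\eta_T^{\sep}=\spec\kappa(T)^{\sep}$ as the cofiltered limit of the \'etale neighbourhoods of $\eta_T$. Concretely, the non-empty open subschemes $U\subseteq T$ are objects of $\mathsf{Sm}_\Lambda/T$ (open immersions are smooth and dominant) with $\varprojlim_U U=\eta_T$, and every finite separable extension of $\kappa(T)$ is the function field of some scheme $T_n$ that is connected and \'etale over such a $U$; since \'etale morphisms are smooth and open, each $T_n$ lies in $\mathsf{Sm}_\Lambda/T$ and is dominant over $T$. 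By \cite[Tag~037X]{stacks-project} (a field extension is separable iff it is a filtered colimit of smooth finite-type algebras), the resulting cofiltered system has limit $\spec\kappa(T)^{\sep}$, giving the claimed $(t_n\colon T_n\to T)$.

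For the generic cluster I would argue (iv)$\Rightarrow$(i)$\Rightarrow$(ii)$\Rightarrow$(iv). The points occurring in (i) are, by the discussion preceding the lemma, dominant and have separably closed residue field, so they form a subset of those in (iv); hence (iv)$\Rightarrow$(i). Since the presentation of $\eta_T^{\sep}$ just produced exhibits it as one of the points allowed in (i), condition (i) forces $Z_{\eta_T^{\sep}}$ to be algebraically trivial, which is (ii). Finally, for (ii)$\Rightarrow$(iv), any dominant point $t\colon\spec\Omega\to T$ with $\Omega$ separably closed factors through $\eta_T$, giving an inclusion $\kappa(T)\hookrightarrow\Omega$; since $\Omega$ is separably closed, every element of $\kappa(T)^{\sep}$, being separable algebraic over $\kappa(T)$, has its minimal polynomial split in $\Omega$, yielding an embedding $\kappa(T)^{\sep}\hookrightarrow\Omega$ over $\kappa(T)$, so that $Z_t=(Z_{\eta_T^{\sep}})_\Omega$. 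The Remark that algebraic triviality is preserved under field extension then gives algebraic triviality of $Z_t$. I note that this step needs neither separability of $\Omega/\kappa(T)$ nor the limit structure, only that $\Omega$ be separably closed.

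It remains to fold in the special conditions. Base-changing along $\eta_\Lambda$ and using that any $\kappa(\Lambda)^{\sep}$-point lands in the generic fibre $T_{\eta_\Lambda}$ at a closed point with residue field finite separable over $\kappa(\Lambda)$, I would reduce the equivalence (ii)$\Leftrightarrow$(iii)$\Leftrightarrow$(iii') to the following statement about the smooth integral $k$-variety $T_0:=T_{\eta_\Lambda}$, with $k=\kappa(\Lambda)$, and the cycle induced by $Z$ on $X_{T_0}$: algebraic triviality of the fibre over a single separable closed point is equivalent to algebraic triviality over the separable closure of the generic point. The implication from the generic point to all (hence some) closed points is a specialization argument: choose a trait $\spec R\to T_0$ whose generic point maps to $\eta_{T_0}$ and whose closed point maps to the given closed point, spread the curve and family witnessing generic algebraic triviality over $R$ (after a finite, and harmlessly separable, extension of $R$ to obtain the two sections), and specialize; this gives (ii)$\Rightarrow$(iii), and (iii)$\Rightarrow$(iii') is immediate once one notes that a smooth $k$-variety has closed points with separable residue field, so the index set is non-empty.

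The main obstacle is the converse implication (iii')$\Rightarrow$(ii): passing from algebraic triviality on a single special fibre back to the generic point. This is the \emph{generization} direction, which does not follow from elementary cycle-theoretic spreading out, since subtracting a spread-out curve family only reduces one to a cycle that is rationally trivial on one fibre, and vanishing on a single fibre does not propagate to the generic fibre by formal manipulation. Here I would invoke the corresponding statement for geometric points established in \cite{ACMVabtriv}, namely that algebraic triviality of the fibre at one point is equivalent to algebraic triviality at the geometric generic point, and then descend from the algebraic to the separable closure using the preservation of algebraic triviality under the relevant field extensions recalled above. Assembling (iv)$\Rightarrow$(i)$\Rightarrow$(ii)$\Rightarrow$(iii)$\Rightarrow$(iii')$\Rightarrow$(ii) together with (ii)$\Rightarrow$(iv) then closes all the equivalences.
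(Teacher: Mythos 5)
Your limit presentation of $\eta_T^{\sep}$ and your treatment of the cluster (i), (ii), (iv) are correct and essentially the paper's argument (the paper simply cites \cite[Tag~037X]{stacks-project} where you build \'etale neighbourhoods by hand). The genuine gap is in your handling of the special conditions, most seriously in (iii)$'\Rightarrow$(ii). Even granting the geometric statement you attribute to \cite{ACMVabtriv}, you then need to ``descend from the algebraic to the separable closure using the preservation of algebraic triviality under the relevant field extensions'' --- but preservation under field extension goes \emph{up}, not down. What you actually need is that a class defined over $\kappa(T)^{\sep}$ which becomes algebraically trivial over $\overline{\kappa(T)}$ is already algebraically trivial over $\kappa(T)^{\sep}$, i.e.\ descent along a purely inseparable extension. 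This is not formal: the push--pull argument along a finite purely inseparable extension only shows that $p^r Z_{\eta_T^{\sep}}$ is algebraically trivial for some $r$, and one cannot divide by $p^r$, since the quotient $\operatorname{CH}^i/\operatorname{A}^i$ may have $p$-torsion. Avoiding precisely this sort of inseparable descent is a design principle of the paper. Its own proof of (iii)$'\Rightarrow$(ii) never leaves the separable world and is one line: base-change the family to $\kappa(T)^{\sep}$; then $\eta_T^{\sep}$ and the base change of $t$ along $\kappa(T)^{\sep}/\kappa(\Lambda)^{\sep}$ are both $\kappa(T)^{\sep}$-points of the smooth scheme $T_{\kappa(T)^{\sep}}$, so $Z_{\eta_T^{\sep}}-(Z_t)_{\kappa(T)^{\sep}}$ is algebraically trivial by the very definition of algebraic equivalence, while $(Z_t)_{\kappa(T)^{\sep}}$ is algebraically trivial by preservation under extension applied in the legitimate (upward) direction.

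Your (ii)$\Rightarrow$(iii) via a trait is also problematic in positive characteristic, for two reasons the paper's argument is designed to dodge. First, although the witness data descends to a finite \emph{separable} extension $L/\kappa(T)$, the induced residue field extension of the corresponding extension of DVRs need not be separable, so your specialized conclusion lives over a field that need not embed in $\kappa(\Lambda)^{\sep}$ --- and you are back to inseparable descent. Second, spreading out over a DVR is not like spreading out over a variety: there is no dense open subset to shrink to, the witness parameter space is only smooth \emph{quasi-projective}, so its two marked points need not extend to sections over the trait (the valuative criterion requires properness), and smooth projective models of curves need not exist over imperfect fields without Frobenius twists (compare the proof of Lemma~\ref{L:MinVZK}). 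The paper instead spreads the witness out to an \'etale morphism $\widetilde T\to T$ carrying the smooth family, both sections, and the correspondence over all of $\widetilde T$, and evaluates at any $\kappa(\Lambda)^{\sep}$-point of $T$ in the open dense image of $\widetilde T$; \'etaleness keeps every residue extension separable, and no extension over a trait is ever needed. This yields (iii)$'$, and then (iii)$\Leftrightarrow$(iii)$'$ follows since any two $\kappa(\Lambda)^{\sep}$-points of the smooth scheme $T_{\kappa(\Lambda)^{\sep}}$ have algebraically equivalent fibers.
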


 \begin{proof}
  The separable algebraic closure $\eta_T^{\sep}$ of the generic point $\eta_T$
  can be obtained as an inverse limit of morphisms $(t_n:T_n \to T)_{n\in
   \mathbb
   N}$ in $\mathsf {Sm}_\Lambda/T$ (see \emph{e.g.}
  \cite[\href{https://stacks.math.columbia.edu/tag/037X}{Tag
   037X}]{stacks-project}), yielding the first assertion, as well as the
  implication $(i)\Rightarrow (ii)$.
  The implication $(iv) \Rightarrow (i)$ is obvious, while the
  equivalence $(ii)\Leftrightarrow (iv)$ follows from the fact that any point
  $t$
  as in $(iv)$ factors through $\eta_T^{\sep}$.
  Hence
  items $(i), (ii)$ and $(iv)$ are equivalent.
  Items $(iii)$ and $(iii)'$ are equivalent\,:  Since $T$ is smooth over
  $\Lambda$, its base-change
  $T_{\eta_\Lambda^{\sep}}$ is smooth over $\spec \kappa(\Lambda)^{\sep}$ and it
  follows by definition of
  algebraic equivalence that $Z_t$ and $Z_s$ are algebraically equivalent for
  any
  two points   $t : \spec \kappa(\Lambda)^{\sep} \to T$ and $s : \spec
  \kappa(\Lambda)^{\sep} \to T$.

  We now prove $(ii) \Rightarrow (iii)'$.
  By definition of algebraic equivalence, there exist a smooth variety $V$ of
  finite type over $\spec \kappa(T)^{\sep}$, points $v_0,v_1 \in  V(
  \kappa(T)^{\sep})$, and a correspondence $Y\in
  \operatorname{CH}^i(X_{\eta_T^{\sep}} \times_{ \kappa(T)^{\sep}} V)$ such that
  $Z_{\eta_T^{\sep}} = Y_{v_1} - Y_{v_0}$. The data consisting of $V,v_0,v_1$
  and
  $Y$ is defined over a finite (separable) extension $L$ of $\kappa(T)$.
  Spreading out, we find an \'etale morphism $\widetilde{T} \to T$, a smooth
  family $\widetilde{V} \to \widetilde{T}$ with two sections $\tilde{v}_0$ and
  $\tilde{v}_1$, and a cycle class $\widetilde{Y} \in
  \operatorname{CH}^i(X_{\widetilde{T}} \times_{\widetilde{T}} \widetilde{V})$,
  whose generic fibers are the models of $V,v_0,v_1$ and $Y$ over $L$.
  Consider now a point $t : \spec \kappa(\Lambda)^{\sep} \to T$ in the image of
  $\widetilde{T} \to T$, and consider any point $\tilde{t} :  \spec
  \kappa(\Lambda)^{\sep} \to \widetilde{T}$ in the pre-image.
  Then, under the natural identifications, $Z_t$ coincides with~$Z_{\tilde{t}}
  = (\widetilde{Y}_{\tilde{v}_1})_{\tilde{t}} -
  (\widetilde{Y}_{\tilde{v}_0})_{\tilde{t}} =
  (\widetilde{Y}_{\tilde{t}})_{(\tilde{v}_1)_{\tilde{t}}} -
  (\widetilde{Y}_{\tilde{t}})_{(\tilde{v}_0)_{\tilde{t}}} $, and so $Z_t$ is
  algebraically trivial.

  To conclude, we prove
  $(iii)' \Rightarrow (ii)$. This follows from the simple observation that
  $\eta_T^{\sep}$ provides a point $\spec \kappa(T)^{\sep} \to
  T_{\kappa(T)^{\sep}}$
  and that the base-change of a point  $t : \spec \kappa(\Lambda)^{\sep} \to T$
  along the field extension $\kappa(T)^{\sep}/\kappa(\Lambda)^{\sep}$ provides a
  point $\spec \kappa(T)^{\sep} \to T_{\eta_T^{\sep}}$\,; hence
  $Z_{\eta_T^{\sep}}$ is
  algebraically equivalent to  $(Z_{t_{\kappa(T)^{\sep}}}) =
  (Z_t)_{\kappa(T)^{\sep}}$, which is by assumption itself algebraically
  trivial.
 \end{proof}

 \begin{cor}
  Suppose that $\Lambda=\operatorname{Spec}K$ for a field $K$,
  that $T$ is an integral scheme in $\mathsf {Sm}_K/S$, and $Z\in \mathscr
  A^i_{X/S}(T)$.
  Then, for any morphism $t:\operatorname{Spec}\Omega\to T$ with $\Omega$
  separably closed, we have $Z_t\in \operatorname{CH}^i(X_t)$ is algebraically
  trivial.
 \end{cor}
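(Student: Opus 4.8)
The plan is to deduce the statement from Lemma~\ref{L:WhatPoints} by base-changing the ground field from $K$ to $\Omega$. The subtlety is that an arbitrary morphism $t:\spec\Omega\to T$ need not be dominant, so it is not directly covered by conditions $(iii)$ or $(iv)$ of the lemma; but after extending scalars from $K$ to $\Omega$ it becomes an $\Omega$-\emph{rational point} of the smooth $\Omega$-scheme $T_\Omega$, and a rational point over a separably closed base field is exactly the kind of point handled by condition $(iii)$ of the lemma applied over $\spec\Omega$ (note $\Omega^{\sep}=\Omega$, so $\kappa(\spec\Omega)^{\sep}=\Omega$). First I would record, from the hypothesis $Z\in\mathscr A^i_{X/S}(T)$ together with the equivalence $(i)\Leftrightarrow(ii)$ of Lemma~\ref{L:WhatPoints}, that $Z_{\eta_T^{\sep}}\in\chow^i(X_{\eta_T^{\sep}})$ is algebraically trivial.

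Next I base-change everything along $K\subseteq\Omega$, working over $\Lambda'=\spec\Omega$ with $S_\Omega$ and $X_\Omega$. Flat pullback along the projection $X_T\times_K\Omega\to X_T$ produces a class $Z_\Omega\in\chow^i(X_\Omega\times_{S_\Omega}T_\Omega)=\chow^i(X_T\times_K\Omega)$. By the universal property of base change, $t$ corresponds to an $\Omega$-rational point $\tilde P:\spec\Omega\to T_\Omega$ with $\mathrm{pr}_T\circ\tilde P=t$; it is a closed point lying on a single connected component $T_\Omega^{\circ}$, which is integral, smooth, separated and of finite type over $\Omega$ (a connected component of a scheme smooth over a field is integral). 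Since $\spec\Omega\to\spec K$ is flat, $T_\Omega\to T$ is flat, so every generic point of $T_\Omega$—in particular $\eta_{T_\Omega^\circ}$—maps to $\eta_T$. Hence the separably closed generic fibre of $Z_\Omega|_{T_\Omega^\circ}$ is the pullback of $Z_{\eta_T^{\sep}}$ along the field extension $\kappa(T)^{\sep}\subseteq\kappa(\eta_{T_\Omega^\circ})^{\sep}$, and is therefore algebraically trivial by the Remark on pullbacks of algebraically trivial cycles along field extensions. Thus $Z_\Omega|_{T_\Omega^\circ}$ satisfies condition $(ii)$ of Lemma~\ref{L:WhatPoints} over the base $\spec\Omega$. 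Invoking the implication $(ii)\Rightarrow(iii)$ of that lemma, every $\Omega$-point of $T_\Omega^\circ$ has algebraically trivial fibre, in particular $\tilde P$. Finally, the factorisation $t=\mathrm{pr}_T\circ\tilde P$ together with functoriality of the refined Gysin pullback gives $Z_t=t^!Z=\tilde P^{!}(\mathrm{pr}_T^{!}Z)=(Z_\Omega)_{\tilde P}$, which is therefore algebraically trivial.

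The main obstacle is a bookkeeping one: citing Lemma~\ref{L:WhatPoints} over $\spec\Omega$ for the component $T_\Omega^\circ$ would, strictly speaking, require $T_\Omega^\circ$ to be an object of $\mathsf{Sm}_\Omega/S_\Omega$, i.e.\ dominant over $S_\Omega$, and this can fail (extending scalars along a nontrivial extension may split $S_\Omega$ into several components, with $T_\Omega^\circ$ dominating only one of them). I would circumvent this by noting that the implication $(ii)\Rightarrow(iii)$ used above is intrinsic to the pair $\bigl(T_\Omega^\circ,\,Z_\Omega|_{T_\Omega^\circ}\bigr)$ with $T_\Omega^\circ$ integral and smooth over $\Omega$: its proof in Lemma~\ref{L:WhatPoints} (spreading out an algebraic-equivalence witness from the generic point over an \'etale neighbourhood and then specialising to an $\Omega$-point) never invokes the structure morphism to $S_\Omega$ nor its dominance, so it applies verbatim. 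The only other point requiring care is the identification $Z_t=(Z_\Omega)_{\tilde P}$, which is the compatibility of the refined Gysin pullback with composition and with flat base change; since both $Z_t$ and $(Z_\Omega)_{\tilde P}$ are the pullback of $Z$ along the one morphism $\spec\Omega\to T$, merely factored differently, and live in the same group $\chow^i(X_t)$, they coincide.
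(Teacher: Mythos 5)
Your proof is correct and takes essentially the same route as the paper, whose entire proof is that the corollary is ``immediate from'' Lemma~\ref{L:WhatPoints}: what you have written is a careful implementation of exactly that deduction --- base-changing to $\Omega$, restricting to the connected component through the induced rational point $\tilde P$, checking via flatness that its generic point dominates $\eta_T$, and invoking $(ii)\Rightarrow(iii)$ of the lemma --- together with the correct observation that those implications never use the structure morphism to $S$ or its dominance. One simplification: the dominance bookkeeping you circumvent by re-inspecting the lemma's proof can be avoided entirely by applying the lemma verbatim to the data $\Lambda'=\operatorname{Spec}\Omega$, $S':=T_\Omega^\circ$, $X':=X_T\times_T T_\Omega^\circ$, and $T':=T_\Omega^\circ$ equipped with the \emph{identity} structure map (which is trivially dominant); your step establishing algebraic triviality of the fibre over $\eta_{T_\Omega^\circ}^{\sep}$ is precisely condition $(ii)$ for this setup, and condition $(iii)$ then covers $\tilde P$.
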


 \begin{proof}
  This is immediate from the lemma.
 \end{proof}

 \subsection{Regular homomorphisms and algebraic representatives}
 We keep the notation and conventions of the previous subsection.

 \begin{dfn}[Regular homomorphism]\label{D:reghom}
  Let $A/S$ be an abelian scheme, viewed also as
  the contravariant functor $\operatorname{Hom}_S(-,A):\mathsf {Sm}_\Lambda/S\to
  \mathsf
  {AbGp}$.
  A \emph{regular homomorphism from $\mathscr A^i_{X/S}$ to $A/S$} is a natural
  transformation of functors $\Phi:\mathscr A^i_{X/S}\to A$.
 \end{dfn}

 Let us parse the definition.  Given $T$ in $\mathsf {Sm}_\Lambda/S$, we
 obtain
 $\Phi(T):\mathscr A^i_{X/S}(T) \to A(T)$\,; in other words, given a cycle class
 $Z\in \mathscr A^i_{X/S}(T)$, \emph{i.e.}, a family of algebraically trivial
 cycle
 classes on $X$ parameterized by $T$, we obtain a $S$-morphism $\Phi(T)(Z):T\to
 A$.

 \begin{dfn}[Algebraic representative]\label{D:fun}
  An \emph{algebraic representative in codimension-$i$}  consists of an abelian
  $S$-scheme $\operatorname{Ab}^i_{X/S}$ together with   a natural
  transformation
  of  functors
  $$\Phi^i_{X/S}:\mathscr {A}^i_{X/S} \to \operatorname{Ab}^i_{X/S}$$
  over $\mathsf {Sm}_\Lambda/S$
  that is initial among all natural transformations of  functors $ \Phi:\mathscr
  {A}^i_{X/S} \to A$ to abelian schemes $A/S$\,:
  \begin{equation}\label{E:AlgRepDefE}
  \begin{multlined}
  \xymatrix{
   \mathscr {A}^i_{X/S} \ar[r]^{\Phi^i_{X/S}} \ar[rd]_{\Phi}
   &\operatorname{Ab}^i_{X/S}\ar@{-->}[d]_{\exists !}  \\
   & A\\
  }
  \end{multlined}
  \end{equation}
  In particular, if an algebraic representative exists, it is unique up to
  unique isomorphism.
 \end{dfn}

 \begin{rem} Since abelian $S$-schemes, and morphisms of abelian $S$-schemes as $S$-schemes,
  form a
  full
  subcategory of  $\mathsf {Sm}_\Lambda/S$, 
 by Yoneda a natural transformation of
  functors
  $\operatorname{Ab}^i_{X/S}\to A$ is equivalent to a morphism of 
  $S$-schemes; from the commutativity of \eqref{E:AlgRepDefE}, this must send the zero section to the zero section.  In particular, the morphism $\operatorname{Ab}^i_{X/S}\to A$ in \eqref{E:AlgRepDefE} is induced by a homomorphism of abelian $S$-schemes.
 \end{rem}

 \subsection{Remarks on the definition of regular
  homomorphisms}\label{SS:rem}

 We feel it would be useful here to provide some discussion of how we arrived at
 Definition~\ref{D:reghom}.  In fact, there are many possible variations on the
 definition, which lead to slight variations on the theory, and we find it to be
 useful to mention some pathologies that led to our choices here.  An underlying
 theme is our desire to use Fulton's foundations for cycles and intersection
 theory \cite{fulton}, and to ensure that the Abel--Jacobi map defines a regular
 homomorphism in the relative setting.

 \subsubsection{Defining the category $\mathsf {Sm}_\Lambda/S$}\label{SSS:def}
 In order to be able to manipulate fibers of cycles, we wanted to work in a setting where we
 could use the refined Gysin fibers \cite[\S 20.1]{fulton}.  For this we need points to
 be
 regularly embedded in the ambient space, and so insisting on parameter spaces
 $T$ that are smooth separated and of finite type over $\Lambda$ is expedient.
 This leads to considering the category of $S$-schemes that are smooth separated and of finite type over $\Lambda$ (i.e., without the assumption that the structure map to $S$ be dominant).
  For the abstract theory, the functor $\mathscr{A}^i_{X/S/\Lambda}$ extends naturally to
 this larger category and in fact most of the structural results
 (precisely all results up to Theorem~\ref{T:basechangechar0} excluded, except for Corollary~\ref{C:surjdef})
 in this paper go through
 unchanged in that setting.  However, without the requirement that the structure morphisms to $S$ be
 \emph{dominant}, showing the existence of interesting (\emph{i.e.},
 nonzero) regular homomorphisms, which is a central step in showing the theory
 is
 nontrivial, becomes much harder. For instance,  one can see that any time one
 has $f:X\to S$ a smooth projective morphism of complex algebraic manifolds,
 then
 if the geometric coniveau in a fiber jumps,  the Abel--Jacobi map  (see \S
 \ref{S:jac}) would fail to
 be a regular homomorphism\,; see
 \cite[Ex.~6.5]{ACMVdcg} and compare with Theorem~\ref{T2:NF}.
 This is the essential reason we require
 that
 objects in $\mathsf {Sm}_\Lambda/S$ have dominant structure morphism to $S$.

 \subsubsection{Sheafification of $\mathscr A^i_{X/S}$}\label{S:sheafify}
 Even in the setting of codimension-$1$ cycle classes, our functor $\mathscr
 A^1_{X/S}$ differs from the Picard functor.  Namely, assuming $f:X \to S$ is flat, it could be natural to
 define a functor $$\mathscr P^i_{X/S/\Lambda}:=\mathscr
 A^i_{X/S/\Lambda}/f^*\mathscr A^i_{S/S/\Lambda},$$ where we use flat pull back
 of cycle classes.  For instance, $\mathscr P^1_{X/S/\Lambda}$ agrees with the
 relative  Picard functor $\mathscr Pic^0_{X/S}$ of algebraically trivial line
 bundles.
 Even in the case $i=1$, this need not be a sheaf, and so one might prefer
 further to sheafify this functor, in the hope that its sheafification
 $(\mathscr P^i_{X/S/\Lambda})^\dagger$ might be representable by
 an abelian  scheme in $\mathsf {Sm}_\Lambda/S$.  Unfortunately,
 for $i>1$, there are examples where this sheaf is far from being representable
 by an abelian scheme (\emph{e.g.}, Mumford's theorem that for any complex
 projective surface $S$ with $h^{2,0}(S)\ne 0$, the group $\operatorname{A}^2(S)$
 is infinite-dimensional\,; see also Remark \ref{R:mumford}).
 Since any regular homomorphism $\Phi:\mathscr A^i_{X/S}\to A$ factors uniquely
 through a natural transformation
 $$
 \xymatrix@R=1em{
  \mathscr A^i_{X/S}  \ar[r] \ar[d]_\Phi& \mathscr P^i_{X/S} \ar[r]& (\mathscr
  P^i_{X/S})^\dagger \ar@{-->}[lld]\\
  A
 }
 $$
 we prefer to work with the functor $\mathscr
 A^i_{X/S}$, which although (again) far from being representable in general, is
 easier to work with.
 In light of this discussion, we view the algebraic representative as being
 something of a categorical
 moduli space for $\mathscr A^i_{X/S}$ (and indeed also for $\mathscr P^i_{X/S}$
 and its sheafification).

 \begin{rem}
  While preparing this manuscript, we became aware of the recent work of
  Benoist--Wittenberg~\cite{BW}.  
 In the case where $S=\Lambda=\operatorname{Spec}K$ and where $X$ is a \emph{geometrically rational smooth projective threefold}, they define in \cite[\S 2.3.2]{BW}  a functor for codimension-$2$ cycle classes and show \cite[Thm.~3.1]{BW} that  this functor is a sheaf representable by a group scheme $\mathbf{CH}^2_{X/K}$ over $K$ whose connected component of the identity is 
an abelian variety.
In this setting,
  it would be
  interesting to compare  their sheaf with the sheaf we work with here, namely
  $(\mathscr P^2_{X/K})^\dagger$, and to compare their abelian variety $(\mathbf{CH}^2_{X/K})^\circ$
  with the algebraic representative we construct in Theorem~\ref{T2:mainAb2} for codimension-2 cycles classes.  For instance, if $K$ is perfect, they have
  shown  \cite[Thm.~3.1$(vi)$]{BW}  that the two algebraic representatives agree.
  On the other hand, as is mentioned in \cite[Rmk.~3.2(i)]{BW}, 
  defining an algebraic representative in the sense of Benoist--Wittenberg for codimension-2 cycles 
under weaker hypotheses on the variety $X$
   could prove useful in extending Theorem~\ref{T2:mainalgrep} in the case of codimension-2 cycles to non-separable field extensions.
 \end{rem}

 \subsection{The classical definition of regular homomorphisms and algebraic
  representatives} \label{SS:reg}

 We now show that over algebraically closed fields our notion of regular
 homomorphisms and algebraic representative agrees   with the usual one
 (\emph{i.e.},
 \cite[Def.~1.6.1]{murre83} or \cite[2.5]{samuelequivalence}).

 Let $k$ be an algebraically closed field, and take
 $\Lambda=S=\operatorname{Spec}k$. In that case, Lemma~\ref{L:WhatPoints} takes
 the simple form\,:

 \begin{lem}
  Let $T$ be a smooth scheme of finite type over $k$ and let $Z\in
  \operatorname{CH}^i(T\times_k X)$. Then $Z\in  \mathscr
  A^i_{X/k}(T)$ if and only if 
  $Z_t \in \operatorname{A}^i(X)$ for all $t\in T(k)$.\qed
 \end{lem}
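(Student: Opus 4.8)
The plan is to obtain this as the special case $\Lambda = S = \spec k$ of Lemma~\ref{L:WhatPoints}, specialized to $k$ algebraically closed. Since $k$ is algebraically closed we have $\kappa(\Lambda) = k$ and $\kappa(\Lambda)^{\sep} = k$, so the points appearing in conditions (iii) and (iii)$'$ of that lemma are precisely the morphisms $t : \spec k \to T$, i.e.\ the $k$-points $t \in T(k)$. Moreover, because $S = \spec k$, the fiber is $X_t = X_T \times_T \spec k = X$, so $Z_t \in \operatorname{CH}^i(X)$ and the assertion ``$Z_t$ is algebraically trivial'' is verbatim the statement $Z_t \in \operatorname{A}^i(X)$. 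Thus condition (iii) of Lemma~\ref{L:WhatPoints} is exactly the right-hand side of the equivalence to be proved, while condition (i) is $Z \in \mathscr A^i_{X/k}(T)$, and the equivalence (i) $\Leftrightarrow$ (iii) closes both implications simultaneously.

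The only gap to bridge is that Lemma~\ref{L:WhatPoints} is phrased for $T$ integral, whereas here $T$ is merely smooth of finite type over $k$. I would dispose of this by a component-wise reduction. As $T$ is smooth over the field $k$ it is regular, hence each of its finitely many connected components $T_j$ is integral; writing $T = \bigsqcup_j T_j$ gives $X_T = \bigsqcup_j X_{T_j}$ and a direct-sum decomposition $\operatorname{CH}^i(X_T) = \bigoplus_j \operatorname{CH}^i(X_{T_j})$. Any $t \in T(k)$ lies on a single $T_j$, and any $\spec \Omega \to T$ factors through a single component since $\spec \Omega$ is irreducible; consequently both the membership $Z \in \mathscr A^i_{X/k}(T)$ and the fiberwise triviality condition may be tested one component at a time. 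This reduces the claim to the integral case already handled.

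I do not expect any genuine obstacle: all the substance has been absorbed into Lemma~\ref{L:WhatPoints}. For definiteness, the forward implication can also be read off the Corollary immediately following that lemma, which gives $Z_t \in \operatorname{A}^i(X)$ for every $t : \spec k \to T$ once $Z \in \mathscr A^i_{X/k}(T)$, using that $k$ is separably closed. The single point warranting care is therefore the bookkeeping of the component-wise reduction, ensuring that the integrality hypothesis of Lemma~\ref{L:WhatPoints} is legitimately removed.
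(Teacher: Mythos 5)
Your proposal is correct and is essentially the paper's own (implicit) proof: the lemma is stated with no argument precisely because it is the specialization of Lemma~\ref{L:WhatPoints} to $\Lambda=S=\operatorname{Spec}k$ with $k$ algebraically closed, under which condition $(i)$ becomes $Z\in\mathscr{A}^i_{X/k}(T)$ and condition $(iii)$ becomes ``$Z_t\in\operatorname{A}^i(X)$ for all $t\in T(k)$'', exactly as you say. Your component-wise reduction to integral $T$ is the same bookkeeping the paper performs implicitly elsewhere (\emph{e.g.}, in the trace construction of \S\ref{S:algclosed}, where a smooth scheme over a separably closed field is replaced by its disjoint integral components), so nothing further is needed.
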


 For a smooth integral separated scheme $T$ of finite type over $k$ and a cycle
 class
 $Z\in \mathscr {A}^i_{X/k}(T)$, we therefore have a map  $$w_{Z}:T(k)\to
 \operatorname{A}^i(X)$$
 $$
 w_Z(t)=Z_t.
 $$
 On the other hand,
 for a smooth integral separated $k$-pointed scheme $(T,t_0)$ of finite type
 over $k$ and
 a cycle class $Z'\in \operatorname{CH}^i(T\times_k X)$, we have a map
 $$w_{Z',t_0}:T(k)\to \operatorname{A}^i(X)$$
 $$w_{Z',t_0}(t)=Z'_t-Z'_{t_0}.$$
 Note that if  $Z\in \mathscr {A}^i_{X/k}(T)$, then viewing $Z\in
 \operatorname{CH}^i(T\times_k X)$, we have $w_{Z,t_0}(t)=w_Z(t)-Z_{t_0}$.
 Conversely\,:

 \begin{lem}[Cycle maps with and without base points]
  Let $A/k$ be an abelian variety, and let $\phi:\operatorname{A}^i(X)\to A(k)$
  be a homomorphism of groups.
  The following are equivalent\,:
  \begin{enumerate}[label=(\roman*)]
   \item For every smooth integral separated scheme  $T$ of finite type over $k$
   and every
   cycle class $Z\in \mathscr {A}^i_{X/k}(T)$, the induced  map of sets
   $T(k)\stackrel{w_Z}{\to} \operatorname{A}^i(k)\stackrel{\phi}{\to} A(k)$ is
   induced by a morphism of $k$-schemes $\psi_Z:T\to A$.

   \item For every smooth integral separated $k$-pointed scheme $(T,t_0)$ over
   $k$ and
   every
   cycle class $Z\in \operatorname{CH}^i(X\times T)$, the induced  map of sets
   $T(k)\stackrel{w_{Z,t_0}}{\to} \operatorname{A}^i(k)\stackrel{\phi}{\to}
   A(k)$
   is induced by a morphism of $k$-schemes $\psi_{Z,t_0}:T\to A$.
  \end{enumerate}
 \end{lem}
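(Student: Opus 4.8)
The plan is to reduce everything to the single algebraic identity recorded just above the statement, namely that for $Z\in \mathscr A^i_{X/k}(T)$ one has $w_{Z,t_0}(t)=w_Z(t)-Z_{t_0}$, combined with the elementary fact that translation by a fixed point $a\in A(k)$ is a morphism of $k$-schemes $\tau_a:A\to A$. Each implication then amounts to transporting a given morphism $T\to A$ by a constant element of $A(k)$, or to passing between a cycle and its ``base-pointed normalization'' $Z-(Z_{t_0}\times T)$.

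First I would prove $(ii)\Rightarrow (i)$. Given a smooth integral separated scheme $T$ of finite type over $k$ and $Z\in \mathscr A^i_{X/k}(T)$, choose a point $t_0\in T(k)$; this is possible since $T$ is a nonempty variety over the algebraically closed field $k$. Viewing $Z$ as an element of $\operatorname{CH}^i(X\times T)$ and applying $(ii)$ produces $\psi_{Z,t_0}:T\to A$ inducing $\phi\circ w_{Z,t_0}$. Because $Z\in \mathscr A^i_{X/k}(T)$, the fiber $Z_{t_0}$ lies in $\operatorname{A}^i(X)$ by the preceding lemma, so $\phi(Z_{t_0})\in A(k)$ is defined, and the identity $w_Z(t)=w_{Z,t_0}(t)+Z_{t_0}$ gives $\phi(w_Z(t))=\phi(w_{Z,t_0}(t))+\phi(Z_{t_0})$. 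Hence $\psi_Z:=\tau_{\phi(Z_{t_0})}\circ \psi_{Z,t_0}$ is a $k$-morphism inducing $\phi\circ w_Z$, which is $(i)$.

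For $(i)\Rightarrow (ii)$, given a $k$-pointed $(T,t_0)$ and an arbitrary $Z\in \operatorname{CH}^i(X\times T)$, I would set $Z':=Z-(Z_{t_0}\times T)$. By linearity of the Gysin pullback and the fact that the fibers of $Z_{t_0}\times T$ are constantly $Z_{t_0}$, one computes $Z'_t=Z_t-Z_{t_0}$ for every $t\in T(k)$. In particular each $Z'_t$ is a difference of fibers of a cycle over the smooth integral base $T$, hence algebraically trivial, so $Z'\in \mathscr A^i_{X/k}(T)$ by the lemma characterizing membership via fibers. Applying $(i)$ to $Z'$ yields $\psi_{Z'}:T\to A$ inducing $\phi\circ w_{Z'}$, and since $w_{Z'}(t)=Z'_t=Z_t-Z_{t_0}=w_{Z,t_0}(t)$ we may simply take $\psi_{Z,t_0}:=\psi_{Z'}$.

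The argument is essentially formal, so I do not anticipate a serious obstacle; the only points requiring care are the two definedness/membership checks that make the bookkeeping legitimate. In the first implication one must know $Z_{t_0}\in \operatorname{A}^i(X)$ so that the translating constant $\phi(Z_{t_0})$ exists, and in the second one must verify that $Z'=Z-(Z_{t_0}\times T)$ genuinely lands in $\mathscr A^i_{X/k}(T)$, which rests on differences of fibers over a smooth integral base being algebraically trivial. Both are immediate from the definitions and the preceding lemma, so the main ``content'' is recognizing that the base-point translation and the normalization $Z\mapsto Z-(Z_{t_0}\times T)$ are inverse bookkeeping devices.
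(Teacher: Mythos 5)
Your proof is correct and follows essentially the same route as the paper's: both directions reduce to the normalization $Z'=Z-(Z_{t_0}\times_k T)$ together with translation by the constant $\phi(Z_{t_0})\in A(k)$, exactly as in the paper's argument. The only difference is that you spell out the two membership checks ($Z_{t_0}\in\operatorname{A}^i(X)$ and $Z'\in\mathscr A^i_{X/k}(T)$ via the fiberwise criterion) which the paper leaves implicit by citing Lemma~\ref{L:WhatPoints}.
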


 \begin{proof}
  Assume $(i)$.  If $Z\in \operatorname{CH}^i(T\times_k X)$, set $Z':=Z-
  (Z_{t_0}\times_k T)$.  The cycle $Z'$ belongs to $\mathscr {A}^i_{X/k}(T)$ by
  Lemma~\ref{L:WhatPoints} and on sets we have  $\psi_{Z,t_0}=\phi \circ w_{Z,t_0}
  =
  \phi\circ w_{Z'}= \psi_{Z'}$. Thus since $\psi_{Z'}$
  is induced by a morphism of schemes, so is $\psi_{Z,t_0}$.  Conversely, given
  $Z\in \mathscr A^i_{X/k}(T)$, we have in particular  $Z\in
  \operatorname{CH}^i(T\times_k X)$.
  From the definitions, we have  $\psi_Z =
  \psi_{Z,t_0} + \phi(Z_{t_0})$, and we are done.
 \end{proof}

 The classical definition of a regular homomorphism is a homomorphism $\phi$ as
 in $(ii)$ of the lemma above.
 The following lemma shows that our definition of regular homomorphisms, and
 hence of  algebraic representatives,  agrees with the usual one when working
 over an algebraically closed field.

 \begin{lem}[Regular homomorphisms and cycle maps]
  Let $A/k$ be an abelian variety.

  \begin{enumerate}[label=(\roman*)]
   \item Given a natural transformation of contravariant functors
   $\Phi:\mathscr
   A^i_{X/k}\to A$, the group  homomorphism
   $\phi_{\Phi}:=\Phi(\operatorname{Spec}k):\mathscr
   A^i_{X/k}(\operatorname{Spec}k)=\operatorname{A}^i(X) \to A(k)$
   has the property that
   for every smooth integral separated  scheme $T$ over $k$ and every cycle
   class $Z\in
   \mathscr A^i_X(T)$, the induced  map of sets  $T(k)\stackrel{w_{Z}}{\to}
   \operatorname{A}^i(k)\stackrel{\phi_\Phi}{\to} A(k)$ is induced by a morphism
   of
   $k$- schemes $\psi_{Z}:T\to A$.

   \item Conversely,  given a  group  homomorphism  $\phi: \operatorname{A}^i(X)
   \to A(k)$ with
   the property that
   for every smooth integral   variety $T$ over $k$ and every cycle class $Z\in
   \mathscr A^i_X(T)$, the induced  map of sets  $T(k)\stackrel{w_{Z}}{\to}
   \operatorname{A}^i(k)\stackrel{\phi}{\to} A(k)$ is induced by a morphism of
   $k$-schemes $\psi_{Z}:T\to A$,
   there is a natural transformation of functors $\Phi_{\phi}:\mathscr
   A^i_{X/k}\to A$ defined by $\Phi_\phi(T)(Z):= \psi_Z$.  The functor on
   morphisms
   is defined \emph{via} the refined Gysin pull-back.
  \end{enumerate}
  Moreover, given $\Phi$ as in $(i)$, we have $\Phi_{\phi_\Phi}=\Phi$, and given
  $\phi$ as in $(ii)$, we have $\phi_{\Phi_\phi}=\phi$.
 \end{lem}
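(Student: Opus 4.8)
The plan is to reduce both directions to two ingredients: the naturality of $\Phi$ (equivalently, the functoriality of the refined Gysin pullback), applied to the $k$-points $t\colon \spec k\to T$ viewed as morphisms in $\mathsf{Sm}_k/k$; and the rigidity principle that, over the algebraically closed field $k$, a morphism from a smooth $k$-variety $T$ to the separated scheme $A$ is determined by its values on the dense set $T(k)$. I would use rigidity both to get uniqueness of the morphisms $\psi_Z$ and to promote equalities of set maps on $k$-points to equalities of morphisms of schemes. For a possibly disconnected object $T$ of $\mathsf{Sm}_k/k$ I would argue on its connected components, each a smooth integral $k$-variety; this is the ``work component-wise'' reduction and is routine.

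For part $(i)$, I would take $\psi_Z:=\Phi(T)(Z)\in A(T)=\operatorname{Hom}_k(T,A)$ and check it induces $\phi_\Phi\circ w_Z$ on $k$-points. Fixing $t\in T(k)$ and regarding $t\colon \spec k\to T$ as a morphism in $\mathsf{Sm}_k/k$, naturality of $\Phi$ for $t$ yields the commuting square relating $\Phi(T)$ and $\Phi(\spec k)=\phi_\Phi$ through $t^!$ on the source and precomposition $t^*$ on the target. Evaluating at $Z$ gives $\psi_Z(t)=\psi_Z\circ t=t^*\big(\Phi(T)(Z)\big)=\phi_\Phi(t^!Z)=\phi_\Phi(Z_t)=\phi_\Phi\big(w_Z(t)\big)$, which is exactly the assertion, with $\psi_Z=\Phi(T)(Z)$ the required $k$-morphism.

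For part $(ii)$, I would define $\Phi_\phi(T)(Z):=\psi_Z$, the morphism supplied by the hypothesis on $\phi$; rigidity makes $\psi_Z$ unique, so this is well defined. That $\Phi_\phi(T)$ is a group homomorphism follows since $w_{Z+Z'}=w_Z+w_{Z'}$ and $\phi$ is a homomorphism, so $\psi_{Z+Z'}$ and $\psi_Z+\psi_{Z'}$ agree on $T(k)$ and hence coincide by rigidity, using that $A\times A\to A$ is a morphism. Naturality of $\Phi_\phi$ along $g\colon T'\to T$ amounts to $\psi_{g^!Z}=\psi_Z\circ g$; here functoriality of the refined Gysin pullback gives, for $t'\in T'(k)$, the identity $(g^!Z)_{t'}=(t')^!g^!Z=(g\circ t')^!Z=Z_{g(t')}$, so the two morphisms again agree on $k$-points and are therefore equal.

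Finally, the two ``moreover'' identities follow formally. Evaluating $\Phi_\phi$ at $T=\spec k$ gives $\phi_{\Phi_\phi}=\phi$ at once, since $w_Z$ is the identity there. For $\Phi_{\phi_\Phi}=\Phi$, both $\Phi(T)(Z)$ (by part $(i)$) and $\Phi_{\phi_\Phi}(T)(Z)=\psi_Z$ (by definition) induce $\phi_\Phi\circ w_Z$ on $T(k)$, so they coincide by rigidity. The only genuine inputs are this rigidity statement and the functoriality of the refined Gysin pullback recalled above; I expect the sole place requiring real care to be the passage from integral $T$ to arbitrary objects of $\mathsf{Sm}_k/k$, which I would dispatch component-wise.
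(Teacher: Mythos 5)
Your proof is correct, and it is the same argument the paper intends: the paper's proof is literally ``this follows directly from the definitions,'' and your write-up just makes explicit the two ingredients that justify that claim (naturality of $\Phi$ evaluated at $k$-points $t\colon\spec k\to T$ via $t^{!}$, and rigidity from density of $T(k)$ plus separatedness of $A$ to pass from equalities on $k$-points to equalities of morphisms). The component-wise reduction and the verification of naturality via $(g\circ t')^{!}=(t')^{!}g^{!}$ are exactly the routine checks the paper suppresses.
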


 \begin{proof}
  This follows directly from the definitions.
 \end{proof}

 \begin{exa}[Abel--Jacobi maps are regular homomorphisms]
  \label{Ex:AJ}
  In the classical case where $X$ is smooth projective over $S=\Lambda=\spec \cx$, the image of Griffiths'
  Abel--Jacobi map $AJ : \operatorname{A}^i(X) \to \mathrm{J}^{2i-1}(X)$
  restricted to
  algebraically trivial cycles defines a subtorus $\mathrm{J}^{2i-1}_a(X)$
  naturally
  endowed with a polarization and hence defines a complex abelian variety. It is
  a
  classical result of Griffiths that the induced Abel--Jacobi map $AJ :
  \operatorname{A}^i(X) \to \mathrm{J}^{2i-1}_a(X)$ defines a regular
  homomorphism. Additionally, in the
  case where $i=1,2,$ or $\dim X$, then $AJ$ is in fact the algebraic
  representative\,; see \cite{murre83} and in particular \cite[Thm.~C]{murre83},
  but also Theorem~\ref{T:MK} below.
  However, for $2<i<\dim X$, the Abel--Jacobi map is not in general an algebraic
  representative\,; \emph{cf.}~\cite[Cor.~4.2]{OttemSuzuki}. We refer to \S
  \ref{SS:AJequi}
  below for more details on intermediate Jacobians.
 \end{exa}

 \section{Regular homomorphisms and base change}

 \subsection{Base change in $\mathsf {Sm}_\Lambda/S$}\label{S:SmSbasechange}

 In the notation above, let  $
 \Phi:\mathscr A^i_{X/S}\to A
 $  be a regular homomorphism
 and let $S'\to S$ be a morphism in $\mathsf {Sm}_\Lambda/S$.
   Then  the
 regular
 homomorphism $\Phi:\mathscr A^i_{X/S}\to A$  induces naturally a regular
 homomorphism
 \begin{equation}\label{E:BC-1}
 \Phi_{S'}:\mathscr A^i_{X_{S'}/S'} \to A_{S'}.
 \end{equation}
 Indeed,
 the forgetful functor
 $
 (\mathsf {Sm}_\Lambda/S')\to (\mathsf {Sm}_\Lambda/S)
 $
 induces, \emph{via} fibered products of functors, a functor
 $$
 \xymatrix@R=1.5em{
  \left(\mathscr A^i_{X/S}\right)_{S'} \ar[r] \ar[d] & \mathscr A^i_{X/S}
  \ar[d]\\
  (\mathsf {Sm}_\Lambda/S')\ar[r]&  (\mathsf {Sm}_\Lambda/S).
 }
 $$
 The first claim is that
 \begin{equation}\label{E:BC-2}
 \left(\mathscr A^i_{X/S}\right)_{S'}=\mathscr A^i_{X_{S'}/S'}
 \end{equation}
 as functors over $\mathsf {Sm}_\Lambda/S'$.
 To begin, let us show that  for every $T'\to S'$ in $\mathsf {Sm}_\Lambda/S'$
 we
 have
 $$
 \left(\mathscr A^i_{X/S}\right)_{S'}(T')= \mathscr A^i_{X/S}(T')=\mathscr
 A^i_{X_{S'}/S'}(T').
 $$
 The first equality follows immediately from the definition of the fibered
 product of functors. For the second equality, 
 by definition, if $Z\in \mathscr
 {A}^i_{X/S}(T')$, then  $Z \in
 \operatorname{CH}^i(X_{T'})$ has the property that
 for  every  separably closed field $\Omega$ and every
 $\Omega$-point $t':
 \operatorname{Spec}(\Omega) \to  T'$ obtained as an inverse limit of morphisms
 $(t_n:T_n' \to T')_{n\in \mathbb N}$ in $\mathsf {Sm}_\Lambda/T'$, the cycle class $Z_{t'}\in \operatorname{CH}^i(X_{t'})$ is algebraically trivial.
 Now since
 $X_{T'}=(X_{S'})_{T'}$, unraveling the definition, we see that the previous sentence is also the condition that $Z\in \mathscr A^i_{X_{S'}/S'}(T')$.  The
 functors also agree on morphisms since they are both defined \emph{via} the refined
 Gysin pullback.

 Now for $T'\to
 S'$, the diagram
 $$
 \xymatrix@R=1em{
  T'\ar[rrd]^{\Phi(T')(Z)} \ar[ddr] \ar@{-->}[rd]&&&\\
  &A_{S'}\ar[r] \ar[d]& A\ar[d]\\
  &S'\ar[r] &S
 }
 $$
 provides a natural transformation
 \begin{equation}\label{E:BC-3}
 \Phi_{S'}:\left(\mathscr A^i_{X/S}\right)_{S'}\to A_{S'}.
 \end{equation}
 Combining \eqref{E:BC-2} with \eqref{E:BC-3} defines the natural transformation
 \eqref{E:BC-1}.

 \begin{lem} \label{L:basechange}
  Let $\Phi:\mathscr A^i_{X/S}\to A$ be a regular homomorphism and let $S''\to
  S'\to S$  be morphisms in $\mathsf{Sm}_\Lambda/S$. Then $(\Phi_{S'})_{S''} =
  \Phi_{S''}$.
 \end{lem}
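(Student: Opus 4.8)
The plan is to unwind the base-change construction \eqref{E:BC-3} and to verify the asserted identity objectwise. First I would observe that, by transitivity of base change, $(X_{S'})_{S''}=X_{S''}$ and $(A_{S'})_{S''}=A_{S''}$, and that the identification \eqref{E:BC-2} is compatible with composition of base changes; consequently both $(\Phi_{S'})_{S''}$ and $\Phi_{S''}$ are natural transformations $\mathscr A^i_{X_{S''}/S''}\to A_{S''}$ with the same source and target. Since a natural transformation between functors valued in $\mathsf {AbGp}$ is determined by its values on objects, it suffices to check that $(\Phi_{S'})_{S''}(T'')(Z)=\Phi_{S''}(T'')(Z)$ as $S''$-morphisms $T''\to A_{S''}$ for every $T''\to S''$ in $\mathsf {Sm}_\Lambda/S''$ and every $Z\in\mathscr A^i_{X_{S''}/S''}(T'')$.

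Next I would trace the recipe \eqref{E:BC-3} through on both sides. Under \eqref{E:BC-2} the group $\mathscr A^i_{X_{S''}/S''}(T'')$ is identified with $\mathscr A^i_{X/S}(T'')$ (and likewise after base changing first to $S'$), so in each case the underlying $S$-morphism to $A$ is the same, namely $\Phi(T'')(Z):T''\to A$. By definition, $\Phi_{S''}(T'')(Z)$ is the morphism $T''\to A_{S''}=A\times_S S''$ determined by the universal property of the fiber product from $\Phi(T'')(Z)$ together with the structure morphism $T''\to S''$. On the other side, $\Phi_{S'}(T'')(Z)$ is the morphism $T''\to A_{S'}=A\times_S S'$ determined from the same $\Phi(T'')(Z)$ and the structure morphism $T''\to S'$; base changing once more along $S''\to S'$ produces the morphism $T''\to (A_{S'})_{S''}=A\times_S S''$ determined from $\Phi(T'')(Z)$ and $T''\to S''$. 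By the uniqueness clause in the universal property of $A\times_S S''$, these two morphisms coincide.

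The verification is therefore a purely formal bookkeeping exercise, and I expect no genuine obstacle. The only point requiring a moment of care is to check that the two identifications of $\mathscr A^i_{X_{S''}/S''}(T'')$ with $\mathscr A^i_{X/S}(T'')$ --- the one obtained by base changing directly and the one obtained by base changing first to $S'$ and then to $S''$ --- carry the cycle class $Z$ to the same element. This holds because the functor $\mathscr A^i$ is defined throughout \emph{via} the same refined Gysin pullback and base change of cycle classes is transitive, so that the identification \eqref{E:BC-2} is functorial in the base and compatible with the composition $S''\to S'\to S$.
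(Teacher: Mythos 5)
Your proposal is correct and follows essentially the same route as the paper, whose proof is simply the one-line observation that the claim ``follows directly from the definitions and from the base-change construction''; your write-up just makes that bookkeeping explicit (objectwise check, uniqueness in the universal property of $A\times_S S''$, and compatibility of the identification \eqref{E:BC-2} with composition of base changes). No gap: the care you take with the two identifications of $\mathscr A^i_{X_{S''}/S''}(T'')$ is exactly the point the paper leaves implicit, and it holds since both identify the group with the same subgroup of $\operatorname{CH}^i(X_{T''})$.
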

 \begin{proof}
  This follows directly from the definitions and from the base-change
  construction.
 \end{proof}

 \subsection{Inverse limits and base change}\label{S:FunLim-1}

 Both of the functors $\mathscr A_{X/S}^i$ and $A$  send inverse limits of
 varieties to direct limits of
 abelian groups.
 Therefore, a  natural transformation of functors $$\Phi:\mathscr A_{X/S}^i \to
 A$$  extends in a
 canonical way to the category of schemes over $S$ that can be obtained as
 inverse limits  of schemes in $\mathsf {Sm}_\Lambda/S$.
 For example, if
 $\Lambda=S=\operatorname{Spec}K$
 for some field $K$, and
 if   $L/K$ is
 a separable extension of fields, using
 \cite[\href{https://stacks.math.columbia.edu/tag/037X}{Tag
  037X}]{stacks-project},
 we obtain a map $\Phi(L):\mathscr A_{X/K}^i(L) \to A(L)$.

 In fact, since in this case any scheme in $\mathsf {Sm}_L/L$ is an inverse
 limit
 of schemes  in $\mathsf {Sm}_K/K$, we obtain an induced regular homomorphism
 \begin{equation}\label{E:PhiL}
 \Phi_L:\mathscr A_{X_L/L}^i \to A_L
 \end{equation}
 over $\mathsf {Sm}_L/L$.  This is a special case of the following lemma\,:

 \begin{lem}\label{L:limBC}
  Let $\Phi:\mathscr A^i_{X/S/\Lambda}\to A$ be a regular homomorphism, and
  suppose we have a cartesian diagram
  $$
  \xymatrix@R=1em{
   S'\ar[r] \ar[d]& S \ar[d]\\
   \Lambda'\ar[r]& \Lambda
  }
  $$
  where $\Lambda'$ is an integral  regular Noetherian  scheme,
  and $S'$ is an integral scheme that is smooth separated and of finite type
  over
  $\Lambda'$, which is obtained as the inverse limit of cartesian diagrams
  $$
  \xymatrix@R=1em{
   S'_n\ar[r] \ar[d]& S \ar[d]\\
   \Lambda'_n\ar[r]& \Lambda
  }
  $$
  where $\Lambda'_n$ is smooth separated of finite type over $\Lambda$,
  and $S_{n}'\to \Lambda_n'$ is smooth separated of finite type, so that
  $S'_n\to
  S$ is smooth separated of finite type, and so in particular in $\mathsf {Sm}_\Lambda/S$.
  Every  scheme in $\mathsf {Sm}_{\Lambda'}/S'$ is an inverse limit of schemes
  in
  $\mathsf {Sm}_\Lambda/S$, and therefore $\Phi$ defines a regular homomorphism
  \begin{equation*}
  \Phi_{S'}:\mathscr A_{X_{S'}/S'/\Lambda'}^i \to A_{S'}
  \end{equation*}
  over $\mathsf {Sm}_{\Lambda'}/S'$.
 \end{lem}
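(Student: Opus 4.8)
The plan is to reduce everything to the two structural facts recalled just above the statement: first, that every object (and every morphism) of $\mathsf{Sm}_{\Lambda'}/S'$ is a cofiltered inverse limit of objects (and morphisms) of $\mathsf{Sm}_\Lambda/S$; and second, that both $\mathscr A^i_{X/S/\Lambda}$ and $A$ carry such cofiltered limits of schemes to filtered colimits of abelian groups, so that $\Phi$ already extends canonically along these limits. Granting these, $\Phi_{S'}$ is assembled from the finite-level maps $\Phi(T'_n)$ by passing to the colimit, and what remains is to check well-definedness and naturality.

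First I would establish the inverse-limit claim. Fix $T'$ in $\mathsf{Sm}_{\Lambda'}/S'$. Since $\Lambda'=\varprojlim_n\Lambda'_n$ with each $\Lambda'_n$ smooth separated of finite type over $\Lambda$ and each $S'_n=S\times_\Lambda\Lambda'_n$ lying in $\mathsf{Sm}_\Lambda/S$ by hypothesis, the standard limit theorems of Grothendieck (EGA IV, \S 8; see also \cite{stacks-project}) apply: as $T'\to\Lambda'$ is of finite type (equivalently of finite presentation, everything being Noetherian), it descends to a finite-type scheme $T'_{n_0}\to\Lambda'_{n_0}$ with $T'=T'_{n_0}\times_{\Lambda'_{n_0}}\Lambda'$, and setting $T'_n=T'_{n_0}\times_{\Lambda'_{n_0}}\Lambda'_n$ gives $T'=\varprojlim_{n\ge n_0}T'_n$. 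For all $n\gg 0$ the properties ``smooth'', ``separated'' and ``dominant over $S$'' spread out from $T'$ to $T'_n$; since $T'_n\to\Lambda'_n\to\Lambda$ is then a composite of smooth separated finite-type morphisms and $T'_n\to S$ is dominant of finite type, we get $T'_n\in\mathsf{Sm}_\Lambda/S$. The same spreading-out, applied to a morphism $u:T''\to T'$ in $\mathsf{Sm}_{\Lambda'}/S'$ after choosing compatible limit presentations, produces $u_n:T''_n\to T'_n$ in $\mathsf{Sm}_\Lambda/S$ with $u=\varprojlim_n u_n$.

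Next I would define $\Phi_{S'}$ and identify source and target. For $T'=\varprojlim_n T'_n$ as above, the equality $(X_{S'})_{T'}=X_{T'}=\varprojlim_n X_{T'_n}$ gives $\operatorname{CH}^i((X_{S'})_{T'})=\varinjlim_n\operatorname{CH}^i(X_{T'_n})$, and a point-by-point comparison of the fiberwise-triviality condition over $\Lambda'$ and over $\Lambda$ (each separably closed point of $T'$ arising as a limit of dominant morphisms in $\mathsf{Sm}_{\Lambda'}/T'$ also arises as such over $\Lambda$) yields, exactly as in the identification \eqref{E:BC-2}, that $\mathscr A^i_{X_{S'}/S'/\Lambda'}(T')=\varinjlim_n\mathscr A^i_{X/S/\Lambda}(T'_n)$. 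On the target, $A_{S'}(T')=\operatorname{Hom}_{S'}(T',A_{S'})=\operatorname{Hom}_S(T',A)=\varinjlim_n\operatorname{Hom}_S(T'_n,A)=\varinjlim_n A(T'_n)$, since $A$ is of finite presentation over $S$. I then set $\Phi_{S'}(T')=\varinjlim_n\Phi(T'_n)$.

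The remaining point, and the one requiring the most care, is to check that $\Phi_{S'}$ is well defined and natural rather than merely defined on objects. Independence of the chosen presentation $T'=\varprojlim_n T'_n$ follows by comparing any two presentations through a common refinement, using the cofinality built into the limit theorems; naturality with respect to a morphism $u:T''\to T'$ follows by spreading $u$ out to $u_n$ as above and invoking naturality of $\Phi$ at finite level together with compatibility of the refined Gysin pullback $u^!$ with the colimit $\varinjlim_n u_n^!$. Passing to the colimit then gives the commuting naturality square, so $\Phi_{S'}:\mathscr A^i_{X_{S'}/S'/\Lambda'}\to A_{S'}$ is a regular homomorphism over $\mathsf{Sm}_{\Lambda'}/S'$. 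The main obstacle is thus not any single deep input but the bookkeeping in this last step: arranging compatible limit presentations for objects and morphisms simultaneously (a cofinality argument), so that the colimit construction is genuinely functorial.
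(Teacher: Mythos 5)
Your proposal is correct and follows essentially the same route as the paper: the entire content is the claim that every object of $\mathsf{Sm}_{\Lambda'}/S'$ is an inverse limit of objects of $\mathsf{Sm}_\Lambda/S$, after which $\Phi$ extends canonically because both $\mathscr A^i_{X/S/\Lambda}$ and $A$ take such inverse limits of schemes to filtered colimits of abelian groups. The only difference is one of presentation: the paper proves the limit claim by a bare-hands affine argument (a smooth finite-type $R'$-algebra $R'[x_1,\dots,x_r]/I$ is the filtered colimit of the $R_n[x_1,\dots,x_r]/I$), whereas you invoke the general spreading-out theorems of EGA IV, \S 8, and you make explicit the well-definedness and naturality bookkeeping that the paper leaves implicit.
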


 \begin{proof}
  We only need to show that every  scheme in $\mathsf {Sm}_{\Lambda'}/S'$ is an
  inverse limit of schemes in $\mathsf {Sm}_\Lambda/S$.  We may as well work
  with
  affine schemes.  So, let $\Lambda = \operatorname{Spec}Q$, $\Lambda_n'=\operatorname{Spec}Q'$, $\Lambda ' = \operatorname{Spec}Q$, $S = \spec R$, $S_n'=\operatorname{Spec}R_n'$,  and $S' = \spec R'$.  We have that $R'$ is 
  obtained as the direct limit of the rings $R'_n$, which are  smooth and finite type over the Noetherian ring $Q_n$.  Then any  ring of finite type over $R'$,
  say of the form $R'[x_1,\dots,x_r]/I$ for some ideal $I$, which is smooth over $Q'$, is the direct limit
  of
  the $R$-algebras $R_n'[x_1,\dots,x_r]/I_n$, $I_n= I\cap R_n'[x_1,\dots,x_n]$, which are smooth over $Q_n'$ for sufficiently large $n$, say using the Jacobian criterion.  Indeed, we may apply the direct limit to the short exact sequence $0\to I_n\to R_n'[x_1,\dots,x_r]\to R_n'[x_1,\dots,x_r]/I_n\to 0$, and use exactness of the direct limit.
 \end{proof}

 We will often use this in the following form\,:

 \begin{cor}
  Let $\Phi:\mathscr A^i_{X/S}\to A$ be a regular homomorphism, let $\Omega$ be
  a
  separably closed field, and let
  $s:
  \operatorname{Spec}(\Omega) \to  S$ be obtained as an inverse limit of
  morphisms $(s_n:S_n \to S)_{n\in \mathbb N}$ in $\mathsf {Sm}_\Lambda/S$.
  Every  scheme in $\mathsf {Sm}_{\Omega}/\Omega$ is an inverse limit of schemes
  in $\mathsf {Sm}_\Lambda/S$, and therefore $\Phi$ defines a regular
  homomorphism
  \begin{equation*}
  \Phi_\Omega:\mathscr A_{X_\Omega/\Omega/\Omega}^i \to A_\Omega
  \end{equation*}
  over $\mathsf {Sm}_{\Omega}/\Omega$.
 \end{cor}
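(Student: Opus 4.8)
The plan is to obtain this corollary as the special case $\Lambda'=S'=\spec\Omega$ of Lemma~\ref{L:limBC}. Here $\spec\Omega$ is regarded as a $\Lambda$-scheme via the composite $\spec\Omega\xrightarrow{s}S\to\Lambda$, and the hypothesis $\spec\Omega=\varprojlim_n S_n$ with $s_n:S_n\to S$ in $\mathsf{Sm}_\Lambda/S$ supplies the required inverse system. As in Lemma~\ref{L:limBC}, the substance of the statement is the geometric claim that every object of $\mathsf{Sm}_\Omega/\Omega$ is an inverse limit of objects of $\mathsf{Sm}_\Lambda/S$; granting this, the homomorphism $\Phi_\Omega$ is manufactured purely formally from the limit-preservation property recorded at the beginning of \S\ref{S:FunLim-1}.

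First I would reduce to affine schemes and replay the ring-theoretic argument in the proof of Lemma~\ref{L:limBC}. Setting $R_n=\Gamma(S_n,\calo_{S_n})$, each $R_n$ is smooth of finite type over $\Gamma(\Lambda,\calo_\Lambda)$ and is an $R=\Gamma(S,\calo_S)$-algebra through the dominant map $s_n$, with $\Omega=\varinjlim_n R_n$. An object $W\in\mathsf{Sm}_\Omega/\Omega$ is, locally, of the form $\spec(\Omega[x_1,\dots,x_r]/I)$, and since smoothness forces finite presentation the ideal $I$ is generated by finitely many polynomials whose coefficients already lie in some $R_{n_0}$. Hence for $n\ge n_0$ the algebra descends to $B_n=R_n[x_1,\dots,x_r]/I_n$, and $W=\varprojlim_n\spec B_n$.

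Next I would verify that, after possibly increasing $n_0$, each $\spec B_n$ actually lies in $\mathsf{Sm}_\Lambda/S$. Smoothness, separatedness and finite type are finitely presented properties, so by standard spreading-out (see \cite{stacks-project}) they descend from the limit to finite level, whence $\spec B_n$ is smooth over $R_n$ and therefore smooth separated of finite type over $\Lambda$. The one point going beyond Lemma~\ref{L:limBC} is the dominance of the structure morphism $\spec B_n\to S$: after restricting $S_n$ to the integral component through which the limit point factors, the smooth morphism $\spec B_n\to S_n$ is open with image containing the image of $\spec\Omega$, hence dominant, and composing with the dominant $s_n$ gives dominance over $S$. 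This dominance bookkeeping is precisely the subtlety flagged in \S\ref{SSS:def}, and I expect it to be the main obstacle; everything else is the verbatim limit argument of Lemma~\ref{L:limBC}.

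Finally, with the geometric claim in hand, $\Phi_\Omega$ is built formally. For $W=\varprojlim_n\spec B_n$ one has $\mathscr A^i_{X/S}(W)=\varinjlim_n\mathscr A^i_{X/S}(\spec B_n)$ and $A(W)=\varinjlim_n A(\spec B_n)$, so $\Phi$ passes to the colimit to define $\Phi_\Omega(W)$, natural in $W$. Invoking the limiting form of the identification $(\mathscr A^i_{X/S})_\Omega=\mathscr A^i_{X_\Omega/\Omega}$ from \eqref{E:BC-2} then exhibits $\Phi_\Omega:\mathscr A^i_{X_\Omega/\Omega/\Omega}\to A_\Omega$ as a regular homomorphism over $\mathsf{Sm}_\Omega/\Omega$, as required.
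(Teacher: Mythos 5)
Your proposal is correct and follows the paper's own route: the paper disposes of this corollary by invoking Lemma~\ref{L:limBC}, whose proof is precisely the affine limit argument you replay (writing a smooth finite-type $\Omega$-algebra as $\varinjlim_n R_n[x_1,\dots,x_r]/I_n$ and using that both $\mathscr A^i_{X/S}$ and $A$ turn inverse limits of schemes into direct limits of abelian groups). The only difference is that you make explicit the spreading-out of smoothness and the dominance of the structure maps $\operatorname{Spec}B_n\to S$ (which follows most directly from the fact that the image of $\operatorname{Spec}\Omega$, hence of $\operatorname{Spec}B_n$, contains the generic point of $S$), details the paper leaves implicit; this is filling in bookkeeping, not a different argument.
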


 \begin{proof}
  This follows immediately from the lemma.
 \end{proof}

 \begin{cor}
  Given  two cartesian diagrams
  $$
  \xymatrix@R=1em{
   S'' \ar[r] \ar[d]& S'\ar[r] \ar[d]& S \ar[d]\\
   \Lambda '' \ar[r]&\Lambda'\ar[r]& \Lambda
  }
  $$
  as in Lemma \ref{L:limBC}, we have $(\Phi_{S'})_{S''}=\Phi_{S''}$.
 \end{cor}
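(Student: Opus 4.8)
The plan is to verify the identity of the two natural transformations by evaluating them on an arbitrary object and cycle class, and then to recognize both values as a single colimit of evaluations of $\Phi$ over objects of $\mathsf{Sm}_\Lambda/S$. Since $(\Phi_{S'})_{S''}$ and $\Phi_{S''}$ are both natural transformations $\mathscr A^i_{X_{S''}/S''/\Lambda''}\to A_{S''}$ over $\mathsf{Sm}_{\Lambda''}/S''$, it suffices to fix an object $T''\to S''$ in $\mathsf{Sm}_{\Lambda''}/S''$ and a class $Z\in \mathscr A^i_{X_{S''}/S''}(T'')$, and to check that the two resulting $S''$-morphisms $T''\to A_{S''}$ coincide.

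First I would present $T''$ as an inverse limit of objects of $\mathsf{Sm}_\Lambda/S$ in the two ways supplied by the hypotheses. Applying Lemma~\ref{L:limBC} to the second cartesian square writes $T''=\varprojlim_m T'_m$ as a limit of schemes $T'_m$ in $\mathsf{Sm}_{\Lambda'}/S'$, and applying it to the first square writes each $T'_m=\varprojlim_n T_{m,n}$ as a limit of schemes $T_{m,n}$ in $\mathsf{Sm}_\Lambda/S$. The transitivity of filtered colimits of smooth finite-type algebras -- exactly the fact used in the proof of Lemma~\ref{L:limBC}, that a smooth finite-type algebra over a filtered colimit of smooth algebras is itself such a colimit -- shows that the doubly indexed system $(T_{m,n})$ is cofiltered with $T''=\varprojlim_{(m,n)}T_{m,n}$, which is precisely the presentation that the composite cartesian square (again ``as in the lemma'') attaches to $T''$.

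Because $\mathscr A^i_{X/S}$ and $A$ send these inverse limits to direct limits of abelian groups, the class $Z$ and the morphism it induces are detected at finite level: there are classes $Z_{m,n}\in \mathscr A^i_{X/S}(T_{m,n})$ with colimit $Z$, and the canonical extension of \S\ref{S:FunLim-1} gives the underlying morphism $T''\to A$ as $\varinjlim_{(m,n)}\Phi(T_{m,n})(Z_{m,n})$. Unwinding \S\ref{S:SmSbasechange}, $\Phi_{S''}(T'')(Z)$ is this morphism regarded as a morphism into $A_{S''}=A\times_S S''$ via $T''\to S''$, while $(\Phi_{S'})_{S''}(T'')(Z)$ is obtained by first forming $\Phi_{S'}(T'_m)(Z_m)=\varinjlim_n\Phi(T_{m,n})(Z_{m,n})$ into $A_{S'}=A\times_S S'$ and then base-changing along $S''\to S'$; in the colimit over $m$ this is the same underlying morphism, now regarded into $(A_{S'})_{S''}$. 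Since $(A_{S'})_{S''}=(A\times_S S')\times_{S'}S''=A\times_S S''=A_{S''}$ by transitivity of fiber products, and since the finite-level compatibility of the two base-change operations within $\mathsf{Sm}_\Lambda/S$ is exactly Lemma~\ref{L:basechange}, the two $S''$-morphisms $T''\to A_{S''}$ agree.

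The only genuine point requiring care, and the step I would be most wary of, is the cofinality bookkeeping: that the direct presentation of $T''$ as a limit over $\mathsf{Sm}_\Lambda/S$ attached to the composite square and the iterated presentation passing through $\mathsf{Sm}_{\Lambda'}/S'$ compute the same limit, so that the iterated colimit $\varinjlim_m\varinjlim_n$ coincides with the single colimit $\varinjlim_{(m,n)}$. This is a formal property of cofiltered limits; granting it, the remainder is the routine compatibility of the limit extension of \S\ref{S:FunLim-1} with the base-change construction of \S\ref{S:SmSbasechange}.
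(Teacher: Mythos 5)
Your proof is correct and follows essentially the same route as the paper, which disposes of this corollary with the single line ``This follows directly from the construction'': what you have done is spell out exactly why the construction is transitive, namely that the iterated limit presentation of an object of $\mathsf{Sm}_{\Lambda''}/S''$ through $\mathsf{Sm}_{\Lambda'}/S'$ and the direct presentation over $\mathsf{Sm}_\Lambda/S$ attached to the composite square compute the same cofiltered limit, so the two extensions of $\Phi$ agree at finite level (Lemma~\ref{L:basechange}) and hence in the colimit. The cofinality bookkeeping you flag is indeed formal and is handled by the same ring-level argument as in the proof of Lemma~\ref{L:limBC}, so there is no gap.
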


 \begin{proof}
  This follows immediately from the construction.
 \end{proof}

 \subsection{Base-change and equivariant morphisms}
 Let $X\to S$ and $S'\to S$ be  morphisms of schemes.  Given the action of a
 group $\Gamma'$ on $S'$ over $S$,  this can be extended to an action of
 $\Gamma$ on $ X_{S'}$
 as follows.  For each $\sigma'\in \Gamma'$, the commutativity of the diagram
 $\xymatrix{S'\ar[r]_{\sigma'}\ar @/^.7pc/[rr]& S'\ar[r]& S}$ gives an
 isomorphism
 between $X_{S'}$ and the pull-back $(X_{S'})_{\sigma'}$ of $X_{S'}$ along
 $\sigma'$.
 In other words, for each $\sigma'\in \Gamma'$, we obtain a cartesian diagram
 \begin{equation}\label{E:BLR-GD}
 \xymatrix@R=1em{
  X_{S'} \ar[r]^{\sigma'} \ar[d]& X_{S'} \ar[d]\\
  S' \ar[r]^{\sigma'}& S'
 }
 \end{equation}
 giving the action of $\Gamma'$ on $X_{S'}$.

 In what follows, we will want to consider a filtered system of $S$-schemes
 $S_n/S$ (indexed by an arbitrary filtered set), together with an $S$-scheme $S'$
 which is the cofiltered limit (\emph{i.e.}, inverse
 limit) of the $S_n$.  We will then assume that a group $\Gamma'$ acts on the
 cofiltered system $S_n/S$, inducing an action on  $S'$.  Note that we do not
 assume that $\Gamma'$ acts on each $S_n$ individually.   This induces an action
 of $\Gamma'$ on the cofiltered system $X_{S_n}/S_n$, \emph{via} the commutativity of
 the diagrams of the form
 $\xymatrix{S_n\ar[r]_{\sigma'}\ar @/^.7pc/[rr]& S_m\ar[r]& S}$, and the same
 construction as given above for the action of $\Gamma'$ on $S'$.

 \begin{exa}\label{E:R-Sm/K-filt}
  The main example we have in mind is where $S=\operatorname{Spec}K$ for some
  field $K$, $S'=\operatorname{Spec}L$ for some separable field extension $L/K$,
  $S_n=\operatorname{Spec}R_n$ is the filtered system of smooth $K$-algebras $R_n$
  of finite type contained in $L$, and $\Gamma'=\operatorname{Aut}(L/K)$.  To see
  that the $R_n$ form a filtered system, we can argue as follows.
  Suppose we have $R$ and $R'$ smooth $K$-algebras of finite type contained in
  $L$.
  Consider the fraction fields $K(R),K(R')\subseteq L$.  The field
  $K(R)K(R')\subseteq L$ is  separable over $K$ \cite[\href{https://stacks.math.columbia.edu/tag/030P}{Lem.~030P}]{stacks-project}.
  Thus there is some smooth $K$-algebra of finite type $R''\subseteq K(R)K(R')$
  with $K(R'')=K(R)K(R')$
  \cite[\href{https://stacks.math.columbia.edu/tag/037X}{Lem.~037X}]{stacks-project}.
  The inclusion $K(R)\subseteq K(R'')$ induces a map of rings $R\to R''_{f}$ for
  some localization at $f\in R''$, and similarly for $R'$.  Thus replacing $R''$
  with the localization $R_{ff'}$, we see that the system $R_n$ is filtered.  Now
  since $L$ is the localization at $(0)$ of a smooth $K$-algebra of finite type
  over $K$
  \cite[\href{https://stacks.math.columbia.edu/tag/07ND}{Lem.~07ND,~037X}]{stacks-project},
  it is clear that $L=\varinjlim R_n$.   Next we explain how
  $\Gamma'=\operatorname{Aut}(L/K)$ acts on the filtered system.  For this, we
  simply observe that for $\sigma'\in \operatorname{Aut}(L/K)$, then for any
  smooth $K$-algebra $R$ of finite type contained in $L$, we have that
  $\sigma'(R)$ is again a smooth $K$-algebra of finite type contained in $L$.
 \end{exa}

 \begin{dfn}[Equivariant regular  homomorphisms]\label{D:equivariant}
  Suppose we are given a cartesian diagram
  $$
  \xymatrix@R=1em{
   S'\ar[r] \ar[d]& S \ar[d]\\
   \Lambda'\ar[r]& \Lambda
  }
  $$
  as in Lemma~\ref{L:limBC}, such that    the filtered systems
  $S_n \to S$ and $\Lambda_n\to \Lambda$ from Lemma~\ref{L:limBC} admit
  compatible
  (in the obvious way) actions of a group $\Gamma'$.
  For each $\sigma'\in \Gamma'$, the action of the group on  the filtered system
  $X_{S_n }$
  induces in the limit natural
  transformations
  $\sigma'^!:\mathscr A^i_{X_{S'}/S'}\to \mathscr A^i_{X_{S'}/S'}$ and $\sigma'
  :A_{S'}\to
  A_{S'}$, defined in the obvious way.   We say \emph{a regular homomorphism
   $$\Phi':\mathscr A^i_{X_{S'}/S'/\Lambda'}\to A_{S'}$$ is
   $\Gamma'$-equivariant}
  if for all $\sigma' \in \Gamma'$ the following diagram is
  commutative\,:
  $$
  \xymatrix@R=1em{
   \mathscr A^i_{X_{S'}/S'}  \ar[r]^{\Phi'} \ar[d]^{\sigma'^!}& A_{S'}
   \ar@{<-}[d]^{\sigma'}\\
   \mathscr A^i_{X_{S'}/S'} \ar[r]^{\Phi'}& A_{S'}
  }
  $$
 \end{dfn}

 \begin{rem}
  Note the vertical arrow $\sigma'$ in the diagram above.  One could replace this
  isomorphism with the isomorphism  $\sigma'^*=\sigma'^{-1}$ pointing down in the
  diagram, but we prefer to leave the diagram as it is above.
 \end{rem}

 \begin{lem} \label{L:GalBC}
  In the notation of Definition~\ref{D:equivariant} and Lemma~\ref{L:limBC}, if
  $\Phi:\mathscr A^i_{X/S/\Lambda}\to A$ is a regular homomorphism, then
  the regular homomorphism
  $\Phi_{S'}:\mathscr A^i_{X_{S'}/S'/\Lambda'}\to A_{S'}$ is
  $\Gamma'$-equivariant.
 \end{lem}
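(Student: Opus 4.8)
The plan is to deduce the $\Gamma'$-equivariance of $\Phi_{S'}$ formally from the single fact that $\Phi$ is a natural transformation of functors over $\mathsf{Sm}_\Lambda/S$. Fixing $\sigma'\in\Gamma'$, the assertion to be checked is that the square of Definition~\ref{D:equivariant} commutes, i.e.\ that
\[
\Phi_{S'} = \sigma' \circ \Phi_{S'} \circ \sigma'^!
\]
as natural transformations. The whole point is that each of the three arrows in this square is assembled, in the inverse limit, from data living in the category $\mathsf{Sm}_\Lambda/S$, and over that category the corresponding squares commute on the nose.

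First I would unwind the two vertical arrows. By the construction preceding Example~\ref{E:R-Sm/K-filt}, the action of $\sigma'$ on the cofiltered system $X_{S_n}/S_n$ is induced, through cartesian squares of the shape \eqref{E:BLR-GD}, by honest morphisms $\sigma':S_n\to S_m$ of the filtered system; each of these is a morphism of separated $S$-schemes that are smooth of finite type over $\Lambda$, hence a morphism in $\mathsf{Sm}_\Lambda/S$. Applying the contravariant functors $\mathscr A^i_{X/S}$ and $A=\operatorname{Hom}_S(-,A)$ to these morphisms produces refined Gysin pullbacks $\sigma'^!\colon \mathscr A^i_{X/S}(S_m)\to\mathscr A^i_{X/S}(S_n)$ and pullbacks $\sigma'^*\colon A(S_m)\to A(S_n)$, whose colimits are, by Definition~\ref{D:equivariant}, precisely the natural transformations $\sigma'^!$ on $\mathscr A^i_{X_{S'}/S'}$ and (after the single inversion recorded in the Remark following Definition~\ref{D:equivariant}) $\sigma'$ on $A_{S'}$. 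Meanwhile $\Phi_{S'}$ is, by Lemma~\ref{L:limBC}, the colimit of the components $\Phi(T_n)$ indexed by the finite-level objects $T_n$ of $\mathsf{Sm}_\Lambda/S$.

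With this in hand the verification becomes a limit of naturality squares. For each morphism $\sigma'\colon T_n\to T_m$ coming from the action on the (base and test) systems, naturality of $\Phi$ yields a commuting square
\[
\xymatrix@R=1em{
 \mathscr A^i_{X/S}(T_m) \ar[r]^{\Phi(T_m)} \ar[d]_{\sigma'^!} & A(T_m) \ar[d]^{\sigma'^*} \\
 \mathscr A^i_{X/S}(T_n) \ar[r]_{\Phi(T_n)} & A(T_n),
}
\]
whose horizontal arrows are the components of $\Phi$, whose left vertical arrow is the Gysin pullback and whose right vertical arrow is precomposition with $\sigma'$. Passing to the colimit over the system---using, as in \S\ref{S:FunLim-1}, that both $\mathscr A^i_{X/S}$ and $A$ turn the relevant inverse limits of schemes into direct limits of groups---turns this compatible family of commuting squares into exactly the equivariance square of Definition~\ref{D:equivariant}, where I would rewrite the colimit of the $\sigma'^*$ as the upward arrow $\sigma'$ by means of the relation $\sigma'^*=(\sigma')^{-1}$ from that Remark. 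This gives $\Phi_{S'}=\sigma'\circ\Phi_{S'}\circ\sigma'^!$.

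The only step that is more than bookkeeping---and hence the main obstacle---is matching the two vertical colimits with the natural transformations $\sigma'^!$ and $\sigma'$ of Definition~\ref{D:equivariant}: one must check that the Gysin pullbacks and base-change isomorphisms assembled from the maps $\sigma'\colon S_n\to S_m$ really are those natural transformations, and in particular that the directions---together with the single inversion on the $A$-side---are consistent across the cofiltered system, so that the colimit is well defined and the actions on source and target are induced by the \emph{same} underlying isomorphisms. Granting this compatibility, which is immediate from the way the $\Gamma'$-action is set up in the limit, the lemma follows.
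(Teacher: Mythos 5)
Your proposal is correct and takes essentially the same route as the paper: the paper's proof consists precisely of the observation that the lemma ``follows directly from the definitions and from the base-change construction'' of \S\ref{S:SmSbasechange}, and your argument is that observation spelled out --- the limit-level actions $\sigma'^!$ and $\sigma'$ are assembled from morphisms $\sigma'\colon S_n\to S_m$ in $\mathsf{Sm}_\Lambda/S$, so the naturality squares of $\Phi$ at finite level pass to the colimit and yield the equivariance square of Definition~\ref{D:equivariant}. No gap; nothing further is needed.
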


 \begin{proof}
  This follows directly from the definitions and from the base-change
  construction
  of \S \ref{S:SmSbasechange}.
 \end{proof}

 \begin{rem}[Galois descent data] Later we will discuss descent of regular
  homomorphisms along Galois field extensions.  To clarify how this interacts
  with
  this discussion here, recall that a finite faithfully flat morphism of schemes
  $p:S'\to S$ is called
  a Galois covering if there is a finite group $\Gamma$ of $S$-automorphisms of
  $S'$ such that the morphism
  $$
  \Gamma \times S' \longrightarrow S'\times_S S'
  $$
  $$
  (\sigma,x)\mapsto (\sigma x,x)
  $$
  is an isomorphism, where $\Gamma\times S'$ is the disjoint union of copies of
  $S'$ parameterized by $\Gamma$.  The standard example is where $K'/K$ is a
  finite Galois extension of fields,  $p:\operatorname{Spec}K'\to
  \operatorname{Spec}K$ is the associated map of schemes, and
  $\Gamma=\operatorname{Gal}(K'/K)$.
  Given an $S'$-scheme $X'$, a descent datum for $X'$ over $S$ is equivalent to
  an
  action $\Gamma \times X'\to X'$ compatible with the action of $\Gamma$ on
  $S$\,;
  \emph{i.e.}, for each $\sigma\in \Gamma$, we have a cartesian diagram
  \eqref{E:BLR-GD} \cite[Exa.~B, p.139]{BLR}.
  Recall that if $X$ is a scheme over~$S$, then $X_{S'}$ is equipped with a
  descent datum over $S$, as described above.
 \end{rem}

 \section{Base change and descent along field extensions} Here we assume that
 $S=\Lambda
 =
 \spec (K)$ for some field~$K$.
 Recall from \S\ref{S:FunLim-1} that  if   $L/K$ is
 a separable extension of fields,
 then
 a  regular homomorphism  $\Phi:\mathscr A_{X/K/K}^i
 \to A$  induces in a
 canonical way a regular homomorphism
 $\Phi_L:\mathscr A_{X_L/L/L}^i \to A_L$.
 Combining Lemmas~\ref{L:limBC} and~\ref{L:GalBC}, we have in this setting\,:

 \begin{lem} \label{L:equivariant}
  Let $\Phi:\mathscr A^i_{X/K/K}\to A$ be a regular homomorphism. Let $L/K$ be a
  separable extension of fields.  Then
  \begin{enumerate}[label=(\roman*)]
   \item $(\Phi_L)_{L'} = \Phi_{L'}$ for any tower of separable field extensions
   $L'/L/K$.
   \item
   $\Phi_L$ is $\mathrm{Aut}(L/K)$-equivariant.
  \end{enumerate}
 \end{lem}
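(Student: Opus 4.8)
The plan is to specialize the base-change formalism of~\S\ref{S:SmSbasechange} and~\S\ref{S:FunLim-1} to the affine case $S = \Lambda = \spec K$, in which every field extension appearing is separable. In this regime the two hypotheses needed to run Lemmas~\ref{L:limBC} and~\ref{L:GalBC}—namely that the relevant limit schemes be inverse limits of objects of $\mathsf{Sm}_\Lambda/S$, and that they carry compatible group actions—are supplied directly by the characterization of separability in \cite[\href{https://stacks.math.columbia.edu/tag/037X}{Tag 037X}]{stacks-project} together with Example~\ref{E:R-Sm/K-filt}. The proof is therefore formal: one checks that the inputs are present and reads off the conclusions.

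For item~$(i)$, I would note that, since $L/K$ and $L'/L$ are separable, $\spec L$ is an inverse limit of schemes smooth and of finite type over $K$ and $\spec L'$ is an inverse limit of schemes smooth and of finite type over $L$. Taking $\Lambda = S = \spec K$, $\Lambda' = S' = \spec L$, and $\Lambda'' = S'' = \spec L'$, the tower $\spec L' \to \spec L \to \spec K$ then satisfies the hypotheses of Lemma~\ref{L:limBC} at each stage, so the corollary to that lemma asserting $(\Phi_{S'})_{S''} = \Phi_{S''}$ unwinds to exactly $(\Phi_L)_{L'} = \Phi_{L'}$.

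For item~$(ii)$, I would set $\Gamma' = \mathrm{Aut}(L/K)$ and invoke Example~\ref{E:R-Sm/K-filt}, which realizes $\spec L$ as the cofiltered limit of the filtered system $S_n = \spec R_n$ of smooth finite-type $K$-subalgebras $R_n \subseteq L$ and records the action of $\mathrm{Aut}(L/K)$ on this system \emph{via} $R \mapsto \sigma'(R)$. This is precisely the data demanded by Definition~\ref{D:equivariant}, so Lemma~\ref{L:GalBC} applies with this $\Gamma'$ and yields directly that $\Phi_L$ is $\mathrm{Aut}(L/K)$-equivariant.

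I do not anticipate a genuine obstacle. The only point requiring care is verifying that $\spec L$ (and $\spec L'$) really do occur as inverse limits of the requisite smooth finite-type schemes carrying the compatible $\Gamma'$-action, and this is exactly what Example~\ref{E:R-Sm/K-filt} and \cite[\href{https://stacks.math.columbia.edu/tag/037X}{Tag 037X}]{stacks-project} establish. The lemma then follows by combining Lemmas~\ref{L:limBC} and~\ref{L:GalBC}, with no additional computation.
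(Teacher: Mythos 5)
Your proposal is correct and takes essentially the same route as the paper: for item~$(ii)$ you invoke exactly what the paper does (Lemma~\ref{L:GalBC} together with Example~\ref{E:R-Sm/K-filt}), and for item~$(i)$ your appeal to the corollary following Lemma~\ref{L:limBC} is just a packaged form of the paper's citation of Lemma~\ref{L:basechange} plus the limit construction in the proof of Lemma~\ref{L:limBC}. The only point left implicit (in both your write-up and the paper's) is that $L'/K$ is itself separable, so that $\Phi_{L'}$ is defined; this follows from transitivity of separability and is built into the phrase ``tower of separable field extensions.''
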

 \begin{proof}
  $(i)$ is a consequence of Lemma \ref{L:basechange}, and an elementary
  modification of the proof of Lemma~\ref{L:limBC}.
  $(ii)$ is an immediate consequence of Lemma~\ref{L:GalBC} and
  Example \ref{E:R-Sm/K-filt}.
 \end{proof}

 \subsection{Separable field extensions of separably closed fields}
 \label{S:algclosed}

 We assume here that $\Omega/k$ is a regular (\emph{i.e.}, primary and
 separable) field
 extension, and that
 $k$ is separably closed. Equivalently, $\Omega/k$ is a separable extension of a
 separably closed field $k$.
 For instance, if $k$ is algebraically
 closed, $\Omega$ is any field extension of $k$. We are
 interested in understanding the relationship between regular homomorphisms over
 $k$ and over $\Omega$.

 If $L/K$ is any field extension, recall that an $L/K$-\emph{trace} of an
 abelian variety $B$ over $L$ is a final object $(\LKtrace{B},\tau)$ (sometimes
 also denoted $(\mathrm{tr}_{L/K}(B),\tau)$) in the category of pairs $(A,f)$
 where $A$ is an abelian variety over $K$ and $f: A_L \to B$ is homomorphism of
 abelian varieties over $L$. If $\Omega/k$ is any \emph{primary} extension, then
 an
 $\Omega/k$-trace exists \cite[Thm.~6.2]{conradtrace} and is unique up to unique
 isomorphism. Concretely, if $B$ is an abelian variety over $\Omega$ and $A$ an
 abelian variety over~$k$, any homomorphism $A_\Omega \to B$ factors uniquely as
 $A_\Omega \to \LKtrace{B}_\Omega \stackrel{\tau}{\to} B$ and there are thus
 canonical bijections
 $$\Hom_\Omega(A_\Omega,B) = \Hom_\Omega(A_\Omega,\LKtrace{B}_\Omega) =
 \Hom_k(A,\LKtrace{B}).$$
 We direct the reader to \cite[\S\S 3.3.1-3.3.2]{ACMVdcg} for a
 review of the $\Omega/k$-trace and Chow rigidity for primary
 extensions as exposited in \cite{conradtrace}.
 \medskip

 Let now $X$ be a  scheme of finite type over a separably closed field  $k$
 and let $\Omega/k$ be a separable extension.
 We start by observing as in \cite[Step 2, Thm.~3.7]{ACMVdcg} that if
 $\Psi:\mathscr A^i_{X_\Omega /\Omega}\to B$ is a regular homomorphism (over
 $\mathsf {Sm}_\Omega/\Omega$), and $(\LKtrace B,\tau : \LKtrace{B}_\Omega \to
 B)$ is the $\Omega/k$-trace of $B$,
 then there is an
 induced regular homomorphism \begin{equation}\label{E:L/KtrRegHom}
 \LKtrace{\Psi} :\mathscr A^i_{X /k}\to \LKtrace B
 \end{equation}
 (over $\mathsf {Sm}_k/k$)
 defined in the following way.
 Take $T$ in $\mathsf {Sm}_k/k$ which we may assume to be integral (since $k$ is
 separably closed, $T$ is a disjoint union of integral schemes) and $Z\in
 \mathscr A^i_{X/k}(T)$. Since $\Omega/k$
 is regular and in particular separable, $\Omega$ is an inverse limit of rings
 that are smooth and of finite
 type over $k$\,; we thereby
 obtain a morphism $\Psi(T_\Omega)(Z_\Omega):T_\Omega\to B$.  Let $t_0\in T(k)$
 be a base point (which exists because $k$ is separably closed
 and $T$ is smooth
 over $k$) and let $\operatorname{alb}_{t_0} : T_\Omega \to
 \operatorname{Alb}_{T_\Omega/\Omega}$ be the associated Albanese morphism over
 $\Omega$ (which classically exists\,; see, \emph{e.g.}, the discussion in \cite{ACMValb}).
 Since  by definition $\operatorname{alb}_{t_0} $ is initial among morphisms
 from $T_\Omega$ to abelian varieties over $\Omega$ sending $t_0$ to $0$, we
 obtain a commutative diagram
 $$
 \xymatrix@R=1.5em{
  T_\Omega \ar[r]^{\Psi(T_\Omega)(Z_\Omega)} \ar[d]_{\operatorname{alb}_{t_0}}&
  B\\
  \operatorname{Alb}_{T_\Omega/\Omega} \ar[ru]_{\eta_{t_0}}&
 }
 $$
 From \cite[Thm.~A]{ACMValb}, since $\Omega/k$ is
 separable, the canonical homomorphism $ \operatorname{Alb}_{T_\Omega/\Omega} \to
 (\operatorname{Alb}_{T_k/k})_\Omega$ is an isomorphism.
 By the definition of the $\Omega/k$-trace
 together with Chow rigidity for primary extensions, we thereby obtain a
 commutative diagram
 $$
 \xymatrix@R=1.5em{
  T\ar@{-->}[r] \ar[d]_{\operatorname{alb}_{t_0}}& \LKtrace B\\
  \operatorname{Alb}_{T/k} \ar[ru]_{\eta_{t_0}}&
 }
 $$
 where the dashed arrow is the composition of the other arrows.  It is easy to
 see that while $\operatorname{alb}_{t_0}$ and $\eta_{t_0}$ depend on $t_0\in
 T(k)$, their composition, the dashed arrow, does not.  This is the morphism we
 define as $\LKtrace \Psi(T)(Z):T\to \LKtrace B$, and this in turn defines the
 natural transformation $\LKtrace \Phi:\mathscr A^i_{X/k}\to \LKtrace B$ on
 objects.  The definition on morphisms is similar.

 \begin{lem}[Trace for regular homomorphisms]\label{L:Pull-Trace}
  In the notation above\,:
  \begin{enumerate}[label=(\roman*)]
   \item
   Let $\Phi:\mathscr A^i_{X/k}\to A$ be a regular homomorphism, and let
   $$
   \Phi_\Omega:\mathscr A^i_{X_\Omega/\Omega}\to A_\Omega
   $$
   be the regular homomorphism defined in \eqref{E:PhiL} above.
   Then we have $\LKtrace{\Phi_\Omega}=\Phi$.

   \item Let $\Psi:\mathscr A^i_{X_\Omega /\Omega}\to B$ be a regular
   homomorphism,
   and let
   $$
   \LKtrace{\Psi} :\mathscr A^i_{X_k /k}\to \LKtrace B
   $$
   be the regular homomorphism defined in \eqref{E:L/KtrRegHom} above.
   Then $(\LKtrace \Psi)_\Omega$ fits into a commutative diagram
   $$
   \xymatrix@R=1em{
    \mathscr A^i_{X_\Omega/\Omega}  \ar[r]^{(\LKtrace \Psi)_\Omega} \ar@{=}[d]&
    \LKtrace{B}_\Omega  \ar[d]^\tau \\
    \mathscr A^i_{X_\Omega/\Omega} \ar[r]^\Psi & B
   }
   $$
  \end{enumerate}
 \end{lem}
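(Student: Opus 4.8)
The plan is to verify each of the two compatibility statements directly from the explicit construction of the trace given just above the lemma statement, together with the defining universal property of the $\Omega/k$-trace and the base change isomorphism for Albanese varieties (Theorem~\ref{T:SepBC-Alb}). The key observation is that both $\LKtrace{\Phi_\Omega}$ and $(\LKtrace\Psi)_\Omega$ are natural transformations, so by the remark following Definition~\ref{D:fun} (and the agreement of our notion with the classical one over separably closed fields in \S\ref{SS:reg}) it suffices to check equality of the induced morphisms $T\to \LKtrace B$ (respectively $T_\Omega\to B$) on objects $T$, and in fact, since $k$ is separably closed, on integral $T$ equipped with a $k$-point $t_0$.

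For part $(i)$, I would fix an integral $T$ in $\mathsf{Sm}_k/k$ with base point $t_0\in T(k)$ and a cycle $Z\in\mathscr A^i_{X/k}(T)$, and trace $Z$ through the construction of $\LKtrace{(\Phi_\Omega)}$. The morphism $\Phi_\Omega(T_\Omega)(Z_\Omega):T_\Omega\to A_\Omega$ is, by the very definition of $\Phi_\Omega$ in \eqref{E:PhiL}, simply the base change $(\Phi(T)(Z))_\Omega$ of $\psi_Z=\Phi(T)(Z):T\to A$. The trace construction then factors this through $\operatorname{alb}_{t_0}:T_\Omega\to\operatorname{Alb}_{T_\Omega/\Omega}$ and, via the universal property of the $\Omega/k$-trace of $A_\Omega$ --- whose trace is canonically $A$ itself, with $\tau$ the identity (any abelian variety over $k$ is its own $\Omega/k$-trace after base change) --- recovers a morphism $T\to A=\LKtrace{(A_\Omega)}$. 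The point is that the whole diagram is the base change of the corresponding diagram over $k$, namely $\psi_Z$ factoring through $\operatorname{alb}_{t_0}:T\to\operatorname{Alb}_{T/k}$; here one uses that the Albanese base change isomorphism $\operatorname{Alb}_{T_\Omega/\Omega}\xrightarrow{\sim}(\operatorname{Alb}_{T/k})_\Omega$ of Theorem~\ref{T:SepBC-Alb} is exactly the identification used in the construction, so the descended dashed arrow is $\psi_Z$ itself. Hence $\LKtrace{(\Phi_\Omega)}(T)(Z)=\Phi(T)(Z)$, giving $\LKtrace{(\Phi_\Omega)}=\Phi$.

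For part $(ii)$, I would again fix integral $T$ in $\mathsf{Sm}_k/k$ with $t_0\in T(k)$ and $Z\in\mathscr A^i_{X/k}(T)$, and unwind both paths around the square. By construction $\LKtrace\Psi(T)(Z):T\to\LKtrace B$ is the dashed composite $T\xrightarrow{\operatorname{alb}_{t_0}}\operatorname{Alb}_{T/k}\xrightarrow{\eta_{t_0}}\LKtrace B$ obtained by descending $\eta_{t_0}:\operatorname{Alb}_{T_\Omega/\Omega}\to B$ through the trace; here $\eta_{t_0}$ is the unique factorization of $\Psi(T_\Omega)(Z_\Omega)$ through the Albanese of $T_\Omega$. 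Base changing and postcomposing with $\tau:\LKtrace B_\Omega\to B$, the defining property of the $\Omega/k$-trace says precisely that $\tau\circ(\eta_{t_0})_\Omega$ equals the original morphism $\operatorname{Alb}_{T_\Omega/\Omega}\to B$ from which it was produced. Composing with $(\operatorname{alb}_{t_0})_\Omega$ and using the Albanese base change identification then returns $\Psi(T_\Omega)(Z_\Omega)=\Psi(T_\Omega)(Z_\Omega)$, which under the identification $\mathscr A^i_{X_\Omega/\Omega}(T_\Omega)=\mathscr A^i_{X/k}(T)_\Omega$ is exactly $\Psi$ applied to $Z_\Omega$. This yields commutativity of the displayed square on $T$, hence as natural transformations.

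The main obstacle is the careful bookkeeping of the two compatible base change identifications being used --- the Albanese base change isomorphism from Theorem~\ref{T:SepBC-Alb} and the trace factorization bijection $\Hom_k(A,\LKtrace B)=\Hom_\Omega(A_\Omega,B)$ --- and checking that the construction of $\LKtrace\Psi$ (and its independence of the chosen base point $t_0$, already noted in the text) is genuinely functorial, so that verifying equality on integral objects with a base point suffices to conclude equality of natural transformations on all of $\mathsf{Sm}_k/k$. I expect no deep difficulty beyond this diagram-chasing: each arrow is forced by a universal property, and the two parts are formally the two directions of the adjunction-like bijection defining the trace, so the real work is simply confirming that all the identifications are the canonical ones already fixed in the construction.
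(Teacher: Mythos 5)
Your proposal is correct and takes essentially the same approach as the paper: the paper's own proof is the one-line assertion that the lemma is ``clear from the construction of the trace of a regular homomorphism and from the universal property of the trace of an abelian variety,'' and your argument is exactly that verification carried out explicitly---reducing to integral pointed $T$ in $\mathsf{Sm}_k/k$ (legitimate, since both sides are determined on inverse limits of such objects by the definition of base change in \S\ref{S:FunLim-1}), base-changing the Albanese factorization via Theorem~\ref{T:SepBC-Alb}, and invoking the trace bijection $\Hom_k(A,\LKtrace{B})=\Hom_\Omega(A_\Omega,B)$ together with the fact that $\LKtrace{(A_\Omega)}=A$ with $\tau=\mathrm{id}$. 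The only gloss is that in part $(ii)$ the identity $\tau\circ(\eta_{t_0})_\Omega=\eta_{t_0}$ is, strictly speaking, built into the construction of the descended arrow (the trace's universal property handles the homomorphism part, while the translation constant descends by the construction itself), but this is precisely the level of detail the paper also leaves implicit.
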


 \begin{proof}
  This is clear from the construction of the trace of a regular homomorphism and
  from the universal property of the trace of an abelian variety.
 \end{proof}

 \subsection{Galois extensions of fields}\label{S:Galois}

 Let $L/K$ be a Galois extension of fields.

 \begin{dfn}[Galois-equivariant regular
  homomorphism]\label{D:Galois-equivariant}
  Let $A$ be an abelian variety over $K$.   We say \emph{a regular homomorphism
   $\Psi:\mathscr A^i_{X_{L}/L}\to A_{L}$ is Galois-equivariant} if it is
  $\mathrm{Aut}(L/K)$-equivariant in the sense of
  Definition~\ref{D:equivariant}.
 \end{dfn}

 If  $A$ is an abelian variety over $K$, and $\Psi:\mathscr A^i_{X_{L}/L}\to
 A_{L}$
 is a  Galois-equivariant regular homomorphism  (of functors over $\mathsf
 {Sm}_L/L$), then there is a   regular homomorphism
 \begin{equation}\label{E:underKreg}
 \underline \Psi:\mathscr A^i_{X/K}\to A
 \end{equation}
 (over $\mathsf {Sm}_K/K$) defined as follows.  If $T$ is in $\mathsf {Sm}_K/K$
 and  $Z\in \mathscr A^i_{X/K}(T)$, then we obtain an induced morphism
 $\Psi(T_{L})(Z_{L}):T_L\to A_{L}$ that satisfies Galois descent (see
 \cite[Rmk.~4.3]{ACMVdcg}).  Thus there is an induced morphism $T\to A$, which
 we
 define to be $\underline \Psi(T)(Z)$.  This defines  $\underline \Psi:\mathscr
 A^i_{X/K}\to A$ on objects.  The definition on morphisms is similar.

 \begin{lem}[Regular homomorphisms and base change from $K$ to
  $L$]\label{L:Pull-Desc}
  Suppose $X$ is a scheme of finite type over $K$,
  $L/K$
  is a Galois field extension, and $A$ is an abelian variety over $K$.
  \begin{enumerate}[label=(\roman*)]
   \item If $\Phi:\mathscr A^i_{X/K}\to A$ is a regular homomorphism, then
   $\Phi_{L}:\mathscr A^i_{X_{L}/L}\to A_{L}$ is a  Galois-equivariant regular
   homomorphism and we have $\underline{\Phi_{L}}=\Phi$.

   \item Suppose $\Psi:\mathscr A^i_{X_{L}/L}\to A_{L}$
   is a  Galois-equivariant regular homomorphism  (over $\mathsf {Sm}_L/L$),
   and
   $\underline \Psi:\mathscr A^i_{X/K}\to A$
   is the induced regular homomorphism defined in \eqref{E:underKreg} above.
   Then
   we have $(\underline \Psi)_{L}=\Psi$.
  \end{enumerate}
 \end{lem}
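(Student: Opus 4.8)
The plan is to verify both statements by unwinding, on the level of objects, the explicit base-change construction of \S\ref{S:SmSbasechange} together with the definition \eqref{E:underKreg} of the descent $\underline{(-)}$; the behaviour on morphisms is entirely analogous and I would only remark on it. Throughout I write $T_L=T\times_K L$ for the base change of $T\in\mathsf{Sm}_K/K$, and use that for a cycle $Z\in\mathscr A^i_{X/K}(T)$ its base change $Z_L\in\mathscr A^i_{X_L/L}(T_L)$ is the refined Gysin pullback of $Z$ along the projection $T_L\to T$, together with the functoriality $(g\circ f)^!=f^!g^!$.

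For part $(i)$, the Galois-equivariance of $\Phi_L$ is immediate from Lemma~\ref{L:equivariant}$(ii)$ and Definition~\ref{D:Galois-equivariant}, so $\underline{\Phi_L}$ is defined. To identify it with $\Phi$, I would fix $T\in\mathsf{Sm}_K/K$ and $Z\in\mathscr A^i_{X/K}(T)$. By naturality of $\Phi$ along the projection $T_L\to T$, and since $Z_L$ is the Gysin pullback of $Z$, the morphism $\Phi(T_L)(Z_L):T_L\to A$ equals $\Phi(T)(Z)$ precomposed with $T_L\to T$; hence the base-change construction gives $\Phi_L(T_L)(Z_L)=(\Phi(T)(Z))_L$. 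As this is the base change of a $K$-morphism, its Galois descent along $L/K$ is $\Phi(T)(Z)$ itself, and by \eqref{E:underKreg} this descent is precisely $\underline{\Phi_L}(T)(Z)$.

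For part $(ii)$, the subtlety is that a general $T'\in\mathsf{Sm}_L/L$ need not descend to $K$, so the displayed formula cannot be applied directly; instead I would use the inverse-limit presentation from the proof of Lemma~\ref{L:limBC}. Write $T'=\varprojlim_n T_n$ with $T_n\in\mathsf{Sm}_K/K$ and $K$-morphisms $\mathrm{pr}_n:T'\to T_n$, and choose $Z_n\in\mathscr A^i_{X/K}(T_n)$ with $Z'=\mathrm{pr}_n^!Z_n$. The key manoeuvre is to factor each $K$-morphism $\mathrm{pr}_n$ through an $L$-morphism $\widetilde{\mathrm{pr}}_n:T'\to (T_n)_L$ in $\mathsf{Sm}_L/L$, using the universal property of $(T_n)_L=T_n\times_K L$, so that $Z'=\widetilde{\mathrm{pr}}_n^!\big((Z_n)_L\big)$. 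Applying the naturality of both $\Psi$ and $(\underline\Psi)_L$ along $\widetilde{\mathrm{pr}}_n$ then reduces the desired equality $(\underline\Psi)_L(T')(Z')=\Psi(T')(Z')$ to equality of the two homomorphisms on the single pair $\big((T_n)_L,(Z_n)_L\big)$.

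On such a pair both sides compute to the same $L$-morphism: the base-change construction gives $(\underline\Psi)_L((T_n)_L)((Z_n)_L)=(\underline\Psi(T_n)(Z_n))_L$, while by the definition \eqref{E:underKreg} the morphism $\underline\Psi(T_n)(Z_n):T_n\to A$ is the Galois descent of $\Psi((T_n)_L)((Z_n)_L)$, so base-changing back yields $\Psi((T_n)_L)((Z_n)_L)=(\underline\Psi(T_n)(Z_n))_L$ as well — here the Galois-equivariance of $\Psi$ and the descent of \cite[Rmk.~4.3]{ACMVdcg} are what guarantee that the descended morphism base-changes to the original. Comparing the two, they agree, and the same argument for morphisms of $\mathsf{Sm}_L/L$ finishes the proof. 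I expect the only real obstacle to be the bookkeeping in part $(ii)$ of which arrows live over $K$ and which over $L$, and in particular the factorisation of $\mathrm{pr}_n$ through $(T_n)_L$, which is exactly what makes the $L$-linear naturality of $\Psi$ and $(\underline\Psi)_L$ applicable.
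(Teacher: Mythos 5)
Your proof is correct and takes essentially the same approach as the paper: the paper's own proof is a one-line assertion that the lemma is ``clear from the construction of $\underline \Psi$ for a Galois-equivariant regular homomorphism $\Psi$,'' and your argument is precisely the detailed unwinding of the base-change construction \eqref{E:PhiL}/Lemma~\ref{L:limBC} and the descent construction \eqref{E:underKreg} that this assertion leaves implicit. In particular, your reduction in part $(ii)$ to the pairs $\bigl((T_n)_L,(Z_n)_L\bigr)$, via factoring the $K$-morphism $\mathrm{pr}_n$ through the $L$-morphism $T'\to (T_n)_L$, is exactly the bookkeeping needed to handle parameter spaces in $\mathsf{Sm}_L/L$ that do not descend to $K$.
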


 \begin{proof}
  This is clear from the construction of $\underline \Psi$ for a
  Galois-equivariant regular homomorphism~$\Psi$.
 \end{proof}

 \section{Surjective regular homomorphisms and miniversal cycles}

 \subsection{Surjective regular homomorphisms}\label{S:surjreghom}

 Note that since $\mathsf {AbGp}$ is an abelian category, the category of
 contravariant functors $\mathsf {Sm}_\Lambda/S\to \mathsf {AbGp}$  is also an
 abelian
 category.
 Thus there is a natural notion of kernel,  image, etc., for regular
 homomorphisms.  This is \emph{not} the definition we want to use for
 surjectivity. Rather, we would like to define surjectivity as meaning
 ``surjective when restricted to algebraically closed
 fields''.
 Since our results focus on certain separable extensions, we adopt the
 following definition\,:

 \begin{dfn}[Surjective regular homomorphism] \label{D:surj}
  A regular homomorphism $\Phi:\mathscr A^i_{X/S}\to A$ is called
  \emph{surjective} if
  $\Phi_\Omega(\Omega):\mathscr{A}_{X/S}^i({\Omega})\to A(\Omega)$ is
  surjective for all separably closed points $s : \spec \Omega \to S$ obtained
  as inverse limits of morphisms to $S$ in $\mathsf {Sm}_\Lambda/S$  (see
  Lemma~\ref{L:WhatPoints}).
 \end{dfn}

 In Proposition~\ref{prop:surjdef}, we will see that the surjectivity of a
 regular homomorphism can be tested at the separable closure of the generic
 point of $S$.
 A regular homomorphism that is an epimorphism of functors is a surjective
 regular homomorphism (\emph{e.g.}, Proposition~\ref{P:univ-epi} and
 Corollary~\ref{C:MinVZK}).
 However, there are examples of surjective regular homomorphisms that are not
 epimorphisms of functors (Remark~\ref{R:Voisin-No-Uni}).

 \subsection{Miniversal cycles for surjective regular homomorphisms}
 The following notion of \emph{miniversal cycle} for regular homomorphism will
 prove to be crucial to our understanding of surjectivity.

 \begin{dfn}[Universal and miniversal cycle classes]
  Let $\Phi:\mathscr A^i_{X/S}\to A$ be a regular homomorphism. Then a cycle
  class
  $Z\in \mathscr A^i_{X/S}(A)$ is called \emph{universal}
  (resp.~\emph{miniversal}) for $\Phi$ if the induced morphism $\Phi(A)(Z):A\to
  A$
  is the identity (resp.~an isogeny).  If $\Phi=\Phi^i_{X/S}$ is an algebraic
  representative, we will often drop the reference to $\Phi$, and simply call
  $Z$
  a universal (resp.~miniversal) codimension-$i$ cycle class.
 \end{dfn}

 The following proposition shows that the existence of a universal cycle
 characterizes regular homomorphisms that are epimorphisms of functors.

 \begin{pro}\label{P:univ-epi}
  A regular homomorphism  is an epimorphism of functors if and only if admits a
  universal cycle class.
 \end{pro}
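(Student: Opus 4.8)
The plan is to unwind the definitions and run a Yoneda-style naturality argument. Since $\mathsf{AbGp}$ is abelian and cokernels in the category of contravariant functors $\mathsf{Sm}_\Lambda/S\to\mathsf{AbGp}$ are computed objectwise, a regular homomorphism $\Phi:\mathscr A^i_{X/S}\to A$ is an epimorphism of functors precisely when $\Phi(T):\mathscr A^i_{X/S}(T)\to\operatorname{Hom}_S(T,A)$ is surjective for every object $T$ of $\mathsf{Sm}_\Lambda/S$. I would prove both implications against this reformulation.

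For the forward direction I would simply test surjectivity at the object $A$ itself. Here one first checks that the abelian scheme $A$ is a legitimate object of $\mathsf{Sm}_\Lambda/S$: it is smooth separated and of finite type over $\Lambda$ (being smooth proper over $S$, which is in turn smooth separated of finite type over $\Lambda$), and its structure morphism to $S$ is surjective, hence dominant. Thus $\mathscr A^i_{X/S}(A)$ makes sense, and surjectivity of $\Phi(A)$ applied to $\operatorname{id}_A\in\operatorname{Hom}_S(A,A)$ produces a cycle $Z\in\mathscr A^i_{X/S}(A)$ with $\Phi(A)(Z)=\operatorname{id}_A$, which is by definition a universal cycle for $\Phi$.

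For the converse, suppose $Z\in\mathscr A^i_{X/S}(A)$ is universal, so that $\Phi(A)(Z)=\operatorname{id}_A$. Given any $T$ in $\mathsf{Sm}_\Lambda/S$ and any $f\in\operatorname{Hom}_S(T,A)$, I view $f:T\to A$ as a morphism in $\mathsf{Sm}_\Lambda/S$ and invoke naturality of $\Phi$ along $f$; since $\mathscr A^i_{X/S}$ acts on morphisms by the refined Gysin pullback and $\operatorname{Hom}_S(-,A)$ acts by precomposition, the commuting square yields
$$\Phi(T)(f^!Z)=\Phi(A)(Z)\circ f=\operatorname{id}_A\circ f=f.$$
Because $f^!Z\in\mathscr A^i_{X/S}(T)$ by contravariant functoriality, this exhibits $f$ as lying in the image of $\Phi(T)$, so $\Phi(T)$ is surjective for every $T$ and $\Phi$ is an epimorphism of functors.

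The argument is essentially formal, so I do not anticipate any serious obstacle; the substance of the statement is precisely that a universal cycle is nothing other than a preimage of $\operatorname{id}_A$ under $\Phi(A)$. The only points requiring care are the identification of epimorphisms in the functor category with objectwise surjections, and the verification that $A$ is an admissible parameter space so that $\mathscr A^i_{X/S}(A)$ and the relevant naturality square are available.
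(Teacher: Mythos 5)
Your proof is correct and follows essentially the same route as the paper's: universality plus naturality along $f:T\to A$ gives $\Phi(T)(f^!Z)=f$, and conversely objectwise surjectivity of an epimorphism applied at the object $A$ produces a preimage of $\operatorname{id}_A$, i.e.\ a universal cycle. Your extra verifications --- that epimorphisms in the functor category are objectwise surjections, and that $A$ is a legitimate object of $\mathsf{Sm}_\Lambda/S$ (the latter is recorded in the paper's remark following the definition of algebraic representative) --- are points the paper leaves implicit, but the substance is identical.
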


 \begin{proof}
  Let $\Phi:\mathscr A^i_{X/S}\to A$ be the  regular homomorphism.
  First assume there is a universal cycle class $Z$.  Then given any morphism $g:T\to
  A$, we have that $\Phi(T)(g^!Z)=g$.  Thus $\Phi$ is an epimorphism of
  functors.
  Conversely, if $\Phi$ is an epimorphism of functors, then
  any cycle $Z\in\Phi(A)^{-1}(\operatorname{Id}_A)$ is a universal cycle class.
 \end{proof}

 \subsection{Existence of miniversal cycles} \label{subsec:existence}

 Essential to the proof of Lemma~\ref{L:MinVZK} is the following result,
 extracted from \cite{ACMVabtriv}, regarding the field of definition of schemes
 parameterizing algebraically trivial cycles\,:

 \begin{pro}[\cite{ACMVabtriv}] \label{P:IMRN}
  Let $X$ be a scheme of finite type over a field $K$, let  $K^{\sep}$ be a
  separable closure of $K$ and let $\alpha \in
  \operatorname{A}^i(X_{K^{\sep}})$ be an algebraically trivial cycle class.
  Then there exist a smooth quasi-projective curve $C$ over $K$, a cycle $Z\in
  \operatorname{CH}^i(C\times_K X)$,
  and  points $t_0, t_1 \in C({K}^{\sep})$, such that $$\alpha = Z_{t_1} -
  Z_{t_0}
  \quad \in \mathrm{A}^i(X_{K^{\sep}}).$$
  Moreover, if $C$ can be chosen to be smooth projective over $K$, then there
  exist an abelian variety $A$ over $K$, a cycle $Z'\in
  \operatorname{CH}^i(A\times_K X)$,
  and  a point $t \in A({K}^{\sep})$ such that $$\alpha = Z'_{t} - Z'_{0}
  \quad \in \mathrm{A}^i(X_{K^{\sep}}).$$
 \end{pro}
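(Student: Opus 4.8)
The plan is a spreading-out and descent argument: realize $\alpha$ over $K^{\sep}$ straight from the definition of algebraic triviality, descend the parametrizing data to a finite separable subextension, and then reinterpret that data over $K$ itself; for the projective refinement one replaces the curve by its Picard variety and transports the cycle across a Poincar\'e-type correspondence.

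First I would invoke the definition of algebraic triviality over $K^{\sep}$: there are a smooth quasi-projective integral curve $C'/K^{\sep}$, a cycle $Z'\in\operatorname{CH}^i(C'\times_{K^{\sep}}X_{K^{\sep}})$, and points $t_0',t_1'\in C'(K^{\sep})$ with $\alpha=Z'_{t_1'}-Z'_{t_0'}$. Writing $K^{\sep}=\varinjlim_L L$ over finite separable $L/K$ and using the standard limit formalism for finite-type schemes and for cycles, this data descends to some finite separable $L$: a smooth quasi-projective curve $C_L/L$ with a cycle $Z_L\in\operatorname{CH}^i(C_L\times_L X_L)$ base-changing to $(C',Z')$, the points being retained as $K^{\sep}$-points of $C_L$. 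The crucial move is to view $C:=C_L$ as a $K$-scheme through $C_L\to\operatorname{Spec}L\to\operatorname{Spec}K$: since $L/K$ is finite separable, $C$ is smooth over $K$ and stays integral, quasi-projective and one-dimensional, hence is a smooth quasi-projective curve over $K$. Under the canonical identification $C_L\times_K X\cong C_L\times_L X_L$ (valid because $X_L=L\otimes_K X$ and $C_L\otimes_L L=C_L$) the cycle $Z_L$ becomes a cycle $Z\in\operatorname{CH}^i(C\times_K X)$, and $t_0',t_1'$ become points $t_0,t_1\in C(K^{\sep})$ via $L\hookrightarrow K^{\sep}$. As this identification is compatible with the refined Gysin pullbacks that define the fibers, one has $Z_{t_j}=(Z_L)_{t_j'}$, so $Z_{t_1}-Z_{t_0}=\alpha$; this proves the first assertion. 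If one wants $C$ projective, pass to its smooth projective model and extend $Z$ using surjectivity of restriction to the open curve, the points lying in the open locus.

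For the second assertion, assume $C$ is smooth projective over $K$ and set $A:=\operatorname{Pic}^0_{C/K}$, an abelian variety over $K$ (a Weil restriction along the finite separable field of constants when $C$ is not geometrically connected). Given a Poincar\'e divisor class $\mathcal P\in\operatorname{CH}^1(C\times_K A)$, characterized by $\mathcal P|_{C\times\{s\}}$ having class $s$, I would put $Z':=\operatorname{pr}_{A\times X,*}\!\bigl(\operatorname{pr}_{C\times X}^{*}Z\cdot\operatorname{pr}_{C\times A}^{*}\mathcal P\bigr)\in\operatorname{CH}^i(A\times_K X)$. A base-change computation of the fibers yields $Z'_s=Z_*(\mathcal P_s)$, the action of the correspondence $Z$ on a degree-$0$ divisor class $s$; since $Z_*$ factors through rational equivalence this depends only on $s\in A(K^{\sep})=\operatorname{Pic}^0(C_{K^{\sep}})$. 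Hence $Z'_0=0$ and, for $t:=[t_1]-[t_0]\in A(K^{\sep})$, one gets $Z'_t=Z_{t_1}-Z_{t_0}=\alpha$, as required (in genus $0$ one has $A=0$ and $\alpha=0$ automatically).

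I expect the main obstacle to be purely a question of fields of definition, living in the second assertion: a Poincar\'e class on $C\times_K A$ exists unconditionally only once $C$ acquires a rational point, which happens over $K^{\sep}$ (or the finite $L$ carrying $t_0$) but need not happen over $K$. To descend $Z'$ to $K$ I would exploit that the construction is insensitive to the rigidification of $\mathcal P$: replacing $\mathcal P$ by $\mathcal P+\operatorname{pr}_A^{*}M$ alters $Z'$ only by the decomposable class $\operatorname{pr}_A^{*}M\cdot\operatorname{pr}_X^{*}(\operatorname{pr}_{X,*}Z)$, every Gysin fiber of which vanishes. Thus the cycle $Z'_L$ built over $L$ from a $t_0$-rigidified Poincar\'e bundle, while not literally $\operatorname{Gal}(L/K)$-invariant, has Galois translates differing from it only by such zero-fiber classes, so the fiber datum---and hence the difference $Z'_t-Z'_0$ we must realize---is Galois-invariant; I would then produce an honest $K$-cycle by Galois descent together with the representability of $A$ over $K$. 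Reconciling the merely $K^{\sep}$-rational rigidification with a genuine cycle over $K$ is the delicate point, mirroring exactly the $K$-versus-$K^{\sep}$ descent that the proposition is designed to control.
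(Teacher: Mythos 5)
Your treatment of the first assertion is correct, and it follows essentially the same route as the result the paper cites for this half (the paper's ``proof'' here is the citation \cite[Thm.~4.11]{ACMVabtriv}): spread the data witnessing algebraic triviality over $K^{\sep}$ out to a finite separable subextension $L/K$, then view the $L$-curve as a $K$-curve through the \'etale morphism $\operatorname{Spec}L\to\operatorname{Spec}K$, using $C_L\times_KX\cong C_L\times_LX_L$ and its compatibility with Gysin fibers. One aside, however, is false in general: over an imperfect field a smooth quasi-projective curve need not admit a smooth projective model (genus change can occur at the boundary); this is exactly why the paper's Lemma~\ref{L:MinVZK} has to pass to Frobenius twists via \cite[Lemma 1.2]{schroer09}, and why the second assertion of the proposition is conditional rather than absolute.

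The genuine gap is in the ``moreover'' statement, at the step you yourself single out as delicate, and the proposed repair does not work. Knowing that the conjugates ${}^{\sigma}Z'_L$ differ from $Z'_L$ by classes all of whose Gysin fibers vanish does not allow any descent: Chow groups do not satisfy Galois descent, so a class in $\operatorname{CH}^i(A_L\times_LX_L)$ that is $\operatorname{Gal}(L/K)$-invariant (even on the nose, let alone only fiberwise) need not come from $\operatorname{CH}^i(A\times_KX)$. Indeed, the Poincar\'e class itself is Galois-invariant up to exactly the ambiguities you identify, and its failure to descend to $K$ \emph{is} the Brauer obstruction you are trying to get around, so your argument runs into that obstruction rather than around it. The honest substitute for descent is the transfer $\pi_*$ along the finite \'etale morphism $\pi\colon(A\times_KX)_L\to A\times_KX$; this does produce a $K$-cycle, but since all conjugates have the same fibers, its Gysin fiber over any $s\in A(K^{\sep})$ is $[L:K]\,Z_*(\mathcal{P}_s)$, so one realizes $[L:K]\alpha$ rather than $\alpha$; and in characteristic $p$ one can neither divide (neither $A(K^{\sep})$ nor $\operatorname{A}^i(X_{K^{\sep}})$ need be $p$-divisible) nor force $[L:K]$ prime to $p$ (if $p$ divides the index of $C$, every extension over which $C$ acquires a point has degree divisible by $p$). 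The correct fix---and this is how the argument of \cite[Prop.~3.14]{ACMVabtriv}, which the paper invokes for this half, proceeds in spirit---is to change the abelian variety rather than descend the cycle, which the statement permits: choose a finite separable $M/K$ inside $K^{\sep}$ over which $t_0,t_1$ are defined, set $A:=\operatorname{Res}_{M/K}\operatorname{Pic}^0_{C_M/M}$, pull the Poincar\'e correspondence back along the counit $A_M\to\operatorname{Pic}^0_{C_M/M}$, and push forward along $(A\times_KX)_M\to A\times_KX$. The Gysin fiber of the resulting $K$-cycle over a point of $A(K^{\sep})$, i.e.\ over a tuple $(\beta_\sigma)_\sigma$ indexed by the embeddings $\sigma\colon M\hookrightarrow K^{\sep}$, is $\sum_\sigma Z_*(\beta_\sigma)$, so the tuple with $[t_1]-[t_0]$ in the slot of the given embedding and $0$ in all other slots realizes $\alpha$ exactly. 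Finally, note that $[t_1]-[t_0]$ defines a point of $\operatorname{Pic}^0$ only when $t_0,t_1$ lie on the same geometric component of $C_{K^{\sep}}$; this is automatic for the curve produced by your first paragraph (both points factor through the same embedding of the field of constants $L$), but your write-up of the second assertion never records this, and without it your point $t$ need not lie on $A$ at all.
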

 \begin{proof}
  The existence of a smooth quasi-projective curve $C$ over $K$ is given by the
  equivalence of $(i)$ and $(ii)$  for separable extensions in
  \cite[Thm.~4.11]{ACMVabtriv}. The ``moreover statement'' can be proved along
  the
  lines of the proof of  \cite[Prop.~3.14]{ACMVabtriv}.
 \end{proof}

 For the record, we have the following classical results concerning extensions of
 morphisms to abelian schemes.

 \begin{pro}[{\cite[Cor.~6, \S 8.4]{BLR}, \cite[I.2.7]{FC}}]\label{P:BLR}
  Let $S$ be a Noetherian scheme and let $A$ be an abelian $S$-scheme.
  \begin{enumerate}[label=(\roman*)]
   \item \cite[Cor.~6, \S 8.4]{BLR}  Any rational $S$-morphism $T\dashrightarrow A$
   from an $S$-scheme $T$ to $A$ is defined everywhere if $T$ is regular.
   \item \cite[I.2.7]{FC} Suppose $S$ is normal and let $B$ be another abelian
   $S$-scheme. Any homomorphism $B_U \to A_U$ defined on some dense open subset
   $U\subseteq S$ extends to a homomorphism $B\to A$. \qed
  \end{enumerate}
 \end{pro}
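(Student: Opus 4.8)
The plan is to treat part $(i)$ as a relative form of Weil's extension theorem for rational maps into group schemes, and to obtain part $(ii)$ from the N\'eron mapping property of abelian schemes over a normal base, using $(i)$ as an auxiliary input. The group structure on $A$ will be decisive throughout: a rational map from a regular scheme to $\proj^n$ can genuinely fail to extend across a codimension-two locus, so the argument cannot be purely valuation-theoretic and must exploit the group law.

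For part $(i)$, let $f\colon T\dashrightarrow A$ be a rational $S$-morphism with $T$ regular, defined on a dense open $V\subseteq T$. First I would check that $f$ extends across every codimension-one point. If $\xi\in T$ has codimension one, then $\calo_{T,\xi}$ is a discrete valuation ring because $T$ is regular, and since $A\to S$ is proper the valuative criterion of properness extends the induced $\spec\calo_{T,\xi}$-point of $A$ across $\xi$. Hence $f$ is defined on an open $V'\supseteq V$ whose complement has codimension at least two. The second, crucial, input is the purity statement that the indeterminacy locus of a rational map from a regular scheme to a group scheme is either empty or pure of codimension one: one builds from the group law on $A$ the rational map $(t,t')\mapsto f(t)-f(t')$, whose value along the diagonal is forced to be the identity section, and this rigidity propagates indeterminacy in codimension one. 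Since the complement of $V'$ has codimension at least two, it must therefore be empty, so $f$ is everywhere defined. The relativization of this argument to a Noetherian base is exactly \cite[Cor.~6, \S 8.4]{BLR}.

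For part $(ii)$, regard the homomorphism $\phi_U\colon B_U\to A_U$ as a morphism $B_U\to A$ of $S$-schemes, hence as a rational $S$-morphism $B\dashrightarrow A$. Because $S$ is normal it is $R_1$, so its local ring at any codimension-one point $\eta$ is a discrete valuation ring; localizing at $\eta$, the scheme $A$ becomes an abelian scheme over that discrete valuation ring and is therefore the N\'eron model of its generic fibre, $A$ being proper. The N\'eron mapping property then identifies $\Hom(B,A)$ with $\Hom(B_K,A_K)$ on the generic fibre and extends $\phi_U$ across $\eta$; ranging over the finitely many codimension-one points of $S\setminus U$ produces an extension defined on an open $V\subseteq S$ whose complement has codimension at least two. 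To fill in the remaining codimension-two locus one again invokes the group-theoretic purity of $(i)$ for the target $A$, now over the normal source $B$, and concludes that the rational map is in fact a morphism $\phi\colon B\to A$. Finally I would verify that $\phi$ is a homomorphism: the two $S$-morphisms $\phi\circ m_B$ and $m_A\circ(\phi\times_S\phi)$ from $B\times_S B$ to $A$ agree on the dense open where $\phi_U$ is defined, and since $A$ is separated they agree everywhere; compatibility with the identity sections follows likewise by density. This is the content of \cite[I.2.7]{FC}.

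The step I expect to be the main obstacle, and the reason $(ii)$ is a genuinely separate assertion rather than a formal corollary of $(i)$, is the interplay between regularity and normality. Part $(i)$ as stated requires the source $T$ to be regular, whereas in $(ii)$ the source $B$ is only smooth over the normal base $S$ and hence merely normal, not regular. Thus the purity argument must be set up so that it applies over a normal source that is regular only in codimension one; the correct formulation reduces everything to the codimension-one points, where the local rings are discrete valuation rings, and then propagates the conclusion by the rigidity coming from the group law. Relativizing Weil's classical field-theoretic purity to a Noetherian base, while simultaneously controlling the behaviour over non-closed points of the source, is the technically delicate core of both parts.
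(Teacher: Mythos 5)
The paper itself gives no proof of this proposition: both parts are quoted from the literature (\cite[Cor.~6, \S 8.4]{BLR} and \cite[I.2.7]{FC}), together with a remark that in the only case where $(ii)$ is actually used in the paper ($S$ smooth over a field, so that $B$ is regular) it follows from $(i)$ plus rigidity \cite[Prop.~6.1]{mumfordGIT}. So your attempt must be measured against the standard proofs you are reconstructing, and while your architecture is the right one --- valuative criterion of properness at codimension-one points, a Weil-type extension theorem to kill higher-codimension indeterminacy, then density plus separatedness to upgrade the extension in $(ii)$ to a homomorphism --- the purity step, which you yourself flag as the crux, is not correctly handled in either part. In $(i)$, the diagonal argument you sketch, $(t,t')\mapsto f(t)-f(t')$, lives on $T\times_S T$, and regularity of $T$ does \emph{not} pass to $T\times_S T$: for $T=\spec\integ[i]$ over $S=\spec\integ$ the product is singular where its two components cross over $(2)$, and for $T$ the spectrum of an inseparable extension of a function field $\kappa(S)$ it is even non-reduced. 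This is exactly why Weil's extension theorem \cite[\S 4.4, Thm.~1]{BLR} is stated for a source that is \emph{smooth over a normal Noetherian base}, not for a merely regular source. The missing idea that makes $(i)$ true as stated is the rebasing trick: an $S$-rational map $T\dashrightarrow A$ is the same thing as a $T$-rational section of the abelian scheme $A_T\to T$, and the source $T$ is smooth over the new base $T$, which is normal because it is regular; Weil's theorem then applies verbatim. Without this reduction, the "relativization of Weil's purity" that you defer is not a technical refinement but the entire content of the theorem.

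In $(ii)$ the problem is sharper: you correctly observe that $(i)$ does not apply to the source $B$ (smooth over a normal base need not be regular), but the repair you propose --- purity for a normal source that is regular in codimension one --- is a false statement. Let $C$ be the affine cone over an elliptic curve $E\subset\proj^2$: it is normal, hence $R_1$, and projection away from the vertex defines a rational map $C\dashrightarrow E$ whose indeterminacy locus is exactly the vertex, a codimension-two point. So no argument that "reduces everything to the codimension-one points" of the source can possibly work, and normality of $B$ is not the relevant hypothesis. What saves $(ii)$ is that $B$ is \emph{smooth over} the normal base $S$ --- precisely the hypothesis of \cite[\S 4.4, Thm.~1]{BLR}, which $B$ satisfies automatically as an abelian $S$-scheme. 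With that substitution, the rest of your argument for $(ii)$ is the standard proof: your N\'eron-model step at the codimension-one points of $S$ is fine, the bad locus in $B$ then has codimension $\ge 2$ (as $B\to S$ is flat with equidimensional fibers), Weil's theorem extends the map everywhere, and your density/separatedness argument gives the homomorphism property since the schemes involved are reduced. As written, however, your proof of $(ii)$ rests on a lemma that the cone example refutes; and note that for the paper's purposes the cheaper route of its own remark suffices, since there $B$ is regular and $(i)$ applies directly.
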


 \begin{rem}
  We will in fact only use Proposition \ref{P:BLR}$(ii)$ \cite[I.2.7]{FC} in the
  case where all schemes are taken over a fixed base field $K$, and $S$ is assumed
  to be smooth over $K$\,; in this special case Proposition \ref{P:BLR}$(ii)$
  follows immediately from Proposition \ref{P:BLR}$(i)$ \cite[Cor.~6, \S 8.4]{BLR}
  and rigidity \cite[Prop.~6.1]{mumfordGIT}.
 \end{rem}

 The following lemma is crucial.

 \begin{lem}[{Existence of miniversal cycle classes}]
  \label{L:MinVZK}
  Suppose that $\Phi:\mathscr A^i_{X/S}\to A$ is a regular homomorphism.  Then
  there is an $S$-abelian subscheme $i:A'\hookrightarrow A$ such that $\Phi$
  factors uniquely
  as
  $$
  \xymatrix{
   \Phi :\mathscr A^i_{X/S}\ar@{->>}[r]^<>(0.5){\Phi'}& A' \ar@{^(->}^{i}[r]&
   A\\
  }
  $$
  where $\Phi'$ is a surjective regular homomorphism.
  Moreover, for any surjective regular homomorphism  $\Phi:\mathscr
  A^i_{X/S}\twoheadrightarrow  A$
  there exists  $Z\in \mathscr A^i_{X/S}(A)$ such that the induced
  $S$-morphism $\Phi(A)(Z):A\to A$ is  given by  $r\cdot
  \operatorname{Id}_{A}$
  for some natural number $r$.
 \end{lem}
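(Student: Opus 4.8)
The plan is to prove both assertions by working at the generic point $\eta=\spec\kappa(S)$ of $S$ (and its separable closure $\bar\eta=\spec\kappa(S)^{\sep}$), constructing the required data there and then spreading out and extending over $S$. Two observations make this transfer work: first, by Lemma~\ref{L:WhatPoints} membership of a class in $\mathscr A^i_{X/S}(A)$ is detected only at the point $\eta_A^{\sep}$ over the generic point of $A$, which factors through the generic fibre $A_\eta$; and second, two $S$-morphisms $A\to A$ that agree on the dense open $A_\eta$ coincide, since $A$ is integral and reduced. Throughout I use the base-changed regular homomorphism $\Phi_\eta:\mathscr A^i_{X_\eta/\eta}\to A_\eta$ from Lemma~\ref{L:limBC} and the compatibility of $\Phi$ with restriction to opens and to $\eta$ (Lemma~\ref{L:basechange}).

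For the existence of a miniversal cycle (the ``moreover''), I test surjectivity of $\Phi$ at $\bar\eta$ via Proposition~\ref{prop:surjdef}, and choose finitely many classes in $\mathrm{A}^i(X_{\bar\eta})$ whose $\Phi_{\bar\eta}$-images generate $A_{\bar\eta}(\bar\eta)$. By the ``moreover'' part of Proposition~\ref{P:IMRN} — available because a smooth quasi-projective curve may be replaced by its smooth projective completion after extending the cycle along the surjection on Chow groups — each class is parameterized over $K=\kappa(S)$ by an abelian variety $A_j$ and a cycle $Z'_j$. I set $T_\eta=\prod_{K}A_j$ and $W_\eta=\sum_j \operatorname{pr}_j^{*}Z'_j$; after replacing $W_\eta$ by $W_\eta-(W_\eta)_0\times_K T_\eta$, which stays in $\mathscr A^i_{X_\eta/\eta}(T_\eta)$ by Lemma~\ref{L:WhatPoints} since any two fibres over the integral base are algebraically equivalent, I may assume $\psi_\eta:=\Phi_\eta(T_\eta)(W_\eta):T_\eta\to A_\eta$ sends $0\mapsto 0$. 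By rigidity $\psi_\eta$ is then a homomorphism of abelian varieties, and it is surjective because its image is the sum of the images of the $A_j$. Taking an abelian subvariety complementary to $(\ker\psi_\eta)^{\circ}$ and inverting the resulting isogeny up to an integer produces a homomorphism $\sigma_\eta:A_\eta\to T_\eta$ with $\psi_\eta\circ\sigma_\eta=r\cdot\mathrm{Id}_{A_\eta}$ for some $r\in\mathbb N$. Naturality of $\Phi_\eta$ gives $\Phi_\eta(A_\eta)(\sigma_\eta^{!}W_\eta)=\psi_\eta\circ\sigma_\eta=r\cdot\mathrm{Id}_{A_\eta}$, so $Z_\eta:=\sigma_\eta^{!}W_\eta$ is miniversal over $\eta$. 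Finally I lift $Z_\eta$ to $Z\in\operatorname{CH}^i(X_A)$ using surjectivity of restriction along the localization $A_\eta\subset A$; by the first observation $Z\in\mathscr A^i_{X/S}(A)$, and by the second the $S$-morphism $\Phi(A)(Z)$ equals $r\cdot\mathrm{Id}_A$ because it does so on $A_\eta$.

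For the factorization I first identify the image over $\bar\eta$. The subgroup $\operatorname{im}\!\big(\Phi_{\bar\eta}(\bar\eta)\big)\subseteq A_{\bar\eta}(\bar\eta)$ is the sum of the images of the homomorphisms $A_{T}\to A_{\bar\eta}$ attached, as above, to based families; each such image is an abelian subvariety, the sum stabilizes by finiteness of $\dim A$ to a single abelian subvariety $A'_{\bar\eta}$, and by divisibility $\operatorname{im}\!\big(\Phi_{\bar\eta}(\bar\eta)\big)=A'_{\bar\eta}(\bar\eta)$. Since finitely many of the relevant families are already defined over $K$, the abelian subvariety $A'_{\bar\eta}$ descends to $A'_\eta\subseteq A_\eta$ over $K$, which I then extend to an abelian subscheme $i:A'\hookrightarrow A$ over $S$ — for instance by extending a quasi-idempotent endomorphism of $A_\eta$ cutting out $A'_\eta$ to an endomorphism of $A$ by Proposition~\ref{P:BLR}(ii) and taking its image, or equivalently by schematic closure. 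Each morphism $\Phi(T)(Z):T\to A$ lands in $A'$, since it does so on the dense fibre $T_\eta$ and $T$ is reduced; this yields $\Phi=i\circ\Phi'$ with $\Phi'$ surjective by the computation of the image at $\bar\eta$, and uniqueness follows because any admissible $A'$ has the same generic fibre $A'_\eta$, determined as the Zariski closure of the image at $\bar\eta$.

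The main obstacle is the passage from the generic fibre to all of $S$. For the miniversal cycle this is the verification that the lifted class lies in $\mathscr A^i_{X/S}(A)$ and that $\Phi(A)(Z)$ is globally, not merely generically, equal to $r\cdot\mathrm{Id}_A$; both are handled by the two observations above. More seriously, for the factorization one must produce the image as a genuine abelian subscheme $A'\hookrightarrow A$ over the normal base $S$, which rests on the extension theorems for homomorphisms of abelian schemes (Proposition~\ref{P:BLR}) together with the fact that the schematic closure of an abelian subvariety of $A_\eta$ is an abelian subscheme of $A$. The purely field-theoretic ingredients — rigidity, Poincar\'e complete reducibility used to split $\psi_\eta$ up to isogeny, descent of $A'_{\bar\eta}$ to $K$, and stability of $\mathscr A^i$ under subtracting a constant family — are standard.
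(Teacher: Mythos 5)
Your overall architecture---prove both assertions over the generic point and then spread out, using (a) that membership in $\mathscr A^i_{X/S}(A)$ is detected at the separable closure of the generic point of $A$, (b) that $S$-morphisms of separated $S$-schemes are determined by their restrictions to the generic fiber, and (c) Proposition~\ref{P:BLR} to extend abelian subvarieties and homomorphisms over $S$---is exactly the paper's strategy, and those parts of your argument are sound. The genuine gap is in the field-theoretic core, at the point where you invoke the ``moreover'' part of Proposition~\ref{P:IMRN}: you justify its hypothesis by asserting that ``a smooth quasi-projective curve may be replaced by its smooth projective completion after extending the cycle along the surjection on Chow groups.'' This is false over an imperfect field. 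The projective model of a smooth quasi-projective curve over $K$ (the normalization of its projective closure) is regular, but regular does not imply smooth over $K$ unless $K$ is perfect, and here $K=\kappa(S)$ is typically imperfect---\emph{e.g.} $\Lambda=\spec \mathbb{F}_p$ and $\dim S>0$. Removing perfectness hypotheses is precisely one of the stated goals of the paper, so this is not a corner case that can be waved away.

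The gap is essential to both halves of your argument. For the miniversal cycle you need enough classes in $\mathrm{A}^i(X_{\bar\eta})$ parameterized by abelian varieties over $K$ so that the images of the associated homomorphisms sum to all of $A_{\bar\eta}$; for the factorization you need the image of $\Phi_{\bar\eta}(\bar\eta)$ to equal the group of $\bar\eta$-points of the subvariety generated by such images. If some class is carried only by a curve $C$ admitting no smooth projective model, then a priori its image under $\Phi_{\bar\eta}$ escapes your $A'_{\bar\eta}$, and both the equality $A'=A$ in the ``moreover'' (under surjectivity of $\Phi$) and the surjectivity of $\Phi'$ in the factorization break down. The paper's proof devotes a separate argument to exactly this case: choose $n$ so that the Frobenius twist $C^{(p^n)}$ admits a smooth projective model (\cite[Lemma 1.2]{schroer09}), push the cycle forward along the iterated relative Frobenius $F^n_{C/K}$, use the projection formula to see that fibers are multiplied by $p^n$, and then chase the diagram through the quotient $\bar A=A/A'$, using that $[p^n]_{\bar A}$ is a finite morphism, to conclude $\psi_Z(C)\subseteq A'$ nonetheless. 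Without this step (or some substitute for it), your proposal proves Lemma~\ref{L:MinVZK} only when $\kappa(S)$ is perfect, \emph{e.g.} in characteristic zero.
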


 \begin{proof}
  In case $\Lambda=S= \spec K$ with $K$ an algebraically closed field, this
  is \cite[Lem.~1.6.2 and Cor.~1.6.3]{murre83}.
  This was generalized to the case $\Lambda=S= \spec K$ with $K$ a perfect field
  in
  \cite[Lem.~4.9]{ACMVdcg}.
  We note that in \cite{murre83} and in  \cite[Lem.~4.9]{ACMVdcg}, $X$ is
  assumed
  further to be smooth and projective over $K$.   This however is not necessary as is
  apparent in the proof given below   for $S= \spec K$ with $K$ any field.

  We now treat the case $S= \spec K$ with $K$ any field. Let $A'$ be the abelian
  sub-variety of $A$ which is obtained as the abelian sub-variety generated by
  all
  images $\Phi(D)(Z) : D\to A$,
  where $D$ runs through all abelian varieties over $K$ and $Z \in \mathscr
  A^i_{X/K/\Lambda}(D)$
  runs through all cycles such that $Z_0 = 0 \in
  \operatorname{CH}^i(X)$.
  We immediately note that there exists an abelian variety $D$ over $K$ and a
  cycle class $Z \in \mathscr A^i_{X/K}(D)$ with the property that $\Phi(D)(Z) :
  D\to A$ has image $A'$. Better, there exists such an abelian variety $D$ such
  that $\Phi(D)(Z) : D\to A$ defines an isogeny onto its image~$A'$.
  Considering then a $K$-homomorphism $A'\to D$ such that the composition $A'
  \to D \to A'$ is multiplication by a non-zero integer $r$, we obtain by
  pull-back a cycle $Z' \in \mathscr A^i_{X/K}(A')$ with the property that
  $\Phi(A')(Z') : A' \to A$ factors as $A' \stackrel{\cdot r}{\rightarrow} A'
  \hookrightarrow A$.  It is important to note (using Proposition~\ref{P:IMRN})
  that, if $\Phi$ is surjective, then $A'=A$\,; this will provide the
  uniqueness statement regarding the factorization of $\Phi$, and also the
  statement regarding the existence of miniversal cycles for surjective regular
  homomorphisms.

  We now claim that $\Phi$ factors through   $A'
  \hookrightarrow A$, thereby defining $\Phi':\mathscr A^i_{X/K/\Lambda}\to A'$.
  Let $\gamma \in A(K^{\sep})$ be in
  the image of $\Phi(K^\sep)$.
  Let $\xi \in \mathscr A^i_{X/K}(K^\sep) = \A^i(X_{K^{\sep}})$ be some
  algebraically trivial cycle
  class with $\Phi(K^\sep)(\xi) = \gamma$.  Then by Proposition~\ref{P:IMRN}
  there exist a smooth quasi-projective (but not
  necessarily complete) curve $C/K$, a
  cycle $Z \in \operatorname{CH}^i(C\times X)$, and points $t_0, t_1 \in
  C(K^{\sep})$ with $\Phi(Z_{t_1}-Z_{t_0})= \gamma$
  and
  $\Phi(Z_{t_0}-Z_{t_0})= 0_A$, the origin of~$A$.  To ease notation slightly,
  we
  let $\psi_Z = \Phi(C)(Z):C \to A$.
  If $C$ admits a \emph{smooth} projective model, then by
  Proposition~\ref{P:IMRN}
  the cycle $\xi$ is parameterized by an abelian variety over~$K$ and  we
  therefore already
  have $\psi_Z(C) \subset A'$ by construction of $A'$.
  Otherwise, we continue as follows.  Find
  $n$ sufficiently large that the Frobenius twist $C^{(p^n)}$ admits a
  smooth projective model ({\em e.g.} \cite[Lemma 1.2]{schroer09}).
  The iterated relative Frobenius $F^n_{C/K}:
  C \to C^{(p^n)}$ is a universal homeomorphism.
  Let $\til Z :=
  (F^n_{C/K}\times \operatorname{id})_* Z \in
  \operatorname{CH}^i(C^{(p^n)}\times
  X)$, and let $\psi_{\til Z} := \Phi(C^{(p^n)})(Z)$.  If $Q$ is a point of $C$
  and $\til Q = F^n_{C/K}(Q)$, then  the
  projection formula shows that there is an equality $\til Z_{\til Q} =
  p^n Z_Q$.  Consequently, we have a commuting diagram
  \[
  \xymatrix@R=1.8em{
   C \ar[rr]^{\psi_Z} \ar[d]^{F^n_{C/K}}  \ar@/^3ex/^{\bar\psi_Z}[rrrr]
   && A \ar[d]^{[p^n]_A} \ar[rr] &&
   \bar A \ar[d]^{[p^n]_{\bar A}}\\
   C^{(p^n)} \ar[rr]^{\psi_{\til Z}} \ar@/_3ex/_{\bar\psi_{\til Z}}[rrrr] &&A
   \ar[rr] && \bar A
  }
  \]
  where $\bar A$ is the quotient abelian variety $A/A'$.
  Since $C^{(p^n)}$ admits a smooth projective model, $\psi_{\til
   Z}(C^{(p^n)}) \subset A'$, and thus $\bar\psi_{\til Z}(C^{(p^n)}) =
  0_{\bar A}$.  Since $[p^n]_{\bar A}$ is a finite morphism,
  $\bar\psi_{Z}(C)$ is zero-dimensional, and $\psi_Z(C)$ is contained in a
  unique translate of $A'$.  Because $\psi_Z(C)$ passes through
  $0_A \in A'$, we must have $\psi_Z(C) \subset A'$.  In particular,
  $\gamma \in A'(K^\sep)$.

  Having completed the case $S=\operatorname{Spec}K$, we consider the general
  case.  This is achieved by spreading from the generic fiber.  First we recall
  two general facts about abelian schemes.  First,
  denoting $\eta_S$ the generic point of $S$, we recall that if $A$ is an
  abelian
  scheme over $S$ and if
  $(A')^\circ$
  is an $\eta_S$-abelian subvariety of $A_{\eta_S}$, then $(A')^\circ$ is the
  generic fiber of an abelian subscheme $A'$ of~$A$.
  Indeed, let
  $(A')^\circ$ be an abelian subvariety of $A_{\eta_S}$, where $A/S$
  is an abelian scheme
  over $S$. Then there is a
  homomorphism $f:A_{\eta_S} \to  A_{\eta_S}$ with image $(A')^\circ$.
  Since $S$ is normal, being  smooth over the regular scheme $\Lambda$,
  $f$ extends uniquely by  Proposition~\ref{P:BLR}$(ii)$ \cite[I.2.7]{FC}
  to a
  homomorphism $A \to
  A$. Its image
  defines an abelian subscheme $A'$ over $S$ whose generic fiber is
  $(A')^\circ$.
  Second we recall the elementary fact that if $f,g:A\to B$ are two morphisms of
  separated $S$-schemes that agree on the generic fiber, then they agree over $S$.

  Proceeding with the proof of the general case, we start with the regular
  homomorphism $\Phi:\mathscr A^i_{X/S/\Lambda}\to A$.
  Using Lemma~\ref{L:limBC}, we obtain a regular homomorphism
  $\Phi'_{\eta_S} : \mathscr{A}^i_{X_{\eta_S}/\eta_S/\eta_\Lambda} \to
  A_{\eta_S}$.  From the previous case, we see that there exist an abelian
  subvariety $A'_{\eta_S}\subseteq A_{\eta_S}$ and a cycle
  $Z_{\eta_S}  \in \mathscr{A}^i_{X_{\eta_S}/\eta_S/\eta_\Lambda}(A')$ that is
  miniversal  for $\Phi'_{\eta_S} :
  \mathscr{A}^i_{X_{\eta_S}/\eta_S/\eta_\Lambda} \to
  A'_{\eta_S}$\,; \emph{i.e.}, $\Phi'_{\eta_S}(A'_{\eta_S})(Z_{\eta_S}) :
  A'_{\eta_S} \to A'_{\eta_S}$ is multiplication by some
  natural number $r$.

  Using the first observation on abelian varieties above, we find that
  $A'_{\eta_S}$ is the generic point of a sub-abelian $S$-scheme $A'\subseteq
  A$.
  We next claim that $\Phi$ factors as
  $$
  \xymatrix{
   \Phi :\mathscr A^i_{X/S}\ar@{->}[r]^<>(0.5){\Phi'}& A' \ar@{^(->}^{i}[r]&
   A.
  }
  $$
  We define $\Phi'$ by restricting to the generic point, and then use
  Proposition~\ref{P:BLR}$(i)$
  \cite[Cor.~6, \S 8.4]{BLR} to get extension.  More precisely, given $T\to
  S$ in $\mathsf {Sm}_{\Lambda}/S$, and $\zeta\in \mathscr A^i_{X/S/\Lambda}(T)$,
  we
  define $\Phi'(T)(\zeta):T\to A'$ by extending
  $\Phi'_{\eta_S}(T_{\eta_S})(\zeta_{\eta_S}):T_{\eta_S}\to A'_{\eta_S}$. By
  construction, using our second observation about abelian varieties above, we
  have $\Phi(T)(\zeta):T\to A$ factors through $\Phi'(T)(\zeta)$.   This also
  provides the uniqueness of $A'$ and the factorization.

  Finally,  any spread of $Z_{\eta_S}$ to a cycle $Z \in
  \mathscr{A}^i_{X/S}(A')$ that restricts to $Z_{\eta_S}$
  induces an $S$-morphism $\Phi'(
  A')(Z) : A' \to  A'$ whose restriction to $\eta_S$
  is
  multiplication by $r$. In particular, from our second observation on abelian
  varieties, the $S$-morphism $\Phi'(
  A')(Z)$ is multiplication by~$r$.
  The surjectivity of $\Phi'$ can then be easily established using the
  miniversal cycle, since one can simply take appropriate $\Omega$-points of
  $A'$
  to prove surjectivity.
 \end{proof}

 \begin{cor}\label{C:MinVZK}
  A regular homomorphism is surjective if and only if it admits a miniversal
  cycle class.
 \end{cor}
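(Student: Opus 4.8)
The plan is to deduce both implications directly from Lemma~\ref{L:MinVZK}, which already contains all of the substantive work; the corollary is essentially a repackaging of that lemma in the language of miniversal cycles.

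For the ``only if'' direction, suppose $\Phi:\mathscr A^i_{X/S}\to A$ is surjective. Then the ``Moreover'' clause of Lemma~\ref{L:MinVZK} produces a cycle class $Z\in\mathscr A^i_{X/S}(A)$ for which the induced $S$-morphism $\Phi(A)(Z):A\to A$ equals $r\cdot\operatorname{Id}_A$ for some positive integer $r$. Since multiplication by a positive integer on an abelian $S$-scheme is an isogeny, the morphism $\Phi(A)(Z)$ is an isogeny, and $Z$ is by definition a miniversal cycle for $\Phi$.

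For the ``if'' direction, suppose $\Phi$ admits a miniversal cycle $Z\in\mathscr A^i_{X/S}(A)$, so that $\Phi(A)(Z):A\to A$ is an isogeny. By the first part of Lemma~\ref{L:MinVZK}, $\Phi$ factors uniquely as $\mathscr A^i_{X/S}\xrightarrow{\Phi'}A'\xrightarrow{\ i\ }A$, where $i:A'\hookrightarrow A$ is an $S$-abelian subscheme and $\Phi'$ is a surjective regular homomorphism. I would then observe that $\Phi(A)(Z)=i\circ\Phi'(A)(Z)$ factors (scheme-theoretically) through the closed subscheme $A'$, so that the image of $\Phi(A)(Z)$ is contained in $A'$. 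But $\Phi(A)(Z)$ is an isogeny, hence a surjective morphism of $S$-schemes, and therefore its image is all of $A$; combined with the factorization through $A'$ this forces $A'=A$. Consequently $\Phi=\Phi'$ is surjective.

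The only point requiring care is the last step: one must use ``isogeny'' solely through the fact that it is a surjective morphism of schemes, so that the conclusion $A'=A$ is a statement about the image of $\Phi(A)(Z)$ and does not get entangled with the subtler notion of surjectivity on $\Omega$-points over separably closed (possibly imperfect) fields appearing in Definition~\ref{D:surj}. Since the genuine difficulties---the existence of the factorization through $A'$ and the construction of the miniversal cycle realizing $r\cdot\operatorname{Id}_A$---are already handled in Lemma~\ref{L:MinVZK}, I expect no real obstacle here beyond this bookkeeping.
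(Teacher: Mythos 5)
Your proof is correct, and the ``only if'' half coincides with the paper's: the final clause of Lemma~\ref{L:MinVZK} produces $Z$ with $\Phi(A)(Z)=r\cdot\operatorname{Id}_A$, an isogeny, hence a miniversal cycle. Where you differ is the converse. The paper disposes of it in one line (``if a regular homomorphism admits a miniversal cycle, it is immediate that it is surjective''), the most natural such argument being point-lifting: given $a\in A_\Omega(\Omega)$, choose $b$ with $\bigl(\Phi(A)(Z)\bigr)_\Omega(b)=a$ and pull $Z$ back along $b$. You instead invoke the factorization $\Phi=i\circ\Phi'$ through an abelian subscheme $A'\subseteq A$ with $\Phi'$ surjective (the first part of Lemma~\ref{L:MinVZK}), note that the isogeny $\Phi(A)(Z)$ factors through $A'$ while being scheme-theoretically surjective, and conclude $A'=A$, so that $\Phi=\Phi'$ is surjective. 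Your route is genuinely more robust: the point-lifting argument is \emph{not} immediate in positive characteristic, because the fields $\Omega$ appearing in Definition~\ref{D:surj} are separably closed but possibly imperfect, and an inseparable isogeny (for instance multiplication by $p$ on a supersingular elliptic curve) need not be surjective on $\Omega$-points of such a field---the cokernel is measured by fppf cohomology of the kernel, which can be nonzero when the kernel has infinitesimal part. By using ``isogeny'' only through its surjectivity as a morphism of schemes, exactly as you flag in your closing remark, you sidestep this pitfall entirely; the only cost is that your converse leans on the factorization statement of Lemma~\ref{L:MinVZK} rather than on the miniversal cycle alone, which is perfectly legitimate for a corollary of that lemma.
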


 \begin{proof}
  If a regular homomorphism is surjective, it follows from Lemma~\ref{L:MinVZK}
  that it admits a miniversal cycle class.  Conversely, if a regular homomorphism
  admits
  a miniversal cycle class, it is immediate that it is surjective.
 \end{proof}

We include the following example for clarity.  
 \begin{exa}\label{Exa:NotIsoFunct}
 There are examples of surjective regular homomorphisms $\Phi:\mathscr A^n_{X/S}\to A$ that are epimorphisms of functors, and isomorphisms on points, but  are not isomorphisms of functors.
  For instance, take $X=\mathbb P^2_k$, for an algebraically closed field $k$.
  Then $\operatorname{Ab}^2_{X/k}=0$, the trivial abelian variety.  The morphism
  $\Phi^2_{X/k}: \mathscr A^2_{X/k} \to  \operatorname{Ab}^2_{X/k}$ is clearly a
  surjective regular homomorphism, which is an epimorphism of functors, and an
  isomorphism on $k$-points.  However, let  $T$ be a smooth projective curve of
  positive genus, let $\alpha\in \operatorname{A}_0(T)\cong
  {\operatorname{Alb}}_{T/k}$ be any nontrivial cycle class,  and take $Z=
  \operatorname{pr}_1^*\alpha\in  \operatorname{A}^2(T\times_k X)$, where
  $\operatorname{pr}_1$ is the first projection.
  We obtain a morphism $\Phi^2_{X/k}(T):\mathscr A^2_{X/k}(T)\to
  \operatorname{Ab}^2_{X/k}(T)=0$.   Since $Z\in \mathscr A^2_{X/k}(T)$ is
  non-trivial  (\emph{e.g.}, \cite[Thm.~3.3]{fulton}), we see that  $\Phi^2_{X/k}(T)$ is not an isomorphism, and therefore
  $\Phi^2_{X/k}$ is not an isomorphism  of functors.
  We note that in this special case of $X=\mathbb P^2_k$, there is an obvious functor that is representable by $\operatorname{Ab}^2_{X/k}$.  Namely,
if one considers the functor $\mathscr P^2_{X/k}$ given on $T$ by taking the quotient of $\mathscr A^2_{X/k}(T)$ by the classes pulled back from $T$, then $\mathscr P^2_{X/k}=\operatorname{Ab}^2_{X/k}$.  More generally, for a rational variety $X$ over a field $K$,
Benoist--Wittenberg have defined a functor of algebraically trivial cycle classes that is representable by $\operatorname{Ab}^2_{X/K}$ \cite{BW}; see \S\ref{S:sheafify}  for  more discussion of these topics.
\end{exa}

 \subsection{Surjective regular homomorphisms and base change}
 We have the general fact that the notion of surjectivity for regular
 homomorphisms is invariant under base-change\,:

 \begin{pro}\label{P:surjbasechange0}
  In the notation of Lemma~\ref{L:limBC},
  if  $\Phi:\mathscr A^i_{X/S/\Lambda}\to A$ is a surjective regular
  homomorphism, then
  the regular homomorphism $\Phi_{S'}:\mathscr A^i_{X_{S'}/S'/\Lambda'}\to
  A_{S'}$ is surjective.
 \end{pro}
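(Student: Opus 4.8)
The plan is to reduce surjectivity of $\Phi_{S'}$, which by Definition~\ref{D:surj} must be tested at separably closed points of $S'$, to surjectivity of $\Phi$ at separably closed points of $S$, by composing with the structure morphism $S'\to S$ and then invoking transitivity of base change.

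First I would fix a separably closed field $\Omega$ and a point $s':\spec\Omega\to S'$ obtained as an inverse limit of morphisms to $S'$ in $\mathsf {Sm}_{\Lambda'}/S'$; these are precisely the points at which surjectivity of $\Phi_{S'}$ is to be checked. Composing with $S'\to S$ yields a point $s:\spec\Omega\to S$. The key observation is that $s$ is again a test point of the type appearing in Definition~\ref{D:surj} for $\Phi$. Indeed, by Lemma~\ref{L:limBC} every scheme in $\mathsf {Sm}_{\Lambda'}/S'$ is itself an inverse limit of schemes in $\mathsf {Sm}_\Lambda/S$, so writing $\spec\Omega=\varprojlim_n S'_n$ and each $S'_n=\varprojlim_m T_{n,m}$ with $T_{n,m}$ in $\mathsf {Sm}_\Lambda/S$, the point $\spec\Omega=\varprojlim_{(n,m)} T_{n,m}$ is an inverse limit of schemes in $\mathsf {Sm}_\Lambda/S$ (after reindexing the directed system by the cofinal diagonal). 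Since the structure morphisms to $S$ of objects of $\mathsf {Sm}_\Lambda/S$ are dominant and dominance is preserved under composition, $s$ is genuinely obtained as an inverse limit of morphisms to $S$ in $\mathsf {Sm}_\Lambda/S$.

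Next I would apply the transitivity-of-base-change corollary following Lemma~\ref{L:limBC} to the composite pair of cartesian diagrams arising from $s'$ and $S'\to S$, obtaining $(\Phi_{S'})_\Omega=\Phi_\Omega$ as regular homomorphisms over $\mathsf {Sm}_\Omega/\Omega$. Evaluating on $\Omega$-points, the map $(\Phi_{S'})_\Omega(\Omega):\mathscr A^i_{X_{S'}/S'}(\Omega)\to A_{S'}(\Omega)$ is identified with $\Phi_\Omega(\Omega):\mathscr A^i_{X/S}(\Omega)\to A(\Omega)$, both sides being nothing but the base change to the separably closed field $\Omega$ along the single point $s$. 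Because $\Phi$ is surjective and $s$ is a permissible test point over $S$, the latter map is surjective; hence so is the former. As $s'$ was an arbitrary test point over $S'$, this establishes that $\Phi_{S'}$ is surjective.

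I do not expect a serious obstacle here: the argument is essentially bookkeeping of inverse limits and base change, already packaged in Lemma~\ref{L:limBC} and its corollaries. The only step demanding a little care is the verification that the composite $s$ falls within the class of points allowed by Definition~\ref{D:surj} --- that is, that a limit of limits is again an inverse limit of objects of $\mathsf {Sm}_\Lambda/S$ and that dominance of the structure morphisms propagates through the composition --- since it is exactly this that makes transitivity of base change applicable and lets one transport surjectivity from $S$ to $S'$.
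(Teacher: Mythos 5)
Your proof is correct, but it takes a genuinely different route from the paper's. You argue formally from Definition~\ref{D:surj}: fix a test point $s':\spec\Omega\to S'$, observe that the composite $s:\spec\Omega\to S$ is again a test point for $\Phi$, and invoke transitivity of base change ($(\Phi_{S'})_\Omega=\Phi_\Omega$) to transport surjectivity. The paper instead first reduces by limits to the case where $S'\to S$ is a morphism in $\mathsf{Sm}_\Lambda/S$, and then applies Corollary~\ref{C:MinVZK}: a regular homomorphism is surjective if and only if it admits a miniversal cycle, and a miniversal cycle $Z\in\mathscr A^i_{X/S}(A)$ for $\Phi$ pulls back along $S'\to S$ to a miniversal cycle $Z_{S'}\in\mathscr A^i_{X_{S'}/S'}(A_{S'})$ for $\Phi_{S'}$, since $\Phi_{S'}(A_{S'})(Z_{S'})=(\Phi(A)(Z))_{S'}$ is again an isogeny. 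Your approach is more elementary in that it bypasses the cycle-theoretic input entirely (Corollary~\ref{C:MinVZK} rests on Lemma~\ref{L:MinVZK}, the hardest result of that section), showing the proposition is purely formal; the paper's approach buys an explicit miniversal cycle over $S'$ (a datum reused elsewhere) and sidesteps the double-limit bookkeeping that your argument must confront. On that last point, one caveat: your ``cofinal diagonal'' reindexing is not literally available, since the double system $(T_{n,m})$ has no transition maps across the outer index $n$; the rigorous fix is the standard finite-presentation argument (each object of $\mathsf{Sm}_\Lambda/S$ is finitely presented over $\Lambda$, so any map to a filtered colimit factors at a finite level, making the category of factorizations of $s$ through objects of $\mathsf{Sm}_\Lambda/S$ filtered with colimit $\Omega$), or simply a citation of the corollary following Lemma~\ref{L:limBC}, which packages exactly this limit-of-limits statement. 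This is a presentational repair, not a gap.
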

 \begin{proof}
  Taking limits, we reduce to the case where $S' \to S$ is a  morphism in $
  \operatorname{Sm}_\Lambda/S$.
  By Corollary~\ref{C:MinVZK}, a regular homomorphism is surjective if and only
  if it
  admits a miniversal cycle. A miniversal cycle $Z\in \mathscr A^i_{X/S}(A)$ for
  $\Phi$ provides by base-change along $S'\to S$ a miniversal cycle $Z_{S'} \in
  \mathscr A^i_{X_{S'}/S'}(A_{S'})$ for $\Phi_{S'}$.
 \end{proof}

 \subsection{Surjective regular homomorphisms, revisited}

 In the proof of Lemma~\ref{L:MinVZK}, we only used that the regular
 homomorphism
 $\Phi:\mathscr A^i_{X/S}\to A$ is surjective on the separable closure of the
 generic point of $S$.
 The aim of this section is to show Proposition~\ref{prop:surjdef} (see also
 Corollary~\ref{C:surjdef} in the next subsection) saying not
 only that in
 Definition~\ref{D:surj} we could have required surjectivity only at those
 points, but in fact only at $\ell$-primary torsion points for some prime $\ell$
 different from the characteristic exponent of the generic point of $S$.
 \medskip

 First we have the general lemma\,:

 \begin{lem}[Chow groups and purely inseparable extensions]\label{L:rad}
  Let $X$ be a scheme of finite type over a field $K$ of characteristic exponent
  $p$ and let $L/K$ be a purely inseparable extension. Denote $f: X_L \to X$ the
  natural projection. Then the proper push-forward $f_*:
  \operatorname{CH}^i(X_L)
  \to \operatorname{CH}^i(X)$ and the flat pull-back $f^* :
  \operatorname{CH}^i(X)
  \to \operatorname{CH}^i(X_L)$ are isomorphisms on prime-to-$p$ torsion.
 \end{lem}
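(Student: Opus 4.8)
The plan is to reduce to the case of a finite extension and there to show that both composites $f_*\circ f^*$ and $f^*\circ f_*$ equal multiplication by $[L:K]$, which is a power of $p$. Since multiplication by a power of $p$ is an automorphism of any prime-to-$p$ torsion group, this will force $f_*$ and $f^*$ to be isomorphisms on prime-to-$p$ torsion.

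First I would reduce to the finite case. Writing $L=\varinjlim_\alpha L_\alpha$ as the filtered union of its finite (purely inseparable) subextensions $L_\alpha/K$, we have $X_L=\varprojlim_\alpha X_{L_\alpha}$ with finite flat affine transition maps, so by the standard continuity of Chow groups under such limits $\operatorname{CH}^i(X_L)=\varinjlim_\alpha \operatorname{CH}^i(X_{L_\alpha})$, compatibly with the pushforwards and flat pullbacks; for infinite $L/K$ this is moreover how $f_*$ is to be interpreted, since then $f$ is not of finite type. As prime-to-$p$ torsion commutes with filtered colimits and a filtered colimit of isomorphisms is an isomorphism, it suffices to treat a finite purely inseparable extension $L/K$ of degree $p^n$. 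In that case $f\colon X_L\to X$ is finite, flat, and radicial (being the base change of the universal homeomorphism $\operatorname{Spec}L\to\operatorname{Spec}K$), so both $f_*$ and $f^*$ are defined.

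Next I would compute the two composites. That $f_*\circ f^*=p^n\cdot\operatorname{id}$ on $\operatorname{CH}^i(X)$ is the standard degree formula for a finite flat morphism of constant degree $p^n$ (the projection formula together with $f_*[X_L]=p^n[X]$; see \cite[\S1.7]{fulton}). For the other composite the key input is that $f$ is radicial, hence injective on points: for an integral closed subscheme $V\subseteq X_L$ with image $f(V)\subseteq X$ one has $f^{-1}(f(V))=V$ set-theoretically, so $V$ is the only component of $f^{-1}(f(V))$. Writing $F=\kappa(\eta_{f(V)})$, the fibre of $f$ over $\eta_{f(V)}$ is $\operatorname{Spec}(F\otimes_K L)$, an Artinian local ring with $\dim_F(F\otimes_K L)=[L:K]=p^n$. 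Here the length of $F\otimes_K L$ over itself is exactly the flat-pullback multiplicity in $f^*[f(V)]$, while the residue degree $[\kappa(\eta_V):F]=\dim_F(F\otimes_K L)_{\mathrm{red}}$ is the pushforward degree in $f_*[V]$; their product is $\dim_F(F\otimes_K L)=p^n$. Hence $f^*\circ f_*[V]=p^n[V]$, so $f^*\circ f_*=p^n\cdot\operatorname{id}$ on $\operatorname{CH}^i(X_L)$.

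The main obstacle is this second computation: one must verify that no extra components appear in $f^{-1}(f(V))$ (this is precisely where radicialness is used) and that the flat-pullback multiplicity and the residue degree multiply to exactly $p^n$, which amounts to the factorization of $\dim_F(F\otimes_K L)=[L:K]$ as length times residue degree. Granting both composite identities, the conclusion is formal: $f_*$ and $f^*$ preserve prime-to-$p$ torsion, and on those subgroups multiplication by $p^n$ is bijective, so both $f_*\circ f^*$ and $f^*\circ f_*$ are isomorphisms; therefore $f^*$ is injective and surjective, and likewise $f_*$, on prime-to-$p$ torsion, as claimed.
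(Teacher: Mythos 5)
Your proof is correct and takes essentially the same approach as the paper: the paper's entire argument is the observation that for finite $L/K$ of degree $p^r$ one has $f_*f^* = p^r$ and $f^*f_* = p^r$ on Chow groups, which is exactly what you establish before invoking invertibility of $p^r$ on prime-to-$p$ torsion. You simply supply the details the paper leaves implicit, namely the reduction to finite subextensions by a colimit argument and the length/residue-degree computation showing $f^*f_*[V]=p^n[V]$.
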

 \begin{proof}
  Indeed, if $L/K$ is finite of degree, say, $p^r$, then on Chow groups we have
  $f_*f^* = p^r$ and $f^*f_* = p^r$.
 \end{proof}

 Second we have the following lemma, which uses the existence of miniversal
 cycles (Lemma~\ref{L:MinVZK})\,:

 \begin{lem}\label{L:Tor}
  Let $\Phi:\mathscr A^i_{X/K}\to A$
  be a surjective regular homomorphism.
  Then there exists a natural number $r$ such that for any separable extension
  $\Omega/K$ that is separably closed, and for any $N$ invertible in
  $K$, the induced homomorphism
\[
\xymatrix@C+2pc{
\Phi_{\Omega,r}[N]: \A^i(X_{\Omega})[(r,N)N]
\ar[r]^-{\Phi_\Omega[(r,N)N]} & A(\Omega)[(r,N)N] \ar[r] & A(\Omega)[N]
}
\]
is surjective, where $(r,N) = \gcd(r,N)$.
 \end{lem}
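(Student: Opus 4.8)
The plan is to begin with Lemma~\ref{L:MinVZK}. Since $\Phi:\mathscr A^i_{X/K}\to A$ is surjective, that lemma (equivalently Corollary~\ref{C:MinVZK}) produces a natural number $r$ and a miniversal cycle $Z\in\mathscr A^i_{X/K}(A)$ for which the induced $S$-morphism $\Phi(A)(Z):A\to A$ is multiplication by $r$. I would fix this $r$ once and for all, so that it is the $r$ in the statement and depends neither on $\Omega$ nor on $N$. Given a separably closed separable extension $\Omega/K$ and an integer $N$ invertible in $K$, Proposition~\ref{P:surjbasechange0} together with Lemma~\ref{L:limBC} lets me base-change the entire situation to $\Omega$: the cycle $Z_\Omega$ is miniversal for the surjective regular homomorphism $\Phi_\Omega$, and $\Phi_\Omega\big((Z_\Omega)_t\big)=r\,t$ for every $\Omega$-point $t$ of $A$. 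Since $N$ is invertible and $\Omega$ is separably closed, the group $A(\Omega)[M]$ is finite \'etale of the expected rank for each $M\mid (r,N)N$, and the prime-to-$p$ torsion of $A(\Omega)$ is divisible; these are the facts that will allow me to divide by integers prime to $p$ and to solve the congruences that appear below.

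The core step is to manufacture the required torsion classes in $\A^i(X_\Omega)$. The assignment $t\mapsto (Z_\Omega)_t$ is not additive, but the correspondence $Z_\Omega$ acts additively on zero-cycles and so defines a homomorphism $(Z_\Omega)_*:\operatorname{CH}_0(A_\Omega)_{\deg 0}\to\A^i(X_\Omega)$ satisfying $\Phi_\Omega\circ (Z_\Omega)_*=r\cdot\operatorname{alb}$, where $\operatorname{alb}$ is the Albanese map. Restricting to torsion and using the structure of the torsion of zero-cycles on an abelian variety over a separably closed field (Roitman's theorem, away from $p$), I obtain, from any prescribed torsion point of $A$, a genuinely torsion class in $\A^i(X_\Omega)$ of the same order whose image under $\Phi_\Omega$ is exactly $r$ times that point. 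Feeding in the surjectivity of $\Phi_\Omega$ (Definition~\ref{D:surj}) and the parameterizations supplied by Proposition~\ref{P:IMRN} and the construction in Lemma~\ref{L:MinVZK}, I would then, for a given $a\in A(\Omega)[N]$, select a torsion class of order dividing $(r,N)N$ that maps under $\Phi_\Omega$ into the fibre $[(r,N)]^{-1}(a)$. Applying the second arrow, i.e.\ multiplication by $(r,N)$, recovers $a$, so that $a$ lies in the image of the composite; the torsion level $(r,N)N$ is dictated by the order bookkeeping in this choice.

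The main obstacle --- and the source of the factor $(r,N)$ --- is that the miniversal cycle realizes multiplication by $r$ rather than the identity, so on $N$-torsion the operator $r$ fails to be invertible precisely along $\gcd(r,N)$. Consequently one cannot simply lift an $N$-torsion class to a torsion class: an arbitrary preimage furnished by surjectivity need not be torsion, while the preimages produced from the miniversal cycle land in $r\cdot A(\Omega)[\,\cdot\,]$. The delicate part is therefore to combine a (non-torsion) lift coming from surjectivity with the (torsion) lifts coming from the miniversal cycle, reconciling them by a B\'ezout relation $u\,r+v\,N=(r,N)$ and by the prime-to-$p$ divisibility of $A(\Omega)$, so as to exhibit an honest class that is killed by $(r,N)N$ and whose image under $(r,N)\cdot\Phi_\Omega$ is the prescribed $a$. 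I expect the genuinely hard point to be guaranteeing that the class written down is actually annihilated by $(r,N)N$ --- not merely that it maps to the correct element --- which is exactly where Roitman's theorem and the divisibility of the prime-to-$p$ torsion are indispensable and where the precise multiplier $(r,N)$ is forced.
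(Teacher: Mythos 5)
Your preparation matches the paper's own argument: you fix a miniversal cycle $Z\in\mathscr A^i_{X/K}(A)$ with $\Phi(A)(Z)=r\cdot\operatorname{Id}_A$ via Lemma~\ref{L:MinVZK} (so that $r$ depends on neither $\Omega$ nor $N$), base-change to $\Omega$, and observe that $w_Z\colon t\mapsto Z_t-Z_0$ factors as $A(\Omega)\stackrel{\tau}{\longrightarrow}\operatorname{A}_0(A_\Omega)\stackrel{Z_*}{\longrightarrow}\operatorname{A}^i(X_\Omega)$ with $\tau(t)=[t]-[0]$, so that the theorem of Beauville \cite[Prop.~11]{Beau83} (the statement you attribute to Roitman) makes $w_Z$ a homomorphism on prime-to-$p$ torsion satisfying $\Phi_\Omega\circ w_Z[M]=r$ on $A(\Omega)[M]$. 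This is precisely the ``claim'' in the paper's proof; the paper additionally has to transfer this from algebraically closed to separably closed fields using Lemma~\ref{L:rad}, a point you pass over silently even though $\Omega$ is only assumed separably closed.

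The genuine gap is in your final step, and it is not merely a deferred computation: the mechanism you propose cannot produce the required class. The paper's deduction of surjectivity takes place entirely inside the torsion subgroups, where $w_Z$ actually is a homomorphism: it applies the identity $\Phi_\Omega[M]\circ w_Z[M]=\times r$ at the level $M=(r,N)N$, together with the resulting kernel bound $\ker w_Z[N]\subset A(\Omega)[(r,N)]$, and no class of infinite order ever appears. You propose instead to take a preimage $\beta\in\operatorname{A}^i(X_\Omega)$ of a suitable point of $A(\Omega)$ furnished by surjectivity --- a class that, as you yourself note, need not be torsion --- and to reconcile it with the torsion classes $w_Z(b)$ by a B\'ezout relation $ur+vN=(r,N)$. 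But any integral combination $u\,w_Z(b)+vN\beta$ in which $\beta$ has infinite order and $vN\neq 0$ again has infinite order (otherwise a nonzero multiple of $\beta$ would equal a torsion class, forcing $\beta$ to be torsion), so such a combination can never lie in $\operatorname{A}^i(X_\Omega)[(r,N)N]$. The prime-to-$p$ divisibility of $A(\Omega)$ cannot repair this: it lets you divide points of $A$, whereas what your plan needs is divisibility \emph{inside the torsion subgroup of} $\operatorname{A}^i(X_\Omega)$ (equivalently, control of the torsion of $\ker\Phi_\Omega$), which neither the surjectivity of $\Phi_\Omega$ nor Proposition~\ref{P:IMRN} supplies. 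So the step you flag as ``the genuinely hard point'' is exactly where your route breaks down, rather than a detail to be checked; the paper's proof sidesteps it by never leaving torsion, the one place where the miniversal cycle furnishes an honest group homomorphism.
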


 \begin{proof}
  The lemma is proved  in the case where $\Omega$ is algebraically closed in
  \cite[Lem.~3.2]{ACMVdmij}.  Let $p$ be the characteristic
  exponent of $K$.
  First we claim that if  $Z\in \operatorname{CH}^i(A_\Omega \times_\Omega
  X_\Omega)$, then the
  map $w_Z : A(\Omega) \to \operatorname{A}^i(X_\Omega)$, $a\mapsto Z_a-Z_0$ is
  a
  homomorphism on prime-to-$p$ torsion\,; more precisely, for each natural
  number
  $N$ coprime to~$p$, $w_Z$
  restricted to $A(\Omega)[N]$ gives a homomorphism $w_Z[N]:A(\Omega)[N]\to
  \operatorname{A}^i(X_\Omega)[N]$. Indeed, the map $w_Z$ factors as $A(\Omega)
  \stackrel{\tau}{\longrightarrow} \operatorname{A}_0(A_\Omega)
  \stackrel{Z_*}{\longrightarrow} \operatorname{A}^i(X_\Omega)$, where $\tau(a)
  =
  [a] - [0]$ and $Z_*$ is the group homomorphism induced by the action of the
  correspondence~$Z$. The claim then follows from the fact that $\tau_{\bar
   \Omega}
  : A(\bar \Omega) \to \operatorname{A}_0(A_{\bar \Omega}), a\mapsto [a]-[0]$ is
  an isomorphism on torsion by  \cite[Prop.~11]{Beau83}, where $\bar \Omega$
  denotes an algebraic closure of $\Omega$, from the fact that
  $A(\Omega)\hookrightarrow A(\Omega')$ is
  an isomorphism on prime-to-$p$ torsion for any purely inseparable extension
  $\Omega' /\Omega$, and from the fact that the base change
  homomorphism $ \operatorname{A}_0(A_{ \Omega}) \to
  \operatorname{A}_0(A_{\Omega'}) $ is an isomorphism
  on prime-to-$p$ torsion for any purely inseparable extension $\Omega' /\Omega$
  by Lemma~\ref{L:rad}.

  Second, let
  $Z\in \mathscr A^i_{X/K}(A)$ be a miniversal cycle provided by
  Lemma~\ref{L:MinVZK} such that the induced morphism
  $\Phi(A)(Z):A\to
  A$ is  given by  $r\cdot \operatorname{Id}_{A}$ for some natural
  number $r$.  Let $N$ be a natural number relatively prime to $p$.
  By
  the claim, the composition of group
  homomorphisms
  $$A(\Omega)[N] \stackrel{w_Z[N]}{\longrightarrow} \operatorname{A}^i(X_\Omega)[N]
  \stackrel{\Phi_{\Omega}[N]}{\longrightarrow} A(\Omega)[N]$$
is multiplication by $r$.  In particular, $\ker w_Z[N] \subset
A(\Omega)[(r,N)]$, and the composition $\Phi_{\Omega,r}[N]$ is
surjective.
 \end{proof}

 \begin{pro}\label{prop:surjdef}
  Let $\Phi:\mathscr A^i_{X/S}\to A$ be a regular homomorphism.
  Denote $\eta_S$ and $\eta_\Lambda$ the respective generic points of
  $S$ and $\Lambda$. Let $p$ be the characteristic exponent of $\eta_S$, and let $\Omega$ be a separable closure of $\kappa(\eta_S)$.
  The following conditions are equivalent\,:
  \begin{enumerate}[label=(\roman*)]
   \item $\Phi:\mathscr A^i_{X/S}\to A$ is surjective.
   \item $\Phi_{\Omega'}(\Omega'): \mathrm{A}^i(X_{\Omega'})\to A_{\Omega'}(\Omega')$
   is
   surjective for all
   separably closed points $s : \spec \Omega' \to S$ obtained
   as inverse limits of morphisms to $S$ in $\mathsf {Sm}_\Lambda/S$.

   \item $\Phi_{\Omega}(\Omega) : \mathrm{A}^i(X_{\Omega})\to A_\Omega(\Omega)$
   is
   surjective.
   \item $\Phi_{\Omega}(\Omega) : \mathrm{A}^i(X_{\Omega})\to A_\Omega(\Omega)$
   is
   surjective on $\ell$-primary torsion for some prime $\ell\neq p$.

      \item $\Phi_{\Omega}(\Omega) : \mathrm{A}^i(X_{\Omega})\to A_\Omega(\Omega)$ contains 
 the $\ell$-torsion in its image  for some prime $\ell\neq p$.
  \end{enumerate}
 \end{pro}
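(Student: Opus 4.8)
The plan is to establish the cyclic chain $(i)\Leftrightarrow(ii)\Rightarrow(iii)\Rightarrow(iv)\Rightarrow(i)$, with essentially all of the content residing in the final implication. The equivalence $(i)\Leftrightarrow(ii)$ is merely a restatement of Definition~\ref{D:surj}. For $(ii)\Rightarrow(iii)$, I would invoke the first assertion of Lemma~\ref{L:WhatPoints} with $T=S$: the separably closed point $\eta_S^{\sep}=\spec\kappa(S)^{\sep}$ is itself obtained as an inverse limit of dominant morphisms in $\mathsf{Sm}_\Lambda/S$, so $(iii)$ is exactly the instance of $(ii)$ at this particular point. The implication $(iii)\Rightarrow(iv)$ is immediate, since a surjective homomorphism of abelian groups is in particular surjective on $\ell$-primary torsion.

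The heart of the argument is $(iv)\Rightarrow(i)$. First I would apply Lemma~\ref{L:MinVZK} to factor $\Phi$ uniquely as $\mathscr A^i_{X/S}\stackrel{\Phi'}{\twoheadrightarrow} A' \stackrel{i}{\hookrightarrow} A$, where $A'\subseteq A$ is an $S$-abelian subscheme and $\Phi'$ is a \emph{surjective} regular homomorphism. By Corollary~\ref{C:MinVZK} it then suffices to prove $A'=A$, for in that case $\Phi=\Phi'$ is surjective. The key observation is that, because $\Phi$ factors through $i$, the image of $\Phi_\Omega(\Omega)\colon\mathrm{A}^i(X_\Omega)\to A_\Omega(\Omega)=A(\Omega)$ is contained in $A'(\Omega)$ for $\spec\Omega=\eta_S^{\sep}$.

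Next I would exploit hypothesis $(iv)$ together with the density of $\ell$-primary torsion. Surjectivity of $\Phi_\Omega(\Omega)$ on $\ell$-primary torsion forces $A(\Omega)[\ell^\infty]\subseteq A'(\Omega)$. Since $\ell\neq p$ and $\Omega$ is separably closed, each finite group scheme $A_{\eta_S}[\ell^n]$ is \'etale with exactly $\ell^{2n\dim A_{\eta_S}}$ geometric points, all of which therefore lie in $A'_\Omega$; comparing this count with the at most $\ell^{2n\dim A'_{\eta_S}}$ points of $A'_\Omega[\ell^n]$ yields $\dim A'_{\eta_S}=\dim A_{\eta_S}$. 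As $A_{\eta_S}$ is an integral abelian variety, an abelian subvariety of full dimension must equal it, so $A'_{\eta_S}=A_{\eta_S}$. Finally, an abelian subscheme of $A$ is determined by its generic fibre---this is the extension-and-rigidity fact recorded in the proof of Lemma~\ref{L:MinVZK}, resting on Proposition~\ref{P:BLR}$(ii)$---so $A'=A$ over $S$, completing the proof.

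The step I expect to be the main obstacle is the passage from the torsion containment to the equality $A'=A$: one must carry out the counting argument at the generic fibre $A_{\eta_S}$, where $A$ is genuinely an abelian variety over a field; use $\ell\neq p$ to guarantee \'etaleness and the exact count $\ell^{2n\dim}$ of geometric torsion points; and then propagate the equality of generic fibres to an equality of abelian $S$-schemes. I note that Lemma~\ref{L:Tor} is not logically required for this direction, but it makes the quantitative behaviour of $\Phi$ on torsion explicit and may be substituted for the bare density argument.
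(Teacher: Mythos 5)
Your step $(iii)\Rightarrow(iv)$ contains a genuine gap, and it sits exactly where the real content of the proposition lies. Condition $(iv)$ asserts that the \emph{restriction} of $\Phi_\Omega(\Omega)$ to $\ell$-primary torsion, $\mathrm{A}^i(X_\Omega)[\ell^\infty]\to A(\Omega)[\ell^\infty]$, is surjective, i.e., that every $\ell$-power torsion point of $A(\Omega)$ is hit by a \emph{torsion} cycle class. A surjective homomorphism of abelian groups does not in general restrict to a surjection on torsion subgroups (consider $\mathbb{Z}\twoheadrightarrow \mathbb{Z}/\ell$, which is zero on $\ell$-primary torsion), and $\mathrm{A}^i(X_\Omega)$ is precisely the sort of large, possibly divisible group where this is an issue. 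This implication is exactly what Lemma~\ref{L:Tor} is for: one takes a miniversal cycle $Z$ with $\Phi(A)(Z)=r\cdot\operatorname{Id}_A$ (Lemma~\ref{L:MinVZK}), uses that $w_Z\colon A(\Omega)\to \mathrm{A}^i(X_\Omega)$ is a homomorphism on prime-to-$p$ torsion (via Beauville's theorem on $0$-cycles on abelian varieties), so that $\Phi_\Omega\circ w_Z$ is multiplication by $r$ on $N$-torsion, and then exploits divisibility of $A(\Omega)[\ell^\infty]$ to produce torsion preimages. Note that, as the paper remarks, the proofs of Lemmas~\ref{L:MinVZK} and~\ref{L:Tor} only use surjectivity at $\eta_S^{\sep}$, so hypothesis $(iii)$ is indeed strong enough to run this argument; but some such argument is unavoidable, and your closing remark that Lemma~\ref{L:Tor} is not logically required is accurate only for the direction $(iv)\Rightarrow(i)$. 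As written, your cycle is broken at this link: you establish $(iv)\Rightarrow(i)\Leftrightarrow(ii)\Rightarrow(iii)$, but never that $(i)$, $(ii)$ or $(iii)$ implies $(iv)$.

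The rest of the proposal is correct, and your $(iv)\Rightarrow(i)$ is a genuinely different route from the paper's. The paper proves $(iv)\Rightarrow(iii)$ by invoking density of $\ell$-power torsion in $A$ together with the fact (from Lemma~\ref{L:MinVZK}) that the image of $\Phi_\Omega(\Omega)$ is the group of $\Omega$-points of an abelian subvariety, and then separately deduces $(iii)\Rightarrow(ii)$ from the trace formalism (Lemma~\ref{L:Pull-Trace} and Proposition~\ref{P:surjbasechange0}). Your argument---factor $\Phi$ over $S$ through $A'$ by Lemma~\ref{L:MinVZK}, force $\dim A'_{\eta_S}=\dim A_{\eta_S}$ by counting points of the \'etale group schemes $A_{\eta_S}[\ell^n]$, and then spread the equality $A'_{\eta_S}=A_{\eta_S}$ out over $S$---reaches $(i)$ in one step and bypasses the trace machinery entirely; your counting is in effect a proof of the density statement the paper quotes. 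Indeed, the same counting applied to the full image $A(\Omega)$ rather than just its torsion would also give $(iii)\Rightarrow(i)$ directly. So once $(iii)\Rightarrow(iv)$ is repaired by citing Lemma~\ref{L:Tor} (with the observation above that $(iii)$ suffices as its hypothesis), your proof becomes complete, and on the $(iv)\Rightarrow(i)$ leg it is arguably cleaner than the paper's.
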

 
 \begin{proof} The equivalence of $(i)$ and $(ii)$ is simply
  Definition~\ref{D:surj}, while the implication $(ii)\Rightarrow (iii)$ is
  obvious (since $S$ is smooth over $\Lambda$, $\eta_S$ is separable over
  $\eta_{\Lambda}$\,; see Lemma~\ref{L:WhatPoints}). The implication
  $(iii)\Rightarrow (ii)$ follows from
  Lemma~\ref{L:Pull-Trace} and Proposition~\ref{P:surjbasechange0}. The
  implication $(iii) \Rightarrow (iv)$ is Lemma~\ref{L:Tor}, while the
  implication
  $(iv) \Rightarrow (iii)$ follows from the density of $\ell$-power torsion
  points
  in~$A$ and from the fact implied by Lemma~\ref{L:MinVZK} that the image of
  $\Phi_{\Omega}(\Omega)$ consists of the $\Omega$-points of an abelian
  subvariety
  of~$A$. Finally, the implication $(iv) \Rightarrow (v)$ is trivial, while $(v)$ implies $(iii)$ because the dimension of the abelian subvariety of $A$ which contains the image of $\Phi_\Omega(\Omega)$ can be read off from the rank of its $\ell$-torsion.
 \end{proof}

 \subsection{Surjective regular homomorphisms and descent}
 Propositions~\ref{P:surjtrace} and~\ref{P:surjGal} below show that surjectivity
 for regular homomorphisms descends in the
 situations described in \S\S \ref{S:algclosed} and~\ref{S:Galois}.\medskip

 First we show that the trace \eqref{E:L/KtrRegHom} of a
 surjective regular homomorphism is surjective.
 \begin{pro}\label{P:surjtrace}
  Let $k$ be a
  separably closed field
  and let $\Omega/k$ be a separable field
  extension.
  Suppose $\Psi:\mathscr A^i_{X_\Omega /\Omega}\to B$ is a surjective
  regular homomorphism. Then
  the regular homomorphism   $$
  \LKtrace{\Psi} :\mathscr A^i_{X/k}\to \LKtrace B
  $$
  defined in \eqref{E:L/KtrRegHom} is surjective.
 \end{pro}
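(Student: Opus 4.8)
The plan is to show that the image abelian subvariety of $\LKtrace\Psi$ exhausts $\LKtrace B$, by transferring the surjectivity of $\Psi$ across the trace morphism and then carrying out a dimension count. Throughout, recall that $\Omega/k$ is primary (as $k$ is separably closed) and separable, so the $\Omega/k$-trace $(\LKtrace B,\tau)$ is available.

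First I would apply Lemma~\ref{L:MinVZK} to the regular homomorphism $\LKtrace\Psi$ over $k$, factoring it as $\mathscr A^i_{X/k}\twoheadrightarrow \LKtrace{B}'\hookrightarrow \LKtrace B$, where $\LKtrace{B}'$ is an abelian subvariety of $\LKtrace B$ and the first arrow is a surjective regular homomorphism. Surjectivity of $\LKtrace\Psi$ is then equivalent to the equality $\LKtrace{B}'=\LKtrace B$, so the whole problem reduces to computing $\dim\LKtrace{B}'$.

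Next I would base change this factorization along $k\to\Omega$. By Proposition~\ref{P:surjbasechange0} the morphism $(\LKtrace\Psi)_\Omega$ still factors surjectively onto $(\LKtrace{B}')_\Omega$, while by Lemma~\ref{L:Pull-Trace}$(ii)$ we have $\Psi=\tau\circ(\LKtrace\Psi)_\Omega$. Writing $\tau'$ for the restriction of $\tau$ to $(\LKtrace{B}')_\Omega$, this exhibits $\Psi$ as the composition of a surjective regular homomorphism onto $(\LKtrace{B}')_\Omega$ followed by the homomorphism of abelian varieties $\tau':(\LKtrace{B}')_\Omega\to B$. In particular $\Psi$ factors through the abelian subvariety $\tau'((\LKtrace{B}')_\Omega)\subseteq B$; since $\Psi$ is surjective, its image abelian subvariety is all of $B$ (Lemma~\ref{L:MinVZK}), and because the image subvariety is contained in any abelian subvariety through which $\Psi$ factors, we conclude $\tau'((\LKtrace{B}')_\Omega)=B$, i.e. $\tau'$ is surjective. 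Hence $\dim\LKtrace{B}'=\dim(\LKtrace{B}')_\Omega\ge\dim B$.

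Finally I would combine this with the standard fact that the trace morphism $\tau:\LKtrace{B}_\Omega\to B$ has finite kernel \cite{conradtrace}, which forces $\dim\LKtrace B\le\dim B$. Chaining the inequalities gives $\dim\LKtrace{B}'\ge\dim B\ge\dim\LKtrace B\ge\dim\LKtrace{B}'$, so all are equalities; as $\LKtrace{B}'$ is an abelian subvariety of the connected abelian variety $\LKtrace B$ of the same dimension, $\LKtrace{B}'=\LKtrace B$, and therefore $\LKtrace\Psi$ is surjective. The main obstacle is the careful bookkeeping of images under the trace: one must know that both surjectivity and the image subvariety are preserved under the base change $(-)_\Omega$ (Proposition~\ref{P:surjbasechange0}), and that $\tau$ is \emph{almost injective}, i.e. has finite kernel, so that it cannot spuriously inflate dimensions. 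Everything else is formal from Lemmas~\ref{L:MinVZK} and~\ref{L:Pull-Trace}.
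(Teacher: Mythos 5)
Your proof is correct, and it takes a genuinely different route from the paper's. The paper works with torsion: by Proposition~\ref{prop:surjdef}$(iv)$ it suffices to check surjectivity on $\ell$-primary torsion, and this is transferred from $\Psi$ to $\LKtrace{\Psi}$ through the diagram of Lemma~\ref{L:Pull-Trace}$(ii)$ using Lecomte's rigidity theorem (torsion in Chow groups is insensitive to extensions of separably closed fields, extended from the algebraically closed case via Lemma~\ref{L:rad}), the corresponding rigidity for prime-to-$p$ torsion points of abelian varieties, and the fact that for a regular extension the kernel of the trace $\tau$ is not merely finite but infinitesimal (zero-dimensional, supported at the identity), so that it contributes no nontrivial field-valued points. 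You instead argue geometrically: Lemma~\ref{L:MinVZK} factors $\LKtrace{\Psi}$ surjectively onto an abelian subvariety $\LKtrace{B}'\subseteq \LKtrace{B}$; base change (Proposition~\ref{P:surjbasechange0}) combined with Lemma~\ref{L:Pull-Trace}$(ii)$ exhibits $\Psi$ as factoring through $\tau|_{(\LKtrace{B}')_\Omega}$, so surjectivity of $\Psi$ forces this restriction of $\tau$ to surject onto $B$; a dimension count using only the finiteness of $\ker\tau$ then pinches $\LKtrace{B}'=\LKtrace{B}$. Your route is more formal and self-contained given the earlier lemmas: it avoids the deepest arithmetic input of the paper's proof (Lecomte's theorem on torsion in Chow groups) and needs only that $\ker\tau$ is finite rather than infinitesimal. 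The paper's route, on the other hand, stays uniform with the torsion-theoretic machinery it has already built and reuses elsewhere (Lemma~\ref{L:Tor}, Proposition~\ref{prop:surjdef}, Corollary~\ref{C:surjdef}), and in passing records the compatibility of the trace with torsion points that is relevant to its later cohomological arguments. Either argument proves the proposition; yours could serve as a drop-in replacement.
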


 \begin{proof} Let $p$ be the characteristic exponent of $k$. First we claim
  that $\operatorname{A}^i(X) \to \operatorname{A}^i(X_K)$ is an isomorphism on
  prime-to-$p$ torsion for all extensions $K/k$ of separably closed fields.
  Lecomte's rigidity theorem \cite{lecomte86} states indeed that for an
  extension
  $K/k$ of algebraically closed fields and for any separated scheme $X$ of
  finite
  type over $k$, the base change homomorphism $\operatorname{CH}^i(X) \to
  \operatorname{CH}^i(X_K)$ is an isomorphism on torsion. In fact, the inverse
  is
  given by the specialization homomorphism (\emph{cf.}~\cite[proof of
  Thm.~3.8]{ACMVdcg})
  and so  the base change homomorphism $\operatorname{A}^i(X) \to
  \operatorname{A}^i(X_K)$ is also an isomorphism on torsion. Now one passes
  from
  algebraically closed fields to separably closed fields by utilizing
  Lemma~\ref{L:rad}.
  The proposition then follows from the characterization of surjectivity given
  in item $(iv)$ of Proposition~\ref{prop:surjdef} applied to the diagram of
  Lemma~\ref{L:Pull-Trace}$(ii)$, together with the above rigidity result for
  prime-to-$p$ torsion and the
  rigidity for prime-to-$p$ torsion points on abelian varieties for extensions
  of separably closed fields.   Note that a key point we are using when we
  employ
  Lemma~\ref{L:Pull-Trace}$(ii)$ is that for a regular extension of fields, the
  kernel of the trace is zero-dimensional, and supported at the identity
  \cite[Thm.~6.12]{conradtrace}.
 \end{proof}

 As a corollary of Proposition~\ref{P:surjtrace}, we get yet another
 characterization of surjective regular homomorphisms\,:
 \begin{cor}\label{C:surjdef}
  A regular homomorphism $\Phi:\mathscr A^i_{X/S}\to A$  is surjective if and
  only if one of the following conditions holds\,:

  \begin{enumerate}
   \item[$(v)$] $\Phi_{\Omega}(\Omega) : \mathrm{A}^i(X_{\Omega})\to
   A_\Omega(\Omega)$
   is
   surjective for some
   separably closed point $ s: \spec \Omega \to S$  obtained
   as inverse limits of morphisms to $S$ in $\mathsf {Sm}_\Lambda/S$.

   \item[$(vi)$] $\Phi_{\Omega}(\Omega) : \mathrm{A}^i(X_{\Omega})\to
   A_\Omega(\Omega)$
   is
   surjective on $\ell$-primary torsion for some prime $\ell\neq p$ and for some
   separably closed point $ s: \spec \Omega \to S$ obtained
   as inverse limits of morphisms to $S$ in $\mathsf {Sm}_\Lambda/S$.
  \end{enumerate}
 \end{cor}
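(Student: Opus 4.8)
The plan is to deduce the two additional characterizations from Proposition~\ref{prop:surjdef}, reusing the prime-to-$p$ rigidity established \emph{inside} the proof of Proposition~\ref{P:surjtrace}. Throughout, write $k:=\kappa(\eta_S)^{\sep}$, so that $\eta_S^{\sep}=\spec k$; by Lemma~\ref{L:WhatPoints} this is itself one of the separably closed points admissible in $(v)$ and $(vi)$. Letting $(i)$ denote surjectivity in the sense of Definition~\ref{D:surj}, I would establish the cycle of implications $(i)\Rightarrow(v)\Rightarrow(vi)\Rightarrow(i)$, which gives the equivalence of all three conditions.

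The two implications out of $(i)$ are immediate. Since $(i)$ is equivalent to conditions $(iii)$ and $(iv)$ of Proposition~\ref{prop:surjdef}, evaluating at the admissible point $\eta_S^{\sep}$ shows that $\Phi_{\eta_S^{\sep}}(\eta_S^{\sep})$ is surjective, which is $(v)$, and that it is surjective on $\ell$-primary torsion for some $\ell\neq p$, which is $(vi)$.

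For $(v)\Rightarrow(vi)$, suppose $\Phi_\Omega(\Omega)$ is surjective at some admissible point $s:\spec\Omega\to S$. Applying Proposition~\ref{prop:surjdef} over the base $\Lambda=S=\spec\Omega$ (whose generic point is already separably closed, so that its separable closure is $\Omega$ itself), we learn that $\Phi_\Omega\colon\mathscr A^i_{X_\Omega/\Omega}\to A_\Omega$ is a \emph{surjective} regular homomorphism over the field $\Omega$. Then Lemma~\ref{L:MinVZK} furnishes a miniversal cycle and Lemma~\ref{L:Tor} produces an integer $r$ such that, for every prime $\ell\nmid rp$ and every $m$, the map $\A^i(X_\Omega)[\ell^m]\to A(\Omega)[\ell^m]$ is surjective. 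This is precisely condition $(vi)$ for such an~$\ell$.

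The crux is $(vi)\Rightarrow(i)$, and it is also where the main obstacle lies. Let $s:\spec\Omega\to S$ be admissible, with $\Phi_\Omega(\Omega)$ surjective on $\ell$-primary torsion, $\ell\neq p$. By the discussion following Definition~\ref{D:AlgFun}, $s$ factors through the generic point $\eta_S$; hence $k=\kappa(\eta_S)^{\sep}$ embeds in $\Omega$ and $s$ factors through $\eta_S^{\sep}$, so by transitivity of base change (\S\ref{S:FunLim-1}) one has $\Phi_\Omega=(\Phi_{\eta_S^{\sep}})_\Omega$ and a commutative square relating $\Phi_{\eta_S^{\sep}}(\eta_S^{\sep})$ and $\Phi_\Omega(\Omega)$ on $\ell$-primary torsion. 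The difficulty is that $\Omega/k$ need \emph{not} be separable: the structure maps $T_n\to S$ defining the admissible limit are only required dominant with $T_n$ smooth over $\Lambda$, so $\Omega$ is separable over $\kappa(\eta_\Lambda)$ but may well be inseparable over $\kappa(\eta_S)$, and the trace construction of \S\ref{S:algclosed} does not apply. This is exactly circumvented by the rigidity proved within Proposition~\ref{P:surjtrace}: for \emph{any} extension $\Omega/k$ of separably closed fields, the base-change map $\A^i(X_k)\to\A^i(X_\Omega)$ is an isomorphism on prime-to-$p$ torsion (Lecomte's theorem for algebraically closed fields, combined with Lemma~\ref{L:rad} for the purely inseparable parts), while $A_k(k)[\ell^\infty]\to A_\Omega(\Omega)[\ell^\infty]$ is an isomorphism because the $\ell$-primary torsion of $A_k$ is \'etale and $k$ is separably closed. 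Both vertical arrows of the square being isomorphisms, the surjectivity of the bottom arrow forces $\Phi_{\eta_S^{\sep}}(\eta_S^{\sep})$ to be surjective on $\ell$-primary torsion. This is condition $(iv)$ of Proposition~\ref{prop:surjdef}, whence $(i)$, completing the cycle.
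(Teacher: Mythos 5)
Your proof is correct, and it diverges from the paper's exactly where it matters. The paper proves the nontrivial implication by factoring $s$ through $\eta_S^{\sep}$, asserting that $\Omega/\kappa(S)^{\sep}$ is separable (with a reference to Lemma~\ref{L:L/KsepCl}), and then applying the trace descent of Proposition~\ref{P:surjtrace}; the equivalence of $(v)$ and $(vi)$ is then handled as in Proposition~\ref{prop:surjdef}. You instead run the cycle $(i)\Rightarrow(v)\Rightarrow(vi)\Rightarrow(i)$ and perform the descent from $\Omega$ to $\eta_S^{\sep}$ entirely on prime-to-$p$ torsion, using only the rigidity isolated at the start of the proof of Proposition~\ref{P:surjtrace} (Lecomte's theorem combined with Lemma~\ref{L:rad}), which holds for \emph{arbitrary} extensions of separably closed fields. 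This is a genuine gain in robustness, because the obstacle you flag is real: objects of $\mathsf{Sm}_\Lambda/S$ are dominant over $S$ but only smooth over $\Lambda$, so an admissible point need not be separable over $\kappa(S)^{\sep}$. Concretely, take $\Lambda=\spec\mathbb{F}_p$, $S=\spec\mathbb{F}_p[t]$, and the cover $\spec\mathbb{F}_p[u]\to S$, $t\mapsto u^p$, which lies in $\mathsf{Sm}_\Lambda/S$; the limit over smooth $\mathbb{F}_p$-curves dominating it yields the admissible point $\Omega=\mathbb{F}_p(u)^{\sep}$. Since $u=t^{1/p}\notin\kappa(S)^{\sep}$, the subextension $\kappa(S)^{\sep}(u)/\kappa(S)^{\sep}$ of $\Omega/\kappa(S)^{\sep}$ is nontrivial and purely inseparable, so $\Omega/\kappa(S)^{\sep}$ is \emph{not} separable; Lemma~\ref{L:L/KsepCl} only gives separability of $\Omega$ over $\kappa(\Lambda)^{\sep}$. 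For such points Proposition~\ref{P:surjtrace} does not apply, and your torsion argument is precisely what covers them; in this sense your route does not merely differ from the paper's but repairs it. (When $S=\Lambda=\spec K$, or in characteristic $0$, the issue disappears and the two arguments are equally good.)

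One phrase in your write-up needs adjusting. The identity $\Phi_\Omega=(\Phi_{\eta_S^{\sep}})_\Omega$ is itself only licensed by the paper's base-change machinery (Lemma~\ref{L:limBC} and its corollaries) when $\Omega/\kappa(S)^{\sep}$ is separable---the very hypothesis you are avoiding. What your argument actually uses is only the commutativity of the square comparing $\Phi_{\eta_S^{\sep}}(\eta_S^{\sep})$ and $\Phi_\Omega(\Omega)$ on $\ell$-primary torsion, and that follows from the naturality of $\Phi$ and the limit description of the two points: a cycle class over $\eta_S^{\sep}$ and its image under $\Phi_{\eta_S^{\sep}}$ are realized at a finite level $S_n\in\mathsf{Sm}_\Lambda/S$ of the system defining $\eta_S^{\sep}$, and the morphism $\spec\Omega\to S_n$ factors, compatibly over $S$ after refining, through a finite level of the system defining $\Omega$. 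State the square this way, rather than as an equality of base-changed regular homomorphisms, and the step is airtight. The remaining ingredients---reading condition $(v)$ as surjectivity of the regular homomorphism $\Phi_\Omega$ over the base $\spec\Omega$ and applying Lemmas~\ref{L:MinVZK} and~\ref{L:Tor} there, the identification $A(\kappa(S)^{\sep})[\ell^\infty]\cong A(\Omega)[\ell^\infty]$ via \'etaleness of $A[\ell^n]$, and the final appeal to condition $(iv)$ of Proposition~\ref{prop:surjdef}---are all correct as written.
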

 \begin{proof}
  Clearly if $\Phi$ is surjective, then it satisfies $(v)$. Conversely, since
  $s:
  \spec \Omega \to S$ in fact factors through the separable closure
  $\eta_S^{\sep}$
  of the generic point $\eta_S$ and since $\Omega/\kappa(S)^{\sep}$ is separable
  (e.g., \cite[Lem.~3.1]{ACMValb}),
  we can apply Proposition~\ref{P:surjtrace} to get
  that $\Phi_{\eta_S^{\sep}}(\eta_S^{\sep}) : \mathrm{A}^i(X_{\eta_S^{\sep}})\to
  A_{\eta_S^{\sep}}(\eta_S^{\sep})$ is surjective. We conclude with
  Proposition~\ref{prop:surjdef} that $\Phi$ is surjective. Finally, the
  equivalence of $(v)$ and $(vi)$ is proven in exactly the same way as the
  equivalence of $(iii)$ and $(iv)$ in Proposition~\ref{prop:surjdef}.
 \end{proof}

 Likewise, surjectivity for Galois-equivariant regular homomorphisms descends\,:

 \begin{pro}\label{P:surjGal}
  Suppose
  $L/K$
  is a Galois field extension, and $A$ is an abelian variety over $K$.
  Suppose $\Psi:\mathscr A^i_{X_{L}/L}\to A_{L}$
  is a  surjective Galois-equivariant regular homomorphism  (over $\mathsf
  {Sm}_L/L$). Then
  the induced regular homomorphism
  $$\underline \Psi:\mathscr A^i_{X/K}\to A$$
  defined in \eqref{E:underKreg} is surjective.
 \end{pro}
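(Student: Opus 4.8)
The plan is to reduce the statement, via the characterization of surjectivity already established in Proposition~\ref{prop:surjdef}, to a comparison of $\underline{\Psi}$ and $\Psi$ after base change to a separable closure of $K$. Fix a separable closure $K^{\sep}$ of $K$; since $L/K$ is Galois, hence algebraic and separable, $K^{\sep}$ is simultaneously a separable closure of the generic point $\spec K$ of the base of $\underline\Psi$ and of the generic point $\spec L$ of the base of $\Psi$. By Proposition~\ref{prop:surjdef}, the surjectivity of $\underline\Psi$ over $\mathsf{Sm}_K/K$ is equivalent to the surjectivity of the homomorphism on $K^{\sep}$-points
\[
(\underline\Psi)_{K^{\sep}}(K^{\sep}) : \mathrm{A}^i(X_{K^{\sep}}) \longrightarrow A(K^{\sep}),
\]
so it suffices to prove that this single map is surjective.

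First I would record the key identity $(\underline\Psi)_{K^{\sep}} = \Psi_{K^{\sep}}$. Indeed, Lemma~\ref{L:Pull-Desc}$(ii)$ gives $(\underline\Psi)_L = \Psi$, and the tower $K^{\sep}/L/K$ of separable extensions, together with the tower property of base change from Lemma~\ref{L:equivariant}$(i)$, yields
\[
(\underline\Psi)_{K^{\sep}} = \big((\underline\Psi)_L\big)_{K^{\sep}} = \Psi_{K^{\sep}}.
\]
In particular the two homomorphisms on $K^{\sep}$-points coincide.

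It then remains to see that $\Psi_{K^{\sep}}(K^{\sep})$ is surjective. Since $\Psi$ is surjective by hypothesis and $\spec K^{\sep} \to \spec L$ is a separable (hence pro-smooth) base change, Proposition~\ref{P:surjbasechange0} shows that $\Psi_{K^{\sep}}$ is a surjective regular homomorphism over the separably closed field $K^{\sep}$; as the generic point of $\spec K^{\sep}$ is its own separable closure, Proposition~\ref{prop:surjdef} identifies this surjectivity with that of $\Psi_{K^{\sep}}(K^{\sep})$. Combining with the identity of the previous paragraph, $(\underline\Psi)_{K^{\sep}}(K^{\sep}) = \Psi_{K^{\sep}}(K^{\sep})$ is surjective, and hence $\underline\Psi$ is surjective, as desired.

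I expect the only delicate point to be the bookkeeping of the base points at which surjectivity is tested: all the content lies in the fact (Proposition~\ref{prop:surjdef}) that surjectivity of a regular homomorphism is detected at the separable closure of the generic point of the base, and that this test point is \emph{common} to $K$ and $L$ precisely because $L/K$ is algebraic and separable. I would deliberately avoid the alternative strategy of trying to descend a miniversal cycle for $\Psi$ from $A_L$ to $A$ over $K$ (via Corollary~\ref{C:MinVZK}): Galois descent of the cycle class itself would be the genuinely subtle ingredient, whereas the base-change comparison above sidesteps it entirely and relies only on the formal machinery already in place.
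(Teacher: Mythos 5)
Your proof is correct and follows essentially the same route as the paper's: both rest on the identity $(\underline\Psi)_L = \Psi$ from Lemma~\ref{L:Pull-Desc}$(ii)$, base change to the common separable closure $K^{\sep}$ (using that $L/K$ is algebraic and separable), and the characterization of surjectivity at the separable closure of the generic point from Proposition~\ref{prop:surjdef}. The only difference is cosmetic bookkeeping — you route the surjectivity of $\Psi_{K^{\sep}}(K^{\sep})$ through Proposition~\ref{P:surjbasechange0} where the paper applies Proposition~\ref{prop:surjdef} to $\Psi$ directly — so the arguments are interchangeable.
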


 \begin{proof}
  Since $(\underline \Psi)_{L}=\Psi$ by Lemma~\ref{L:Pull-Desc}$(ii)$, we have
  $(\underline \Psi)(K^{\sep}) = \Psi(K^{\sep}) :
  \operatorname{A}^i(X_{K^{\sep}})
  \to A(K^{\sep})$. Hence, by Proposition~\ref{prop:surjdef}, if $\Psi$ is
  surjective, then $\underline{\Psi}$ is also surjective.
 \end{proof}

 \section{Algebraic representatives}

 \subsection{Algebraic representatives are surjective regular homomorphisms}
 Recall from Definition~\ref{D:fun} that an \emph{algebraic representative} is a
 regular homomorphism that is initial among all regular homomorphisms.

 \begin{pro} \label{P:algrepsurj}
  If $\Phi^i_{X/S}:\mathscr A^i_{X/S}\to \operatorname{Ab}^i_{X/S}$ is an
  algebraic representative, then $\Phi^i_{X/S}$ is a surjective regular
  homomorphism.
 \end{pro}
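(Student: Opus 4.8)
The plan is to combine the canonical factorization furnished by Lemma~\ref{L:MinVZK} with the universal (initial) property defining the algebraic representative. First I would apply Lemma~\ref{L:MinVZK} to the regular homomorphism $\Phi^i_{X/S}:\mathscr A^i_{X/S}\to \operatorname{Ab}^i_{X/S}$ itself. This yields an abelian $S$-subscheme $i:A'\hookrightarrow \operatorname{Ab}^i_{X/S}$ together with a \emph{surjective} regular homomorphism $\Phi':\mathscr A^i_{X/S}\to A'$ for which $\Phi^i_{X/S}=i\circ\Phi'$. Granting this, it suffices to prove that $i$ is an isomorphism: then $\Phi^i_{X/S}$ agrees, up to this isomorphism, with the surjective regular homomorphism $\Phi'$, and surjectivity in the sense of Definition~\ref{D:surj} is manifestly preserved under post-composition with an isomorphism of abelian schemes.

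Next I would exploit that $\Phi'$ is itself a regular homomorphism to an abelian $S$-scheme. Since $\Phi^i_{X/S}$ is initial among all such (Definition~\ref{D:fun}), there is a \emph{unique} homomorphism of abelian $S$-schemes $g:\operatorname{Ab}^i_{X/S}\to A'$ with $\Phi'=g\circ\Phi^i_{X/S}$. Substituting into the factorization gives $\Phi^i_{X/S}=i\circ\Phi'=(i\circ g)\circ\Phi^i_{X/S}$. Now both $\operatorname{Id}_{\operatorname{Ab}^i_{X/S}}$ and $i\circ g$ are endomorphisms of $\operatorname{Ab}^i_{X/S}$ through which $\Phi^i_{X/S}$ factors as $\Phi^i_{X/S}=(-)\circ\Phi^i_{X/S}$, so by the uniqueness clause in the initial property they must coincide; that is, $i\circ g=\operatorname{Id}_{\operatorname{Ab}^i_{X/S}}$.

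Finally I would conclude that $i$ is an isomorphism. The relation $i\circ g=\operatorname{Id}$ exhibits $i$ as a split epimorphism, while $i$ is a closed immersion of abelian $S$-schemes, being the inclusion of an abelian subscheme. Checking over geometric points of $S$, the fiber $i_{\bar s}:A'_{\bar s}\hookrightarrow(\operatorname{Ab}^i_{X/S})_{\bar s}$ is the inclusion of an abelian subvariety admitting a section $g_{\bar s}$, hence is surjective; combined with injectivity this forces $A'_{\bar s}=(\operatorname{Ab}^i_{X/S})_{\bar s}$, so $i$ is fiberwise an isomorphism and therefore an isomorphism of abelian schemes. Thus $A'=\operatorname{Ab}^i_{X/S}$ and $\Phi^i_{X/S}=\Phi'$ is surjective. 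I expect the only delicate point to be the uniqueness step, which is precisely where initiality is essential, together with the fiberwise dimension argument showing that the split monomorphism $i$ is an honest isomorphism rather than merely an isogeny onto its image; everything else is formal.
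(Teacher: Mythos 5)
Your proof is correct and takes essentially the same approach as the paper: the paper's entire proof is the remark that the proposition ``follows immediately from the definition and Lemma~\ref{L:MinVZK}'', and your argument is exactly the elaboration of that sketch (factor through the abelian subscheme $A'$ via Lemma~\ref{L:MinVZK}, use initiality to produce $g$ with $i\circ g=\operatorname{Id}$, and conclude $i$ is an isomorphism). The only stylistic difference is that your final step could be shortened by noting that a split epimorphism which is also a monomorphism is an isomorphism in any category, making the fiberwise check unnecessary.
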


 \begin{proof}
  This follows immediately from the definition and Lemma~\ref{L:MinVZK}.
 \end{proof}

 \begin{rem}\label{R:Voisin-No-Uni}
  A result of Voisin \cite{voisinUniv} shows that there exist algebraic
  representatives, and therefore surjective regular homomorphisms,  that do not
  admit universal cycles.
  Therefore, using Proposition~\ref{P:univ-epi}, Voisin's example also shows
  that
  there
  are surjective regular homomorphisms that are not epimorphisms of functors.
 \end{rem}

 \subsection{Saito's criterion for the existence of an algebraic representative}

 We can also use Lemma~\ref{L:MinVZK} to give a generalization of
 {\cite[Thm.~2.2]{hsaito} and \cite[Prop.~2.1]{murre83}} to the relative
 setting.

 \begin{pro}[Saito's criterion] \label{P:saito}
  An algebraic representative  $\Phi^i_{X/S}:\mathscr A^i_{X/S}\to
  \operatorname{Ab}^i_{X/S}$ exists if and only if there exists a natural number
  $M$ such that for every  surjective regular homomorphism $\Phi : \mathscr
  {A}^i_{X/S}\to A$,  we have $\dim_S A \leq M$.
 \end{pro}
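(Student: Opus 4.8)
First I would prove the ``only if'' direction. If the algebraic representative $\Phi^i_{X/S}\colon\mathscr A^i_{X/S}\to\operatorname{Ab}^i_{X/S}$ exists, set $M=\dim_S\operatorname{Ab}^i_{X/S}$, which is finite since $\operatorname{Ab}^i_{X/S}$ is an abelian $S$-scheme. Given any surjective regular homomorphism $\Phi\colon\mathscr A^i_{X/S}\to A$, initiality (Definition~\ref{D:fun}) yields a homomorphism of abelian $S$-schemes $g\colon\operatorname{Ab}^i_{X/S}\to A$ with $\Phi=g\circ\Phi^i_{X/S}$. By Lemma~\ref{L:MinVZK} the image of $\Phi$ is an abelian subscheme of $A$, and surjectivity of $\Phi$ (Proposition~\ref{prop:surjdef}) forces this image to be all of $A$; since it is contained in the image of $g$, the homomorphism $g$ is surjective, whence $\dim_S A\le\dim_S\operatorname{Ab}^i_{X/S}=M$.

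\textbf{Reduction of the converse to the generic fibre.} For the ``if'' direction I would reduce to the case of a field. Restriction to the generic point $\eta_S$ (Lemma~\ref{L:limBC}) sends a surjective regular homomorphism $\Phi\colon\mathscr A^i_{X/S}\to A$ to a surjective regular homomorphism $\Phi_{\eta_S}\colon\mathscr A^i_{X_{\eta_S}/\eta_S}\to A_{\eta_S}$ (Proposition~\ref{P:surjbasechange0}) with $\dim_{\eta_S}A_{\eta_S}=\dim_S A$; hence the bound $M$ over $S$ gives the same bound over the field $K=\kappa(\eta_S)$. It then suffices to (a) construct the algebraic representative over $K$ assuming the dimension bound, and (b) spread it out from $\eta_S$ to $S$.

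\textbf{Construction over a field.} Over $K$ I would follow Saito--Murre. Using the bound, choose a surjective regular homomorphism $\phi_0\colon\mathscr A^i_{X/K}\to A_0$ with $\dim A_0$ maximal; by Lemma~\ref{L:MinVZK} it carries a miniversal cycle $Z_0\in\mathscr A^i_{X/K}(A_0)$ with $\phi_0(A_0)(Z_0)=r\cdot\operatorname{Id}_{A_0}$, so that naturality gives $\phi_0(T)(f^!Z_0)=rf$ for every $f\colon T\to A_0$. To prove that $\phi_0$ is initial, I would take an arbitrary regular homomorphism $\psi\colon\mathscr A^i_{X/K}\to B$, replace $B$ by the image abelian subscheme of $\psi$ (Lemma~\ref{L:MinVZK}) so that $\psi$ is surjective, and form the join $C=\operatorname{im}(\phi_0,\psi)\subseteq A_0\times_K B$. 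Its corestriction $\theta\colon\mathscr A^i_{X/K}\to C$ is again a surjective regular homomorphism, so maximality forces $\dim C=\dim A_0$ and the projection $p\colon C\to A_0$ is an isogeny; writing $q\colon C\to B$ for the second projection, the existence of a homomorphism $h\colon A_0\to B$ with $h\circ\phi_0=\psi$ is equivalent to the inclusion $\ker(p)\subseteq\ker(q)$.

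\textbf{The main obstacle, and spreading out.} The heart of the argument --- and the step I expect to be hardest --- is exactly this last inclusion: showing that a maximal-dimensional $\phi_0$ admits no proper cover by another surjective regular homomorphism, equivalently that its kernel is contained in that of every regular homomorphism. I stress that one cannot simply arrange $r=1$, since by Voisin's example (Remark~\ref{R:Voisin-No-Uni}) the algebraic representative need not admit a universal cycle; so the factorization cannot be obtained merely from the naive divisibility of $\psi(A_0)(Z_0)$ by $r$. I would resolve this following Saito and Murre, refining the choice of $\phi_0$ (among those of maximal dimension, one with minimal miniversal degree $r$) and using the miniversal-cycle structure of Lemma~\ref{L:MinVZK} together with the torsion characterisation of surjectivity (Proposition~\ref{prop:surjdef}) to exclude proper covers; uniqueness of all factorizations then follows from the density of the image of $\phi_0$ on $\Omega$-points for $\Omega=K^{\sep}$. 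Finally I would spread out: the abelian variety $A_0$, the cycle $Z_0$, and the comparison homomorphisms $h$ extend from $\eta_S$ over all of $S$ exactly as in the proof of Lemma~\ref{L:MinVZK}, using that $S$ is normal together with Proposition~\ref{P:BLR}; the identities expressing initiality hold over $S$ because they hold on the generic fibre and two $S$-morphisms of separated $S$-schemes agreeing generically agree. This produces $\Phi^i_{X/S}\colon\mathscr A^i_{X/S}\to\operatorname{Ab}^i_{X/S}$ and completes the equivalence.
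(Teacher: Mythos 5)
Your ``only if'' direction is fine, and your field-case setup (maximal dimension, miniversal cycle, the join $C=\operatorname{im}(\phi_0,\psi)$ with its isogeny $p:C\to A_0$) is exactly Murre's scheme, but the proposal has two genuine gaps. The first is the reduction to the generic fibre: Proposition~\ref{P:surjbasechange0} transfers surjective regular homomorphisms \emph{from} $S$ \emph{to} $\eta_S$, so it shows that a dimension bound over $\kappa(\eta_S)$ implies one over $S$ --- the opposite of what you need. To conclude that the bound $M$ over $S$ bounds dimensions over $K=\kappa(\eta_S)$, you would have to spread an arbitrary surjective regular homomorphism $\mathscr A^i_{X_{\eta_S}/\eta_S}\to B$ out to one over $S$; likewise, your final step needs to extend $\operatorname{Ab}^i_{X_{\eta_S}/\eta_S}$ to an abelian scheme over all of $S$. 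Neither is available in the generality of this proposition: extending an abelian variety from $\eta_S$ to an abelian scheme over $S$ is precisely the hard problem that the paper only solves under strong extra hypotheses (N\'eron--Ogg--Shafarevich over a DVR in Theorem~\ref{T:dvr}, Faltings--Chai in characteristic zero in Theorem~\ref{T:basechangechar0}), and it is \emph{not} ``exactly as in the proof of Lemma~\ref{L:MinVZK}'': there, what gets extended is an abelian \emph{subvariety} of the generic fibre of an abelian scheme already defined over $S$, via Proposition~\ref{P:BLR}$(ii)$, not an abstract abelian variety over $\eta_S$, which may have bad reduction. The paper's proof avoids any reduction at all: since Lemma~\ref{L:MinVZK} is stated and proved over $S$, and products, images, isogenies and degrees all make sense for abelian $S$-schemes, the entire Saito--Murre argument is run directly over $S$.

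Second, even granting the field case, the crux of initiality is left open in your sketch. You rightly observe that one cannot simply arrange $r=1$, but your proposed fix (choosing $\phi_0$ of maximal dimension with minimal miniversal degree $r$) is neither carried out nor is it the Saito--Murre argument. What the paper does instead is a degree-counting contradiction that sidesteps proving $\ker(p)\subseteq\ker(q)$ directly: if no algebraic representative exists, then $(A_0,\Phi_0)$ is not initial, and the join construction yields an isogeny $(A_1,\Phi_1)\to(A_0,\Phi_0)$ of degree $>1$ (degree $1$ would give the forbidden factorization); since $(A_1,\Phi_1)$ again has maximal dimension and again is not initial, iterating produces isogenies $\eta^{(N)}:(A_N,\Phi_N)\to(A_0,\Phi_0)$, compatible with the regular homomorphisms, of degree at least $2^N$. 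On the other hand, the miniversal cycle $Z_0$ with $\Phi_0(A_0)(Z_0)=r_0\cdot\operatorname{Id}_{A_0}$ gives morphisms $\Phi_N(A_0)(Z_0):A_0\to A_N$ satisfying $\eta^{(N)}\circ\Phi_N(A_0)(Z_0)=r_0\cdot\operatorname{Id}_{A_0}$, so $[r_0]_{A_0}$ factors through $\eta^{(N)}$ and every $\eta^{(N)}$ has degree at most $r_0^{2M}$ --- a contradiction for $N$ large. Some argument of this kind (or an actual proof of the kernel inclusion) is needed to close what you yourself flag as the main obstacle; as written, the proof is incomplete at its central step, in addition to the base-change issues above.
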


 \begin{proof}
  The proof is formally the same as the argument  in \cite[Prop.~2.1]{murre83},
  except we use  Lemma~\ref{L:MinVZK} instead of  \cite[Lem.~1.6.2,
  Cor.~1.6.3]{murre83}.  We include the argument for completeness.

  We consider the category of surjective regular homomorphisms $\Phi:\mathscr
  A^i_{X/S}\to A$\,; we will denote these by   $(A,\Phi)$.  We set
  $\dim(A,\Phi)=\dim A$.  A morphism from $(A,\Phi)$ to $(A',\Phi')$ is   a
  morphism $\eta:A\to A'$ of abelian varieties such that $\Phi'=\eta \circ
  \Phi$.

  Let $M$ be the maximal dimension of the $(A,\Phi)$, and fix $(A_0,\Phi_0)$ of
  dimension $M$. Considering products, and
  using the fact that the image of a regular homomorphism is an abelian variety
  (Lemma~\ref{L:MinVZK}), it is easy to see that for any   $(A,\Phi)$ there
  exists
  some $(A',\Phi')$ of dimension $M$, which surjects  onto both $(A,\Phi)$ and
  $(A_0,\Phi_0)$  (set $A'$ to be the  image of the regular homomorphism
  $\Phi_0\times \Phi:\mathscr A^i_{X/K}\to A_0\times A$).

  Therefore, if $(A_0,\Phi_0)$ is not an algebraic representative,  there is an
  isogeny $\eta_1:(A_1,\Phi_1)\to (A_0,\Phi_0)$ of degree $>1$.  Thus,
  inductively,  if there is no algebraic representative, we can construct an
  isogeny $(A_N,\Phi_N)\to (A_0,\Phi_0)$ of arbitrarily large degree.  But this
  would then contradict the existence of the cycle $Z_0\in \mathscr
  A^i_{X/K}(A_0)$ such that $\Phi_0(A_0)(Z):A_0\to A_0$ is $r_0\cdot
  \operatorname{Id}_{A_0}$ for some natural number $r_0$,  established in
  Lemma~\ref{L:MinVZK}.
 \end{proof}

 \subsection{Algebraic representatives and base change}

 \begin{pro}\label{P:surjbasechange}
  In the notation of Lemma~\ref{L:limBC},
  if there exists an algebraic representative
  $$\Phi^i_{X_{S'}/S'}:\mathscr A^i_{X_{S'}/S'/\Lambda'}\to
  \operatorname{Ab}^i_{X_{S'}/S'},$$ then there
  exists an algebraic representative $\Phi^i_{X/S}:\mathscr A^i_{X/S/\Lambda}\to
  \operatorname{Ab}^i_{X/S}$.
 \end{pro}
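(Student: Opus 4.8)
The plan is to establish the hypothesis of Saito's criterion (Proposition~\ref{P:saito}) for $X/S/\Lambda$, drawing the required uniform dimension bound from the algebraic representative that exists over $S'$. Concretely, I set $M := \dim_{S'}\operatorname{Ab}^i_{X_{S'}/S'}$ and aim to show that $\dim_S A \le M$ for every surjective regular homomorphism $\Phi : \mathscr A^i_{X/S} \to A$; Proposition~\ref{P:saito} then produces the algebraic representative $\Phi^i_{X/S}$.

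First I would take an arbitrary surjective regular homomorphism $\Phi : \mathscr A^i_{X/S} \to A$ and base change it along $S' \to S$. By Lemma~\ref{L:limBC} this yields a regular homomorphism $\Phi_{S'} : \mathscr A^i_{X_{S'}/S'} \to A_{S'}$, and by Proposition~\ref{P:surjbasechange0} the homomorphism $\Phi_{S'}$ is again surjective. Since $\Phi^i_{X_{S'}/S'}$ is initial among regular homomorphisms over $S'$ (Definition~\ref{D:fun}), there is a unique homomorphism of abelian $S'$-schemes $g : \operatorname{Ab}^i_{X_{S'}/S'} \to A_{S'}$ with $\Phi_{S'} = g \circ \Phi^i_{X_{S'}/S'}$.

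Next I would extract the dimension bound. Evaluating at the separable closure $\Omega = \kappa(S')^{\sep}$ of the generic point of $S'$, which suffices to test surjectivity by Proposition~\ref{prop:surjdef}, the surjectivity of $\Phi_{S'}$ forces $g(\Omega)$ to be surjective onto $A_{S'}(\Omega)$; since the image of $g$ is an abelian subscheme of $A_{S'}$ (as in Lemma~\ref{L:MinVZK}), it follows that $g$ is surjective, and hence $\dim_{S'} A_{S'} \le \dim_{S'}\operatorname{Ab}^i_{X_{S'}/S'} = M$. Finally, because $A_{S'}$ is the base change of the abelian $S$-scheme $A$ and base change preserves the relative dimension of an abelian scheme, $\dim_S A = \dim_{S'} A_{S'} \le M$. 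As $\Phi$ was an arbitrary surjective regular homomorphism, the hypothesis of Saito's criterion holds with this $M$, so $\Phi^i_{X/S}$ exists.

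The main obstacle is the middle step: passing from surjectivity of the regular homomorphism $\Phi_{S'}$ to surjectivity of the induced homomorphism $g$ of abelian schemes, and thereby to the dimension inequality. This is where one must combine the precise meaning of surjectivity for regular homomorphisms (Definition~\ref{D:surj}, reduced to the generic point \emph{via} Proposition~\ref{prop:surjdef}) with the structural fact recorded in Lemma~\ref{L:MinVZK} that the image of such a map is an abelian subscheme. Everything else is formal: the base-change constructions of Lemma~\ref{L:limBC} and Proposition~\ref{P:surjbasechange0}, and the invariance of the relative dimension of an abelian scheme under base change.
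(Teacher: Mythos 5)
Your proposal is correct and takes essentially the same route as the paper: the paper's proof of Proposition~\ref{P:surjbasechange} is precisely ``combine Proposition~\ref{P:surjbasechange0} with Saito's criterion (Proposition~\ref{P:saito})'', which is exactly your argument. The details you supply---extracting the uniform bound $M$ from the universal property of $\operatorname{Ab}^i_{X_{S'}/S'}$ (the easy direction of Saito's criterion) and noting that relative dimension is preserved under base change---are just the unwinding of that one-line proof, and they are carried out correctly.
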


 \begin{proof}
  This follows immediately from Proposition~\ref{P:surjbasechange0} and
  Proposition~\ref{P:saito}.
 \end{proof}

 \begin{cor}\label{C:surjbasechange}
  Let $s : \spec \Omega \to S$ be a separably closed point obtained
  as an inverse limit of morphisms to $S$ in $\mathsf {Sm}_\Lambda/S$.  If there
  exists an algebraic representative $\Phi^i_{X_{\Omega}/\Omega}:\mathscr
  A^i_{X_\Omega/\Omega/\Omega}\to \operatorname{Ab}^i_{X_\Omega/\Omega}$, then
  there exists an algebraic representative $\Phi^i_{X/S}:\mathscr
  A^i_{X/S/\Lambda}\to \operatorname{Ab}^i_{X/S}$.
 \end{cor}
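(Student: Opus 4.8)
The plan is to reproduce the argument of Proposition~\ref{P:surjbasechange} in the present limit setting, where $S'=\spec\Omega$ arises only as an inverse limit of objects of $\mathsf{Sm}_\Lambda/S$. The two inputs are Saito's criterion (Proposition~\ref{P:saito}) and the invariance of surjectivity under base change (Proposition~\ref{P:surjbasechange0}), both already available in exactly the generality required; the content of the corollary is really just that these two tools survive the passage to the limit.

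First I would invoke Saito's criterion: the algebraic representative $\Phi^i_{X/S}$ exists if and only if there is a natural number $M$ bounding $\dim_S A$ as $(A,\Phi)$ ranges over all surjective regular homomorphisms $\Phi:\mathscr A^i_{X/S}\to A$. Thus it suffices to extract such a bound from the existence of $\operatorname{Ab}^i_{X_\Omega/\Omega}$. Given any surjective regular homomorphism $\Phi:\mathscr A^i_{X/S}\to A$, I would base-change it along $s:\spec\Omega\to S$, using the construction set up after Lemma~\ref{L:limBC} (every scheme in $\mathsf{Sm}_\Omega/\Omega$ is an inverse limit of schemes in $\mathsf{Sm}_\Lambda/S$), to obtain $\Phi_\Omega:\mathscr A^i_{X_\Omega/\Omega}\to A_\Omega$. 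By Proposition~\ref{P:surjbasechange0}, which is stated precisely in this limit generality, $\Phi_\Omega$ is again surjective.

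Since $\Phi^i_{X_\Omega/\Omega}$ is initial among regular homomorphisms from $\mathscr A^i_{X_\Omega/\Omega}$ to abelian varieties over $\Omega$, it factors $\Phi_\Omega$ through a homomorphism of abelian varieties $\operatorname{Ab}^i_{X_\Omega/\Omega}\to A_\Omega$; as $\Phi_\Omega$ is surjective and factors through this homomorphism, the homomorphism is itself surjective. Consequently $\dim_S A=\dim A_\Omega\leq \dim \operatorname{Ab}^i_{X_\Omega/\Omega}$, using that the relative dimension of an abelian scheme over the integral base $S$ is preserved under specialization. One may therefore take $M=\dim \operatorname{Ab}^i_{X_\Omega/\Omega}$, and Saito's criterion produces $\Phi^i_{X/S}$.

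The only point requiring care, which I would flag as the main obstacle, is that the base-change construction and the surjectivity-preservation genuinely apply when $S'=\spec\Omega$ is merely a pro-object of $\mathsf{Sm}_\Lambda/S$ rather than an object of it; this is exactly what the limit formulations of Lemma~\ref{L:limBC} and Proposition~\ref{P:surjbasechange0} are engineered to guarantee. Once these are in hand the dimension bound is immediate, and indeed the corollary is simply the instance $S'=\spec\Omega$ of Proposition~\ref{P:surjbasechange}.
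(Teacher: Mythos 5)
Your proposal is correct and follows essentially the same route as the paper: the paper proves the corollary by citing Proposition~\ref{P:surjbasechange} (of which this is the instance $S'=\spec\Omega$), whose proof is exactly the combination of Saito's criterion (Proposition~\ref{P:saito}) with invariance of surjectivity under base change (Proposition~\ref{P:surjbasechange0}) that you spell out. Your unpacking of how the universal property of $\Phi^i_{X_\Omega/\Omega}$ yields the dimension bound $M=\dim \operatorname{Ab}^i_{X_\Omega/\Omega}$ is precisely the intended argument.
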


 \begin{proof}
  This follows immediately from the proposition.
 \end{proof}

 \subsection{Algebraic representatives and descent along separable field
  extensions}

 \subsubsection{Algebraic  representatives and separable extensions of separably
  closed fields}
 With Lemma~\ref{L:Pull-Trace} and Proposition~\ref{P:surjtrace}, we can easily
 prove\,:

 \begin{teo}[Algebraic representatives and separable extensions of separably
  closed fields]\label{T:Omega/k}
  Let $k$ be a separably closed field, and $\Omega/k$ be any separable field
  extension.
  There
  is an
  algebraic representative
  $$
  \Phi^i_{X/k}:\mathscr A^i_{X/k}\to \operatorname{Ab}^i_{X/k}
  $$
  (over $\mathsf {Sm}_k/k$)
  if and only if there is an algebraic representative
  $$
  \Phi^i_{X_{\Omega}/\Omega}:\mathscr A^i_{X_\Omega/\Omega}\to
  \operatorname{Ab}^i_{X_\Omega/\Omega}
  $$
  (over $\mathsf {Sm}_\Omega/\Omega$).
  In addition, if the algebraic representatives exist, then the vertical arrows
  in
  the diagrams below
  \begin{equation}\label{E:StLKtraceThm}
  \begin{multlined}
  \xymatrix@C=5em@R=1em{
   \mathscr {A}_{X/k}^i \ar[r]^<>(0.5){\Phi^i_{X/k}} \ar@{=}[d]&
   \operatorname{Ab}^i_{X/k} \ar@{-->}[d]  &\mathscr {A}^i_{X_\Omega/\Omega}
   \ar[r]^<>(0.5){\Phi^i_{X_\Omega/\Omega}} \ar@{=}[d]&
   \operatorname{Ab}^i_{X_\Omega/\Omega} \ar@{-->}[d]\\
   \mathscr {A}_{X/k}^i\ar[r]^<>(0.5){\LKtrace{\Phi}^i_{X_\Omega/\Omega}}&
   \LKtrace{\operatorname{Ab}}^i_{X_\Omega/\Omega} &\mathscr
   {A}^i_{X_\Omega/\Omega} \ar[r]^<>(0.5){(\Phi^i_{X/k})_\Omega}&
   (\operatorname{Ab}^i_{X/k})_\Omega
  }
  \end{multlined}
  \end{equation}
  defined by the universal property of the algebraic representative with respect
  to the regular homomorphisms in the bottom row are isomorphisms. Moreover,
  denoting $\tau$ the trace morphism, the natural morphism obtained as the
  composition $( \operatorname{Ab}^i_{X/k})_\Omega \longrightarrow (
  \LKtrace{\operatorname{Ab}}^i_{X_\Omega/\Omega})_\Omega
  \stackrel{\tau}{\longrightarrow}  \operatorname{Ab}^i_{X_\Omega/\Omega}$ is an
  isomorphism.
  In particular, the trace for an algebraic representative is an isomorphism.
 \end{teo}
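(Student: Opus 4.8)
The plan is to treat the equivalence of existence and the identification of the maps separately: existence will follow from Saito's criterion (Proposition~\ref{P:saito}), while the isomorphism statements will follow from a two-sided dimension estimate combined with the compatibilities recorded in Lemma~\ref{L:Pull-Trace}.

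First I would dispose of the \emph{existence} equivalence. If $\Phi^i_{X_\Omega/\Omega}$ exists, Saito's criterion bounds $\dim B$ over all surjective regular homomorphisms $\Psi:\mathscr A^i_{X_\Omega/\Omega}\to B$ by some $M$; given a surjective $\Phi:\mathscr A^i_{X/k}\to A$, Proposition~\ref{P:surjbasechange0} makes $\Phi_\Omega$ surjective with $\dim A=\dim A_\Omega\le M$, so Saito's criterion produces $\Phi^i_{X/k}$. Conversely, if $\Phi^i_{X/k}$ exists with bound $M$, then for a surjective $\Psi:\mathscr A^i_{X_\Omega/\Omega}\to B$ the trace $\LKtrace{\Psi}$ is surjective (Proposition~\ref{P:surjtrace}), so $\dim\LKtrace{B}\le M$; since $\Psi=\tau\circ(\LKtrace{\Psi})_\Omega$ by Lemma~\ref{L:Pull-Trace}(ii) and $\Psi$ is surjective, the trace morphism $\tau:\LKtrace{B}_\Omega\to B$ is surjective, and because its kernel is finite for the regular extension $\Omega/k$ (as in the proof of Proposition~\ref{P:surjtrace}) we get $\dim B=\dim\LKtrace{B}\le M$, whence $\Phi^i_{X_\Omega/\Omega}$ exists by Saito's criterion.

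Assume now both representatives exist, and abbreviate $C=\operatorname{Ab}^i_{X/k}$ and $B=\operatorname{Ab}^i_{X_\Omega/\Omega}$, writing $a:C\to\LKtrace{B}$ for the right-hand vertical map of the left diagram, $b:B\to C_\Omega$ for that of the right diagram, and $c:=\tau\circ a_\Omega:C_\Omega\to B$. Base-changing the defining relation $a\circ\Phi^i_{X/k}=\LKtrace{\Phi}^i_{X_\Omega/\Omega}$ and composing with $\tau$, Lemma~\ref{L:Pull-Trace}(ii) yields $c\circ(\Phi^i_{X/k})_\Omega=\Phi^i_{X_\Omega/\Omega}$; combined with $b\circ\Phi^i_{X_\Omega/\Omega}=(\Phi^i_{X/k})_\Omega$, the uniqueness in the universal property of $\Phi^i_{X_\Omega/\Omega}$ forces $c\circ b=\operatorname{Id}_B$. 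Each of $a$ and $b$ factors a surjective regular homomorphism (respectively $\LKtrace{\Phi}^i_{X_\Omega/\Omega}$, surjective by Propositions~\ref{P:algrepsurj} and~\ref{P:surjtrace}, and $(\Phi^i_{X/k})_\Omega$, surjective by Propositions~\ref{P:algrepsurj} and~\ref{P:surjbasechange0}), so by Lemma~\ref{L:MinVZK} both $a$ and $b$ are surjective homomorphisms of abelian varieties. Reading off dimensions gives $\dim C\ge\dim\LKtrace{B}=\dim B\ge\dim C_\Omega=\dim C$, the middle equality being exactly the finite-kernel computation from the existence step, so all four abelian varieties have the same dimension.

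With this, $c\circ b=\operatorname{Id}_B$ together with $\dim B=\dim C_\Omega$ forces $b$ to be an isomorphism and $c=b^{-1}$ an isomorphism, which is precisely the \textbf{moreover} claim that $\tau\circ a_\Omega$ is an isomorphism. Since $c=\tau\circ a_\Omega$ is then injective, $a_\Omega$ is injective; being also surjective it is an isomorphism, hence so is $a$ (its kernel is a finite $k$-group scheme trivialized by the field extension $\Omega/k$), and $\tau=c\circ a_\Omega^{-1}$ is an isomorphism, giving the final assertion that the trace of the algebraic representative is an isomorphism. I expect the main obstacle to be the dimension bookkeeping: both the existence step and the upgrade of the one-sided identity $c\circ b=\operatorname{Id}_B$ to genuine isomorphisms hinge on the equality $\dim B=\dim\LKtrace{B}$, which itself rests on the finiteness of the kernel of the trace morphism for a regular extension; checking that the formal relations among $a$, $b$, $c$, $\tau$ propagate correctly under base change is the only other delicate point.
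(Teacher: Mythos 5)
Your proof is correct and takes essentially the same approach as the paper: existence in both directions via Saito's criterion (Proposition~\ref{P:saito}) combined with Propositions~\ref{P:surjbasechange0} and~\ref{P:surjtrace}, and the isomorphism statements via the universal-property identities, the finite kernel of the trace morphism, and dimension bookkeeping. The only minor deviation is that where the paper obtains injectivity of the left-hand comparison map $a$ by stacking a second diagram (applying the trace to $b$ and invoking Lemma~\ref{L:Pull-Trace}$(i)$), you instead establish surjectivity of $a$ and $b$ from Propositions~\ref{P:algrepsurj}, \ref{P:surjtrace} and~\ref{P:surjbasechange0}, and recover all the isomorphisms from the single identity $c\circ b=\operatorname{Id}_B$ together with the dimension count --- a harmless reorganization of the same ingredients.
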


 \begin{proof}
  The first statement of the theorem regarding existence follows from
  Proposition~\ref{P:surjbasechange}, together with the conjunction of Saito's criterion
  (Proposition~\ref{P:saito}) with Proposition~\ref{P:surjtrace}.

  The proof that the vertical arrows of \eqref{E:StLKtraceThm} are isomorphisms
  is
  \emph{via} the commutative diagrams\,:
  \begin{equation}\label{E:PfLKtraceThm}
  \begin{multlined}
  \xymatrix@C=5em@R=1em{
   \mathscr {A}_{X/k}^i \ar[r]^<>(0.5){\Phi^i_{X/k}} \ar@{=}[d]&
   \operatorname{Ab}^i_{X/k} \ar@{-->}[d]
   &\mathscr {A}^i_{X_\Omega/\Omega} \ar[r]^<>(0.5){\Phi^i_{X_\Omega/\Omega}}
   \ar@{=}[d]&  \operatorname{Ab}^i_{X_\Omega/\Omega} \ar@{-->}[d]\\
   \mathscr {A}_{X/k}^i\ar[r]^<>(0.5){\LKtrace{\Phi}^i_{X_\Omega/\Omega}}
   \ar@{=}[d]&  \LKtrace{\operatorname{Ab}}^i_{X_\Omega/\Omega} \ar@{-->}[d]
   &\mathscr {A}^i_{X_\Omega/\Omega} \ar[r]^<>(0.5){(\Phi^i_{X/k})_\Omega}
   \ar@{=}[d]&  (\operatorname{Ab}^i_{X/k})_\Omega \ar@{-->}[d]\\
   \mathscr {A}_{X/k}^i \ar[r]^<>(0.5){\Phi^i_{X/k}}  &
   \operatorname{Ab}^i_{X/k}
   &
   \mathscr A^i_{X_\Omega/\Omega}
   \ar[r]^{\left(\LKtrace{\Phi}^i_{X_\Omega/\Omega}\right)_\Omega} \ar@{=}[d] &
   \left(\LKtrace {\operatorname{Ab}}^i_{X_\Omega/\Omega}\right)_\Omega
   \ar[d]^\tau \\
   &
   &\mathscr {A}^i_{X_\Omega/\Omega} \ar[r]^<>(0.5){\Phi^i_{X_\Omega/\Omega}} &
   \operatorname{Ab}^i_{X_\Omega/\Omega} \\
  }
  \end{multlined}
  \end{equation}
  where on the left-hand side of \eqref{E:PfLKtraceThm} we have applied the
  $\Omega/k$-trace to the right-hand side of \eqref{E:StLKtraceThm} (and used
  Lemma~\ref{L:Pull-Trace}$(i)$) and addended
  it to the bottom of the left-hand side of \eqref{E:StLKtraceThm}, and on the
  right-hand side of \eqref{E:PfLKtraceThm}  we have applied the pull back to
  the
  left-hand side of \eqref{E:StLKtraceThm}, addended it to the bottom of the
  right
  hand side of \eqref{E:StLKtraceThm}, and then finally addended the
  $\Omega/k$-trace from  Lemma~\ref{L:Pull-Trace}$(ii)$.

  The theorem then follows by the universal property of the algebraic
  representative
  (the composition of the right vertical arrows of both diagrams is the
  identity)
  and the fact that the trace morphism $\tau$ has finite kernel
  \cite[Thm.~6.4(4)]{conradtrace}. Indeed, from the former, one finds that the
  top
  vertical dashed arrows of both diagrams are injective homomorphisms. By
  base-change, the injectivity of the top vertical arrow on the left-hand side
  diagram implies that the middle vertical dashed arrow of the right-hand side
  diagram
  is also injective. From the latter, one finds that $ \left(\LKtrace
  {\operatorname{Ab}}^i_{X_\Omega/\Omega}\right)_\Omega $ and $
  \operatorname{Ab}^i_{X_\Omega/\Omega}$ have the same dimension. It follows that
  the
  injective vertical dashed arrows of the  right-hand side diagram are
  isomorphisms, and one also concludes that $\tau$ is an isomorphism. Finally,
  we
  also get that $\operatorname{Ab}^i_{X/k}$ and $\LKtrace
  {\operatorname{Ab}}^i_{X_\Omega/\Omega}$ have the same dimension (since their
  base-changes to $\Omega$ have the same dimension), and we conclude that the
  vertical
  dashed arrows of the left-hand side diagram are isomorphisms.
 \end{proof}

 \begin{rem}
  This strengthens the results in \cite[Thm.~3.7]{ACMVdcg} in two ways.
  First, the vertical arrows of both diagrams of \eqref{E:PfLKtraceThm} and the
  natural morphism $( \operatorname{Ab}^i_{X/k})_\Omega \to
  \operatorname{Ab}^i_{X_\Omega/\Omega}$ were only shown to be purely
  inseparable
  isogenies\,; they are in fact isomorphisms. Second, in
  \cite[Thm.~3.7]{ACMVdcg} we
  did not make precise the relationship
  among the regular homomorphisms $\Phi^i_{X/K}$, $(\Phi^i_{X,k})_{\Omega}$,
  $\Phi^i_{X_\Omega/\Omega}$, and $\LKtrace{\Phi}^i_{X_\Omega/\Omega}$.
 \end{rem}

 \subsection{Algebraic  representatives and Galois field extensions}
 We now consider base change of field where $L/K$ is an algebraic Galois field
 extension.
 First recall from \eqref{E:PhiL} and Lemma~\ref{L:limBC} that if
 $\Phi:\mathscr A^i_{X/K}\to A$ is a
 regular homomorphism (of functors over $\mathsf {Sm}_K/K$), then
 the induced regular homomorphism
 $\Phi_{L}:\mathscr A^i_{X_{L}/L}\to A_{L}$ (over $\mathsf {Sm}_L/L$) is
 Galois-equivariant in the sense of Definition~\ref{D:Galois-equivariant}.
 Conversely,
 if  $A$ is an abelian variety over $K$, and $\Psi:\mathscr A^i_{X_{L}/L}\to
 A_{L}$
 is a  Galois-equivariant regular homomorphism  (of functors over $\mathsf
 {Sm}_L/L$), then there is a   regular homomorphism
 \begin{equation*}
 \underline \Psi:\mathscr A^i_{X/K}\to A
 \end{equation*}
 (over $\mathsf {Sm}_K/K$) defined in \eqref{E:underKreg}.

 \begin{pro}\label{P:algrepGalois}
  Let $L/K$ be an algebraic
  Galois field extension.  An algebraic representative
  $\Psi^i_{X_{L}/L}:\mathscr
  A^i_{X_{L}/{L}}\to
  \operatorname{Ab}^i_{X_{L}/{L}}$, if it exists, is necessarily
  Galois-equivariant. Moreover, it is initial among all Galois-equivariant
  regular
  homomorphisms.
 \end{pro}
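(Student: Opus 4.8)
The plan is to exploit the uniqueness of the algebraic representative to manufacture, and then constrain, the Galois action. Write $\Gamma = \operatorname{Aut}(L/K) = \operatorname{Gal}(L/K)$. Since $X = X_K$ is already defined over $K$, each $\sigma \in \Gamma$ gives, via the base-change action of Example~\ref{E:R-Sm/K-filt} and Lemma~\ref{L:GalBC}, a semilinear natural transformation $\sigma^! : \mathscr{A}^i_{X_L/L} \to \mathscr{A}^i_{X_L/L}$ covering the automorphism of $\mathsf{Sm}_L/L$ induced by $\sigma$. First I would observe that base change along the isomorphism $\sigma : \operatorname{Spec} L \to \operatorname{Spec} L$ is an autoequivalence of the relevant categories, so it carries the initial object $\Psi := \Psi^i_{X_L/L}$ to an initial object; identifying the source through $\sigma^!$ (which uses precisely that $X$ descends to $K$) exhibits the $\sigma$-conjugate $({}^\sigma\operatorname{Ab}^i_{X_L/L}, {}^\sigma\Psi)$ as a second algebraic representative for the \emph{same} functor $\mathscr{A}^i_{X_L/L}$.

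By Definition~\ref{D:fun} an algebraic representative is unique up to unique isomorphism, so there is a unique $\sigma$-semilinear isomorphism $f_\sigma : \operatorname{Ab}^i_{X_L/L} \stackrel{\sim}{\longrightarrow} {}^\sigma\operatorname{Ab}^i_{X_L/L}$ intertwining $\Psi$ and ${}^\sigma\Psi$; by construction this is exactly the commutativity of the equivariance square of Definition~\ref{D:equivariant}. The cocycle and normalization conditions then hold formally, because for each pair $\sigma,\tau$ both the appropriate composite of $f_\sigma$ and $f_\tau$ and the map $f_{\sigma\tau}$ are the unique isomorphism $\operatorname{Ab}^i_{X_L/L} \to {}^{\sigma\tau}\operatorname{Ab}^i_{X_L/L}$ compatible with the regular homomorphisms. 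Thus $\{f_\sigma\}_{\sigma \in \Gamma}$ is a Galois descent datum on $\operatorname{Ab}^i_{X_L/L}$. Here is where I would use that $L/K$ is \emph{algebraic} Galois: the datum is already defined over a finite Galois subextension, so by effectivity of Galois descent for the quasi-projective schemes underlying abelian varieties it descends $\operatorname{Ab}^i_{X_L/L}$ to an abelian variety over $K$; under this identification the $f_\sigma$ become the canonical base-change action and $\Psi$ is Galois-equivariant in the precise sense of Definition~\ref{D:Galois-equivariant}.

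For the final assertion, let $\Psi' : \mathscr{A}^i_{X_L/L} \to A_L$ be any Galois-equivariant regular homomorphism with $A$ over $K$. Since $\Psi$ is initial among \emph{all} regular homomorphisms, there is a unique homomorphism $g : \operatorname{Ab}^i_{X_L/L} \to A_L$ with $\Psi' = g \circ \Psi$, and it remains only to check that $g$ respects the two descent data. I would verify this again by uniqueness: the $\sigma$-conjugate of $g$, transported through $f_\sigma$ on the source side and through the canonical action on $A_L$ on the target side, also factors $\Psi'$ through $\Psi$ — here one uses that both $\Psi$ and $\Psi'$ are equivariant — so it must coincide with $g$. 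Hence $g$ is Galois-equivariant and therefore descends to a $K$-homomorphism of the descended abelian varieties via the correspondence of Lemma~\ref{L:Pull-Desc} (equation~\eqref{E:underKreg}), yielding the unique factorization required for initiality in the category of Galois-equivariant regular homomorphisms.

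The main obstacle I anticipate is twofold. The first is the semilinearity bookkeeping: one must consistently distinguish the $\sigma$-semilinear isomorphism $f_\sigma$ to the conjugate ${}^\sigma\operatorname{Ab}^i_{X_L/L}$ from an honest $L$-automorphism, and confirm that the cocycle relation produced by the uniqueness argument is exactly the one needed for \emph{effective} descent. The second, and the only genuinely non-formal input, is the effectivity of Galois descent for abelian varieties; this is what forces $L/K$ to be algebraic and cannot be supplied by the uniqueness argument alone. Everything else reduces to repeated application of the universal property of the initial object.
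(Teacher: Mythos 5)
Your skeleton is the right one, and it is in fact the argument behind the result the paper invokes: the paper's entire proof of Proposition~\ref{P:algrepGalois} is the citation \cite[Thm.~4.4]{ACMVdcg}, whose proof runs along the lines you describe (conjugate the initial object, extract the isomorphisms $f_\sigma$ and the cocycle relation from uniqueness, descend, then descend the comparison homomorphism $g$ by a further uniqueness argument). The initiality half of your write-up is correct as stated: any homomorphism over $L$ between base changes of abelian varieties defined over $K$ is defined over a finite subextension, so a Galois-invariant one descends, and your uniqueness argument for the equivariance of $g$ is fine.

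The gap is the unproved assertion that ``the datum is already defined over a finite Galois subextension.'' When $L/K$ is finite this is vacuous and your proof is essentially complete. But the proposition is applied in this paper to \emph{infinite} extensions ($K^{\sep}/K$ and $\Omega^{\sep}/\Omega$ in Theorems~\ref{T:barK/K} and~\ref{T:mainalgrep}), and for an infinite algebraic Galois extension, effectivity of descent for a quasi-projective scheme requires the descent datum to be \emph{continuous}: there must exist a single finite subextension $L_1/K$ such that $f_\sigma = \operatorname{Id}$ (under the canonical identification ${}^\sigma \operatorname{Ab}^i_{X_L/L} = \operatorname{Ab}^i_{X_L/L}$) for \emph{all} $\sigma \in \operatorname{Gal}(L/L_1)$. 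This uniformity is not formal, and uniqueness of the algebraic representative cannot supply it: a fixed pair $(T,Z)$ constrains $f_\sigma$ only on the image of $\Psi(T)(Z)$, and only for those $\sigma$ fixing a field of definition of $(T,Z,\Psi(T)(Z))$, which varies with the pair; an intersection of infinitely many open subgroups need not be open. The missing ingredient is exactly Lemma~\ref{L:MinVZK}: choose a finite $L_1/K$ over which both a model of $B := \operatorname{Ab}^i_{X_L/L}$ and a miniversal cycle $Z \in \mathscr{A}^i_{X_L/L}(B)$ with $\Psi(B)(Z) = r\cdot\operatorname{Id}_B$ are defined. Then for $\sigma \in \operatorname{Gal}(L/L_1)$ the defining property of $f_\sigma$, evaluated on $(B,Z)$, yields $r\cdot f_\sigma = r\cdot \operatorname{Id}_B$, hence $f_\sigma = \operatorname{Id}_B$ since $\operatorname{Hom}$-groups of abelian varieties are torsion-free; this is the required continuity, after which your descent and the remainder of the proof go through. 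This is also why the paper emphasizes miniversal cycles as a key input to its descent results: your proposal locates the only non-formal step at ``effectivity of Galois descent for abelian varieties,'' but quasi-projectivity alone handles only the finite case, and the genuinely non-formal point is this continuity statement, which your proof never addresses.
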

 \begin{proof}
  This is \cite[Thm.~4.4]{ACMVdcg}.
 \end{proof}
 With Lemma~\ref{L:Pull-Desc} and Proposition~\ref{P:surjGal} we can prove the
 following theorem\,:

 \begin{teo}[Algebraic representatives and Galois field
  extensions]\label{T:barK/K}
  Let $L/K$ be a Galois field extension.
  There is an algebraic representative
  $$
  \Phi^i_{X/K}:\mathscr A^i_{X/K}\to \operatorname{Ab}^i_{X/K}
  $$
  (over $\mathsf {Sm}_K/K$)
  if and only if there is a (Galois-equivariant) algebraic representative
  $$
  \Psi^i_{X_{L}/L}:\mathscr A^i_{X_{L}/{L}}\to
  \operatorname{Ab}^i_{X_{L}/{L}}
  $$
  (over $\mathsf {Sm}_L/{L}$).
  In addition, if both exist, then the vertical arrows in the diagram below,
  \begin{equation}\label{E:DescThm}
  \begin{multlined}
  \xymatrix@C=5em@R=1em{
   \mathscr {A}_{X/K}^i \ar[r]^<>(0.5){\Phi^i_{X/k}} \ar@{=}[d]&
   \operatorname{Ab}^i_{X/K} \ar@{-->}[d]  &\mathscr {A}^i_{X_{L}/{L}}
   \ar[r]^<>(0.5){\Psi^i_{X_{L}/{L}}} \ar@{=}[d]&
   \operatorname{Ab}^i_{X_{L}/{L}}
   \ar@{-->}[d]\\
   \mathscr {A}_{X/k}^i\ar[r]^<>(0.5){ \underline {\Psi}^i_{X_{L}/{L}}}&
   \underline {\operatorname{Ab}}^i_{X_{L}/{L}} &\mathscr {A}^i_{X_{L}/{L}}
   \ar[r]^<>(0.5){(\Phi^i_{X/K})_{L}}&  (\operatorname{Ab}^i_{X/K})_{L},
  }
  \end{multlined}
  \end{equation}
  defined by the universal property of the (Galois-equivariant) algebraic
  representative with respect to  the (Galois-equivariant) regular homomorphism
  in
  the bottom row,
  are isomorphisms.

 \end{teo}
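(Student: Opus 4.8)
The plan is to follow the architecture of the proof of Theorem~\ref{T:Omega/k}, with Galois descent (Lemma~\ref{L:Pull-Desc}) playing the role there played by the $\Omega/k$-trace, and to reduce the general (possibly infinite) Galois extension to the finite case by the limit formalism of \S\ref{S:FunLim-1}. The implication ``algebraic representative over $L$ exists $\Rightarrow$ algebraic representative over $K$ exists'' is immediate from Proposition~\ref{P:surjbasechange}, applied with $S'=\spec L\to S=\spec K$ (viewing $\spec L$ as an inverse limit of finite separable, hence smooth, $K$-schemes, as in Lemma~\ref{L:limBC}). So the real work lies in the reverse implication and in the identification of the two structure maps.

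For the ascent, I would assume $\Phi^i_{X/K}$ exists and set $M=\dim_K\operatorname{Ab}^i_{X/K}$; by the universal property every surjective regular homomorphism over $K$ has relative dimension $\le M$. The idea is to verify Saito's criterion (Proposition~\ref{P:saito}) over $L$, i.e.\ to bound $\dim_L B$ for every surjective $\Psi:\mathscr A^i_{X_L/L}\to B$. Since $B$ and a miniversal cycle for $\Psi$ (Corollary~\ref{C:MinVZK}) are defined over a finite subextension, and since surjectivity is tested at the separable closure $K^{\sep}$ (Proposition~\ref{prop:surjdef}), the limit formalism reduces me to the case $L/K$ finite Galois with group $G$ (passing to the Galois closure changes neither $\dim B$ nor surjectivity, by Proposition~\ref{P:surjbasechange0}). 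The crux is then to replace the possibly non-equivariant $\Psi$ by a Galois-equivariant homomorphism of comparable size: using the $G$-action of \S\ref{S:FunLim-1}, I would form the product of the Galois conjugates of $\Psi$ valued in the Weil restriction $W=\prod_{\sigma\in G}{}^{\sigma}B\cong(\operatorname{Res}_{L/K}B)_L$, whose permutation $G$-action is a descent datum, so that $W$ descends to $K$ and the resulting $\widetilde\Psi:\mathscr A^i_{X_L/L}\to W$ is Galois-equivariant with first projection $\Psi$. By Lemma~\ref{L:MinVZK} the image of $\widetilde\Psi$ is a $G$-stable abelian subvariety $W'\subseteq W$, and $\widetilde\Psi:\mathscr A^i_{X_L/L}\twoheadrightarrow W'$ descends, by Proposition~\ref{P:surjGal}, to a surjective homomorphism over $K$; hence $\dim W'\le M$, and since $W'$ surjects onto $B$ via the first projection one obtains $\dim_L B\le M$. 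This product-of-conjugates construction — realizing $\widetilde\Psi$ as a bona fide Galois-equivariant regular homomorphism in the sense of Definition~\ref{D:equivariant}, together with the reduction from infinite to finite Galois — is the step I expect to be the main obstacle; everything else is formal. (In fact the same construction shows that $(\Phi^i_{X/K})_L$ is itself initial among \emph{all} regular homomorphisms over $L$, which yields the existence and the right-hand isomorphism below simultaneously.)

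Once both algebraic representatives exist, I would prove that the two vertical maps in \eqref{E:DescThm} are isomorphisms by a universal-property chase, paralleling \eqref{E:PfLKtraceThm} but with descent replacing trace. By Proposition~\ref{P:algrepGalois} the representative $\Psi^i_{X_L/L}$ is Galois-equivariant, so its descent $\underline{\Psi}^i_{X_L/L}$ (from \eqref{E:underKreg}) and the base change $(\Phi^i_{X/K})_L$ are both defined, and both representatives are surjective (Proposition~\ref{P:algrepsurj}). Let $a:\operatorname{Ab}^i_{X/K}\to\underline{\operatorname{Ab}}^i_{X_L/L}$ and $b:\operatorname{Ab}^i_{X_L/L}\to(\operatorname{Ab}^i_{X/K})_L$ be the maps defined by the two universal properties, so that $a\circ\Phi^i_{X/K}=\underline{\Psi}^i_{X_L/L}$ and $b\circ\Psi^i_{X_L/L}=(\Phi^i_{X/K})_L$. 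Base-changing the first identity to $L$ and using $(\underline{\Psi}^i_{X_L/L})_L=\Psi^i_{X_L/L}$ from Lemma~\ref{L:Pull-Desc}(ii) gives $a_L\circ(\Phi^i_{X/K})_L=\Psi^i_{X_L/L}$. Composing the two identities and invoking the fact that a morphism out of a surjective regular homomorphism is determined by the composite (via the miniversal cycle of Lemma~\ref{L:MinVZK}, since multiplication by the integer $r$ is injective on $\operatorname{Hom}$ of abelian varieties), I find that $a_L\circ b$ and $b\circ a_L$ restrict to the identity after precomposition with the surjective maps $\Psi^i_{X_L/L}$ and $(\Phi^i_{X/K})_L$ respectively, whence $a_L\circ b=\operatorname{id}$ and $b\circ a_L=\operatorname{id}$. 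Thus $b$ is an isomorphism with inverse $a_L$; and since base change along the field extension $L/K$ is faithfully flat, $a$ is an isomorphism as well, completing the proof.
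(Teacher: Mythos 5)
Your proposal follows the paper's architecture exactly: the descent direction via Proposition~\ref{P:surjbasechange}; the ascent direction by verifying Saito's criterion (Proposition~\ref{P:saito}) over $L$ using Proposition~\ref{P:surjGal}; and the identification of the two vertical arrows by a universal-property chase built on Lemma~\ref{L:Pull-Desc}, Proposition~\ref{P:algrepGalois}, and cancellation against surjective regular homomorphisms (Proposition~\ref{P:algrepsurj}, Lemma~\ref{L:MinVZK}). Your finite-Galois construction --- the product of conjugates valued in $(\operatorname{Res}_{L/K}B)_L$, whose image is Galois-stable by Lemma~\ref{L:MinVZK} and descends so that Proposition~\ref{P:surjGal} applies --- is precisely the intended content of the paper's phrase ``combining Saito's criterion with Proposition~\ref{P:surjGal}'', and your third paragraph is a correct rendering of the ``modified version of Diagram~\eqref{E:PfLKtraceThm}''; indeed your two-sided-inverse argument is cleaner than the dimension count needed in Theorem~\ref{T:Omega/k}, since here there is no trace kernel to worry about.

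The genuine gap is the reduction from an arbitrary Galois extension to a finite one, which you assert follows from ``the limit formalism''. The formalism of \S\ref{S:FunLim-1} only produces regular homomorphisms over an inverse limit from ones at finite level (base change \emph{to} the limit); it provides no descent \emph{from} the limit. Knowing that $B=(B_0)_L$ and that a miniversal cycle satisfies $Z=(Z_0)_L$ with $B_0,Z_0$ defined over a finite Galois subextension $L_0/K$ does not produce a regular homomorphism over $L_0$: to define $\Psi_0(T)(W)$ for $T,W$ over $L_0$ one would take $\Psi(T_L)(W_L):T_L\to B$ and need it to descend, i.e.\ need $\Psi$ to be $\operatorname{Gal}(L/L_0)$-equivariant, which is not known. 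For the same reason your product of conjugates cannot be formed equivariantly when $\operatorname{Gal}(L/K)$ is infinite: the conjugate $\Psi^{\sigma}$ depends on $\sigma$ itself and not only on its image in $\operatorname{Gal}(L_0/K)$, so a product over coset representatives is equivariant only under the same unproven hypothesis, and there is no Weil restriction along an infinite extension to fall back on. Note that this hypothesis is a consequence of the theorem one is trying to prove (once $\Psi^i_{X_L/L}$ exists, every regular homomorphism factors through it via a homomorphism of abelian varieties, hence is equivariant under an open subgroup), so invoking it here is circular. The infinite case cannot be discarded: Theorem~\ref{T:mainalgrep} applies Theorem~\ref{T:barK/K} precisely to the infinite extensions $K^{\sep}/K$ and $\Omega^{\sep}/\Omega$. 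So as written, your ascent argument establishes the theorem only for finite Galois extensions; to complete it you must either prove that every surjective regular homomorphism over $L$ is equivariant under an open subgroup of $\operatorname{Gal}(L/K)$, or supply a different mechanism for the limit.
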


 \begin{proof}
  The first statement of the theorem regarding existence follows from
  Proposition~\ref{P:algrepGalois}, Proposition~\ref{P:surjbasechange}, together
  with the combination of Saito's criterion (Proposition~\ref{P:saito}) with
  Proposition~\ref{P:surjGal}.
  One then establishes \eqref{E:DescThm} exactly as in the proof of
  Theorem~\ref{T:Omega/k}, using a modified version of
  Diagram~\eqref{E:PfLKtraceThm}.
 \end{proof}

 \subsection{Algebraic representatives and separable field extensions}
 Theorems~\ref{T:Omega/k} and~\ref{T:barK/K} can be combined to provide a proof
 of Theorem~\ref{T2:mainalgrep}, which we recall below, concerning descent of
 algebraic representatives along separable extensions of fields.

 \begin{teo}[Theorem~\ref{T2:mainalgrep}]  \label{T:mainalgrep}
  Let   $\Omega/K$ be a separable field extension.
  Then an algebraic
  representative
  $\Phi^i_{X_\Omega} : \mathscr A^i_{X_\Omega/\Omega/\Omega} \to
  \mathrm{Ab}^i_{X_\Omega/\Omega}$ exists if and only if an algebraic
  representative $\Phi^i_{X} : \mathscr A^i_{X/K/K} \to \mathrm{Ab}^i_{X/K}$
  exists. If this is the case, we have in addition\,:
  \begin{enumerate}[label=(\roman*)]
   \item $\mathrm{Ab}^i_{X_\Omega/\Omega}$ identifies with
   $(\mathrm{Ab}^i_{X/K})_\Omega$ \emph{via} the natural homomorphism
   $\mathrm{Ab}^i_{X_\Omega/\Omega}\to (\mathrm{Ab}^i_{X/K})_\Omega$ induced by
   the
   regular
   homomorphism $(\Phi^i_{X})_\Omega : \mathscr A^i_{X_\Omega/\Omega/\Omega} \to
   (\mathrm{Ab}^i_{X/K})_\Omega$
   and the universal property of~$\Phi^i_{X_\Omega}$\,;
   \item
   $\Phi^i_{X_\Omega}(\Omega) : \mathrm{A}^i(X_\Omega) \to
   \mathrm{Ab}^i_{X_\Omega/\Omega}(\Omega)$ is
   $\mathrm{Aut}(\Omega/K)$-equivariant, relative to the above identification.
  \end{enumerate}
 \end{teo}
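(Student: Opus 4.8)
The plan is to build the general separable extension $\Omega/K$ out of a ``diamond'' whose two non-trivial sides are each covered by one of the two descent theorems already proved. First I would fix a separable closure $\Omega^{\sep}$ of $\Omega$ and let $K^{\sep}$ be the separable algebraic closure of $K$ inside $\Omega^{\sep}$. Since $\Omega/K$ and the algebraic extension $\Omega^{\sep}/\Omega$ are both separable, transitivity of separability gives that $\Omega^{\sep}/K$ is separable; as $\Omega^{\sep}$ is separably closed, its relative separable algebraic closure $K^{\sep}$ is a genuine separable closure of $K$, so $K^{\sep}/K$ is (algebraic) Galois and $K^{\sep}$ is separably closed. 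Because $K^{\sep}/K$ is separable \emph{and algebraic}, the extension $\Omega^{\sep}/K^{\sep}$ is again separable (this uses that a separable extension remains separable after enlarging the base field by a separable algebraic subextension, which one checks by base-changing the product decomposition of $K^{\sep}\otimes_K\overline K$ to $\Omega^{\sep}$). Thus in
\[
\xymatrix@R=1.1em@C=1.1em{
 & \Omega^{\sep} & \\
 \Omega \ar@{-}[ur] & & K^{\sep} \ar@{-}[ul] \\
 & K \ar@{-}[ul] \ar@{-}[ur] &
}
\]
Theorem~\ref{T:barK/K} applies to the Galois extensions $K^{\sep}/K$ and $\Omega^{\sep}/\Omega$, while Theorem~\ref{T:Omega/k} applies to $\Omega^{\sep}/K^{\sep}$.

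For the existence assertion I would simply chain the three biconditionals read off the diamond: by Theorem~\ref{T:barK/K} an algebraic representative exists over $K$ if and only if it exists over $K^{\sep}$; by Theorem~\ref{T:Omega/k} it exists over $K^{\sep}$ if and only if it exists over $\Omega^{\sep}$; and by Theorem~\ref{T:barK/K} again it exists over $\Omega$ if and only if it exists over $\Omega^{\sep}$. Composing these equivalences shows that $\Phi^i_X$ exists if and only if $\Phi^i_{X_\Omega}$ does.

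Assume both exist, and let $\nu\colon \operatorname{Ab}^i_{X_\Omega/\Omega}\to(\operatorname{Ab}^i_{X/K})_\Omega$ be the natural map induced, via the universal property of $\Phi^i_{X_\Omega}$, by the regular homomorphism $(\Phi^i_X)_\Omega$ of Lemma~\ref{L:limBC}. To prove~(i) I would show $\nu$ is an isomorphism after the faithfully flat base change to $\Omega^{\sep}$ — which suffices, since a homomorphism of abelian $\Omega$-varieties is an isomorphism if and only if it is so after a field extension. Over $\Omega^{\sep}$ the source $(\operatorname{Ab}^i_{X_\Omega/\Omega})_{\Omega^{\sep}}$ is identified with $\operatorname{Ab}^i_{X_{\Omega^{\sep}}/\Omega^{\sep}}$ by Theorem~\ref{T:barK/K}, and the target $(\operatorname{Ab}^i_{X/K})_{\Omega^{\sep}}$ is identified with the same abelian variety by Theorem~\ref{T:barK/K} (for $K^{\sep}/K$) followed by Theorem~\ref{T:Omega/k} (for $\Omega^{\sep}/K^{\sep}$). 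The key point is that $\nu_{\Omega^{\sep}}$ coincides with the composite of these identifications: by the compatibility of base change for regular homomorphisms (Lemma~\ref{L:basechange} and the corollaries of Lemma~\ref{L:limBC}), all the maps in sight are the \emph{unique} morphisms of abelian $\Omega^{\sep}$-schemes intertwining the relevant regular homomorphisms, hence they agree by the uniqueness clause in the universal property of $\Phi^i_{X_{\Omega^{\sep}}}$. This gives~(i). For~(ii), the regular homomorphism $(\Phi^i_X)_\Omega$ is $\operatorname{Aut}(\Omega/K)$-equivariant by Lemma~\ref{L:equivariant}(ii); since $\nu$ is constructed canonically from it through a universal property that is itself compatible with the $\operatorname{Aut}(\Omega/K)$-action, $\nu$ is equivariant, and therefore $\Phi^i_{X_\Omega}(\Omega)=\nu(\Omega)^{-1}\circ(\Phi^i_X)_\Omega(\Omega)$ is $\operatorname{Aut}(\Omega/K)$-equivariant relative to the identification of~(i).

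The step I expect to be the main obstacle is the coherence bookkeeping in~(i): one must choose compatible separable closures $K^{\sep}\subseteq\Omega^{\sep}$ and then verify that the single natural map $\nu$ really does base-change to the \emph{composite} of the two structurally different isomorphisms — the one of Theorem~\ref{T:Omega/k} built from the $\Omega/k$-trace, and the one of Theorem~\ref{T:barK/K} built from Galois descent. What makes this manageable rather than delicate is that every homomorphism of abelian schemes appearing in the comparison is pinned down uniquely by a universal property, so the required diagram commutes automatically; the only genuine external inputs are transitivity of separability, the fact that $\Omega^{\sep}/K^{\sep}$ is separable over a separably closed field, and descent of isomorphisms of abelian varieties along $\Omega^{\sep}/\Omega$.
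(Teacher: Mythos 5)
Your proposal is correct and follows essentially the same route as the paper: the same diamond $K \subseteq K^{\sep} \subseteq \Omega^{\sep} \supseteq \Omega$, with Theorem~\ref{T:barK/K} applied to the two Galois sides, Theorem~\ref{T:Omega/k} applied to $\Omega^{\sep}/K^{\sep}$ (whose separability is exactly Lemma~\ref{L:L/KsepCl}), and Lemma~\ref{L:equivariant} giving the equivariance in $(ii)$. The only difference is cosmetic: you spell out the descent-of-isomorphisms and universal-property bookkeeping for $(i)$ that the paper compresses into a citation of the right-hand squares of \eqref{E:StLKtraceThm} and \eqref{E:DescThm}.
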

 \begin{proof}  Let $\Omega^{\sep} \supseteq K^{\sep}$ be separable closures of
  $\Omega $ and $K$\,; note that $\Omega^{\sep} \supseteq K^{\sep}$  is
  separable \cite[Lemma~3.1]{ACMValb}.
  That an algebraic representative $\Phi^i_{X_\Omega}$ exists if and only if an
  algebraic representative $\Phi^i_{X}$ exists is obtained by applying
  Theorem~\ref{T:barK/K} to the extension $\Omega^{\sep}/\Omega$,
  Theorem~\ref{T:Omega/k} to the extension $\Omega^{\sep}/K^{\sep}$
  and
  Theorem~\ref{T:barK/K} to the extension $K^{\sep}/K$.
  The identification
  $\mathrm{Ab}^i_{X_\Omega/\Omega}\stackrel{=}{\longrightarrow}
  (\mathrm{Ab}^i_{X/K})_\Omega$ comes from the isomorphisms provided by the
  vertical dotted arrows of the right-hand squares of~\eqref{E:StLKtraceThm}
  and~\eqref{E:DescThm}. The $\mathrm{Aut}(\Omega/K)$-equivariance of
  $\Phi^i_{X_\Omega}(\Omega)$ is then given by Lemma~\ref{L:equivariant}.
 \end{proof}

 \begin{rem}
  Slightly more is true\,: Let $\sigma : \Omega_1 \simeq \Omega_2$ be an
  isomorphism of separable fields over~$K$
  Assume that
  $\mathrm{Ab}^i_{X_{\Omega_1}}/{\Omega_1}$ exists.
  Then $\mathrm{Ab}^i_{X_{\Omega_2}}/{\Omega_2}$  exists,
  $\mathrm{Ab}^i_{X_{\Omega_j}}/\Omega_j$ identifies with
  $(\mathrm{Ab}^i_{X/K})_{\Omega_j}$ for $j=1,2$, and relative to these
  identifications
  $\mathrm{A}^i(X_{\Omega_1}) \to
  \mathrm{Ab}^i_{X_{\Omega_1}/{\Omega_1}}({\Omega_1})$ is mapped to
  $\mathrm{A}^i(X_{\Omega_2}) \to
  \mathrm{Ab}^i_{X_{\Omega_2}/{\Omega_2}}({\Omega_2})$ \emph{via} $\sigma$.
 \end{rem}

 \section{Existence results for algebraic representatives}\label{S:existence}

 We now prove the existence statement of
 Theorem~\ref{T2:mainAb2} concerning algebraic representatives for cycles of dimension 0, codimension 1 and codimension 2. In particular, in the codimension-2 case, this generalizes
 Murre's result concerning the existence of algebraic representatives for
 codimension-$2$ cycles
 \cite[Thm.~A]{murre83} on smooth projective varieties defined over
 an algebraically closed field.

 \begin{teo}\label{T:mainAb2}
  Suppose $X$ is a
  scheme of finite type over $S$ with geometric generic fiber
  $X_{\bar{\eta}_S}$,  and let $i$ be a non-negative integer. Suppose
  there exists some $\ell$, invertible in $\kappa(\eta_S)$, such that
  the $\ell$-torsion group $\A^i(X_{\bar \eta_S})[\ell]$ is finite.
  Then there exists an algebraic representative
  $$
  \Phi^i_{X/S}:\mathscr A^i_{X/S/\Lambda}\longrightarrow
  \operatorname{Ab}^i_{X/S}.
  $$
In particular, if $X \to S$ is smooth and proper, then an algebraic representative $
\Phi^i_{X/S}:\mathscr A^i_{X/S/\Lambda}\longrightarrow
\operatorname{Ab}^i_{X/S}$ exists for $i=1,2$ and $\dim_SX$.
 \end{teo}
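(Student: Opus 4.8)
The plan is to reduce the whole theorem to Saito's criterion (Proposition~\ref{P:saito}) after passing to a separably closed base field, where the finiteness hypothesis can be converted, via the existence of miniversal cycles, into a \emph{uniform} bound on the dimensions of surjective regular homomorphisms. \textbf{Step 1 (Reduction to a separably closed field).} Set $\Omega=\kappa(\eta_S)^{\sep}$, and regard $s:\spec\Omega=\eta_S^{\sep}\to S$ as a separably closed point arising as an inverse limit of morphisms in $\mathsf{Sm}_\Lambda/S$ (Lemma~\ref{L:WhatPoints}). By Corollary~\ref{C:surjbasechange} it suffices to produce an algebraic representative $\Phi^i_{X_\Omega/\Omega}$ over $\Omega$. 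The hypothesis descends to $\Omega$: the extension $\bar{\kappa(\eta_S)}/\Omega$ is purely inseparable, so since $\ell\neq p$ the base-change map $\A^i(X_\Omega)\to\A^i(X_{\bar{\eta}_S})$ is an isomorphism on $\ell$-torsion (Lemma~\ref{L:rad}, applied to $\operatorname{CH}^i$ and using that flat pullback preserves algebraic triviality). Hence $\A^i(X_\Omega)[\ell]$ is finite, of $\ff_\ell$-dimension, say, $b$.

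\textbf{Step 2 (The uniform bound).} I claim that every surjective regular homomorphism $\Phi:\mathscr A^i_{X_\Omega/\Omega}\to A$ satisfies $2\dim A\le b$. As $\Omega$ is separably closed, Lemma~\ref{L:Tor} (taken with $K=\Omega$ and the trivial extension $\Omega/\Omega$) furnishes an $r$ such that the composite $\A^i(X_\Omega)[(r,\ell)\ell]\to A(\Omega)[(r,\ell)\ell]\to A(\Omega)[\ell]$ is surjective. Since $\ell$ is prime, $(r,\ell)\ell$ is a power of $\ell$, so finiteness of $\A^i(X_\Omega)[\ell]$ forces $G:=\A^i(X_\Omega)[(r,\ell)\ell]$ to be a finite abelian $\ell$-group with $G[\ell]=\A^i(X_\Omega)[\ell]$. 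Because the target $A(\Omega)[\ell]\cong(\integ/\ell)^{2\dim A}$ is killed by $\ell$, the surjection factors through $G/\ell G$, and for a finite abelian $\ell$-group $\dim_{\ff_\ell}(G/\ell G)=\dim_{\ff_\ell}G[\ell]$; therefore $2\dim A\le\dim_{\ff_\ell}G[\ell]=b$. This bound is independent of $\Phi$, so Proposition~\ref{P:saito} produces the algebraic representative over $\Omega$, and Step~1 gives it over $S$, completing the general existence statement.

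\textbf{Step 3 (The smooth proper cases $i=1,2,\dim_SX$).} Here one only needs to verify the finiteness hypothesis for the smooth proper geometric generic fiber $X_{\bar{\eta}_S}$ over the algebraically closed field $\overline{\kappa(\eta_S)}$, for some $\ell$ invertible there. For $i=1$, $\A^1(X_{\bar{\eta}_S})=\operatorname{Pic}^0(X_{\bar{\eta}_S})$ is the group of points of an abelian variety, with finite $\ell$-torsion. For $i=\dim_SX$, algebraically trivial $0$-cycles have, by Rojtman's theorem and its prime-to-$p$ refinement, torsion isomorphic to that of $\operatorname{Alb}(X_{\bar{\eta}_S})$, so $\A^{\dim_S X}(X_{\bar{\eta}_S})[\ell]$ is finite. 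The decisive case is $i=2$: the $\ell$-adic Bloch map embeds $\A^2(X_{\bar{\eta}_S})\{\ell\}$ into the co-finitely generated group $H^3_{\et}(X_{\bar{\eta}_S},\rat_\ell/\integ_\ell(2))$, which has finite $\ell$-torsion; hence $\A^2(X_{\bar{\eta}_S})[\ell]$ is finite (see \cite{murre83} and \cite{ACMVBlochMap}). Feeding each of these into the first part yields the asserted existence.

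\textbf{Main obstacle.} The structural heart is Step~2, where the auxiliary integer $r$ supplied by Lemma~\ref{L:Tor} must be absorbed; the counting above shows it costs nothing, precisely because $G[\ell]$ is unchanged when passing from $\ell$ to $(r,\ell)\ell$, so the bound stays anchored to the \emph{given} finite group $\A^i(X_\Omega)[\ell]$. The deepest external input is the $i=2$ finiteness in Step~3, which rests on injectivity of the Bloch map together with finiteness of torsion in étale cohomology of a smooth proper variety; once that is granted, everything else is formal given the base-change, descent, and miniversal-cycle machinery of the preceding sections.
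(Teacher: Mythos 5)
Your proposal follows the paper's own proof almost step for step: the reduction to $\eta_S^{\sep}$ via Corollary~\ref{C:surjbasechange}, Saito's criterion (Proposition~\ref{P:saito}) fed by a uniform bound extracted from miniversal cycles (Lemma~\ref{L:MinVZK}) and Lemma~\ref{L:Tor}, and the verification of the finiteness hypothesis for $i=1,2,\dim_SX$ via the Picard scheme, the Merkurjev--Suslin injectivity of the second Bloch map (Theorem~\ref{T:MS}), and Roitman's theorem. Your Step 1 is, if anything, more careful than the paper's text, which leaves the transfer of finiteness from $X_{\bar{\eta}_S}$ to $X_{\eta_S^{\sep}}$ (your appeal to Lemma~\ref{L:rad}) implicit.

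There is, however, one step that fails as literally written: in Step 2 you quote Lemma~\ref{L:Tor} at level $N=\ell$, so your group $G=\A^i(X_\Omega)[(r,\ell)\ell]$ sits inside $\A^i(X_\Omega)[\ell^2]$ no matter how divisible $r$ is by $\ell$. But the only control the miniversal cycle gives on torsion is the identity $\Phi\circ w_Z=r$, which guarantees only that the image of $\Phi$ on $M$-torsion contains $r\cdot A(\Omega)[M]$; for $M=\ell^2$ and $\operatorname{ord}_\ell(r)\ge 2$ that guaranteed subgroup is $0$. The failure is genuine, not an artifact of the estimate: if $\Phi^i_{X_\Omega}$ is an algebraic representative that is an isomorphism on $\ell$-primary torsion (such exist, \emph{e.g.}, for $i=2$ over $\cx$ by Murre's results, \emph{cf.}~Theorem~\ref{T:mainAb2char0}) and $\psi:\operatorname{Ab}^i_{X_\Omega}\to A$ is an isogeny with kernel cyclic of order $\ell^2$, then $\Phi=\psi\circ\Phi^i_{X_\Omega}$ is a surjective regular homomorphism, every miniversal degree $r$ for it has $\operatorname{ord}_\ell(r)\ge 2$ (compare Tate modules in $\psi\circ f=r\cdot\operatorname{Id}_A$), and the composite $\A^i(X_\Omega)[\ell^2]\to A(\Omega)[\ell^2]\to A(\Omega)[\ell]$ has image of $\ff_\ell$-dimension $2\dim A-1$, hence is not surjective. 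This is exactly why the paper's proof does not take $N=\ell$ but instead uses the surjection onto $A(\eta_S^{\sep})[\ell]$ from (a subgroup of) $\A^i(X_{\eta_S^{\sep}})[\ell^{\operatorname{ord}_\ell(r)+1}]$: at that level the same identity gives that the image of $\Phi$ contains $r\cdot A[\ell^{\operatorname{ord}_\ell(r)+1}]=A[\ell]$, which is what is needed. The good news is that your counting is completely insensitive to the level: replacing $G$ by $\A^i(X_\Omega)[\ell^{\operatorname{ord}_\ell(r)+1}]$, or by the subgroup of it that actually surjects onto $A(\Omega)[\ell]$, one still has a finite abelian $\ell$-group with $G[\ell]\subseteq\A^i(X_\Omega)[\ell]$, so $\dim_{\ff_\ell}(G/\ell G)=\dim_{\ff_\ell}G[\ell]\le b$ and $2\dim A\le b$. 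With that single substitution your argument coincides with the paper's.
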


 \begin{proof}
   In the case where $X$ is a smooth projective scheme over $S=\spec
  K$ for some algebraically closed field $K$, the theorem is classical  and is due to
  Murre~\cite[Proof of Thm.~A]{murre83}.
  We have suitably generalized Saito's criterion and the existence of
  miniversal
  cycles in order to generalize Murre's argument to the general setting where $S$
  is any separated smooth scheme of finite type over a regular Noetherian scheme
  (rather than the spectrum of an algebraically closed field) and where $X$ is
 any scheme of finite type over~$S$ (rather than smooth and projective).
  Via  Corollary~\ref{C:surjbasechange},
  the theorem reduces to $S$ being the spectrum of a separably closed field.
  By Saito's criterion of Proposition~\ref{P:saito}, we have to show the
  existence of a natural number
  $M$ such that for every  surjective regular homomorphism $\Phi :
  \mathscr
  {A}^i_{X_{\eta_S}/\eta_S}\to A$,  we have $\dim_{\eta_S} A \leq
  M$. Let $R$ be the rank of $\A^i(X_{\bar\eta_S})[\ell]$.  It
  suffices to show that,  for
  each such $\Phi$ and $A$, $\#A(\eta_S^\sep)[\ell]$ has rank at most
  $R$.  Note that our hypothesis implies that, for every $e \ge 1$,
  $\#A(\eta_S^\sep)[\ell^e]$ has rank $R$.

To this end, let $\Phi :
  \mathscr
  {A}^i_{X_{\eta_S}/\eta_S}\to A$ be a surjective regular
  homomorphism. Let
  $Z\in \mathscr A^i_{X_{\eta_S}/\eta_S}(A)$ be a miniversal cycle
  provided by Lemma~\ref{L:MinVZK} such that the induced morphism
  $\Phi(A)(Z):A\to
  A$ is  given by  $r\cdot \operatorname{Id}_{A}$ for some natural number
  $r$. By Lemma~\ref{L:Tor}, there is a surjective homomorphism
  $\A^i(X_{\eta_S^\sep})[\ell^{\operatorname{ord}_\ell(r)+1}] \twoheadrightarrow
  A(\eta_S^\sep)[\ell]$, which shows that the latter indeed has rank at most
  $R$.

Now assume $X \to S$ is smooth and proper; we need to show that for
$i=1,2$ and $\dim_SX$, $\A^i(X_{\bar{\eta}_S})[\ell]$ is finite.
The case $i=1$ follows from the identification of $\A^1(X_{\bar{\eta}_S})[\ell]$ with $\pic^0_{X_{\bar{\eta}_S}}[\ell]$, the case $i=2$ from the finiteness of $\chow^2(X_{\bar{\eta}_S})[\ell]$ due to Merkurjev and Suslin~\cite{MS} (see also Theorem~\ref{T:MS} below), and the case $i=\dim_SX$ from Roitman's theorem~\cite{bloch79}.  (In the projective case, the latter can be found in \cite[Thm.~4.1]{bloch79} or
   \cite[Prop.~\ref{ACMVBlochMap:P:Rojtman}]{ACMVBlochMap}, combined with the fact, \emph{e.g.}
   \cite[Prop.~3.11]{ACMVabtriv}, that numerical and algebraic equivalence agree
   on  for $0$-cycles on $X_\Omega$. The proper case follows from Chow's lemma and the facts that
   $\chow_0$ and $\operatorname{Alb}$ are birational invariants for smooth proper
   varieties.)
 \end{proof}

\begin{rem}
In Theorem~\ref{T:AlbAbSe}, we will establish the existence of the algebraic representative $\Phi^d_{X/S}$ under the assumptions that $X \to S$ is proper with reduced and connected geometric fiber and that the composition $X\to S \to \Lambda$ is smooth. We also mention that, in the codimension-1 case, one can further establish the existence of the algebraic representative $\Phi^1_{X/S}$ under the weaker assumption that the geometric generic fiber of $X\to S$ admits an open embedding in a smooth proper scheme over $\bar{\eta}_S$. In that case, it can indeed be established, \emph{via} the localization exact sequence for Chow groups, that, for $\ell$ invertible in $\kappa(\eta_S)$,
the $\ell$-torsion group $\A^i(X_{\bar \eta_S})[\ell]$ is finite.
\end{rem}

 \section{Relation to the Picard scheme and the Albanese scheme}
 \label{S:PicAlb}

The aim of this section is to compare the algebraic representatives for codimension-1 cycles and for dimension-0 cycles 
to the relative Picard scheme and to the relative Albanese scheme, respectively (Theorems~\ref{T:AlgRep-Co1} and~\ref{T:AlbAb}). 
In light of the importance of the (non-)existence of universal cycle classes (\emph{e.g.}, \cite{voisinUniv, ACMVdiag}), we also discuss the existence of  universal codimension-$1$ cycle classes in Corollary~\ref{C:UnivCodim1}.
In Theorem~\ref{T:AlbAbSe} we generalize the existence result for algebraic representatives for dimension-0 cycles of Theorem~\ref{T:mainAb2} to $S$-schemes $X$ that are separated, geometrically reduced, geometrically connected and of finite type over $S$. In relation to \S \ref{SSS:def}, we note that all results of this section hold in the setting where the structure morphisms to $S$ of parameter spaces are not necessarily dominant.

 \subsection{Algebraic representatives for codimension-$1$ cycles}

 \subsubsection{Picard schemes and regular homomorphisms}

    Recall from Theorem \ref{T:mainAb2} that if the morphism  $f:X\to S$ is smooth and proper, then there exists an algebraic representative
  $
  \Phi^1_{X/S}:\mathscr A^1_{X/S/\Lambda}\to
  \operatorname{Ab}^1_{X/S}.
  $ We now compare this to the relative Picard scheme\,:

 \begin{teo}[Picard schemes and algebraic representatives for divisors]\label{T:AlgRep-Co1}
  Suppose $f:X\to S$
  is smooth and projective with geometrically integral fibers, and  $R^2f_*\mathcal O_X=0$.
  \begin{enumerate}[label=(\roman*)]
\item  The Abel--Jacobi map $AJ^1:\operatorname{Div}_{X/S}^0\to
  \operatorname{Pic}^0_{X/S}$ for divisors
  induces a
  regular homomorphism $$\Phi_{X/S}^1:\mathscr A^1_{X/S}\to
  \operatorname{Pic}^0_{X/S},$$ which is an algebraic representative in
  codimension-$1$.

  \item
For any morphism $S'\to S$ obtained as an inverse limit of morphisms in $\mathsf {Sm}_\Lambda/S$,
the natural homomorphism of $S'$-abelian
  schemes $$ \operatorname{Ab}^1_{X_{S'}/S'} \to (
  \operatorname{Ab}^1_{X/S})_{S'}$$ is an isomorphism.

   \item  For any separably closed
  point
  $s : \spec \Omega \to S$  obtained
  as an inverse limit of morphisms to $S$ in $\mathsf {Sm}_\Lambda/S$,
  the homomorphism
  $\Phi^1_{X/S}(\Omega)$ is an isomorphism\,; in particular, the map
  \begin{equation*}
  \xymatrix@C=4em{
   T_l \operatorname{A}^1(X_\Omega) \ar@{->}[r]^<>(0.5){T_l
    \Phi^1_{X/S}(\Omega)}_\cong
   & T_l \operatorname{Ab}^1_{X/S}(\Omega).\\
  }
  \end{equation*}
  is an isomorphism for all primes $l$. 
\end{enumerate}

 \end{teo}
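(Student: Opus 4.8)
The plan is to identify $\operatorname{Pic}^0_{X/S}$ with the algebraic representative whose existence is guaranteed by Theorem~\ref{T:mainAb2}, and then to read off (ii) and (iii) from base change for Picard schemes. First I would record that $\operatorname{Pic}^0_{X/S}$ is an abelian $S$-scheme: Grothendieck's theorem \cite{FGA} represents the relative Picard functor of $f$ by a separated $S$-scheme, its identity component is proper with abelian-variety fibers, and the vanishing $R^2f_*\mathcal O_X=0$ kills the obstruction to smoothness (which lies in $R^2f_*\mathcal O_X$), so $\operatorname{Pic}^0_{X/S}$ is smooth, hence an abelian scheme. To build $\Phi^1_{X/S}$, note that for $T\in\mathsf{Sm}_\Lambda/S$ a class $Z\in\mathscr A^1_{X/S}(T)\subseteq\operatorname{CH}^1(X_T)=\operatorname{Pic}(X_T)$ determines a $T$-point of $\operatorname{Pic}_{X/S}$; by Lemma~\ref{L:WhatPoints} fiberwise algebraic triviality may be tested at separable closures of points, where the algebraically trivial divisor classes are exactly the $\operatorname{Pic}^0$-points, so this $T$-point factors through $\operatorname{Pic}^0_{X/S}$. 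Naturality in $T$ is the compatibility of the refined Gysin pullback with the Picard functor, and this yields the regular homomorphism $\Phi^1_{X/S}:\mathscr A^1_{X/S}\to\operatorname{Pic}^0_{X/S}$.

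The substantive part is initiality. Since $f$ is smooth and surjective it admits sections \'etale-locally on $S$, and \'etale $S$-schemes lie in $\mathsf{Sm}_\Lambda/S$; over such an $S_\alpha$ a normalized Poincar\'e bundle exists and provides a universal cycle for $\Phi^1$, so that $\Phi^1$ is an epimorphism of functors over $S_\alpha$ (Proposition~\ref{P:univ-epi}). The fibers of $\Phi^1(T)$ are cosets of $\ker\Phi^1(T)$, and using the exact sequence $0\to\operatorname{Pic}(T)\to\operatorname{Pic}(X_T)\to\operatorname{Pic}_{X/S}(T)$ (split by the section) one identifies $\ker\Phi^1(T)$ with the line bundles $f_T^*N$ pulled back from the base, i.e.\ with $f^*\mathscr A^1_{S/S}(T)$. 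Every regular homomorphism $\Psi:\mathscr A^1_{X/S}\to A$ annihilates $f^*\mathscr A^1_{S/S}$, since it factors through $\mathscr P^1_{X/S}=\mathscr A^1_{X/S}/f^*\mathscr A^1_{S/S}$ (see \S\ref{SS:rem}); hence over each $S_\alpha$ the map $\Psi$ is constant on the fibers of the epimorphism $\Phi^1$ and factors through a natural transformation $\operatorname{Pic}^0_{X/S}\to A$, which is a homomorphism of abelian schemes by rigidity once the Poincar\'e bundle is normalized to preserve the zero section. Uniqueness of this factorization over overlaps $S_\alpha\times_S S_\beta$ forces the local factorizations to agree, and fppf descent of morphisms produces the unique global $\psi:\operatorname{Pic}^0_{X/S}\to A$ with $\Psi=\psi\circ\Phi^1$. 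This is the universal property, so $\Phi^1_{X/S}$ is the algebraic representative.

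For (ii) I would use that formation of $\operatorname{Pic}^0$ commutes with the relevant base changes: for $S'\to S$ an inverse limit of morphisms in $\mathsf{Sm}_\Lambda/S$, cohomology and base change (again via $R^2f_*\mathcal O_X=0$) yields a canonical isomorphism $\operatorname{Pic}^0_{X_{S'}/S'}\xrightarrow{\sim}(\operatorname{Pic}^0_{X/S})_{S'}$. Part (i), applied over both $S$ and $S'$ (the hypotheses being preserved under $S'\to S$), identifies both algebraic representatives with these Picard schemes, and under this identification the natural comparison map $\operatorname{Ab}^1_{X_{S'}/S'}\to(\operatorname{Ab}^1_{X/S})_{S'}$ becomes the Picard base-change isomorphism, proving (ii). For (iii), specialize to a separably closed point $s:\spec\Omega\to S$; by (ii) the algebraic representative is $\operatorname{Pic}^0_{X_\Omega/\Omega}$ and $\Phi^1(\Omega)$ is the canonical map $\operatorname{A}^1(X_\Omega)\to\operatorname{Pic}^0_{X_\Omega/\Omega}(\Omega)$. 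Since $\operatorname{Br}(\Omega)=0$ and $\operatorname{Pic}^0_{X_\Omega/\Omega}$ is reduced, this map is a bijection, in particular an isomorphism on prime-to-$p$ torsion; passing to $\ell$-adic Tate modules for $\ell\neq p$ gives the displayed isomorphism $T_\ell\operatorname{A}^1(X_\Omega)\xrightarrow{\sim}T_\ell\operatorname{Ab}^1_{X/S}(\Omega)$.

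The step I expect to be the main obstacle is initiality in (i), specifically obtaining an honest isomorphism $\operatorname{Ab}^1_{X/S}\cong\operatorname{Pic}^0_{X/S}$ rather than merely a purely inseparable isogeny, the weaker conclusion of \cite{ACMVdcg}. A naive comparison through dimensions and Tate modules cannot detect an infinitesimal kernel in characteristic $p$, so the argument must instead be organized around an actual universal cycle (available \'etale-locally once $f$ has a section) together with the precise identification of $\ker\Phi^1$ with pullbacks from the base. The care needed to make the \'etale-local construction descend, and to normalize the Poincar\'e bundle so that the factorization preserves the zero section, is where the real work lies.
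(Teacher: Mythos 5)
Your proof is correct, and for parts (i) and (ii) it follows the paper's strategy: use Kleiman's results \cite{kleimanPic} to show $\operatorname{Pic}^0_{X/S}$ is an abelian scheme (representability of the \'etale-sheafified Picard functor, smoothness from $R^2f_*\mathcal O_X=0$, properness of the identity component), identify it with the algebraic representative, and deduce (ii) from base change for Picard schemes. The main difference in (i) is one of packaging: the paper disposes of initiality in a single line, because $\operatorname{Pic}_{X/S}$ represents the \emph{\'etale sheafification} of the relative Picard functor, so the Abel--Jacobi map is ``by definition universal'' for families of algebraically trivial divisors; your argument --- \'etale-local sections, normalized Poincar\'e bundles giving universal cycles (compare Corollary~\ref{C:UnivCodim1}), the identification of $\ker\Phi^1(T)$ with $f_T^*\operatorname{Pic}(T)$, the fact that any regular homomorphism kills such classes (the factorization through $\mathscr P^1_{X/S}$ asserted in \S\ref{SS:rem}, which one justifies by evaluating at separably closed points and using density), and gluing the local factorizations by uniqueness and \'etale descent --- is precisely a hands-on proof of that sheafification statement, so it is the same route carried out in more detail, and it correctly isolates the mechanism (an honest universal cycle \'etale-locally) that yields an isomorphism rather than a purely inseparable isogeny. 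Part (iii) is where you genuinely diverge, to your advantage: the paper deduces the prime-to-$p$ torsion isomorphism from Bloch's theorem that the first Bloch map is an isomorphism on prime-to-$p$ torsion \cite[Prop.~3.6]{bloch79}, whereas you use $\operatorname{Br}(\Omega)=0$, the existence of an $\Omega$-point of $X_\Omega$, and the Poincar\'e bundle over the smooth connected scheme $\operatorname{Pic}^0_{X_\Omega/\Omega}$ to show that $\Phi^1(\Omega):\operatorname{A}^1(X_\Omega)\to\operatorname{Pic}^0_{X_\Omega/\Omega}(\Omega)$ is bijective outright; this is more elementary, needs no cohomological input, and proves a stronger statement than the one claimed (bijectivity on the whole group, not just on prime-to-$p$ torsion).
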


 \begin{proof}
The proof follows readily from the standard results in say \cite{kleimanPic}.
 \end{proof}

 \begin{rem}\label{R:Pic0Kred}
 If $\Lambda=S=\operatorname{Spec}K$, and $X/K$ is proper 
 and geometrically normal,  then
  $(\operatorname{Pic}^0_{X/K})_{\operatorname{red}}$ is an abelian variety
  \cite[Cor.~VI.3.2]{FGA} (see also \cite[Rem.~9.5.6, Thm.~9.5.3, Cor.~9.4.18.3]{kleimanPic} and \cite[Prop.~VI.3.1]{FGA}), and the  Abel--Jacobi map $AJ^1:\operatorname{Div}_{X/S}^0\to
  \operatorname{Pic}^0_{X/K}$ for divisors still
  induces a
  regular homomorphism $$\Phi_{X/K}^1:\mathscr A^1_{X/K}\to
  (\operatorname{Pic}^0_{X/K})_{\operatorname{red}},$$ which is an algebraic
  representative in
  codimension-$1$,  as our parameter spaces are reduced, and therefore every
  morphism from
  a reduced scheme to  $\operatorname{Pic}^0_{X/K}$ factors uniquely through $
  (\operatorname{Pic}^0_{X/K})_{\operatorname{red}}$.
 \end{rem}

 \subsubsection{Some remarks on universal codimension-$1$ cycle classes}\label{S:UnivCodim1}
Bearing in mind that the (non-)existence of universal cycle classes is a subtle and interesting invariant (\emph{e.g.}, \cite{voisinUniv, ACMVdiag}), we note\,:

\begin{cor}[Universal codimension-$1$ cycle classes]\label{C:UnivCodim1}
Assume  $f:X\to S$
  is smooth and projective with geometrically integral fibers, and  $R^2f_*\mathcal O_X=0$.
If    $f$ admits a section, then $\operatorname{Ab}^1_{X/S}$ admits a universal
  codimension-$1$ cycle class.
\end{cor}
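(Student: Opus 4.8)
The plan is to exhibit the universal cycle explicitly as the cycle class of a Poincar\'e bundle. Recall from Theorem~\ref{T:AlgRep-Co1} that under these hypotheses $\operatorname{Ab}^1_{X/S}$ is canonically identified with $\operatorname{Pic}^0_{X/S}$, and that the regular homomorphism $\Phi^1_{X/S}$ sends a family $W\in\mathscr A^1_{X/S}(T)$ to the classifying morphism $T\to\operatorname{Pic}^0_{X/S}$ of the associated (fiberwise degree-zero) line bundle $\mathcal O(W)$ on $X_T$. By definition, a universal codimension-$1$ cycle is then a class $Z\in\mathscr A^1_{X/S}(\operatorname{Pic}^0_{X/S})$ for which this classifying morphism, applied to $\operatorname{Pic}^0_{X/S}$ itself, is the identity.

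First I would invoke the section $\sigma:S\to X$ to rigidify the relative Picard functor. Since $f$ is projective and flat with geometrically integral fibers and admits a section, $\operatorname{Pic}_{X/S}$ is representable without sheafification and carries a Poincar\'e line bundle $\mathcal P$ on $X\times_S\operatorname{Pic}_{X/S}$, normalized along $\sigma$; restricting to the identity component gives a Poincar\'e bundle on $X\times_S\operatorname{Pic}^0_{X/S}$ (see \cite{kleimanPic}). Writing $P:=\operatorname{Pic}^0_{X/S}=\operatorname{Ab}^1_{X/S}$, note that $X_P=X\times_S P$ is smooth over $\Lambda$, since $X/S$ and $P/S$ are smooth and $S/\Lambda$ is smooth; thus $\mathcal P$ determines a class $Z\in\operatorname{CH}^1(X_P)$ under the identification of $\operatorname{Pic}(X_P)$ with codimension-$1$ cycle classes. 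I would then check $Z\in\mathscr A^1_{X/S}(P)$ using Lemma~\ref{L:WhatPoints}: for a point $t:\operatorname{Spec}\Omega\to P$ with $\Omega$ separably closed, the restriction of $\mathcal P$ along $t$ is precisely the line bundle on $X_t$ classified by $t$, which lies in $\operatorname{Pic}^0$ and is therefore algebraically trivial, so $Z_t$ is algebraically trivial.

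It then remains to compute the induced endomorphism $\Phi^1_{X/S}(P)(Z):P\to P$. By the description of $\Phi^1_{X/S}$ recalled above, this is the morphism classifying the line bundle $\mathcal O(Z)=\mathcal P$ on $X_P$, which by the defining universal property of the Poincar\'e bundle is $\operatorname{id}_P$. Hence $Z$ is a universal codimension-$1$ cycle. The main point requiring care is the genuine existence of the Poincar\'e bundle---as opposed to a line bundle defined only after fppf descent---which is exactly where the hypothesis that $f$ admit a section is used; the remaining verifications (fiberwise algebraic triviality \emph{via} Lemma~\ref{L:WhatPoints}, and the matching of the universal property of $\mathcal P$ with the Abel--Jacobi description of $\Phi^1_{X/S}$ from Theorem~\ref{T:AlgRep-Co1}) are then formal.
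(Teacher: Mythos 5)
Your proposal is correct and takes essentially the same route as the paper: both arguments reduce the statement to the existence of a Poincar\'e (universal) line bundle on $\operatorname{Pic}^0_{X/S}$, which exists because the section, together with $f_*\mathcal O_X=\mathcal O_S$ holding universally, forces the Picard functor to agree with its \'etale sheafification (\cite[Thm.~9.2.5, Exe.~9.3.11]{kleimanPic}). The verifications you spell out explicitly --- fiberwise algebraic triviality of the Poincar\'e class and that its classifying morphism is $\operatorname{id}_{\operatorname{Pic}^0_{X/S}}$ --- are exactly what the paper compresses into the opening sentence that the statement ``is equivalent to the existence of a universal line bundle on $\operatorname{Pic}^0_{X/S}$.''
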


\begin{proof}
The proof follows readily from the standard results in say \cite{kleimanPic}.
%
\end{proof}

\begin{rem}\label{R:Pic0/KUniv}
 Assume $\Lambda=S=\operatorname{Spec}K$, and $X/K$ is proper 
and geometrically normal.  If $X$ admits a $K$-point, then $\operatorname{Ab}^1_{X/K}=(\operatorname{Pic}^0_{X/K})_{\operatorname{red}}$ admits a universal codimension-$1$ cycle class.
\end{rem}

\begin{rem}
 Even when working with a smooth projective geometrically integral  variety $X$
 over a perfect field $K$ there are some subtleties regarding universal
 codimension-$1$ cycle classes.  
For instance, if $K=\rat_p$ and 
$X/K$ is any curve of genus one with $X(K) =
\emptyset$,  
then $\operatorname{Ab}^1_{X/K}$ does not admit a universal
 codimension-$1$ cycle class.
 At the same time, we note that in general, the existence of a $K$-point on a smooth projective geometrically integral variety is not
 a
 necessary condition for the existence of a universal codimension-$1$ cycle class\,; \emph{e.g.}, any  smooth projective geometrically integral variety over a finite field admits a universal codimension-$1$ cycle class. 
\end{rem}

 \subsection{Algebraic representatives for dimension-$0$ cycles}
We denote by
   $d=d_{X/S}$ the relative dimension of $X$ over $S$.

 \subsubsection{Albanese schemes and regular homomorphisms}
 \label{SS:AlbRegHom}
 Let $X\to S$ be a proper morphism. An $S$-morphism $g:X\to A$ to an
 abelian scheme induces a regular homomorphism $$\Phi_g:\mathscr
 A^d_{X/S/\Lambda}\to A.$$
 Conversely, suppose in addition that  $X$ belongs to $\mathsf{Sm}_\Lambda/S$, \emph{i.e.}, $X$ is smooth over $\Lambda$ and $X\to S$ is dominant and proper,
  and we assume the generic fiber of $X/\Lambda$ is geometrically connected, and  $X/S$ admits a section $\sigma:S\to X$.  Then the cycle
 $Z=\Delta-(X\times_S \sigma(S))$ is
 in $\mathscr A^d_{X/S/\Lambda}(X)$,
and for any regular
 homomorphism $\Phi:\mathscr A^d_{X/S/\Lambda}\to A$,
 we obtain a morphism $\Phi(X)(Z):X\to A$.
   These
 constructions are inverses of one another, in the sense that  if $g$ sends $\sigma$ to $0$, then
 $\Phi_g(X)(Z)=g$, and
 $\Phi_{\Phi(X)(Z)}=\Phi$.   This gives the following lemma.  Recall
 that if $X/S$ is a scheme over $S$, with section $\sigma:S\to X$,
 then a \emph{pointed $S$-Albanese morphism} is an initial
 $S$-morphism $X\to A$ to an abelian $S$-scheme sending $\sigma$ to
 $0$\,; \emph{i.e.}, $S\to X\to A$ is the zero section ; and the
 abelian scheme is called the pointed $S$-Albanese scheme of $X$.

 \begin{lem} \label{L:AlbAb}
 Let $X$ be in $\mathsf {Sm}_\Lambda/S$ and assume that $X/\Lambda$ has generic fiber that is geometrically connected.
  Assume that $X/S$ is proper and admits a section $\sigma$, and set
  $Z=\Delta-(X\times_S\sigma(S))$ where $\Delta$ is the diagonal on $X\times_S X$.
  If $$\Phi^d_{X/S}:\mathscr A^d_{X/S/\Lambda}\longrightarrow
  \operatorname{Ab}^d_{X/S}$$ is an algebraic representative, then the
  associated
  map $\Phi^d_{X/S}(X)(Z):X\to \operatorname{Ab}^d_{X/S}$ is a pointed
  $S$-Albanese morphism sending
  $\sigma$ to zero, with $\Phi_{\Phi^d_{X/S}(X)(Z)}=\Phi^d_{X/S}$.   Conversely,
  if $$\alpha_\sigma:X\longrightarrow  \operatorname{Alb}_{X/S}$$ is a
  pointed $S$-Albanese morphism
  sending $\sigma$ to zero, then $\Phi_{\alpha_\sigma}:\mathscr
  A^d_{X/S/\Lambda}\to \operatorname{Alb}_{X/S}$ is an algebraic representative
  with $\Phi_{\alpha_\sigma}(X)(Z)=\alpha_\sigma$.
 \end{lem}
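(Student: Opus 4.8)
The plan is to promote the pair of mutually inverse constructions recorded at the end of \S\ref{SS:AlbRegHom} to an equivalence of universal properties, and then to read off both implications from the matching of initial objects. For a fixed abelian $S$-scheme $A$, those constructions give mutually inverse bijections between the set of $S$-morphisms $g:X\to A$ with $g\circ\sigma=0$ and the set of regular homomorphisms $\Phi:\mathscr A^d_{X/S/\Lambda}\to A$,
$$
\{\, g : X \to A \ \text{with}\ g\circ\sigma = 0 \,\} \ \xrightarrow{\ g \mapsto \Phi_g\ } \ \{\, \Phi : \mathscr A^d_{X/S/\Lambda} \to A \,\}, \qquad \Phi \longmapsto \Phi(X)(Z),
$$
the inverse relations being exactly the two identities $\Phi_g(X)(Z) = g$ and $\Phi_{\Phi(X)(Z)} = \Phi$ quoted there. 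Two things remain to be checked: that $\Phi(X)(Z)$ really does send $\sigma$ to $0$ (so that the second assignment lands in the left-hand set), and that the correspondence is natural as $A$ varies; granting these, the bijection is an isomorphism of comma categories and hence carries initial objects to initial objects, which is all that the lemma asserts.

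First I would verify that $g := \Phi(X)(Z)$ kills $\sigma$. By naturality of the regular homomorphism $\Phi$ applied to the morphism $\sigma : S \to X$ in $\mathsf{Sm}_\Lambda/S$, the composite $S \xrightarrow{\sigma} X \xrightarrow{g} A$ equals $\Phi(S)(\sigma^! Z)$; and the Gysin fiber $\sigma^! Z = Z_\sigma$ is $[\sigma] - [\sigma] = 0$, since restricting the normalized diagonal $Z = \Delta - (X\times_S\sigma(S))$ to the parameter point $\sigma$ yields the class of that point minus itself. Hence $g\circ\sigma = \Phi(S)(0) = 0$. Next I would check naturality: for a homomorphism $h : A \to A'$ of abelian $S$-schemes one has $\Phi_{h\circ g} = h\circ\Phi_g$. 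This is immediate from the explicit construction of $\Phi_g$ in \S\ref{SS:AlbRegHom}, because post-composing $g$ with $h$ post-composes the morphism $U' \to A_{U'}$ with $h$, and both the fpqc descent to $U$ and the unique extension across the regular scheme $T$ (Proposition~\ref{P:BLR}$(i)$) commute with $h$. Equivalently, $h$ is a morphism $(A,\Phi_g) \to (A',\Phi_{h\circ g})$ of regular homomorphisms if and only if it is a morphism $(A,g)\to(A',h\circ g)$ of pointed $S$-morphisms.

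With the equivalence in place both directions are formal. If $\Phi^d_{X/S}$ is an algebraic representative, it is by definition initial among regular homomorphisms, so its partner $g = \Phi^d_{X/S}(X)(Z)$ is initial among $S$-morphisms $X\to A$ killing $\sigma$ — that is, $g$ is a pointed $S$-Albanese morphism — and $\Phi_{\Phi^d_{X/S}(X)(Z)} = \Phi^d_{X/S}$ is the inverse identity. Conversely, if $\alpha_\sigma$ is a pointed $S$-Albanese morphism, it is initial among pointed $S$-morphisms, so $\Phi_{\alpha_\sigma}$ is initial among regular homomorphisms, i.e.\ an algebraic representative, with $\Phi_{\alpha_\sigma}(X)(Z) = \alpha_\sigma$. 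The only content beyond \S\ref{SS:AlbRegHom} is the naturality verification, so I expect that to be the main (though mild) obstacle: one must be careful that the descent and the extension-across-a-regular-base steps in the definition of $\Phi_g$ genuinely commute with post-composition by $h$, which they do by the uniqueness of extensions of morphisms of abelian schemes.
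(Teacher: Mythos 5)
Your proof is correct and takes essentially the same route as the paper's: both directions are obtained by transferring initiality across the mutually inverse constructions $g \mapsto \Phi_g$ and $\Phi \mapsto \Phi(X)(Z)$ of \S\ref{SS:AlbRegHom}, which the paper writes as two explicit diagram chases and you package as an isomorphism of comma categories. Your explicit verifications of the pointedness of $\Phi(X)(Z)$ (via $\sigma^! Z = 0$) and of the naturality $\Phi_{h\circ g} = h\circ \Phi_g$ are precisely the facts the paper uses implicitly, the latter in its chain $\Phi = \Phi_{\Phi(X)(Z)} = \Phi_{\beta\circ\alpha_\sigma} = \beta\circ\Phi_{\alpha_\sigma}$.
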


 \begin{proof}
 This follows easily from the discussion above.
%
%
 \end{proof}

 We have the following consequence, generalizing results of
 Serre--Grothendieck--Conrad on the existence of  Albanese varieties over
 fields (\cite{serrealb}; see also the discussion in \cite[Appendix]{wittenberg08} and \cite[\S 1]{ACMValb}), and also Grothendieck's relative
 existence
 result (which requires  that  $\mathscr Pic^0_{X/S}$ be represented by an
 abelian scheme\,; see Theorem \ref{T:AlbAb}, below).

 \begin{teo}[Albanese schemes and algebraic representatives for dimension-$0$ cycles]\label{T:AlbAbSe}
  Let $\Lambda$ be a regular Noetherian  scheme, let $S$ be  a scheme that is
  smooth
  separated and of finite type over $\Lambda$, and let $f:X\to S$ be a
  proper surjective
  morphism such that the composition $X\to S \to \Lambda$ is smooth.
  If the generic fiber of $X/\Lambda$ is
  geometrically connected,   then there is an algebraic representative
  $\Phi^d_{X/S}:\mathscr A^d_{X/S/\Lambda}\to
  \operatorname{Ab}^d_{X/S}$.
  If moreover $X/S$ admits a section, then there exists a pointed
  $S$-Albanese morphism $X\to
  \operatorname{Alb}_{X/S}$.
 \end{teo}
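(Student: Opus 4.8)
The plan is to deduce the existence of $\Phi^d_{X/S}$ from the general existence criterion of Theorem~\ref{T:mainAb2}, and then to read off the pointed $S$-Albanese morphism from Lemma~\ref{L:AlbAb}. Recall that the first, unconditional, assertion of Theorem~\ref{T:mainAb2} only requires that $\A^i(X_{\bar\eta_S})[\ell]$ be finite for some $\ell$ invertible in $\kappa(\eta_S)$; crucially, it does \emph{not} require $X\to S$ to be smooth, only of finite type. Thus, taking $i=d=\dim_S X$, the entire existence problem reduces to the following finiteness statement: for the geometric generic fiber $X_{\bar\eta_S}$, a proper scheme of dimension $d$ over the algebraically closed field $k:=\overline{\kappa(\eta_S)}$, the group $\A^d(X_{\bar\eta_S})[\ell]$ is finite for some $\ell\neq\operatorname{char}k$. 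Equivalently, one could run the reduction by hand: Corollary~\ref{C:surjbasechange} reduces existence to the separable closure of $\eta_S$, and then Saito's criterion (Proposition~\ref{P:saito}) together with the miniversal cycle of Lemma~\ref{L:MinVZK} and the torsion estimate of Lemma~\ref{L:Tor} turns the required dimension bound into exactly this finiteness of $\ell$-torsion.

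First I would record that $X$ lies in $\mathsf{Sm}_\Lambda/S$: the composite $X\to\Lambda$ is smooth, separated and of finite type (the latter from properness of $X\to S$ and finite type of $S\to\Lambda$), and $X\to S$ is dominant since it is surjective. Since $\chow_0$ depends only on the reduction, $\A^d(X_{\bar\eta_S})=\A^d((X_{\bar\eta_S})_{\mathrm{red}})$, so I may replace $X_{\bar\eta_S}$ by its reduction, a reduced proper scheme of dimension $d$ over $k$. When $\operatorname{char}\kappa(\eta_S)=0$ -- in particular whenever $\eta_\Lambda$ has characteristic $0$, as for $\Lambda$ of mixed characteristic with generic point of characteristic $0$ -- the field $\kappa(\eta_S)$ is perfect, so the regular scheme $X_{\eta_S}$ (a localization of $X$, which is regular since it is smooth over the regular scheme $\Lambda$) is already smooth over $\kappa(\eta_S)$; hence $X_{\bar\eta_S}$ is a smooth proper variety and the finiteness $\A^d(X_{\bar\eta_S})[\ell]\cong\operatorname{Alb}(X_{\bar\eta_S})[\ell]$ is immediate from Roitman's theorem \cite{bloch79}.

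The hard part is the case $p:=\operatorname{char}\kappa(\eta_S)>0$, where $\kappa(\eta_S)$ need not be perfect and $X_{\bar\eta_S}$ may be singular or even non-reduced (already the relative-Frobenius example $\mathbb{A}^1\to\mathbb{A}^1$, $s\mapsto t^p$, shows geometric non-reducedness is compatible with all the hypotheses). Here I would reduce the finiteness of the prime-to-$p$ torsion of $\A^d$ of the reduced proper $k$-variety $V:=(X_{\bar\eta_S})_{\mathrm{red}}$ to the smooth projective case, working componentwise if $V$ is reducible. Concretely, after Chow's lemma I may assume $V$ projective, and then a de~Jong alteration $\pi\colon\widetilde V\to V$ with $\widetilde V$ smooth projective induces, by proper push-forward and flat pull-back on $\chow_0$, maps whose composite is multiplication by $\deg\pi$; localizing away from $p$, and using that numerical and algebraic equivalence agree on $0$-cycles of $\widetilde V$ (\emph{cf.} \cite[Prop.~3.11]{ACMVabtriv}) together with Roitman's theorem \cite{bloch79} on $\widetilde V$, one bounds $\A^d(V)[\ell]$ for $\ell\neq p$ by the finite group $\A^d(\widetilde V)[\ell]\cong\operatorname{Alb}(\widetilde V)[\ell]$. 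This is the step I expect to be the main obstacle, since it is exactly where singular and non-reduced positive-characteristic fibers force one out of the smooth-proper setting already covered by Theorem~\ref{T:mainAb2}; the comparison of $\chow_0$ along the alteration, and the control of the $p$-part of the torsion, are the points that must be handled with care.

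With the finiteness in hand, Theorem~\ref{T:mainAb2} furnishes the algebraic representative $\Phi^d_{X/S}\colon\mathscr A^d_{X/S/\Lambda}\to\operatorname{Ab}^d_{X/S}$, proving the first assertion. For the second assertion, suppose $X/S$ admits a section $\sigma$. Then all hypotheses of Lemma~\ref{L:AlbAb} hold: $X\in\mathsf{Sm}_\Lambda/S$, the generic fiber of $X/\Lambda$ is geometrically connected, and $X/S$ is proper with the section $\sigma$. Applying Lemma~\ref{L:AlbAb} to $Z=\Delta-(X\times_S\sigma(S))$, the associated morphism $\Phi^d_{X/S}(X)(Z)\colon X\to\operatorname{Ab}^d_{X/S}$ is a pointed $S$-Albanese morphism sending $\sigma$ to $0$; this exhibits the desired $X\to\operatorname{Alb}_{X/S}$, with $\operatorname{Alb}_{X/S}=\operatorname{Ab}^d_{X/S}$.
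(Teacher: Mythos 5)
Your handling of the second assertion (the section case via Lemma~\ref{L:AlbAb}) coincides with the paper's, and your top-level framework (reduce existence to a statement over a separably/algebraically closed point) is also the paper's. But for the existence of $\Phi^d_{X/S}$ you take a genuinely different route---verifying the torsion-finiteness hypothesis of Theorem~\ref{T:mainAb2}---and that route has a genuine gap exactly at the step you flag as the ``main obstacle.'' Under the hypotheses of the theorem, the geometric generic fiber $X_{\bar{\eta}_S}$ can be non-reduced and its reduction $V$ can be badly singular (your relative Frobenius example is correct), so your plan requires finiteness of $\A^d(V)[\ell]$ for a reduced proper, possibly very singular scheme over an algebraically closed field of characteristic $p$. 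The tools you invoke for this are not available: a de~Jong alteration $\pi\colon\widetilde V\to V$ is not flat, so there is no flat pull-back on $\chow_0$; and since $V$ is singular there is no refined Gysin or correspondence pull-back either, hence no homomorphism $\pi^*\colon\chow_0(V)\to\chow_0(\widetilde V)$ satisfying $\pi_*\pi^*=\deg\pi$. The preliminary reductions have the same defect: passing to irreducible components, or to a Chow cover $V'\to V$, only produces push-forward maps toward $\A_0(V)$, and finiteness of $\ell$-torsion is not inherited by quotients of abelian groups (e.g., $\bigoplus_n\integ\twoheadrightarrow\bigoplus_n\integ/\ell$), so these reductions beg the same question. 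As written, the characteristic-$p$ case therefore fails, and filling it would amount to proving a Roitman-type theorem for Fulton's $\chow_0$ of singular proper schemes, which is not supplied by anything in the paper. (Your characteristic-zero argument is fine: there $X_{\eta_S}$ is regular over a perfect field, hence smooth, and Roitman applies.)

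The paper's proof is structured precisely so that no such finiteness statement is ever needed. After the same reduction via Corollary~\ref{C:surjbasechange} to $\Lambda=S=\spec\Omega$ with $\Omega=\kappa(S)^{\sep}$, it does not run Saito's criterion at all: it uses an $\Omega$-point of $X_\Omega$ to invoke Serre's existence theorem for the Albanese (Theorem~\ref{T:Serre-Alb}), and then the converse direction of Lemma~\ref{L:AlbAb}---the dictionary $g\mapsto\Phi_g$, $\Phi\mapsto\Phi(X)(Z)$ with $Z=\Delta-(X\times_S\sigma(S))$---converts the pointed Albanese morphism directly into an algebraic representative. In other words, for $i=d$ the universal property of the Albanese substitutes for any torsion bound, and this is exactly what makes non-smooth morphisms $X\to S$ tractable. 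Your diagnosis of where the difficulty sits is accurate (indeed, your Frobenius example shows that even the geometric reducedness and existence of an $\Omega$-point used in the paper's reduction are delicate under these hypotheses), but the cure the paper uses is to replace the finiteness criterion by the Albanese's universal property, not to prove a singular Roitman theorem.
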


 \begin{proof} This is a straightforward application of Lemma \ref{L:AlbAb}.
%
%
 \end{proof}

 \begin{rem}
  The existence of the algebraic representative in Theorem \ref{T:AlbAbSe} follows from Theorem
  \ref{T:mainAb2} in the case where $f:X\to S$ is smooth and proper.
   \end{rem}

 \subsubsection{Grothendieck's Albanese schemes and algebraic representatives for dimension-$0$ cycles}
Recall from Theorem \ref{T:mainAb2} that if the morphism  $f:X\to S$ is smooth and proper, then there exists an algebraic representative
  $
  \Phi^d_{X/S}:\mathscr A^d_{X/S/\Lambda}\to
  \operatorname{Ab}^d_{X/S}$.  We also have Theorem \ref{T:AlbAbSe} above, comparing this with a relative Albanese scheme.
   We now compare this to Grothendieck's construction\,:

 \begin{teo}[Grothendieck's Albanese schemes and algebraic representatives for points]\label{T:AlbAb}

 Suppose that $f:X\to S$
  is smooth and projective with geometrically integral fibers, $R^2f_*\mathcal O_X=0$, and $f$ admits a section.
  \begin{enumerate}[label=(\roman*)]
\item The universal line bundle $\mathcal P$ on
  $\operatorname{Pic}^0_{X/S}\times_S X$, distinguished by being trivialized
  along
  the section, determines a pointed $S$-Albanese morphism $X\to
  \operatorname{Alb}_{X/S}:=(\operatorname{Pic}^0_{X/S})^\vee$,  $(t:T\to
  X)\mapsto ( \operatorname{Id}\times t)^*\mathcal P$, that
  induces  a
  regular homomorphism $$\Phi_{X/S}^d:\mathscr A^d_{X/S}\to
  \operatorname{Alb}_{X/S},$$ which is an algebraic representative for
  dimension-$0$ cycles.

  \item For any morphism $S'\to S$ obtained as an inverse limit of morphisms in $\mathsf {Sm}_\Lambda/S$,
the natural homomorphism of $S'$-abelian
  schemes $$ \operatorname{Ab}^d_{X_{S'}/S'} \to (
  \operatorname{Ab}^d_{X/S})_{S'}$$ is an isomorphism.

  
    \item For any separably closed
  point
  $s : \spec \Omega \to S$  obtained
  as an inverse limit of morphisms to $S$ in $\mathsf {Sm}_\Lambda/S$,
  the homomorphism
  $\Phi^d_{X/S}(\Omega)$ is an isomorphism
  on torsion\,; in particular, the map
  \begin{equation*}
  \xymatrix@C=4em{
   T_l \operatorname{A}^d(X_\Omega) \ar@{->}[r]^<>(0.5){T_l
    \Phi^d_{X/S}(\Omega)}_\cong
   & T_l \operatorname{Ab}^d_{X/S}(\Omega).\\
  }
  \end{equation*}
  is an isomorphism for all primes $l$.
\end{enumerate}

 \end{teo}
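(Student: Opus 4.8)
The plan is to assemble the three assertions from the codimension-$1$ results already established, using the duality between $\operatorname{Pic}^0_{X/S}$ and the Albanese scheme, and to reserve Roitman's theorem for the torsion statement $(iii)$. For $(i)$, recall from Theorem~\ref{T:AlgRep-Co1} that under the present hypotheses $\operatorname{Pic}^0_{X/S}$ is an abelian scheme and $\Phi^1_{X/S}:\mathscr A^1_{X/S}\to\operatorname{Pic}^0_{X/S}$ is the algebraic representative in codimension~$1$, so that $\operatorname{Ab}^1_{X/S}=\operatorname{Pic}^0_{X/S}$. Since $f$ admits a section, Corollary~\ref{C:UnivCodim1} supplies a universal codimension-$1$ cycle, namely the line bundle $\mathcal P$ on $\operatorname{Pic}^0_{X/S}\times_S X$ trivialized along the section. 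Viewing $\mathcal P$ as a family of line bundles on $\operatorname{Pic}^0_{X/S}$ parameterized by $X$ and trivializing along the section, the preceding duality proposition produces the canonical $S$-morphism $X\to(\operatorname{Ab}^1_{X/S})^\vee=(\operatorname{Pic}^0_{X/S})^\wedge=\operatorname{Alb}_{X/S}$ --- precisely the assignment $(t:T\to X)\mapsto(\operatorname{Id}\times t)^*\mathcal P$ --- and shows it is a pointed $S$-Albanese morphism. Finally, since $X$ lies in $\mathsf{Sm}_\Lambda/S$, is proper over $S$ with geometrically connected generic fiber over $\Lambda$, and admits a section, Lemma~\ref{L:AlbAb} (together with the remark following the duality proposition) identifies the induced regular homomorphism $\Phi^d_{X/S}=\Phi_{\alpha_\sigma}$ with the algebraic representative for dimension-$0$ cycles, yielding the canonical identification $\operatorname{Ab}^d_{X/S}=\operatorname{Alb}_{X/S}=(\operatorname{Pic}^0_{X/S})^\wedge$.

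For $(ii)$, observe that by $(i)$ we have $\operatorname{Ab}^d_{X/S}=(\operatorname{Ab}^1_{X/S})^\wedge$, and likewise $\operatorname{Ab}^d_{X_{S'}/S'}=(\operatorname{Ab}^1_{X_{S'}/S'})^\wedge$: indeed, by Theorem~\ref{T:AlgRep-Co1}$(ii)$ together with base change for the Picard scheme one has $\operatorname{Ab}^1_{X_{S'}/S'}=\operatorname{Pic}^0_{X_{S'}/S'}=(\operatorname{Pic}^0_{X/S})_{S'}$, an abelian scheme carrying the base-changed universal line bundle and section, so the duality proposition and Lemma~\ref{L:AlbAb} apply over $S'$ exactly as over $S$. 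Since formation of the dual abelian scheme commutes with base change, dualizing the isomorphism $\operatorname{Ab}^1_{X_{S'}/S'}\stackrel{\sim}{\longrightarrow}(\operatorname{Ab}^1_{X/S})_{S'}$ of Theorem~\ref{T:AlgRep-Co1}$(ii)$ yields $\operatorname{Ab}^d_{X_{S'}/S'}\stackrel{\sim}{\longrightarrow}(\operatorname{Ab}^d_{X/S})_{S'}$; it then remains to check that this map coincides with the natural homomorphism of the statement, which follows from the functoriality of the constructions in $(i)$.

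For $(iii)$, I would use $(ii)$ to reduce to the case $S=\spec\Omega$ with $\Omega=\kappa(S)^{\sep}$ separably closed, where by $(i)$ the homomorphism $\Phi^d_{X_\Omega/\Omega}(\Omega)$ is the Albanese map $\operatorname{A}^d(X_\Omega)\to\operatorname{Alb}_{X_\Omega/\Omega}(\Omega)$. The assertion on prime-to-$p$ torsion is then Roitman's theorem~\cite{bloch79}; to descend from its usual formulation over an algebraically closed field to our separably closed $\Omega$, I would base change along the purely inseparable extension $\bar{\Omega}/\Omega$ and invoke Lemma~\ref{L:rad} (an isomorphism on prime-to-$p$ torsion of Chow groups) together with the analogous statement for prime-to-$p$ torsion points of abelian schemes, exactly as in the proof of Proposition~\ref{P:surjtrace}. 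Passing to the inverse limit over $\ell$-power torsion then gives the isomorphism on Tate modules. I expect $(iii)$ to be the main obstacle: items $(i)$ and $(ii)$ are essentially an assembly of structural results already in place, whereas $(iii)$ rests on the substantive input of Roitman's theorem and on the characteristic-$p$ reduction from algebraically closed to separably closed fields.
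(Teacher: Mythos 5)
Your proposal is correct and follows essentially the same route as the paper's proof: part $(i)$ via the existence of $\operatorname{Pic}^0_{X/S}$ (Theorem \ref{T:AlgRep-Co1}), the universal line bundle (Corollary \ref{C:UnivCodim1}), duality to get the Albanese, and Lemma \ref{L:AlbAb} to identify it as the algebraic representative; part $(ii)$ via Picard/Albanese base change; and part $(iii)$ by reducing through $(ii)$ to a separably closed point and invoking Roitman's theorem. The only variations are minor and harmless: where the paper cites \cite[VI, Thm.~3.3]{FGA} directly for the Albanese property and its base change, you re-derive these internally by dualizing Theorem \ref{T:AlgRep-Co1}$(ii)$ (checking compatibility with the natural map), and in $(iii)$ your explicit descent from algebraically closed to separably closed fields via Lemma \ref{L:rad} spells out a point the paper leaves implicit.
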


 \begin{proof}
 This follows readily from the standard arguments in \cite[VI,
  Thm.~3.3(iii)]{FGA}.
%
%
%
%
%
 \end{proof}

 \begin{rem} \label{R:mumford}
  Mumford's examples (e.g., a smooth K3 surface over the complex numbers) show that $\Phi^d_{X/S}(\Omega)$ need not be
  an isomorphism. Precisely, if the generic fiber $X_{\eta_S}$ of the smooth
  projective morphism $f:X\to S$ is  such that some $\ell$-adic cohomology group
  $H^i(X_{{\eta_S^{\sep}}},\rat_\ell)$ is not supported on a divisor for some
  $i>1$, then $\Phi^d_{X/S}(\Omega)$ is not injective for any separable field
  extension $\Omega/\eta_S$ that is separably closed and of infinite
  transcendence
  degree over its prime subfield. However, if $\Omega$ is $\bar \rat$ or a
  separably closed subfield of $\overline{\mathbb{F}_p(T)}$, then the Beilinson
  conjecture predicts that $\Phi^d_{X/S}(\Omega)$ should be an isomorphism.
 \end{rem}

 \begin{rem} We are unaware if $\Phi^d_{X/S}$ is an epimorphism of functors,
  \emph{i.e.}, in view of Proposition~\ref{P:univ-epi}, if it admits a universal codimension-$d$ cycle class.
 \end{rem}

\begin{rem}\label{R:0cycDeg1}
If $\Lambda=S=\operatorname{Spec}K$, and $X/K$ is proper 
and geometrically normal,  then the Abel--Jacobi map induces an algebraic representative 
 $\Phi_{X/K}^1:\mathscr A^1_{X/K}\to
  (\operatorname{Pic}^0_{X/K})_{\operatorname{red}}=\operatorname{Ab}^1_{X/K}$  (Remark~\ref{R:Pic0Kred}).
If in addition $X$ admits a zero-cycle $\alpha_0$ of degree-one (but not necessarily a $K$-point), then $X$ still admits a universal divisor $\mathcal P$,
which induces, as in Theorem \ref{T:AlbAb}(i), a regular homomorphism $\Phi:\mathscr A^d_{X/K}\to (\operatorname{Pic}^0_{X/K})_{\operatorname{red}}^\vee$ that   on $\bar K$ points is given by $\alpha\mapsto \mathcal P_\alpha$, where
if  $\alpha=\sum n_ip_i$, then  $\mathcal P_\alpha= \bigotimes \mathcal P_{p_i}^{\otimes n_i}$, viewed as a line bundle on $(\operatorname{Pic}^0_{X/K})_{\operatorname{red}}$.    The same arguments one uses to prove  Theorem~\ref{T:AlbAb}(i) (using the map $X\to \mathscr A^d_{X/K}$, $x\mapsto x-\alpha_0$) show that $\Phi$ is an algebraic representative in codimension-$d$\,; \emph{i.e.},  under these hypotheses
$$\operatorname{Ab}^d_{X/K} =  ((\operatorname{Pic}^0_{X/K})_{\operatorname{red}})^\vee=(\operatorname{Ab}^1_{X/K})^\vee.
$$
Note that if in addition $X$ admits a $K$-point (in which case $X$ is also geometrically connected) then the morphism $X\to \operatorname{Ab}^d_{X/K}$ is an Albanese morphism. 
\end{rem}

In \S \ref{S:dvr} below, we study the behavior of algebraic representatives
under base change.  Granting Theorems \ref{T:dvr} and
\ref{T:basechangechar0} for now -- their proofs are
independent of the present section -- we can secure a duality like
that of Theorem \ref{T:AlbAb}$(i)$ under slightly weaker hypotheses.

\begin{pro}\label{P:AlbPicDual}   Assume $S$ is the spectrum of a DVR, or $\Lambda = \operatorname{Spec} K$ for a field $K\subseteq \mathbb C$. 
Suppose that $f:X\to S$
  is smooth and proper and  that $\Phi^1_{X/S}:\mathscr A^1_{X/S}\to \operatorname{Ab}^1_{X/S}$ and $\Phi^d_{X/S}:\mathscr A^d_{X/S}\to \operatorname{Ab}^d_{X/S}$   are algebraic representatives (Theorem \ref{T:mainAb2}).
 \begin{enumerate}[label=(\roman*)]
\item If the generic fiber  admits a zero-cycle $\alpha_0$ of degree-one, then it induces a regular homomorphism   $\Phi^d_{X/S}:\mathscr A^d_{X/S}\to ((\operatorname{Ab}^1_{X/S}))^\vee$ that is an algebraic representative; i.e., $((\operatorname{Ab}^1_{X/S}))^\vee\cong \operatorname{Ab}^d_{X/S}$. 

\item   If $X/S$ admits a section and the generic fiber is geometrically connected, then there is an induced map $X\to \operatorname{Ab}^d_{X/S}=((\operatorname{Ab}^1_{X/S}))^\vee$ making $((\operatorname{Ab}^1_{X/S}))^\vee$ a pointed Albanese for $X/S$ (Lemma \ref{L:AlbAb}).  

\item 
If, moreover, there exists a universal codimension-$1$ cycle class $Z$ (see Corollary \ref{C:UnivCodim1}), then it induces a morphism
 $X\to  ((\operatorname{Ab}^1_{X/S}))^\vee$ that agrees with the pointed Albanese map in (ii). 
\end{enumerate}

\end{pro}

\begin{proof}
The proofs are straightforward applications of Theorems \ref{T:dvr} and
\ref{T:basechangechar0}.
\end{proof}

 \section{Surjective regular homomorphisms, algebraic representatives and
  cohomology} \label{S:dvr}

 \subsection{The $\ell$-adic Bloch map}
 Recall that, for $X$ a smooth projective variety over an algebraically closed
 field $K$, Bloch \cite{bloch79} defined for $\ell$ a prime different from the
 characteristic of $K$ and for all nonnegative integers $i$ a functorial map
 $$\lambda^i :
 \operatorname{CH}^i(X)[\ell^\infty] \to H^{2i-1}(X,\rat_\ell/\integ_\ell(i)),$$
 which we will refer to as the \emph{Bloch map}.  Here, for an abelian group $M$,
 $M[\ell^\infty]$ denotes the $\ell$-primary torsion subgroup  $\varinjlim {M[\ell^n]}$.
 In fact, Bloch's map is defined for smooth projective varieties defined over a
 separably closed field (\emph{e.g.}, \cite[Rem.~\ref{ACMVBlochMap:R:D:ell-Bloch79Sep}]{ACMVBlochMap}), and one
 may
 define an \emph{$\ell$-adic Bloch map}
 $$\lambda^i :
 T_\ell\operatorname{CH}^i(X)[\ell^\infty] \to H^{2i-1}(X,\integ_\ell(i))_\tau$$
 for all smooth projective varieties over a separably closed field.
 Here, for an abelian group $M$, $T_\ell M$ denotes the Tate module $\varprojlim
 M[\ell^n]$ and $M_\tau$ denotes the quotient by the torsion subgroup. Here we
 use the observation, originally due to Suwa \cite{Suwa} that by applying
 $T_\ell$
 to the usual Bloch map one obtains the $\ell$-adic Bloch map.
 Classically, the usual Bloch map is bijective for codimension-1 cycles and for
 dimension-0 cycles\,; \emph{cf.}~\cite[Prop.~3.6, Prop.~3.9,
 Thm.~4.1]{bloch79}. The corresponding statement for the $\ell$-adic Bloch map
 follows by applying $T_\ell$\,; \emph{cf.}~\cite[Prop.~\ref{ACMVBlochMap:P:Kummer},
 Prop.~\ref{ACMVBlochMap:P:Rojtman}]{ACMVBlochMap}.

 \begin{teo}[\cite{MS}, \cite{CTR}]  \label{T:MS}
  Let $X$ be a smooth proper
  variety over a separably closed
  field $K$. Then the $\ell$-adic Bloch map
  \begin{equation*}\label{E:BlochMap}
  \xymatrix{
   T_\ell \operatorname{CH}^{2}(X)\ar[r]^{\lambda^{2} \quad}& H^{3}(X,\mathbb
   Z_\ell(2))_\tau
  }
  \end{equation*}
  is injective.
 \end{teo}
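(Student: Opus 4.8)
The plan is to derive the theorem from Bloch's original torsion map by applying the Tate-module functor. As recalled above, the $\ell$-adic Bloch map is obtained by applying $T_\ell$ to the usual Bloch map $\lambda^2 : \chow^2(X)[\ell^\infty] \to H^3(X,\rat_\ell/\integ_\ell(2))$, and its image lies in the torsion-free quotient $H^3(X,\integ_\ell(2))_\tau$ (Suwa's observation \cite{Suwa}). Since $T_\ell = \Hom(\rat_\ell/\integ_\ell,-)$ is left exact, it carries injections to injections; hence the whole assertion reduces to the injectivity of the usual Bloch map $\lambda^2$ on $\ell$-primary torsion. This last statement is the theorem of Merkurjev--Suslin \cite{MS} and Colliot-Th\'el\`ene--Raskind \cite{CTR}, whose strategy I outline next.

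First one reduces to an algebraically closed base field: by Lecomte rigidity \cite{lecomte86} together with Lemma~\ref{L:rad} (exactly as in the proof of Proposition~\ref{P:surjtrace}), the passage from the separably closed field $K$ to its algebraic closure $\bar K$ is an isomorphism on prime-to-$p$ torsion of both $\chow^2$ and $H^3(X,\rat_\ell/\integ_\ell(2))$, and $\lambda^2$ is compatible with base change. Over $\bar K$ the codimension-$2$ Bloch map is analyzed through $K$-cohomology. The Bloch--Quillen formula $\chow^2(X)\cong H^2_{\zar}(X,\mathcal{K}_2)$ holds for every smooth variety over a field, so the smooth proper (not necessarily projective) hypothesis causes no difficulty; likewise the Gersten resolution of $\mathcal{K}_2$ and the Bloch--Ogus machinery are available for all smooth varieties. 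The Merkurjev--Suslin norm-residue isomorphism $K_2(F)/\ell^n \cong H^2(F,\mu_{\ell^n}^{\otimes 2})$, applied to the residue fields $F$ of $X$, upgrades to an isomorphism of Zariski sheaves $\mathcal{K}_2/\ell^n \cong \mathcal{H}^2(\mu_{\ell^n}^{\otimes 2})$, where $\mathcal{H}^q$ denotes the Zariski sheafification of $U\mapsto H^q_{\et}(U,\mu_{\ell^n}^{\otimes 2})$. Feeding this into the coniveau (Bloch--Ogus) spectral sequence $E_2^{p,q}=H^p_{\zar}(X,\mathcal{H}^q)\Rightarrow H^{p+q}_{\et}(X,\mu_{\ell^n}^{\otimes 2})$, whose Gersten vanishing $E_2^{p,q}=0$ for $p>q$ makes $H^1_{\zar}(X,\mathcal{H}^2)$ the first step of the coniveau filtration on $H^3_{\et}$, and combining it with the Kummer sequence for $\mathcal{K}_2$, one arrives at the exact sequence of \cite{CTR},
\[ 0 \to \chow^2(X)[\ell^\infty] \xrightarrow{\ \lambda^2\ } H^3(X,\rat_\ell/\integ_\ell(2)) \to H^0_{\zar}(X,\mathcal{H}^3(\rat_\ell/\integ_\ell(2))), \]
which in particular exhibits $\lambda^2$ as injective.

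The genuinely deep ingredient --- and the step I expect to be the main obstacle in any self-contained account --- is the Merkurjev--Suslin norm-residue isomorphism in degree $2$; everything downstream of it is the homological formalism of the Gersten resolution and the coniveau spectral sequence. Taking that input from \cite{MS,CTR}, the only remaining work is bookkeeping: identifying the comparison map produced by the spectral sequence with Bloch's $\lambda^2$ (so that it is indeed the map whose injectivity we assert), and controlling the error term $H^1_{\zar}(X,\mathcal{K}_2)\otimes\rat_\ell/\integ_\ell$ in the colimit over $n$, so that the a priori \emph{quotient} description of $\chow^2(X)[\ell^n]$ furnished by the Kummer sequence sharpens to the subgroup inclusion in the displayed sequence. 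With the injectivity of $\lambda^2$ in hand, applying $T_\ell$ as in the first paragraph yields the injectivity of the $\ell$-adic Bloch map, as claimed.
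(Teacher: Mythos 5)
Your proposal is correct and takes essentially the same route as the paper: both reduce the $\ell$-adic statement to the injectivity of the usual second Bloch map $\lambda^2$ on $\ell$-primary torsion by applying the left-exact functor $T_\ell$ (Suwa's observation \cite{Suwa}), and both take the Merkurjev--Suslin and Colliot-Th\'el\`ene--Raskind injectivity result \cite{MS}, \cite[Prop.~3.1 and Rmk.~3.2]{CTR} as the essential input. The only difference is one of exposition: the paper treats the smooth proper, separably closed case of that input as a pure citation, whereas you unpack the reduction to an algebraically closed field and sketch the $K$-cohomology argument behind it.
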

 \begin{proof}
  The injectivity of the usual second Bloch map   $\lambda^2 :
  \operatorname{CH}^2(X)[\ell^\infty] \to H^3(X,\rat_\ell/\integ_\ell(2))$ for
  $X$
  smooth projective is due to Merkurjev and Suslin \cite{MS}, and the
  corresponding statement for the second $\ell$-adic Bloch map follows by
  applying
  $T_\ell$\,; \emph{cf.}~\cite[Prop.~\ref{ACMVBlochMap:P:M-9.2}]{ACMVBlochMap}.
  In fact,  the usual second Bloch map
  is shown to be injective for $X$ smooth and proper over a separably closed
  field
  in \cite[Prop.~3.1 and
  Rmk.~3.2]{CTR}. Applying $T_\ell$ gives the theorem.
 \end{proof}

 \subsection{Surjective regular homomorphisms and cohomology}
 Let $X$ be a smooth proper variety over a
 field $K$ of characteristic exponent $p$. Recall from the proof of
 Lemma~\ref{L:Tor} that any cycle $Z\in \mathscr A^{i}_{X/K}(A)$ induces a
 homomorphism of $\mathrm{Gal}(K)$-representations on $N$-torsion
 $$w_Z : A(K^{\sep})[N] \to \A^i(X_{K^{\sep}})[N], \quad a \mapsto
 Z_*([a]-[0])$$
 for all $N$ coprime to $p$\,; and hence a homomorphism of
 $\mathrm{Gal}(K)$-representations
 \begin{equation}\label{E:w}
 w_{Z,\ell} :  T_\ell A
 \to T_\ell \operatorname{A}^{i}(X_{K^{\sep}})
 \end{equation} for all primes $\ell \neq p$.\medskip

 Let now $\Phi:\mathscr A^{i}_{X/K}\to A$ be a surjective regular homomorphism
 and assume $Z\in \mathscr A^{i}_{X/K}(A)$ is a miniversal cycle, provided by
 Lemma~\ref{L:MinVZK}, with
 $\Phi(A)(Z):A\to A$ given by $r\cdot \operatorname{Id}_A$ for some natural
 number $r$. Then, still by the arguments of the proof of Lemma~\ref{L:Tor}, the
 homomorphism of $\mathrm{Gal}(K)$-representations $w_{Z,\ell}$ of \eqref{E:w}
 is
 injective for all primes $\ell\neq p$ coprime to~$r$. The following proposition
 is then an immediate consequence of Theorem~\ref{T:MS} (and the discussion preceding it concerning the cases $i=1, \dim X$).

 \begin{pro}\label{P:BlochMap} With the notation above,  let $\ell$ be a
  prime number different from $p$ and coprime to~$r$.
  Then,  for $i=1,2,\dim X$, the
  morphism of $\operatorname{Gal}(K)$-representations
  $\iota_{Z,\ell}:T_\ell A \to H^{2i-1}(X_{K^{\sep}},\mathbb Z_\ell(i))$  given
  as
  the composition
  \begin{equation*}
  \xymatrix{
   \iota_{Z,\ell}:
   T_\ell A
   \ar@{^(->}[r]^<>(0.5){w_{Z,\ell}} & T_\ell
   \operatorname{A}^{i}(X_{K^{\sep}})\ar@{^(->}[r]& T_\ell
   \operatorname{CH}^{i}(X_{K^{\sep}})\ar[r]^{\lambda^{i} \quad}&
   H^{2i-1}(X_{K^{\sep}},\mathbb Z_\ell(i))_\tau
  }
  \end{equation*}
  is injective.  Moreover, if $\Phi$ is an algebraic
  representative, then $\iota_{Z,\ell}$ is an isomorphism for $i=1,\dim X$.\qed
 \end{pro}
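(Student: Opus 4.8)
The plan is to recognize $\iota_{Z,\ell}$ as a composite of three maps and verify injectivity factor by factor, then to upgrade to an isomorphism in the cases $i=1,\dim X$ using two additional bijectivity inputs. Concretely, $\iota_{Z,\ell}$ is by construction
\[
T_\ell A \xrightarrow{\,w_{Z,\ell}\,} T_\ell \operatorname{A}^i(X_{K^{\sep}}) \hookrightarrow T_\ell \operatorname{CH}^i(X_{K^{\sep}}) \xrightarrow{\,\lambda^i\,} H^{2i-1}(X_{K^{\sep}},\integ_\ell(i))_\tau .
\]
The first arrow is injective as noted just before the statement: the identity $\Phi_\Omega[N]\circ w_Z[N]=r$ from the proof of Lemma~\ref{L:Tor} yields $T_\ell\Phi\circ w_{Z,\ell}=r\cdot\operatorname{Id}_{T_\ell A}$, and $r$ is a unit in $\integ_\ell$ since $\gcd(r,\ell)=1$. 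The second arrow is injective because $\operatorname{A}^i\subseteq\operatorname{CH}^i$ and $T_\ell(-)=\varprojlim(-)[\ell^n]$ is left exact. The third arrow is injective: for $i=2$ this is exactly Theorem~\ref{T:MS}, applicable because $X_{K^{\sep}}$ is smooth and proper over the separably closed field $K^{\sep}$, while for $i=1,\dim X$ it is the injectivity half of the bijectivity of the $\ell$-adic Bloch map recalled immediately before Theorem~\ref{T:MS}. Composing, $\iota_{Z,\ell}$ is injective.

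For the \emph{moreover} part I would take $\Phi=\Phi^i_{X/K}$ and $A=\operatorname{Ab}^i_{X/K}$ and show each of the three factors is an isomorphism when $i=1,\dim X$. For the middle inclusion, the quotient $\operatorname{CH}^i(X_{K^{\sep}})/\operatorname{A}^i(X_{K^{\sep}})$ is the finitely generated N\'eron--Severi group when $i=1$ and is $\integ$ via the degree map when $i=\dim X$; in both cases it has vanishing Tate module, so $T_\ell\operatorname{A}^i\to T_\ell\operatorname{CH}^i$ is an isomorphism. The Bloch map $\lambda^i$ is then an isomorphism onto $H^{2i-1}(X_{K^{\sep}},\integ_\ell(i))_\tau$ for $i=1,\dim X$ by the cited bijectivity. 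Finally, $w_{Z,\ell}$ is an isomorphism: Theorem~\ref{T:AlgRep-Co1}$(iii)$ (together with Remark~\ref{R:Pic0Kred}) for $i=1$ and Theorem~\ref{T:AlbAb}$(iii)$ for $i=\dim X$ assert that $T_\ell\Phi^i_{X/K}$ is an isomorphism, and since $T_\ell\Phi^i_{X/K}\circ w_{Z,\ell}=r\cdot\operatorname{Id}$ with $r$ invertible, this forces $w_{Z,\ell}=r\,(T_\ell\Phi^i_{X/K})^{-1}$ to be an isomorphism. Composing three isomorphisms shows $\iota_{Z,\ell}$ is an isomorphism for $i=1,\dim X$.

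Essentially all of the depth is imported: the injectivity of $\lambda^2$ (Theorem~\ref{T:MS}, i.e.\ Merkurjev--Suslin extended to the proper case by Colliot-Th\'el\`ene--Raskind) and the computations of $T_\ell\Phi^i_{X/K}$ in codimension $1$ and dimension $0$ are the nontrivial inputs, so the argument here is formal. The one point inside the proof that needs a moment's care is the identification $T_\ell\operatorname{A}^i=T_\ell\operatorname{CH}^i$ required for the isomorphism statement, which I would justify separately for $i=1$ (finite generation of $\operatorname{NS}$) and for $i=\dim X$ (torsion-freeness of the degree cokernel); this is precisely what guarantees that the image of $\iota_{Z,\ell}$ exhausts $H^{2i-1}(X_{K^{\sep}},\integ_\ell(i))_\tau$ rather than only the part arising from algebraically trivial classes.
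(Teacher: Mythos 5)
Your proof is correct and fills in the proposition exactly along the lines the paper intends (the paper gives no argument beyond declaring the statement an immediate consequence of Theorem~\ref{T:MS} and the bijectivity of $\lambda^1$ and $\lambda^{\dim X}$ recalled just before it): injectivity factor by factor via miniversality with $\ell\nmid r$, left-exactness of $T_\ell$, and Merkurjev--Suslin/Colliot-Th\'el\`ene--Raskind, and the moreover clause via identifying the algebraic representative with $(\operatorname{Pic}^0_{X/K})_{\operatorname{red}}$, resp.\ the Albanese, together with Bloch/Roitman. The only blemish is citational: Theorems~\ref{T:AlgRep-Co1}$(iii)$ and~\ref{T:AlbAb}$(iii)$ carry hypotheses (projectivity, geometrically integral fibers, $R^2f_*\mathcal O_X=0$, and for the latter a section) that are not assumed in Proposition~\ref{P:BlochMap}; for $i=1$ your appeal to Remark~\ref{R:Pic0Kred} repairs this (the Abel map to $(\operatorname{Pic}^0_{X/K})_{\operatorname{red}}$ is then a bijection on $K^{\sep}$-points, so $T_\ell\Phi^1$ is an isomorphism), but for $i=\dim X$ you should instead invoke Lemma~\ref{L:AlbAb}/Theorem~\ref{T:AlbAbSe} over the separably closed field $K^{\sep}$ together with Roitman's theorem for smooth \emph{proper} varieties as extended in the proof of Theorem~\ref{T:mainAb2}, since Theorem~\ref{T:AlbAb} does not apply to, for example, a K3 surface or a proper non-projective $X$.
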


 \subsection{Algebraic representatives  over a DVR}

 \begin{teo}\label{T:dvr}
  Let $S = \Lambda$ be the spectrum of a DVR and denote $\eta$ its generic
  point. Suppose $X$ is a smooth and proper scheme over $S$. Fix $i=1,2$ or $\dim_S X$. Then the natural
  homomorphism of $\eta$-abelian varieties $$\mathrm{Ab}^i_{X_{\eta}/\eta}
  \longrightarrow (\mathrm{Ab}^i_{X/S})_{\eta}$$ is an isomorphism.
 \end{teo}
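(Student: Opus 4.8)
The plan is to produce the isomorphism by showing that $B := \mathrm{Ab}^i_{X_\eta/\eta}$ acquires good reduction over $S$, and then to compare it with $A := (\mathrm{Ab}^i_{X/S})_\eta$ using the universal property of the algebraic representative. Both $A$ and $B$ exist as $\eta$-abelian varieties by Theorem~\ref{T:mainAb2} (applied to $X/S$ and to $X_\eta/\eta$ respectively, these being exactly the cases $i=1,2,\dim_S X$). The natural homomorphism $\phi : B \to A$ is the one induced, by initiality of $\Phi^i_{X_\eta/\eta}$, from the regular homomorphism $(\Phi^i_{X/S})_\eta : \mathscr A^i_{X_\eta/\eta}\to A$; since $\Phi^i_{X/S}$ is surjective (Proposition~\ref{P:algrepsurj}) and surjectivity is preserved by base change (Proposition~\ref{P:surjbasechange0}), the homomorphism $(\Phi^i_{X/S})_\eta$ is surjective, and hence so is $\phi$ (Lemma~\ref{L:MinVZK}). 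It therefore suffices to exhibit a left inverse of $\phi$, which the good-reduction argument will supply.

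The crux, and the \emph{main obstacle}, is to show that $B$ has good reduction. Fix a prime $\ell$ different from the residue characteristic $p$ of the DVR and coprime to the multiplier $r$ of a miniversal cycle for the surjective regular homomorphism $\Phi^i_{X_\eta/\eta}$ (such a cycle exists by Corollary~\ref{C:MinVZK}). Applying Proposition~\ref{P:BlochMap} to $\Phi^i_{X_\eta/\eta}$ yields a $\operatorname{Gal}(\eta)$-equivariant injection
$$
\iota_{Z,\ell} : T_\ell B \hookrightarrow H^{2i-1}(X_{\eta^{\sep}},\integ_\ell(i))_\tau .
$$
This is precisely the step that forces the restriction $i=1,2,\dim_S X$, as it rests on the injectivity of the $\ell$-adic Bloch map (Theorem~\ref{T:MS} for $i=2$, and the classical cases for $i=1,\dim_S X$). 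On the other hand, because $X\to S$ is smooth and proper, the smooth-proper base change theorem identifies the $\operatorname{Gal}(\eta)$-module $H^{2i-1}(X_{\eta^{\sep}},\integ_\ell(i))$ with the cohomology of the special geometric fibre, so the inertia subgroup acts trivially on it, and \emph{a fortiori} on its torsion-free quotient. Through the equivariant injection $\iota_{Z,\ell}$, inertia then acts trivially on $T_\ell B$, and the N\'eron--Ogg--Shafarevich criterion shows that $B$ has good reduction; let $\mathcal B/S$ denote the abelian scheme extending it.

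It remains to extend $\Phi^i_{X_\eta/\eta}$ over $S$ and to compare it with $\Phi^i_{X/S}$. First I would build a regular homomorphism $\widetilde\Phi : \mathscr A^i_{X/S}\to \mathcal B$ as follows: for $T$ in $\mathsf{Sm}_\Lambda/S$ and $W\in \mathscr A^i_{X/S}(T)$, restrict $W$ to the generic fibre $T_\eta$ (which is open and dense in $T$, as $T\to S$ is dominant) to obtain $W_\eta\in \mathscr A^i_{X_\eta/\eta}(T_\eta)$, apply $\Phi^i_{X_\eta/\eta}(T_\eta)(W_\eta):T_\eta\to B\hookrightarrow\mathcal B$, and extend the resulting rational map to a morphism $T\to\mathcal B$ using that $T$ is regular and $\mathcal B/S$ is an abelian scheme (Proposition~\ref{P:BLR}$(i)$); uniqueness of the extension makes $\widetilde\Phi$ functorial, and $\widetilde\Phi_\eta = \Phi^i_{X_\eta/\eta}$. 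By initiality of $\Phi^i_{X/S}$ there is a unique $S$-homomorphism $g:\mathrm{Ab}^i_{X/S}\to\mathcal B$ with $\widetilde\Phi = g\circ\Phi^i_{X/S}$. Restricting to $\eta$ and using $\Phi^i_{X_\eta/\eta}=\widetilde\Phi_\eta=g_\eta\circ\phi\circ\Phi^i_{X_\eta/\eta}$ together with the surjectivity of $\Phi^i_{X_\eta/\eta}$, one obtains $g_\eta\circ\phi=\mathrm{Id}_B$. Thus $\phi$ is a split monomorphism; being also surjective, it is an isomorphism, as required.
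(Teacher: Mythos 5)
Your proof is correct and follows essentially the same route as the paper's: Proposition~\ref{P:BlochMap} together with the N\'eron--Ogg--Shafarevich criterion to spread $\mathrm{Ab}^i_{X_\eta/\eta}$ out to an abelian scheme over $S$, extension of $\Phi^i_{X_\eta/\eta}$ to a regular homomorphism $\mathscr A^i_{X/S}\to\mathcal B$ via Proposition~\ref{P:BLR}$(i)$, and then the two universal-property compositions to get both injectivity and surjectivity of the comparison map. The only differences are cosmetic: you spell out the smooth--proper base change input that makes the inertia action trivial (which the paper leaves implicit in its appeal to N\'eron--Ogg--Shafarevich), and you deduce $g_\eta\circ\phi=\mathrm{Id}$ from surjectivity of $\Phi^i_{X_\eta/\eta}$ where the paper invokes its initiality.
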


 \begin{proof}
  We start by observing that, due to Proposition~\ref{P:BlochMap}
  and the N\'eron--Ogg--Shafarevich
  criterion, $\mathrm{Ab}^i_{X_{\eta}/\eta}$
  admits a model $A$ over $S$. Moreover, there is a regular homomorphism $\Phi:
  \mathscr A^i_{X/S} \to A$ whose base-change to $\eta$ is
  $\Phi^i_{X_{\eta}/\eta}$, defined in the following way. If $T$ is a separated
  smooth scheme of finite type over $S=\Lambda$ and if $Z\in \mathscr A^i_{X/S}(T)$,
  then
  $\Phi(T)(Z)$ is the unique morphism
  $T\to A$ lifting $\Phi^i_{X_{\eta}/\eta}(T_{\eta})(Z_{\eta}) : T_{\eta} \to
  A_{\eta} = \mathrm{Ab}^i_{X_{\eta}/\eta}$. Such a morphism exists due to the
  fact that any rational map defined over a base $S$ from a regular scheme to an
  abelian $S$-scheme extends to a morphism (Proposition~\ref{P:BLR}$(i)$,
  \cite[Cor.~6, \S 8.4]{BLR}). (Note that
  the above argument in fact establishes that any surjective regular
  homomorphism
  $ \mathscr A^i_{X_\eta/\eta} \to B$ extends to a surjective regular
  homomorphism
  $\mathscr A^i_{X/S} \to \mathcal{B}$.)

  By the universal property of the algebraic representatives, there exist a
  unique homomorphism $f : \mathrm{Ab}^i_{X_{\eta}/\eta} \rightarrow
  (\mathrm{Ab}^i_{X/S})_{\eta}$
  such that $(\Phi^i_{X/S})_{\eta} = f \circ \Phi^i_{X_{\eta}/\eta}$
  as well as a unique homomorphism $g : \mathrm{Ab}^i_{X/S} \to A$ such that $\Phi =
  g\circ \Phi^i_{X/S}$. By base-changing the latter to $\eta$, one obtains
  $\Phi^i_{X_{\eta}/\eta} = g_{\eta} \circ (\Phi^i_{X/S})_{\eta}$.
  In other words, we have a commutative diagram
  $$\xymatrix@R=1.3em{\mathscr A^i_{X_\eta/\eta} \ar[rr]^{\Phi_{X_\eta/\eta}}
   \ar[dr]_{(\Phi^i_{X/S})_{\eta}} & & \mathrm{Ab}^i_{X_{\eta}/\eta}
   \ar@/^/@{-->}[dl]^f\\
   &  (\mathrm{Ab}^i_{X/S})_{\eta} \ar@/^/@{-->}[ur]^{g_\eta}
  }
  $$
  This yields the identity $\Phi^i_{X_{\eta}/\eta} = g_{\eta} \circ f \circ
  \Phi^i_{X_{\eta}/\eta}$, and hence by the universal property of
  $\Phi^i_{X_{\eta}/\eta}$ that $g_{\eta} \circ f  =
  \mathrm{id}_{\mathrm{Ab}^i_{X_{\eta}/\eta}}$. In particular,  $f$ is an 
  injective homomorphism. On the other hand, we also obtain the identity $f\circ
  g_\eta \circ (\Phi_{X/S})_\eta = (\Phi_{X/S})_\eta$. The surjectivity of
  $\Phi_{X/S}$ (Proposition~\ref{P:algrepsurj}) implies then that $f\circ g_\eta
  :
  (\mathrm{Ab}^i_{X/S})_{\eta} \to (\mathrm{Ab}^i_{X/S})_{\eta}$ is the identity
  on $\eta^{\sep}$-points, in particular that $f$ is surjective on
  $\eta^{\sep}$-points. In conclusion $f$ is an isomorphism.
 \end{proof}

 \begin{rem} \label{R:specialfiber}
  We remind the reader that, in contrast, the \emph{special} fiber of the algebraic
  representative is typically \emph{not} the algebraic representative of the
  special fiber (\emph{e.g.}, \cite[Ex.~6.5]{ACMVdcg}).
 \end{rem}

On the other hand, we have the following, which gives a condition for 
 the special fiber of the algebraic
  representative to be isogenous to a sub-abelian variety of the algebraic representative of the
  special fiber\,:  

\begin{cor}\label{C:algrepDVR}
	Let $S=\Lambda$ be the spectrum of a discrete valuation ring with generic
	point $\eta = \spec K$ and closed point $\circ = \spec \kappa$.  Let
	$X/S$ be a smooth projective scheme, and let $\Gamma \in \mathscr A^2_{X_\eta/K}(\operatorname{Ab}^2_{X_\eta/K})$ be a miniversal cycle of minimal degree $r$. 
	Suppose that for some prime $\ell \nmid r\cdot \operatorname{char}(K)$ we have  $\phi^2_{X_\circ/\kappa}[\ell^\infty]$ is injective.  
	Then the morphism 
	$(\Ab^2_{X/S})_\circ \to \Ab^2_{X_\circ/\kappa}$ induced by $\Gamma$ is an isogeny 
	onto its image. 
\end{cor}
\begin{proof} By Theorem \ref{T:dvr} 
	we have  $(\Ab^2_{X/S})_\eta \iso
	\Ab^2_{X_\eta/\eta}$.  
	Let $\Gamma_{X/S} \in \mathcal A^2_{X/S}(\Ab^2_{X/S})$ be a
	miniversal cycle of minimal degree $r$ induced by the one in the assumption of the lemma. Its specialization induces a group homomorphism
	$w_{\Gamma_{X/S},\circ}: \Ab^2_{X/S}(\bar \kappa) \to
	\A^2(X_{\circ,\bar\kappa})$, and thus a homomorphism
	$\psi_{\Gamma_{X/S,\circ}}: (\Ab^2_{X/S})_\circ \to \Ab^2_{X_\circ/\circ}$. 
	On $\ell$-primary torsion, we have a
	commutative diagram
	\begin{equation}
	\label{D:specializeA2}
	\xymatrix{
		\Ab^2_{X/S}[\ell^\infty](\bar K) \ar@{->}[r]^{\sim} \ar[d]^{w_{\Gamma_{X/S}}}&
		(\Ab^2_{X/S})_\circ[\ell^\infty](\bar\kappa)
		\ar[d]^{w_{\Gamma_{X/S},\circ}}
		\ar@/^5pc/[dd]^{\psi_{\Gamma_{X/S},\circ}} \\
		\A^2(X_{\bar K})[\ell^\infty] \ar@{^{(}->}[r] & \A^2(X_0)[\ell^\infty]
		\ar@{^{(}->}[d]^{\phi^2_{X_\circ/\bar\kappa}[\ell^\infty]} \\
		& \Ab^2_{X_\circ/\kappa}[\ell^\infty](\bar\kappa)
	}
	\end{equation}
	Both the top and bottom horizontal 
	arrows are the specialization maps; the fact that the specialization map on torsion cycle classes is injective in codimension-$2$ follows from the fact that the second Bloch map is an inclusion.  Choose a prime $\ell\ne \operatorname{char}(K)$
	relatively prime to $r$.  Then $w_{\Gamma_{X/S}}$ and  is injective, and by commutativity, this implies $w_{\Gamma_{X/S,\circ}}$ is injective. 
	Then since we assume that $\phi^2_{X_\circ/\kappa}[\ell^\infty]$ is injective, it follows that all arrows in
	\eqref{D:specializeA2} are injective.
	In particular,  it follows that 
	$\ker \psi_{\Gamma_{X/S},\circ}$ is trivial, and therefore has no nontrivial $\ell$-torsion, so that the 
	morphism of abelian varieties is an isogeny onto its image. 
\end{proof}

\begin{rem} The condition that 
 $\phi^2_{X_\circ/\kappa}[l^\infty]$ be injective holds  for all primes $l$ if  $\operatorname{char}(\kappa) = 0$  \cite[Thm.~10.3]{murre83}, or if $X$ is geometrically rationally chain connected with $\kappa$ perfect \cite[Prop.~3.8]{ACMVBlochMap}.  In other words, under these hypotheses,  the morphism 
	$(\Ab^2_{X/S})_\circ \to \Ab^2_{X_\circ/\kappa}$ induced by $\Gamma$ is an isogeny 
	onto its image. 
We are unaware of an example of a smooth projective variety $X_\circ$ for which $\phi^2_{X_\circ/\kappa}[l^\infty]$ is not an isomorphism\,; in other words, we are unaware of an example where the morphism 
	$(\Ab^2_{X/S})_\circ \to \Ab^2_{X_\circ/\kappa}$ induced by $\Gamma$ is \emph{not} an isogeny 
	onto its image. 
\end{rem}

 \subsection{Algebraic representatives and base change
  in characteristic zero} Our aim is to prove Theorem~\ref{T2:mainAb2}$(i)$\,:

 \begin{teo} \label{T:basechangechar0} Suppose that $\Lambda=\operatorname{Spec}K$ for a field $K\subseteq \mathbb C$, and
 $X$ is a
  smooth proper scheme over $S$. Fix $i=1,2$ or $\dim_S X$.
 If $S'\to S$ is a morphism  obtained as an inverse limit of morphisms in $\operatorname{Sm}_K/S$,
 then the natural homomorphism of $S'$-abelian
  schemes $$ \operatorname{Ab}^i_{X_{S'}/S'} \to (
  \operatorname{Ab}^i_{X/S})_{S'}$$ is an isomorphism.
 \end{teo}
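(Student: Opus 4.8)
The plan is to combine two formal reductions with one genuinely new input. First, by the limit formalism of \S\ref{S:FunLim-1} (Lemma~\ref{L:limBC}), the functors, the abelian schemes, and the comparison map $\psi:\operatorname{Ab}^i_{X_{S'}/S'}\to(\operatorname{Ab}^i_{X/S})_{S'}$ are all cofiltered limits of the corresponding data over the finite-type members $S'_n$ of the pro-system defining $S'$, so it suffices to treat the case where $S'\to S$ is a single dominant morphism in $\mathsf{Sm}_K/S$; in particular $S'$ is then Noetherian, integral, and normal. Second, since $\psi$ is a homomorphism of abelian schemes over the normal integral base $S'$, rigidity lets me reduce to the generic fibre: if $\psi_{\eta_{S'}}$ is an isomorphism, then $\psi_{\eta_{S'}}^{-1}$ spreads out to a dense open and extends over all of $S'$ by Proposition~\ref{P:BLR}$(ii)$, and two homomorphisms of separated $S'$-schemes agreeing generically agree. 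Thus the whole theorem rests on the following \emph{Key Lemma}: for any smooth scheme of finite type $T$ over $K$ carrying a smooth proper $Y\to T$, the natural map $\operatorname{Ab}^i_{Y_{\eta_T}/\eta_T}\to(\operatorname{Ab}^i_{Y/T})_{\eta_T}$ is an isomorphism, i.e. the formation of the algebraic representative commutes with restriction to the generic point.

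Granting the Key Lemma, I would finish as follows. Applied to $T=S'$, it identifies the generic fibre of the left-hand side with $\operatorname{Ab}^i_{X_{\eta_{S'}}/\eta_{S'}}$. For the right-hand side, the map $\eta_{S'}\to S$ factors through $\eta_S$, so $(\operatorname{Ab}^i_{X/S})_{\eta_{S'}}=\bigl((\operatorname{Ab}^i_{X/S})_{\eta_S}\bigr)_{\eta_{S'}}$; the Key Lemma applied to $T=S$ rewrites this as $(\operatorname{Ab}^i_{X_{\eta_S}/\eta_S})_{\eta_{S'}}$, and since $K\subseteq\cx$ has characteristic zero the extension $\eta_{S'}/\eta_S$ is separable, so Theorem~\ref{T:mainalgrep}$(i)$ identifies it with $\operatorname{Ab}^i_{X_{\eta_{S'}}/\eta_{S'}}$ as well. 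All of these identifications are induced by universal properties and by base changes of the single regular homomorphism $\Phi^i_{X/S}$, so a diagram chase shows that $\psi_{\eta_{S'}}$ becomes the identity, hence is an isomorphism. (For $i=1$ and $i=\dim_S X$ one could instead quote Theorems~\ref{T:AlgRep-Co1} and~\ref{T:AlbAb}, but those carry extra hypotheses, so I prefer this uniform route.)

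It remains to prove the Key Lemma, which is handled exactly as the DVR case (Theorem~\ref{T:dvr}), the only new point being to extend the generic-fibre abelian variety across the whole base. Write $B=\operatorname{Ab}^i_{Y_{\eta_T}/\eta_T}$, which is surjective (Proposition~\ref{P:algrepsurj}) and admits a miniversal cycle (Lemma~\ref{L:MinVZK}). By Proposition~\ref{P:BlochMap} this cycle furnishes a $\operatorname{Gal}(\eta_T)$-equivariant embedding $T_\ell B\hookrightarrow H^{2i-1}(Y_{\eta_T^{\sep}},\integ_\ell(i))_\tau$ for a suitable $\ell$. The structure morphism $\pi:Y\to T$ being smooth and proper, $H^{2i-1}(Y_{\eta_T^{\sep}},\integ_\ell(i))$ is the generic stalk of the lisse sheaf $R^{2i-1}\pi_*\integ_\ell(i)$, hence is unramified at every codimension-$1$ point of $T$; so is its subrepresentation $T_\ell B$, and the N\'eron--Ogg--Shafarevich criterion then gives $B$ good reduction there. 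Spreading $B$ out over a dense open and gluing in these good models yields an abelian scheme over an open $U\subseteq T$ with $T\setminus U$ of codimension $\geq 2$, and the Faltings--Chai extension theorem \cite[Cor.~6.8, p.185]{FC} extends $B$ to an abelian scheme $\mathcal B$ over all of $T$. Extending $\Phi^i_{Y_{\eta_T}/\eta_T}$ across $T$ by Proposition~\ref{P:BLR}$(i)$ exactly as in Theorem~\ref{T:dvr} produces a regular homomorphism $\mathscr A^i_{Y/T}\to\mathcal B$ with the right generic fibre; the universal properties of $\operatorname{Ab}^i_{Y/T}$ and of $B$ then furnish mutually inverse maps between $B$ and $(\operatorname{Ab}^i_{Y/T})_{\eta_T}$, proving the Key Lemma.

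The main obstacle is precisely this extension step, and within it the case $i=2$: there Proposition~\ref{P:BlochMap} yields only an \emph{injection} $T_\ell B\hookrightarrow H^3(Y_{\eta_T^{\sep}},\integ_\ell(2))_\tau$, resting on the injectivity of the $\ell$-adic Bloch map for codimension-$2$ cycles (Theorem~\ref{T:MS}, i.e.\ Merkurjev--Suslin), and it is this injectivity that drives the good-reduction argument. For $i=1$ and $i=\dim_S X$ the analogous map is an isomorphism and the statement is essentially classical. The appeal to Faltings--Chai to cross the codimension-$\geq 2$ locus is exactly what confines the argument, and hence the base-change statement, to characteristic zero.
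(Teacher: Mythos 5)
Your core strategy coincides with the paper's: rigidity to pass to the generic point of $S'$, Theorem~\ref{T:mainalgrep}$(i)$ for the separable extension $\kappa(S')/\kappa(S)$, and a generic-fibre comparison (your Key Lemma) proved by extending the generic-fibre abelian variety across the base via the N\'eron--Ogg--Shafarevich criterion plus the Faltings--Chai theorem \cite[Cor.~6.8, p.185]{FC}, and extending morphisms via Proposition~\ref{P:BLR}$(i)$, exactly as in Theorem~\ref{T:dvr}; your treatment of the Key Lemma, including the role of Proposition~\ref{P:BlochMap} and Merkurjev--Suslin for $i=2$, is a faithful and even more detailed rendition of what the paper sketches.

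The genuine gap is your opening reduction. You assert that, by Lemma~\ref{L:limBC}, the abelian schemes $\operatorname{Ab}^i_{X_{S'}/S'}$ and the comparison map $\psi$ are cofiltered limits of the corresponding data over the finite-type stages $S'_n$. Lemma~\ref{L:limBC} gives nothing of the sort: it says that a regular homomorphism base-changes along such limits; it does not say that the algebraic representative --- an object defined by a universal property among all regular homomorphisms over $\mathsf{Sm}_{\Lambda'}/S'$ --- commutes with limits. Knowing $\operatorname{Ab}^i_{X_{S'}/S'}\cong(\operatorname{Ab}^i_{X_{S'_n}/S'_n})_{S'}$ is itself an instance of the theorem you are proving (applied to the limit morphism $S'\to S'_n$), so the reduction is circular. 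The gap can be closed the way the paper does it: apply rigidity directly to $S'$, which is integral, regular and Noetherian by the conventions of Lemma~\ref{L:limBC}, and note that at the generic point one only needs the Key Lemma for $X/S$ itself. Indeed, let $a:\operatorname{Ab}^i_{X_{\eta_{S'}}/\eta_{S'}}\to(\operatorname{Ab}^i_{X_{S'}/S'})_{\eta_{S'}}$ and $b:\operatorname{Ab}^i_{X_{\eta_{S'}}/\eta_{S'}}\to(\operatorname{Ab}^i_{X/S})_{\eta_{S'}}$ be the homomorphisms furnished by the universal property of $\Phi^i_{X_{\eta_{S'}}/\eta_{S'}}$; then $b=\psi_{\eta_{S'}}\circ a$. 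The map $b$ is an isomorphism by the Key Lemma for $X/S$ combined with Theorem~\ref{T:mainalgrep}$(i)$, while $a$ is surjective because $(\Phi^i_{X_{S'}/S'})_{\eta_{S'}}$ is a surjective regular homomorphism (Propositions~\ref{P:algrepsurj} and~\ref{P:surjbasechange0}) factoring through it; hence $a$ is also injective, so an isomorphism in characteristic zero, and $\psi_{\eta_{S'}}=b\circ a^{-1}$ is an isomorphism. Alternatively, you could observe that your proof of the Key Lemma never uses that $T$ is of finite type over $K$ --- only that $T$ is regular, Noetherian and of characteristic zero --- and apply it verbatim to $S'$ over $\Lambda'$; but as written, with the Key Lemma stated only for finite-type $T/K$ and the limit case disposed of by the circular reduction, the inverse-limit case of the theorem is not actually covered.
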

 \begin{proof}
  By rigidity (\emph{e.g.}, \cite[Prop.~6.1]{mumfordGIT})
  the natural homomorphism
  $\mathrm{Ab}^i_{X_{S'}/S'} \to (\mathrm{Ab}^i_{X/S})_{S'}$ is an isomorphism if
  and only if it is an isomorphism when restricted to the generic point
  $\eta_{S'}$.
  By Theorem~\ref{T:mainalgrep}$(i)$ applied to $X_{\eta_S}$ together with the
  field extension $\kappa(S')/\kappa(S)$, we are reduced to showing that the
  natural homomorphism
  $$\mathrm{Ab}^i_{X_{\eta_S}/\eta_S} \to (\mathrm{Ab}^i_{X/S})_{\eta_S}$$ is an
  isomorphism for any smooth proper scheme over a smooth separated scheme $S$ of
  finite type over~$K$.
  This is achieved exactly as in the proof of
  Theorem~\ref{T:dvr}\,: the crux consists in showing that any surjective regular
  homomorphism
  $\Phi:  \mathscr A^i_{X_{\eta_S}/\eta_S} \to B$ extends to a surjective regular
  homomorphism
  $\widetilde \Phi: \mathscr A^i_{X/S} \to \mathcal{B}$. For that purpose,
  we first use,  as in \cite[p.~19]{ACMVsigma}, both the
  N\'eron--Ogg--Shafarevich
  criterion \emph{and} the Faltings--Chai criterion~\cite[Cor.~6.8 p.~185]{FC}, to extend $B$ to an abelian scheme $\mathcal{B}$ over $S$. Second, if $T$ is a smooth separated scheme of finite type over $K$ with a \emph{dominant} morphism to $S$  and if $Z\in \mathscr A^i_{X/S}(T)$,
  then we use  Proposition~\ref{P:BLR}$(i)$,
  \cite[Cor.~6, \S 8.4]{BLR} to define
  $\widetilde \Phi(T)(Z)$ as the unique morphism
  $T\to A$ lifting $\Phi(T_{{\eta_S}})(Z_{{\eta_S}}) : T_{{\eta_S}} \to
  \mathcal{B}_{{\eta_S}} = B$.
 \end{proof}

\begin{rem}
Vasiu and Zink \cite{vasiuzinkpurity} have investigated the structure
of mixed-characteristic rings over which a suitable version of the
Faltings--Chai extension theorem holds.  If $S \to \integ_{(p)}$
satisfies the hypotheses of \cite[Cor.~5]{vasiuzinkpurity} (for
example, if $S$ is smooth over an unramified discrete valuation ring
of mixed characteristic $(0,p)$), and if
$X\to S$ is smooth and proper, then $\Ab^i_{X_{\eta_S}}$ spreads out
to an abelian scheme over $S$, and we again have $\Ab^i_{X_{\eta_S}}
\iso (\Ab^i_{X/S})_{\eta_S}$ for $i = 1$, $2$ or $\dim_SX$.
\end{rem}

Let $X/K$ be a smooth projective variety.  
In \cite[\S 4]{ACMVdiag} we investigate geometric conditions, such as a
decomposition of the diagonal in Chow, which imply that there exists a
correspondence $\Gamma \in \chow^{d_{\Ab^2_{X/K}}+1}(\widehat
\Ab^2_{X/K}\times X)$ which, in any Weil cohomology theory $\mathcal
H$, induces an isomorphism $\Gamma_{\mathcal H, *}:\mathcal
H^1(\Ab^2_{X/K}) \stackrel{\sim}{\to} \mathcal H^3(X)(1)$.  In
this setting, even in positive characteristic, we can obtain an
analogue of Theorem \ref{T:basechangechar0}.

In the following statement, we denote by $W = W(K)$ the ring of Witt
vectors of a field $K$, and let $B = B(K) = \operatorname{Frac} W$ be its fraction
field.  The hypothesis that the crystalline cohomology be locally free
is satisfied by, for example, families of complete intersections
\cite[Ex.~3.13]{morrow19}, or families which admit a decomposition of
the diagonal \cite[\S~\ref{ACMVdiag:S:DD-Tor}]{ACMVdiag}.  In such a case, inducing an
inclusion of crystals is equivalent to inducing an inclusion of isocrystals.

\begin{lem}
Let $\Lambda = \spec K$ be the spectrum of a perfect field of
characteristic $p>0$, let $S/\Lambda$ be smooth, and let $f:X \to S$ be
a smooth proper morphism.  Suppose that, over the generic point
$\eta_S$ of $S$, there exists a correspondence $\Gamma \in
\chow^{d_{\Ab^2_{X_{\eta_S}/\eta_S}}+2}(\Ab^2_{X_{\eta_S}/\eta_S} \times X_{\eta_S})$ such that
$\Gamma_{\cris,*}: H^1_\cris(\Ab^2_{X_{\eta_S}/\eta_S}) \to
H^3_\cris(X_{\eta_S})(1)$ is an inclusion of crystals.
Suppose further that the crystal $R^3 f_{\cris, *} \mathcal O_{S/W(K)}$
is locally free.
Then $\Ab^2_{X_{\eta_S}/\eta_S}$ extends to an abelian scheme
$\underline\Ab^2_{X_{\eta_S}/\eta_S} \to S$.
\end{lem}

\begin{proof}
  Take a spread $g: \underline \Ab^2_{X_{\eta_S}/\eta_S} \to U$ of the
  generic algebraic representative to an abelian scheme over a  nonempty open subscheme $U$ of $S$. Using
  the N\'eron--Ogg--Shafarevich criterion, we may and do assume that
  the complement of $U$ in $S$ has codimension at least two. To slightly ease notation, let
  $\mathcal M = R^1g_{\cris,*}\mathcal O_{\underline
    \Ab^2_{X_{\eta_S}/\eta_S}/W}$ and $\mathcal N = R^3
  f_{\cris,*}\mathcal O_{X/W}(1)$; they are locally free crystals of
  $\mathcal O_{U/W}$- and $\mathcal O_{S/W}$-modules, respectively.

Since $S$ is smooth and thus regular, $\Gamma_{\cris,*}$ extends to an
inclusion of crystals $\gamma: \mathcal M \hookrightarrow \mathcal
N_U$ \cite[Thm.~1.1]{dejongBT}.  Using the elementary
  divisors of the image of $\gamma$, we choose an
  endomorphism $\beta \in \End(\mathcal N_U)$ such that
  $\beta(\mathcal N_U) = \gamma(\mathcal M)$.
  Restriction of F-isocrystals from $S$ to $U$ is fully faithful
  (e.g, \cite[Thm.~5.1]{kedlayanotes}); since by hypothesis $\mathcal
  N$ is locally free, the restriction map of (integral) endomorphisms from
  $\End(\mathcal N)$ to $\End(\mathcal N_U)$ is a bijection.  Let $\widetilde \beta$ be the extension of
  $\beta$ to $S$.  Then $\widetilde\beta(\mathcal N)$ is a crystal on $S$
  whose restriction to $U$ is isomorphic to $\mathcal M$. Thus,
  $\mathcal M := \widetilde\beta(\mathcal N)$ is a locally free
  $F$-crystal on $S$.  Since the Dieudonn\'e functor over $S$ is an
  equivalence of categories \cite[Thm.~4.6]{dejongmessing}, there is a
  $p$-divisible group $\mathcal G \to S$ with $\mathcal G_U$
  isomorphic to $\underline \Ab^2_{X_{\eta_S}/\eta_S}[p^\infty]$.
  Using the proof of \cite[Prop.~4.1]{vasiu04} (the argument given
  there is valid over arbitrary regular Noetherian rings), the abelian
  scheme $\underline \Ab^2_{X_{\eta_S}/\eta_S}$ extends to an abelian
  scheme over all of $S$.
\end{proof}

 \section{Regular homomorphisms and  intermediate
  Jacobians}\label{S:jac}

 The point of this section is to show that Abel--Jacobi maps in the relative
 setting  induce regular homomorphisms in the sense we define here\,; indeed,
 this
 was one of our main motivations in proving \cite[Thm.~1]{ACMVsigma}.
 To explain briefly, recall that given a smooth projective morphism $f:X\to S$
 of
 smooth complex varieties, a result of Griffiths states that the Abel--Jacobi
 map
 induces a regular homomorphism in the analytic category.  In other words, given
 any dominant
 morphism of complex manifolds $T\to S$
 and any cycle class $Z\in \operatorname{CH}^i(X_T)$ such that every refined
 Gysin fiber $Z_t$ is homologically trivial, the associated map of sets $\nu_Z:T
 \to \mathrm J^{2i-1}(X_T/T)=\mathrm J^{2i-1}(X/S)_T$, $t\mapsto AJ_{X_t}(Z_t)$,
 is induced by a holomorphic map of analytic spaces\,; $\nu_Z$ is called the
 motivated normal function associated to $Z$.

 Our recent result \cite[Thm.~1]{ACMVsigma} shows that assuming $f$, $X$, and $S$ are algebraic, the Abel--Jacobi map
 actually induces a regular homomorphism (in the algebraic sense we define here) if we restrict to algebraically trivial
 cycle classes.  More precisely,
 there is an algebraic relative subtorus $\mathrm J^{2i-1}_a(X/S)\subseteq
 \mathrm J^{2i-1}(X/S)$ such that
 if we assume further that $T$ and $Z$ are algebraic with
 every refined Gysin fiber $Z_t$ algebraically trivial, then the normal
 function factors as $\nu_Z:T\to \mathrm J^{2i-1}_a( X/S)_T\subseteq \mathrm
 J^{2i-1}(X/S)_T$, and is algebraic (not just holomorphic).  In particular,
 there
 is a regular homomorphism $\Phi_{AJ}:\mathscr A^i_{X/S/\mathbb C}\to \mathrm
 J^{2i-1}_a(X/S)$, with $\Phi_{AJ}(T)(Z)=\nu_Z:T\to \mathrm J^{2i-1}_a(X/S)_T$
 given by the associated normal function.
 We now explain this in more detail, and recall some of the arithmetic
 properties of these normal functions.

 \subsection{The Abel--Jacobi map over subfields of $\cx$} \label{SS:AJequi}
 We reformulate the main result of \cite{ACMVdmij} within our functorial
 setting. We recall the definition of the \emph{algebraic intermediate
  Jacobian}.
 Let $X$ be a smooth projective variety over $\cx$.
 Griffiths defined an
 Abel--Jacobi map $AJ : \operatorname{CH}^i(X)_{\mathrm{hom}} \to
 \mathrm{J}^{2i-1}(X)$ on homologically trivial cycles. Here
 $\mathrm{J}^{2i-1}(X)$ is the so-called \emph{intermediate Jacobian}\,; it is
 the
 complex torus defined by
 $$ \mathrm{J}^{2i-1}(X) := \mathrm{F}^i\mathrm{H}^{2i-1}(X,\cx)\backslash
 \mathrm{H}^{2i-1}(X,\cx) / \mathrm{H}^{2i-1}(X,\integ)_{\tau},$$ with
 $\mathrm{F}^\bullet$ being the Hodge filtration and with the subscript
 ``$\tau$''
 referring to the quotient by the maximal torsion subgroup. Given a
 homologically
 trivial cycle class $\gamma = \partial  \Gamma$, the Abel--Jacobi map assigns,
 \emph{via} Poincar\'e duality, the linear form $\int_{\Gamma}(-)$ to $\gamma$.
 It is a theorem of Griffiths that this assignment is well-defined. Moreover,
 the
 image of the restriction of the Abel--Jacobi map to
 algebraically trivial cycles defines a subtorus $\mathrm{J}^{2i-1}_a(X)$ which
 is naturally
 endowed with a polarization and hence defines a complex abelian variety. This
 complex abelian variety will be called the \emph{algebraic intermediate
  Jacobian}.
 A basic result of Griffiths shows that the induced map $AJ: \mathrm{A}^i(X)
 \to \mathrm{J}_a^{2i-1}(X)(\cx)$ (which we will henceforth refer to by abuse as
 the Abel--Jacobi map) is a regular homomorphism in the classical sense, and
 hence by \S \ref{SS:reg} provides a regular homomorphism  $AJ:
 \mathscr{A}^i_{X/\cx} \to \mathrm{J}_a^{2i-1}(X)$ in the sense of
 Definition~\ref{D:reghom} whose evaluation at $\spec \cx$ gives the classical
 Abel--Jacobi map $AJ: \mathrm{A}^i(X_\cx) \to \mathrm{J}_a^{2i-1}(X_\cx)(\cx)$.

 \begin{teo}[{\cite{ACMVdmij}}]  \label{T:dmij}
  Let $X$ be a smooth projective variety over a field $K\subseteq \cx$. Then the
  algebraic intermediate Jacobian $\mathrm{J}_a^{2i-1}(X_\cx)$ admits a
  distinguished
  model $\mathrm{J}^{2i-1}_{a,X/K}$ over $K$ and there exists a surjective
  regular
  homomorphism
  $$
  \Phi_{AJ_{X/K}}:\mathscr A^i_{X/K} \longrightarrow \mathrm{J}^{2i-1}_{a,X/K}
  $$
  whose evaluation at $\spec \cx$ is the Abel--Jacobi map $AJ:
  \mathrm{A}^i(X_\cx) \to \mathrm{J}_a^{2i-1}(X_\cx)(\cx)$. In particular, the
  Abel--Jacobi map is
  $\mathrm{Aut}(\cx/K)$-equivariant.
 \end{teo}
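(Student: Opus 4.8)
The plan is to combine the classical Hodge-theoretic description of the Abel--Jacobi map over $\cx$ with the descent machinery for \emph{regular homomorphisms} developed in \S\S\ref{S:algclosed}--\ref{S:Galois}, following the same two-step pattern ($\cx/\bar K$ separable over a separably closed field, then $\bar K/K$ Galois) as in the proof of Theorem~\ref{T:mainalgrep}. Since $i$ is arbitrary here, $\mathrm J_a^{2i-1}(X_\cx)$ need not be an algebraic representative, so I cannot invoke Theorems~\ref{T:Omega/k} and~\ref{T:barK/K}; instead I would work directly with the trace and Galois-descent constructions for regular homomorphisms. Let $\bar K$ be the algebraic closure of $K$ inside $\cx$. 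The starting point is the remark recorded just before the statement: by Griffiths' theorem and the equivalence of \S\ref{SS:reg}, the Abel--Jacobi map on algebraically trivial cycles is a functorial regular homomorphism $AJ : \mathscr A^i_{X_\cx/\cx} \to \mathrm J_a^{2i-1}(X_\cx)$ over $\mathsf{Sm}_\cx/\cx$, and it is surjective by construction, as $\mathrm J_a^{2i-1}(X_\cx)$ is \emph{defined} as the image of $AJ$ on $\mathrm A^i(X_\cx)$. From \cite{ACMVdmij} I take as given the distinguished model $\mathrm J^{2i-1}_{a,X/K}$ over $K$, characterised by $(\mathrm J^{2i-1}_{a,X/K})_\cx \cong \mathrm J_a^{2i-1}(X_\cx)$, together with the $\mathrm{Aut}(\cx/K)$-equivariance of $AJ$ relative to this model; this is the genuinely arithmetic input, and it is what forces the restriction to characteristic zero.

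First I would descend from $\cx$ to $\bar K$ by the trace construction \eqref{E:L/KtrRegHom}. Since $\cx/\bar K$ is a regular (primary and separable) extension of the separably closed field $\bar K$, the $\cx/\bar K$-trace of $AJ$ yields a regular homomorphism $\LKtrace{AJ} : \mathscr A^i_{X_{\bar K}/\bar K} \to \LKtrace{B}$ over $\mathsf{Sm}_{\bar K}/\bar K$, where $B = \mathrm J_a^{2i-1}(X_\cx)$. The key identification is that, because $B = \big((\mathrm J^{2i-1}_{a,X/K})_{\bar K}\big)_\cx$ is already the base change of a $\bar K$-abelian variety, Chow rigidity for the primary extension $\cx/\bar K$ \cite{conradtrace} shows that the pair $\big((\mathrm J^{2i-1}_{a,X/K})_{\bar K},\ \mathrm{id}\big)$ is final, i.e.\ $\LKtrace{B} = (\mathrm J^{2i-1}_{a,X/K})_{\bar K}$ with trace morphism $\tau = \mathrm{id}$. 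Hence $\LKtrace{AJ} : \mathscr A^i_{X_{\bar K}/\bar K} \to (\mathrm J^{2i-1}_{a,X/K})_{\bar K}$, which is surjective by Proposition~\ref{P:surjtrace}, and by Lemma~\ref{L:Pull-Trace}$(ii)$ (with $\tau=\mathrm{id}$) its base change to $\cx$ recovers $AJ$.

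Next I would descend from $\bar K$ to $K$ by Galois descent. The $\mathrm{Aut}(\cx/K)$-equivariance of $AJ$ restricts to $\mathrm{Gal}(\bar K/K)$-equivariance of $\LKtrace{AJ}$; since $(\mathrm J^{2i-1}_{a,X/K})_{\bar K}$ is the base change of the $K$-abelian variety $\mathrm J^{2i-1}_{a,X/K}$, this makes $\LKtrace{AJ}$ a Galois-equivariant regular homomorphism in the sense of Definition~\ref{D:Galois-equivariant}. Construction~\eqref{E:underKreg} then produces the descended functorial regular homomorphism $\Phi_{AJ_{X/K}} : \mathscr A^i_{X/K} \to \mathrm J^{2i-1}_{a,X/K}$ over $\mathsf{Sm}_K/K$, with $(\Phi_{AJ_{X/K}})_{\bar K} = \LKtrace{AJ}$ by Lemma~\ref{L:Pull-Desc}$(ii)$; it is surjective by Proposition~\ref{P:surjGal}. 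Combining the two base-change identities gives $(\Phi_{AJ_{X/K}})_\cx = (\LKtrace{AJ})_\cx = AJ$, so its evaluation at $\spec\cx$ is the classical Abel--Jacobi map, and the asserted $\mathrm{Aut}(\cx/K)$-equivariance then follows formally from Lemma~\ref{L:equivariant}$(ii)$.

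I expect the main obstacle to be the bookkeeping at the two interfaces between the Hodge-theoretic object and the descent formalism: verifying that the $\cx/\bar K$-trace of the base-changed intermediate Jacobian is canonically $(\mathrm J^{2i-1}_{a,X/K})_{\bar K}$ with \emph{identity} trace morphism (so that no isogeny ambiguity is introduced — automatic here since $\mathrm{char}\,K=0$), and that the $\mathrm{Aut}(\cx/K)$-equivariance furnished by \cite{ACMVdmij} on $\cx$-points matches exactly the equivariance required by Definition~\ref{D:Galois-equivariant} for the functorial homomorphism $\LKtrace{AJ}$. All the substantive content — the existence of the model over $K$ and the Galois-equivariance of $AJ$ — is imported from \cite{ACMVdmij}; the role of the present argument is simply to translate it, via the trace and Galois-descent lemmas, into the existence of a single functorial surjective regular homomorphism over $K$.
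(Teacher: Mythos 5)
Your overall skeleton --- descend from $\cx$ to $\bar K$ \emph{via} the trace construction \eqref{E:L/KtrRegHom}, then from $\bar K$ to $K$ \emph{via} Galois descent \eqref{E:underKreg}, with surjectivity propagated by Propositions~\ref{P:surjtrace} and~\ref{P:surjGal} --- is exactly the paper's. The problem is your choice of inputs: you import from \cite{ACMVdmij} both the distinguished model $\mathrm{J}^{2i-1}_{a,X/K}$ \emph{and} the full $\mathrm{Aut}(\cx/K)$-equivariance of $AJ$ relative to it, calling this ``the genuinely arithmetic input.'' But the latter is precisely the ``In particular'' conclusion of Theorem~\ref{T:dmij}, so your argument is circular for that part of the statement. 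Worse, the paper's proof opens by stating that it \emph{fixes a gap} in the proof of this very equivariance claim in \cite{ACMVdmij}; one therefore cannot take the full equivariance as a black box, since the cited proof of it is exactly what is being repaired.

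What the paper actually imports from \cite{ACMVdmij} is strictly weaker: (1) Prop.~1.1 there, which gives a smooth projective curve $C/K$ and a surjection $(\operatorname{Pic}^0_{C/K})_\cx \twoheadrightarrow \mathrm{J}^{2i-1}_a(X_\cx)$, from which one deduces that the trace homomorphism $\tau : \mathrm{tr}_{\cx/\bar K}\big(\mathrm{J}^{2i-1}_a(X_\cx)\big)_\cx \to \mathrm{J}^{2i-1}_a(X_\cx)$ is an isomorphism --- this is what \emph{defines} the $\bar K$-model, in contrast with your appeal to an already-given $K$-model plus Chow rigidity; (2) \S 2.2 there, which descends this trace to a $K$-form and shows that $\LKtrace{AJ}(\bar K)$ is $\mathrm{Gal}(\bar K/K)$-equivariant \emph{on torsion only}; and (3) Prop.~3.8 there, the general principle that a regular homomorphism over $\bar K$ that is Galois-equivariant on torsion is Galois-equivariant. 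With these three ingredients, the functorial machinery (Lemmas~\ref{L:Pull-Trace}, \ref{L:Pull-Desc} and~\ref{L:equivariant}) produces $\Phi_{AJ_{X/K}}$, and the full $\mathrm{Aut}(\cx/K)$-equivariance of $AJ$ then comes out as a \emph{consequence}, via Lemma~\ref{L:equivariant}$(ii)$, rather than going in as a hypothesis. The fix for your proposal is to replace your assumed input by items (1)--(3) and run your two descent steps on those; as written, your argument establishes the existence of the functorial homomorphism only conditionally on the equivariance, and proves nothing about the equivariance itself.
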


  \begin{rem}\label{R:DistMeaning}
  Since $AJ:\operatorname{A}^{i}(X_{\mathbb C})\to
  \mathrm{J}_a^{2i-1}(X_{\mathbb C})$ is surjective,
  the abelian variety
  $\mathrm{J}_a^{2i-1}(X_{\mathbb C})$ admits at most one  structure of a scheme
  over $K$
  such that $AJ$ is
  $\operatorname{Aut}(\cx/K)$-equivariant.  This is
  the sense in which  $\mathrm{J}_a^{2i-1}(X_{\mathbb C})$ admits a
  \emph{distinguished model}
  over~$K$.
  See also \cite[Thm.~4.1]{HTCycClaMaps}.
 \end{rem}

 \begin{proof}
  The following proof fixes a gap in the proof of the
  $\mathrm{Aut}(\cx/K)$-equivariance of the Abel--Jacobi map given
  in~\cite{ACMVdmij}.
  The starting point is \cite[Prop.~1.1]{ACMVdmij} which provides a smooth
  projective curve $C$ over $K$ and a surjective homomorphism
  $(\operatorname{Pic}^0_{C/K})_\cx = J(C_\cx) \to \mathrm{J}_a^{2i-1}(X_\cx)$,
  and hence shows that the $\cx/\bar
  K$-trace
  homomorphism $\tau : \mathrm{tr}_{\cx/\bar
   K}\big(\mathrm{J}_a^{2i-1}(X_\cx)\big)_\cx \to \mathrm{J}_a^{2i-1}(X_\cx)$ is
  an
  isomorphism  (\emph{cf.}~\cite[\S 2.1]{ACMVdmij}). We set $\mathrm{J}^{2i-1}_{a,X_{\bar K}/\bar K} =
  \mathrm{tr}_{\cx/\bar K}\big(\mathrm{J}_a^{2i-1}(X_\cx)\big)$. By
  Lemma~\ref{L:Pull-Trace}, we obtain a regular homomorphism $\LKtrace{AJ} :
  \mathscr{A}^i_{X_{\bar K}/{\bar K}} \to \mathrm{J}^{2i-1}_{a,X_{\bar K}/\bar
   K}$, which satisfies $AJ = \tau \circ (\LKtrace{AJ})_\cx$ and which is
  surjective by Proposition~\ref{P:surjtrace}.

  Next we claim that the surjective regular homomorphism $\LKtrace{AJ}$ is
  $\mathrm{Gal}(\bar K/K)$-equivariant. This is achieved in two steps in
  \cite{ACMVdmij}\,: first one shows \cite[\S 2.2]{ACMVdmij} that $
  \mathrm{J}^{2i-1}_{a,X_{\bar K}/\bar K}$ descends to an abelian variety
  $\mathrm{J}^{2i-1}_{a,X/{K}}$ over $K$ and that $\LKtrace{AJ}(\bar K) :
  \mathrm{A}^i(X_{\bar K}) \to (\mathrm{J}^{2i-1}_{a,X/{K}})(\bar K)$ is
  $\mathrm{Gal}(\bar K/K)$-equivariant on torsion\,; then one establishes the
  general fact \cite[Prop.~3.8]{ACMVdmij} that a regular homomorphism over $\bar
  K$ that is $\mathrm{Gal}(\bar K/K)$-equivariant on torsion is in fact
  $\mathrm{Gal}(\bar K/K)$-equivariant. By  Lemma~\ref{L:Pull-Desc}, we obtain a
  regular homomorphism $\Phi : \mathscr{A}^i_{X/{K}} \to
  \mathrm{J}^{2i-1}_{a,X/{K}}$, which satisfies $\Phi_{\bar K} = \LKtrace{AJ}$
  and
  which is surjective by Proposition~\ref{P:surjGal}.

  Combining the above, Lemma~\ref{L:equivariant}$(i)$ gives $\Phi_{\cx} = AJ$,
  where  $(\mathrm{J}^{2i-1}_{X/{K}})_\cx$ is identified with
  $\mathrm{J}_a^{2i-1}(X_\cx)$ \emph{via} $\tau$.  We conclude with
  Lemma~\ref{L:equivariant}$(ii)$ that relative to the above identification $AJ$
  is  $\mathrm{Aut}(\cx/K)$-equivariant.
 \end{proof}

 \subsection{The Abel--Jacobi map in families}
 In this section, we assume that $\Lambda = \spec K$ for some subfield $K$ of
 $\cx$ and that $S$ is an irreducible smooth quasi-projective variety over~$K$.
 Theorem~\ref{T:geomNF} below is a reformulation of the main result of
 \cite{ACMVsigma} within our functorial setting. This result, which in fact was
 motivated by the present work, shows the existence of non-trivial regular
 homomorphisms in families.
 Before we state the theorem, recall that Griffiths' Abel--Jacobi map can be
 constructed family-wise\,: given a smooth projective family $X$ of complex
 varieties over a smooth quasi-projective complex variety $S$, there is a family
 of complex tori $J^{2i-1}(X/S)$ over $S$ which consists fiberwise of
 the intermediate Jacobians. Moreover, a classical result of Griffiths says
 that,
 given a cycle class $Z\in \operatorname{CH}^i(X)$ which is fiberwise
 homologically trivial, the \emph{normal function} $\nu_Z : S \to
J^{2i-1}(X/S)$ defined by $t\mapsto AJ(Z_t)$ is holomorphic.

 \begin{teo}[\cite{ACMVsigma}]  \label{T:geomNF}
  Suppose $f: X \to S$ is a smooth projective morphism of smooth varieties over
  a
  field $K\subseteq \mathbb C$. Then the Abel--Jacobi map
  $\Phi_{AJ_{X_{\eta_S}/\eta_S}}:\mathscr A^i_{X_{\eta_S}/\eta_S} \to
  \mathrm{J}^{2i-1}_{a,X_{\eta_S}/\eta_S}$ of Theorem~\ref{T:dmij} extends
  uniquely to a surjective regular homomorphism
  $$
  \Phi_{AJ_{X/S}}: \mathscr{A}^i_{X/S} \to \mathrm{J}^{2i-1}_{a,X/S}
  $$
  such that the base change $(\mathrm{J}^{2i-1}_{a,X/S})_{\mathbb C}$ is
  canonically identified with an algebraic subtorus $\mathrm
  J^{2i-1}_a(X_{\mathbb
   C}/S_{\mathbb C})\subseteq \mathrm {J}^{2i-1}(X_{\mathbb C}/S_{\mathbb C})$
  of
  the intermediate Jacobian.
  Moreover,
  for each $T$ in $\mathsf {Sm}_K/S$  and each $Z\in
  \mathscr{A}^i_{X_{X/S}}(T)$,
  we have $\nu_{Z_{\mathbb C}}=(\Phi_{AJ_{X/S}}(T)(Z))_{\mathbb C}:T_{\mathbb
   C}\to (\mathrm J^{2i-1}_{a,X/S})_{\mathbb C}\subseteq \mathrm
  J^{2i-1}(X_{\mathbb C}/S_{\mathbb C})$ is the associated normal function.
 \end{teo}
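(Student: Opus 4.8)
The plan is to extend the generic-fibre Abel--Jacobi regular homomorphism $\Phi_{AJ_{X_{\eta_S}/\eta_S}}$ of Theorem~\ref{T:dmij} to all of $S$ by the same good-reduction-and-rigidity mechanism used in Theorems~\ref{T:dvr} and~\ref{T:basechangechar0}, and then to read off the analytic assertions (the identification of the base change to $\cx$ and the compatibility with normal functions) from \cite{ACMVsigma}. Since the statement is valid for arbitrary $i$, where no algebraic representative is available, the model $\mathrm{J}^{2i-1}_{a,X/S}$ cannot be produced from Theorem~\ref{T:mainAb2} and must instead be obtained by spreading out the distinguished model over $\eta_S$.

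First I would construct the target. Write $B=\mathrm{J}^{2i-1}_{a,X_{\eta_S}/\eta_S}$ for the distinguished model over $\eta_S$ provided by Theorem~\ref{T:dmij}. Exactly as on \cite[p.~19]{ACMVsigma}, the smoothness and properness of $X/S$ make the $\ell$-adic Tate module of $B$ unramified, so the N\'eron--Ogg--Shafarevich criterion gives good reduction and the Faltings--Chai extension theorem \cite[Cor.~6.8, p.~185]{FC} extends $B$ to an abelian scheme $\mathcal B$ over $S$; set $\mathrm{J}^{2i-1}_{a,X/S}:=\mathcal B$. To define the homomorphism, take $T$ in $\mathsf{Sm}_K/S$ and $Z\in\mathscr A^i_{X/S}(T)$; the $\eta_S$-morphism $\Phi_{AJ_{X_{\eta_S}/\eta_S}}(T_{\eta_S})(Z_{\eta_S}):T_{\eta_S}\to B=\mathcal B_{\eta_S}$ is a rational $S$-map $T\dashrightarrow\mathcal B$ which, $T$ being regular, extends uniquely to an $S$-morphism by Proposition~\ref{P:BLR}$(i)$. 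This is $\Phi_{AJ_{X/S}}(T)(Z)$; naturality in $T$ follows from the corresponding property over $\eta_S$ together with the uniqueness of the extensions and the separatedness of $\mathcal B$, so we obtain a regular homomorphism $\Phi_{AJ_{X/S}}:\mathscr A^i_{X/S}\to\mathrm{J}^{2i-1}_{a,X/S}$.

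Surjectivity and uniqueness are then formal. By construction $(\Phi_{AJ_{X/S}})_{\eta_S}=\Phi_{AJ_{X_{\eta_S}/\eta_S}}$, which is surjective, and since surjectivity may be tested at $\eta_S^{\sep}$ (Proposition~\ref{prop:surjdef}) the extension is surjective. For uniqueness, any regular homomorphism to an abelian $S$-scheme restricting to $\Phi_{AJ_{X_{\eta_S}/\eta_S}}$ over $\eta_S$ agrees with $\Phi_{AJ_{X/S}}$ on each $T$ over the dense open lying above $\eta_S$, hence everywhere by Proposition~\ref{P:BLR}$(i)$ and separatedness.

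The remaining---and genuinely hard---content is the comparison over $\cx$, which is the substance of \cite[Thm.~1]{ACMVsigma}: the image of the relative Abel--Jacobi map on fibrewise algebraically trivial cycles cuts out an algebraic subtorus $\mathrm J^{2i-1}_a(X_\cx/S_\cx)\subseteq\mathrm J^{2i-1}(X_\cx/S_\cx)$, and the normal function $\nu_{Z_\cx}$ of any such cycle is algebraic with values in this subtorus. Since $\mathcal B_\cx$ and $\mathrm J^{2i-1}_a(X_\cx/S_\cx)$ are abelian schemes over $S_\cx$ whose generic fibres are canonically identified via Theorem~\ref{T:dmij}, they agree by spreading out and rigidity, yielding the canonical identification $(\mathrm{J}^{2i-1}_{a,X/S})_\cx\cong\mathrm J^{2i-1}_a(X_\cx/S_\cx)$. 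Finally, for fixed $T$ and $Z$ both $\nu_{Z_\cx}$ and $(\Phi_{AJ_{X/S}}(T)(Z))_\cx$ are algebraic maps $T_\cx\to\mathrm J^{2i-1}_a(X_\cx/S_\cx)$ which agree over the generic point of $S$---where $\Phi_{AJ_{X/S}}$ restricts to the Abel--Jacobi map by Theorem~\ref{T:dmij}---and therefore coincide by the identity principle. I expect the main obstacle to be not the extension bookkeeping but precisely this algebraicity input of \cite{ACMVsigma}, on which both the identification of the target and the normal-function identity rest.
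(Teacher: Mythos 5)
Your argument is correct in substance, but it assembles the proof quite differently from the paper. The paper never runs the good-reduction machinery in this proof: it applies \cite[Thm.~1]{ACMVsigma} to \emph{each} family $X_T\to T$, $T$ in $\mathsf{Sm}_K/S$, which already furnishes an abelian scheme $\mathrm J^{2i-1}_{a,X_T/T}$, the map on $T$-points, and the normal-function identity; the main remaining step is then the gluing statement $\mathrm J^{2i-1}_{a,X_T/T}=\mathrm J^{2i-1}_{a,X/S}\times_S T$, proved by a fibered-product diagram whose analytic input is \cite[Rem.~1.6]{ACMVsigma} (the fiber of the subtorus at a very general point is $\mathrm J_a^{2i-1}$ of that fiber) and whose algebraic input is the same rigidity you invoke (two abelian schemes over $T$ with equal generic fiber agree, Proposition~\ref{P:BLR}). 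You instead build the target once over $S$ by N\'eron--Ogg--Shafarevich and Faltings--Chai and define the homomorphism by extending rational maps (Proposition~\ref{P:BLR}$(i)$) --- precisely the mechanism the paper uses for Theorems~\ref{T:dvr} and~\ref{T:basechangechar0} --- and then recover the two analytic assertions a posteriori by rigidity and a density argument; surjectivity and uniqueness are handled identically in both proofs. Two caveats on your version. First, unramifiedness of $T_\ell B$ is \emph{not} a consequence of smooth properness of $X/S$ alone: since $i$ is arbitrary, Proposition~\ref{P:BlochMap} is unavailable, and the required Galois-equivariant embedding $T_\ell B\hookrightarrow H^{2i-1}(X_{\bar \eta_S},\integ_\ell(i))$ is itself a nontrivial result of \cite{ACMVdmij}/\cite{ACMVsigma}; your citation of \cite[p.~19]{ACMVsigma} does cover this, but your phrasing hides the real input. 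Second, both your generic-fiber identification and your identity-principle step tacitly use the same compatibility from \cite{ACMVsigma} that powers the paper's diagram: Theorem~\ref{T:dmij} alone identifies neither the generic fiber of the subtorus $\mathrm J_a^{2i-1}(X_\cx/S_\cx)$ with the base change of $B$, nor the fibers of $\mathcal B_\cx$ at very general points with $\mathrm J_a^{2i-1}$ of those fibers compatibly with the fiberwise Abel--Jacobi maps; both facts come from \cite[Thm.~1 and Rem.~1.6]{ACMVsigma}. So your route does not actually economize on the input from \cite{ACMVsigma}; what it buys is a more transparent functorial structure (uniqueness and naturality of the extension are immediate) and it exhibits the theorem as the intermediate-Jacobian analogue of Theorem~\ref{T:basechangechar0}, whereas the paper's route obtains the normal-function identity for every $T$ directly from \cite[Thm.~1]{ACMVsigma}, with no further analytic comparison needed.
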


 \begin{proof}
  The  content of \cite[Thm.~1]{ACMVsigma} and its proof is to show that  there
  is
  an abelian scheme $\mathrm{J}^{2i-1}_{a,X/S}$ over $S$ such that the
  Abel--Jacobi map $\Phi_{AJ_{X_{\eta_S}/\eta_S}}:\mathscr
  A^i_{X_{\eta_S}/\eta_S}
  \to \mathrm{J}^{2i-1}_{a,X_{\eta_S}/\eta_S}$ of Theorem~\ref{T:dmij}  defines
  a
  map
  $
  \delta: \mathscr{A}^i_{X/S}(S) \to \mathrm{J}^{2i-1}_{a,X/S}(S)
  $
  such that the base change $(\mathrm{J}^{2i-1}_{a,X/S})_{\mathbb C}$ is
  canonically identified with an algebraic subtorus of the intermediate Jacobian
  $\mathrm {J}^{2i-1}(X_{\mathbb C}/S_{\mathbb C})$,  and such that for each
  $Z\in
  \mathscr{A}^i_{X_{X/S}}(S)$ we have $\nu_{Z_{\mathbb C}}=(\delta(Z))_{\mathbb
   C}:S_{\mathbb C}\to (\mathrm J^{2i-1}_{a,X/S})_{\mathbb C}\subseteq \mathrm
  J^{2i-1}(X_{\mathbb C}/S_{\mathbb C})$.  Since this holds also for $X_T\to T$
  for any $T\to S$ in $\mathsf {Sm}_K/S$, in order to define the regular
  homomorphism
  $$\Phi_{AJ_{X/S}}: \mathscr{A}^i_{X/S} \to \mathrm{J}^{2i-1}_{a,X/S},$$
  it suffices to show $\mathrm J^{2i-1}_{a,X_T/T}= \mathrm
  J^{2i-1}_{a,X/S}\times_ST$.  Working on connected components, we may as well
  assume $S$ and $T$ are integral, and
  the claim is that in fact we have a fibered product diagram
  \begin{equation}\label{E:AlgIntBC}
  \xymatrix@R=.2em{
   \mathrm J^{2i-1}_{a}(X_{\mathbb C})\ar@{->}[rr] \ar[dd] \ar@{->}[rd]&&\mathrm
   J^{2i-1}_{a,X_{\eta_S}/\eta_S}\ar@{-}[d]  \ar[rr]&&\mathrm J^{2i-1}_{a,X/S}
   \ar[dd]\\
   &\mathrm J^{2i-1}_{a,X_{\eta_T}/\eta_T} \ar[dd] \ar[ru]
   \ar[rr]&\ar[d]&\mathrm
   J^{2i-1}_{a,X_T/T} \ar[dd] \ar[ru]&\\
   \operatorname{Spec}\mathbb C  \ar[rd]&\ar@{-}[l] \ar[r]&\eta_S
   \ar@{-}[r]&\ar@{->}[r]&S\\
   &\eta_T    \ar[ru] \ar@{->}[rr]&&T \ar[ru]&\\
  }
  \end{equation}
  where $\operatorname{Spec}\mathbb C\to T\to S$ is any very general point of
  $T$\,;
  \emph{i.e.}, maps to the generic point of $T$.
  That the left-hand triangular prism is a fibered product diagram is
  \cite[Rem.~1.6]{ACMVsigma}.    To show the right-hand cube is a fibered
  product
  diagram we argue as follows.  The only thing to show is that the far
  right-hand
  vertical face of the cube is cartesian.  Using that the rest of the diagram is
  cartesian, we see that $J^{2i-1}_{a,X/S}$ pulls back to
  $J^{2i-1}_{a,X_{\eta_T}/\eta_T}$ over $\eta_T$.  Thus both
  $J^{2i-1}_{a,X_S/S}\times_ST$ and $J^{2i-1}_{a,X_T/T}$ are abelian schemes
  over
  $T$ that pull back along $\eta_T\to T$ to give
  $J^{2i-1}_{a,X_{\eta_T}/\eta_T}$\,; \emph{i.e.}, they have the same generic
  fiber.
  Thus they agree (Proposition~\ref{P:BLR}$(i)$,  \cite[Cor.~6, \S 8.4]{BLR}),
  and therefore we
  have defined the regular homomorphism $\Phi_{AJ_{X/S}}$.

  That $\Phi_{AJ_{X/S}}$ is surjective is simply due to
  Proposition~\ref{prop:surjdef} and  to the fact that
  $(\Phi_{AJ_{X/S}})_{\eta_S}$ is the Abel--Jacobi map
  $\Phi_{AJ_{X_{\eta_S}/\eta_S}}: \mathscr A^i_{X_{\eta_S}/\eta_S} \to
  \mathrm{J}^{2i-1}_{a,X_{\eta_S}/\eta_S}$, which is surjective by
  Theorem~\ref{T:dmij}.
  That $\Phi_{AJ_{X/S}}$ is uniquely determined by its restriction
  $\Phi_{AJ_{X_{\eta_S}/\eta_S}}$ is simply due to the fact that if $T$ is an
  element of $\mathsf{Sm}_K/ S$, then a $\eta_S$-morphism $T_{\eta_S} \to
  (\mathrm{J}^{2i-1}_{a,X/S})_{\eta_S} = \mathrm{J}^{2i-1}_{X_{\eta_S}/\eta_S}$
  is the restriction of at most one $S$-morphism $T \to
  \mathrm{J}^{2i-1}_{a,X/S}$.
 \end{proof}

 For clarity we extract the following assertion, which was established in the
 proof
 above.

 \begin{cor}
  Suppose $T\to S$ is in $\mathsf {Sm}_K/S$.  Then we have a fibered product
  diagram\,:
  $$
  \xymatrix@R=1.5em@C=4em{
   \mathscr A^i_{X_T/T}\ar[r]^{\Phi_{AJ_{X_T/T}}} \ar[d]& \mathrm
   J^{2i-1}_{a,X_T/T} \ar[r] \ar[d]& T \ar[d]\\
   \mathscr A^i_{X/S}\ar[r]^{\Phi_{AJ_{X/S}}}& \mathrm J^{2i-1}_{a,X_S/S}\ar[r]&
   S\\
  }
  $$
 \end{cor}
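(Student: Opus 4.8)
The plan is to verify that each of the two squares in the displayed diagram is cartesian; by the pasting lemma for pullbacks the outer rectangle is then cartesian as well, so the whole diagram is a fibered product diagram in the asserted sense. I would organize the argument so that it simply reads off the proof of Theorem~\ref{T:geomNF}, where the essential geometric input already appears, rather than reproving anything.

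First I would treat the right-hand square. Its cartesianness is the identification $\mathrm J^{2i-1}_{a,X_T/T} = (\mathrm J^{2i-1}_{a,X/S})_T$ over $T$, which is exactly what was produced by the fibered product diagram~\eqref{E:AlgIntBC}: reducing to $S$ and $T$ integral by working on connected components, one observes that $\mathrm J^{2i-1}_{a,X/S}\times_S T$ and $\mathrm J^{2i-1}_{a,X_T/T}$ are abelian schemes over $T$ sharing the generic fiber $\mathrm J^{2i-1}_{a,X_{\eta_T}/\eta_T}$, hence coincide by Proposition~\ref{P:BLR}$(i)$. Next I would treat the left-hand square, whose cartesianness is the functorial identity $\mathscr A^i_{X_T/T} = (\mathscr A^i_{X/S})_T$ over $\mathsf{Sm}_K/T$; this is precisely \eqref{E:BC-2}, and the vertical comparison is furnished by the base-change construction of \S\ref{S:SmSbasechange}.

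The step I expect to require the most care is the compatibility of the two horizontal regular homomorphisms, namely that $\Phi_{AJ_{X_T/T}}$ coincides with the base change $(\Phi_{AJ_{X/S}})_T$ under the identifications above. By the uniqueness clause of Theorem~\ref{T:geomNF}, a regular homomorphism over $T$ with target $\mathrm J^{2i-1}_{a,X_T/T}$ is determined by its restriction to the generic point $\eta_T$, so it suffices to check that the two agree there. Both restrict to $\Phi_{AJ_{X_{\eta_T}/\eta_T}}$: for $(\Phi_{AJ_{X/S}})_T$ this follows from transitivity of base change (Lemma~\ref{L:basechange}) together with $(\Phi_{AJ_{X/S}})_{\eta_S} = \Phi_{AJ_{X_{\eta_S}/\eta_S}}$, which reduces the claim to the compatibility of the distinguished models of the algebraic intermediate Jacobian along the separable (indeed characteristic-zero) extension $\kappa(T)/\kappa(S)$. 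This last compatibility is the genuine content of the corollary; it is already implicit in~\eqref{E:AlgIntBC} and follows from the base-change behaviour of the distinguished model in Theorem~\ref{T:dmij}, or equivalently from Theorem~\ref{T:mainalgrep}$(i)$ applied to $X_{\eta_S}$ and the extension $\kappa(T)/\kappa(S)$. Once it is in place, the two squares glue to the required fibered product diagram.
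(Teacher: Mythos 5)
Your proof is correct, and it rests on exactly the same two ingredients as the paper's own (very short) proof: the fibered product diagram \eqref{E:AlgIntBC} from the proof of Theorem~\ref{T:geomNF} for the right-hand square, and the identification \eqref{E:BC-2} of \S\ref{S:SmSbasechange} for the cycle functors. The difference is organizational. The paper verifies the right-hand square and the \emph{outer rectangle} --- the latter being precisely \eqref{E:BC-2}, a statement about the functors $\mathscr A^i$ alone --- and cartesianness of the left-hand square then follows by the pasting lemma; this choice sidesteps any separate comparison of $\Phi_{AJ_{X_T/T}}$ with $(\Phi_{AJ_{X/S}})_T$, since that compatibility is built into the construction of $\Phi_{AJ_{X/S}}$ in the proof of Theorem~\ref{T:geomNF}. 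You instead verify the left-hand square directly, which is why you must make that compatibility explicit; your argument for it --- uniqueness of a regular homomorphism into $\mathrm{J}^{2i-1}_{a,X_T/T}$ given its restriction to $\eta_T$, together with the identification of both restrictions with $\Phi_{AJ_{X_{\eta_T}/\eta_T}}$ --- is valid, and is a reasonable way of recording what the paper leaves implicit. One small caveat: your parenthetical appeal to Theorem~\ref{T:mainalgrep}$(i)$ is only apt for $i=1,2,\dim_S X$, because for general $i$ the algebraic intermediate Jacobian is \emph{not} an algebraic representative (\emph{cf.}~Example~\ref{Ex:AJ}); the correct justification for the compatibility of distinguished models along $\kappa(T)/\kappa(S)$, valid for all $i$, is the uniqueness of the distinguished model (Remark~\ref{R:DistMeaning}) or, as you also note, the left-hand prism of \eqref{E:AlgIntBC} itself. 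With that citation adjusted, both routes are equally rigorous; the paper's choice of which two cartesian statements to check directly is simply the more economical one.
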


 \begin{proof}
  That the right-hand square is cartesian comes from \eqref{E:AlgIntBC}.  That
  the
  outer rectangle is cartesian is \eqref{E:BC-2}.
 \end{proof}

 \subsection{Algebraic representatives and intermediate Jacobians}
 \label{SS:jac2}

 We  show that
 $\mathrm{J}^{2i-1}_{a,X/S}$ is an algebraic representative for codimension-$i$ cycles with $i=1,2,\dim_SX$.

 \begin{teo}[{Algebraic representatives over $\mathbb C$}]
  \label{T:MK}
  Suppose $\Lambda = \operatorname{Spec} K$ for some field $K\subseteq \mathbb C$, and that
$f: X \to S$ is a smooth projective.  Fix $i=1,2,\dim_S X$.
  Then the homomorphism $ \operatorname{Ab}^i_{X/S}\to
  \mathrm{J}^{2i-1}_{a,X/S}$ induced by the universal property of the algebraic
  representative
  is an isomorphism. In other words, $\mathrm{J}^{2i-1}_{a,X/S}$ is an algebraic
  representative for codimension-$i$ cycles.
 \end{teo}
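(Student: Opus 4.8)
The plan is to reduce the statement to the case $S = \operatorname{Spec}\cx$, where it is the classical theorem of Murre. Denote by $\psi^i : \operatorname{Ab}^i_{X/S} \to \mathrm{J}^{2i-1}_{a,X/S}$ the homomorphism of abelian $S$-schemes obtained by feeding the surjective regular homomorphism $\Phi_{AJ_{X/S}}$ of Theorem~\ref{T:geomNF} into the universal property of the algebraic representative $\Phi^i_{X/S}$ (which exists by Theorem~\ref{T:mainAb2}); thus $\Phi_{AJ_{X/S}} = \psi^i \circ \Phi^i_{X/S}$, and $\psi^i$ is the map whose bijectivity we must establish. Since $\psi^i$ is a homomorphism of abelian schemes over the integral base $S$, rigidity (Lemma~\ref{L:rigidity}) reduces us to showing that its restriction $\psi^i_{\eta_S} : \operatorname{Ab}^i_{X_{\eta_S}/\eta_S} \to \mathrm{J}^{2i-1}_{a,X_{\eta_S}/\eta_S}$ to the generic point is an isomorphism.

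Next I would base-change $\psi^i_{\eta_S}$ along a very general point $s : \operatorname{Spec}\cx \to S$, i.e. a $K$-embedding $\kappa(\eta_S) \hookrightarrow \cx$ as in the proof of Theorem~\ref{T:geomNF}; the extension $\cx/\kappa(\eta_S)$ is separable since we are in characteristic zero. On the algebraic-representative side, Theorem~\ref{T:mainalgrep}(i), applied to $X_{\eta_S}$ and the extension $\cx/\kappa(\eta_S)$, canonically identifies $(\operatorname{Ab}^i_{X_{\eta_S}/\eta_S})_\cx$ with the algebraic representative $\operatorname{Ab}^i_{X_\cx/\cx}$ of the fibre, compatibly with the regular homomorphisms. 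On the intermediate-Jacobian side, the fibered-product diagram \eqref{E:AlgIntBC} in the proof of Theorem~\ref{T:geomNF} identifies $(\mathrm{J}^{2i-1}_{a,X_{\eta_S}/\eta_S})_\cx$ with the algebraic intermediate Jacobian $\mathrm{J}^{2i-1}_a(X_\cx)$ of the fibre, while Theorems~\ref{T:dmij} and~\ref{T:geomNF} guarantee that the base change of $\Phi_{AJ_{X_{\eta_S}/\eta_S}}$ along $s$ is the classical Abel--Jacobi map $AJ$. Because forming the map induced by the universal property commutes with these base-change identifications, $(\psi^i_{\eta_S})_\cx$ is precisely the canonical comparison homomorphism $\operatorname{Ab}^i_{X_\cx/\cx} \to \mathrm{J}^{2i-1}_a(X_\cx)$ over $\cx$.

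The key input is then Murre's classical theorem \cite[\S 10]{murre83} (see Example~\ref{Ex:AJ} and \cite[Thm.~C]{murre83}): for a smooth projective variety over $\cx$ and for $i = 1, 2, \dim X$, the Abel--Jacobi map $AJ : \mathrm{A}^i(X_\cx) \to \mathrm{J}^{2i-1}_a(X_\cx)$ is itself the algebraic representative, so the comparison homomorphism $\operatorname{Ab}^i_{X_\cx/\cx} \to \mathrm{J}^{2i-1}_a(X_\cx)$ is an isomorphism. Hence $(\psi^i_{\eta_S})_\cx$ is an isomorphism; as $\cx/\kappa(\eta_S)$ is a field extension, hence faithfully flat, $\psi^i_{\eta_S}$ is an isomorphism, and by rigidity so is $\psi^i$. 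I expect the main obstacle to be organisational rather than mathematical: one must check carefully that the two base-change identifications of the previous paragraph are compatible with the universal property, so that $(\psi^i_{\eta_S})_\cx$ is genuinely Murre's comparison map and not merely some abstract isomorphism between the two abelian varieties. The substantive mathematics --- that the Abel--Jacobi map is an algebraic representative in codimensions $1$, $2$ and $\dim X$ --- is imported from \cite{murre83}, which is exactly why the hypothesis $K \subseteq \cx$ (and hence characteristic zero) cannot be dispensed with here.
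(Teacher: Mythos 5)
Your overall route---reduce to the generic point of $S$, base-change along an embedding $\kappa(\eta_S)\hookrightarrow \cx$ using Theorem~\ref{T:mainalgrep}$(i)$ and the identifications from the proof of Theorem~\ref{T:geomNF}, then quote Murre's theorem over $\cx$---is the same as the paper's. But there is one genuine gap, and it sits precisely at the point the paper's proof is engineered to avoid. In your first reduction you assert that the restriction of $\psi^i$ to the generic point is a homomorphism $\operatorname{Ab}^i_{X_{\eta_S}/\eta_S}\to \mathrm{J}^{2i-1}_{a,X_{\eta_S}/\eta_S}$. It is not: the generic fibre of the source of $\psi^i$ is $(\operatorname{Ab}^i_{X/S})_{\eta_S}$, and the identification $(\operatorname{Ab}^i_{X/S})_{\eta_S}\cong \operatorname{Ab}^i_{X_{\eta_S}/\eta_S}$ is nothing like a formality in this paper---it is a substantive theorem (Theorem~\ref{T:basechangechar0}, equivalently Theorem~\ref{T2:mainAb2}$(i)$, proved using the N\'eron--Ogg--Shafarevich criterion and the Faltings--Chai extension theorem), and the analogous statement for \emph{special} fibres is false in general (Remark~\ref{R:specialfiber}). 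The corresponding identification on the intermediate-Jacobian side, $(\mathrm{J}^{2i-1}_{a,X/S})_{\eta_S}=\mathrm{J}^{2i-1}_{a,X_{\eta_S}/\eta_S}$, does come for free from Theorem~\ref{T:geomNF}, but on the algebraic-representative side you are silently using a base-change result that must either be cited or reproved.

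There are two ways to repair this. The quick one: invoke Theorem~\ref{T:basechangechar0} (which precedes Theorem~\ref{T:MK} in the paper and whose proof does not use it, so there is no circularity) to supply the missing identification; the rest of your argument then goes through. The paper's own proof instead avoids needing this input: by Proposition~\ref{P:surjbasechange0} and the universal property of $\operatorname{Ab}^i_{X_{\eta_S}/\eta_S}$ one gets a \emph{surjection} $\operatorname{Ab}^i_{X_{\eta_S}/\eta_S}\twoheadrightarrow (\operatorname{Ab}^i_{X/S})_{\eta_S}$, whose composition with $(\psi^i)_{\eta_S}$ is the natural comparison map $\operatorname{Ab}^i_{X_{\eta_S}/\eta_S}\to \mathrm{J}^{2i-1}_{a,X_{\eta_S}/\eta_S}$ (this is \eqref{E:fac}); one then shows this \emph{composite} is an isomorphism by the base change to $\cx$ (Theorems~\ref{T:Omega/k} and~\ref{T:barK/K}, i.e.\ your Theorem~\ref{T:mainalgrep} step) together with Murre's theorem, which forces the surjection to have trivial kernel and hence both arrows to be isomorphisms---so the generic-fibre base change for $\operatorname{Ab}^i$ comes out as a by-product rather than going in as a hypothesis. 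A smaller point: Lemma~\ref{L:rigidity} is a statement about two pointed morphisms agreeing, not about a homomorphism being an isomorphism once its generic fibre is; for the final globalization step the paper uses Proposition~\ref{P:BLR}$(i)$ to extend the inverse of the generic fibre to a morphism over $S$, and then the fact that morphisms of separated $S$-schemes agreeing on the generic fibre agree.
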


 \begin{proof}
 The cases $i=1,\dim_SX$ can be proven in the same way as the case $i=2$, and so we focus on this last case.
 In the classical situation where $S = \spec \cx$, this is
  \cite[Thm.~C]{murre83}. Our strategy is to reduce to that known case.
  By base-changing to the generic point of $S$ and by the universal property of
  $\operatorname{Ab}^2_{X_{\eta_S}/\eta_S}$ we obtain a composition of
  homomorphism of abelian varieties over $\eta_S$\,:
  \begin{equation}\label{E:fac}
  \operatorname{Ab}^2_{X_{\eta_S}/\eta_S} \twoheadrightarrow
  (\operatorname{Ab}^2_{X/S})_{\eta_S} \to (\mathrm{J}^3_{a,X/S})_{\eta_S}  =
  \mathrm{J}^3_{a,X_{\eta_S}/\eta_S},
  \end{equation}
  where the identification on the right is provided by Theorem~\ref{T:geomNF}
  and the surjectivity of the arrow on the left is provided by
  Proposition~\ref{P:surjbasechange0}. In order to show that
  $\operatorname{Ab}^2_{X/S}\to
  \mathrm{J}^3_{a,X/S}$ is an isomorphism, it is enough to show that it is
  generically an isomorphism by Proposition~\ref{P:BLR}$(i)$ (\cite[Cor.~6, \S
  8.4]{BLR}).
  Hence, in view of \eqref{E:fac}, it is enough to see that the natural
  homomorphism $\operatorname{Ab}^2_{X_{\eta_S}/\eta_S} \to
  \mathrm{J}^3_{a,X_{\eta_S}/\eta_S}$ is an isomorphism. This is achieved as
  follows. Fix an embedding of $\kappa(S)$ in~$\cx$. Thanks to
  Theorems~\ref{T:Omega/k} and~\ref{T:barK/K}, we obtain after base-changing the
  latter homomorphism along the inclusion $\kappa(X)\subset \cx$ a
  homomorphism
  $$\operatorname{Ab}^2_{X_{\cx}/\cx} \to \mathrm{J}^3_{a,X_\cx},$$
  which by
  \cite[Thm.~C]{murre83} is an isomorphism.
 \end{proof}

 \begin{rem}
  As already mentioned in Example~\ref{Ex:AJ}, for $2<i<\dim X_{\mathbb C}$, even if an algebraic
  representative exists, the canonical morphism
  $\operatorname{Ab}^{i}_{X_{\mathbb C}/\mathbb C}\twoheadrightarrow
  \mathrm{J}_a^{2i-1}(X_{\mathbb C})$ need not be injective\,;
  \emph{cf.}~\cite[Cor.~4.2]{OttemSuzuki}.
 \end{rem}

 Building on the $K=\cx$ version of Theorem~\ref{T:MK}, Murre established the
 following theorem in the special case when $S = \spec \cx$\,:

 \begin{teo}\label{T:mainAb2char0}
  Suppose
  $\operatorname{char}(\kappa(\Lambda))=0$ and let $X\to S$ be
  a smooth projective
   morphism. Fix $i=1,2,\dim_SX$.
  Let $\Phi_{X/S}^i:\mathscr A^i_{X/S}\to \operatorname{Ab}^i_{X/S}$
  be the algebraic representative for codimension-$i$ cycles (whose existence is
  provided by Theorem~\ref{T:mainAb2}).
  Then
  for any separably closed
  point
  $s : \spec \Omega \to S$  obtained
  as an inverse limit of morphisms in $\mathsf {Sm}_\Lambda/S$,
  the homomorphism
  $\Phi^i_{X/S}(\Omega)$ is an isomorphism
  on  torsion.
  In particular, for all primes $\ell$, the map
  \begin{equation}\label{E:MT2}
  \xymatrix@C=4em{
   T_\ell \operatorname{A}^i(X_\Omega) \ar@{->}[r]^<>(0.5){T_\ell
    \Phi^i_{X/S}(\Omega)}
   & T_\ell \operatorname{Ab}^i_{X/S}(\Omega)\\
  }
  \end{equation}
  is an isomorphism.
 \end{teo}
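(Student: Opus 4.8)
The plan is to reduce the statement to the classical situation over $\cx$, where $\Phi^i_{X_\cx/\cx}(\cx)$ is identified with Griffiths' Abel--Jacobi map and the torsion isomorphism is already known, and then to transport the conclusion back to the separably closed point $s$ using the base-change results of the paper. Throughout I would exploit the characteristic-zero hypothesis: since $\operatorname{char}(\kappa(\Lambda))=0$, the separably closed field $\Omega$ is in fact algebraically closed, every field extension in sight is separable, and the characteristic exponent is $1$ (so ``prime-to-$p$ torsion'' means all torsion).

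First I would identify the map at $s$ with the evaluation of the algebraic representative over $\Omega$. As $s:\spec\Omega\to S$ is an inverse limit of dominant morphisms in $\mathsf{Sm}_\Lambda/S$, Theorem~\ref{T:basechangechar0} gives a canonical isomorphism $\operatorname{Ab}^i_{X_\Omega/\Omega}\cong(\operatorname{Ab}^i_{X/S})_\Omega$ compatible with the regular homomorphisms, so that $\Phi^i_{X/S}(\Omega)$ is identified with $\Phi^i_{X_\Omega/\Omega}(\Omega)\colon \operatorname{A}^i(X_\Omega)\to\operatorname{Ab}^i_{X_\Omega/\Omega}(\Omega)$. (Restricting to the generic point of $\Lambda$ and then to a finitely generated field of definition, which I may embed into $\cx$ using the rigidity recalled below, lets me arrange $\Lambda=\spec K$ with $K\subseteq\cx$ so that this theorem applies.) The problem is thus reduced to showing that, for a smooth projective variety over an algebraically closed field $\Omega$ of characteristic zero, the evaluation $\Phi^i_{X_\Omega/\Omega}(\Omega)$ of the algebraic representative is an isomorphism on torsion.

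Next I would argue that this torsion property is insensitive to enlarging the algebraically closed base. For an extension $\Omega\subseteq\Omega'$ of algebraically closed fields, Theorem~\ref{T:Omega/k} provides a compatible identification $\operatorname{Ab}^i_{X_{\Omega'}/\Omega'}\cong(\operatorname{Ab}^i_{X_\Omega/\Omega})_{\Omega'}$, while Lecomte's rigidity theorem \cite{lecomte86} (as in the proof of Proposition~\ref{P:surjtrace}) shows that the base-change map on the torsion of $\operatorname{A}^i$ is an isomorphism; the analogous rigidity holds for the torsion points of the abelian variety $\operatorname{Ab}^i_{X_\Omega/\Omega}$. Spreading $X_\Omega$ out over a finitely generated subfield of $\Omega$ and embedding that subfield into $\cx$, I would therefore reduce to the case $\Omega=\cx$.

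Finally, over $\cx$ the statement is classical. By Theorem~\ref{T:MK}, $\operatorname{Ab}^i_{X_\cx/\cx}$ is the algebraic intermediate Jacobian $\mathrm{J}^{2i-1}_a(X_\cx)$ and $\Phi^i_{X_\cx/\cx}(\cx)$ is the Abel--Jacobi map. For $i=1$ the torsion isomorphism is the identification with $\operatorname{Pic}^0$, and for $i=\dim X$ it is Roitman's theorem; both already follow from Theorems~\ref{T:AlgRep-Co1} and~\ref{T:AlbAb}, since here the characteristic exponent is $1$. The essential case $i=2$ is the one I expect to be the main obstacle: one must invoke the theorem of Bloch and Merkurjev--Suslin, established in \cite[\S 10]{murre83} (compare the injectivity of the $\ell$-adic Bloch map in Theorem~\ref{T:MS} and Proposition~\ref{P:BlochMap}), that the Abel--Jacobi map on codimension-$2$ cycles is an isomorphism on torsion. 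This is the only nonformal input, and it is precisely what forces the restriction to characteristic zero. Applying $T_\ell=\varprojlim[\ell^n]$ then yields the asserted isomorphism \eqref{E:MT2} of Tate modules for every prime $\ell$.
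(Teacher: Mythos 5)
Your proposal is correct and follows essentially the same route as the paper's proof: the paper likewise reduces to the case $S=\spec\cx$ \emph{via} Theorem~\ref{T:basechangechar0} and Theorem~\ref{T:Omega/k}, and then invokes Murre's result \cite[\S 10, Thm.~10.3]{murre83} together with the identification $\operatorname{Ab}^2_{X_\cx/\cx}\cong \mathrm{J}^3_a(X_\cx)$ of Theorem~\ref{T:MK} for $i=2$, the cases $i=1,\dim_S X$ being classical. The only differences are expository: you spell out the rigidity arguments (Lecomte's theorem plus rigidity of torsion on abelian varieties) and the reduction from a general characteristic-zero $\Lambda$ to a field $K\subseteq\cx$, both of which the paper leaves implicit in its citations of those two theorems.
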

 \begin{proof} This is standard over $\mathbb C$ for $i=1,\dim_SX$.  For $i=2$,
  this is \cite[\S 10]{murre83} in the case $S = \spec \cx$\,: in
  that case the natural morphism $\mathrm{Ab}^2_{X/\cx} \to \mathrm{J}_a^3(X)$
  is
  an isomorphism (Theorem~\ref{T:MK}) and the corresponding statement with $
  \mathrm{J}_a^3(X)$ in place of $\mathrm{Ab}^2_{X/\cx}$ is
  \cite[Thm.~10.3]{murre83}.
  One reduces the general case to the previous case \emph{via} Theorem \ref{T:basechangechar0}
  and
  Theorem~\ref{T:Omega/k}.
 \end{proof}


 \bibliographystyle{hamsalpha}
 \bibliography{DCG}
\end{document}

 \appendix

 \section{Albanese varieties and base change}\label{appendix}
 In this appendix we consider the problem of compatibility of the
 formation of the
 Albanese variety with base change of field.   A classical result of Serre implies the
 existence of Albanese varieties for geometrically integral varieties over fields, but it
 turns out that Albanese varieties are not stable under base change of
 field.   A result
 of Grothendieck, later generalized by Conrad, implies that if one  restricts to
 proper geometrically integral varieties, then Albanese varieties are stable under base
 change of field.  In this appendix, we show that alternatively, if one is
 willing to restrict to \emph{separable} base change of field, then
 the Albanese variety
 is stable under separable base change of field for all geometrically integral
 varieties.

 \subsection{Definition of the Albanese variety}
 Let
 $V$ be a geometrically reduced and geometrically connected separated scheme of
 finite type over a field $K$.
 Recall that an \emph{Albanese datum for $V$} consists of a triple $(\alb_{V/K}, \
 \alb^1_{V/K},a:V\to \alb^1_{V/K})$ with $\alb_{V/K}$ an abelian variety over
 $K$, $\alb^1_{V/K}$ a torsor under $\alb_{V/K}$ over $K$, and $a:V\to
 \alb^1_{V/K}$ a morphism of $K$-schemes which is universal in the sense that
 given any triple $(A, P,f:V\to P)$ with $A$ an abelian variety over $K$, $P$ a
 torsor under $A$ over $K$, and $f:V\to P$ a morphisms of $K$-schemes, there is a
 unique homomorphism $\operatorname{Alb}_{V/K}\to A$ and a unique morphism of
 torsors $\operatorname{Alb}^1_{V/K}\to P$ making the following diagram
 commute\,:
 $$
 \xymatrix{
  V \ar[d]^a \ar[r]^f& P\\
  \operatorname{Alb}^1_{V/K} \ar@{-->}[ru]_{\exists !}
 }
 $$
 We will respectively call the three objects in this datum the Albanese variety, the Albanese torsor, and the Albanese morphism of $V/K$ (although of course the torsor is itself a variety, too).

 When $V/K$ is equipped with a $K$-point $v:\operatorname{Spec}K\to V$ over $K$,
 then one can define a pointed Albanese variety and morphism, by requiring all the maps in the
 previous paragraph to be pointed.  This reduces to the following situation\,: a
 \emph{pointed Albanese datum for $(V,v)$} is a pair $(\operatorname{Alb}_{V/K},a:V\to
 \operatorname{Alb}_{V/K})$ where $\operatorname{Alb}_{V/K}$ is an abelian
 variety, and $a:V\to \operatorname{Alb}_{V/K}$ is a morphism of $K$-schemes
 taking $v$ to the zero section, which is universal in the sense that given any
 $K$-morphism $f:V\to A$  to an abelian variety $A$ taking $v$ to the zero
 section, there is a unique homomorphism $\operatorname{Alb}_{V/K}\to A$ making
 the following diagram commute\,:
 $$
 \xymatrix{
  V \ar[d]^a \ar[r]^f& A\\
  \operatorname{Alb}_{V/K} \ar@{-->}[ru]_{\exists !}
 }
 $$

 \subsection{Existence of Albanese varieties}
 The existence of the Albanese variety goes back essentially to Serre.  We
        direct the reader to \cite[Thm.~A.1 and p.836]{wittenberg08}
        for an exposition valid over an arbitrary field\,; the
        assertion there is made for $V/K$ a geometrically integral
        scheme of finite type over a field~$K$, although
        argument holds under the weaker hypothesis that $V$ is a
        geometrically reduced and geometrically connected 
        scheme of finite type over $K$.

 \begin{teo}[Serre]\label{T:Serre-Alb}
  Let $V$ be a geometrically reduced and geometrically connected  scheme
  of finite type over a field $K$.  Then $V$ admits an Albanese torsor, and if $V$ admits
  a $K$-point, then $V$ admits a pointed Albanese variety. \qed
 \end{teo}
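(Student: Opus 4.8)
The plan is to reduce the torsor statement to the pointed statement, and then to construct the pointed Albanese by a dimension-bounding argument entirely parallel to Saito's criterion (Proposition~\ref{P:saito}).

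First I would dispose of the reduction. Since $V$ is geometrically reduced and geometrically connected, its smooth locus is dense and $V(K^{\sep})\neq\emptyset$; fix $v\in V(K^{\sep})$. Granting the pointed case over $K^{\sep}$, one obtains a pointed Albanese $(\alb_{V_{K^{\sep}}/K^{\sep}},a_v)$. Its underlying abelian variety, together with the morphism $a_v$ up to translation, is independent of the choice of base point, hence canonical; consequently it carries a $\gal(K^{\sep}/K)$-descent datum (already defined over a finite subextension, as everything is of finite type) and descends to an abelian variety $\alb_{V/K}$ over $K$, by Galois descent as in \S\ref{S:Galois}. The translation ambiguity in $a_v$ descends to an $\alb_{V/K}$-torsor $\alb^1_{V/K}$ together with a $K$-morphism $a\colon V\to\alb^1_{V/K}$, and one checks that the universal property for torsors descends from the pointed universal property over $K^{\sep}$. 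This reduces everything to constructing the pointed Albanese when $V$ has a $K$-point $v$.

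For the pointed case I would consider the category of pairs $(A,f)$ with $A$ an abelian variety over $K$ and $f\colon V\to A$ a $K$-morphism sending $v$ to $0$, calling such a pair \emph{generating} if $f(V)$ is contained in no proper abelian subvariety. As in the proof of Proposition~\ref{P:saito}, products furnish an upper bound for any two generating pairs: the abelian subvariety $B\subseteq A_1\times A_2$ generated by the image of $(f_1,f_2)$ surjects onto both factors. The construction of the initial object then rests on two finiteness inputs. The first is a \textbf{dimension bound}: there is an integer $M$, depending only on $V$, with $\dim A\le M$ for every generating pair. I would obtain this from the finiteness of \'etale cohomology of the finite-type scheme $V_{\bar K}$: for $\ell\neq\operatorname{char}K$, a generating $f$ induces an injection $f^*\colon H^1_{\et}(A_{\bar K},\mathbb Q_\ell)\hookrightarrow H^1_{\et}(V_{\bar K},\mathbb Q_\ell)$ (injective precisely because the image generates $A$), whence $2\dim A=\dim_{\mathbb Q_\ell}H^1_{\et}(A_{\bar K},\mathbb Q_\ell)\le\dim_{\mathbb Q_\ell}H^1_{\et}(V_{\bar K},\mathbb Q_\ell)$. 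The second is a \textbf{degree bound}: among generating pairs of maximal dimension, any isogeny $\pi\colon A_1\to A_0$ through which $f_0$ lifts to a generating $f_1$ is controlled by the class of the $\ker\pi$-torsor $A_1\to A_0$ in $H^1_{\et}(V,\ker\pi)$, and finiteness of these groups bounds the degree of such lifts, so a maximal lift exists.

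With both bounds in hand, I would finish exactly as in Proposition~\ref{P:saito}: choose a generating pair $(\alb_{V/K},a)$ of maximal dimension admitting no nontrivial generating isogeny-cover, and verify initiality. Given any $(A,f)$, replacing $A$ by the subvariety generated by $f(V)$ we may assume it generating; forming the generated $B\subseteq\alb_{V/K}\times A$, the projection $B\to\alb_{V/K}$ is a surjection of equal-dimensional abelian varieties, hence an isogeny, which by maximality and the degree bound is an isomorphism, so $f$ factors uniquely through $a$ via $B\to A$. Uniqueness of the factorization uses that $V$ is reduced with $f(V)$ generating, together with rigidity \cite[Prop.~6.1]{mumfordGIT}. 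I expect the main obstacle to be the dimension and degree bounds, and in particular making the $\ell$-adic argument uniform in characteristic $p$ (where one must pass to prime-to-$p$ torsion and handle inseparability of residue extensions); the extension and descent steps are routine given Proposition~\ref{P:BLR} and the Galois-descent formalism of \S\ref{S:Galois}.
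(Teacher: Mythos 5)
The paper does not actually prove Theorem~\ref{T:Serre-Alb}; it invokes Serre's classical result, pointing to \cite[Thm.~A.1 and p.~836]{wittenberg08} for an exposition, so your proposal has to stand on its own. Its first half is sound: reducing the torsor statement to the pointed one by Galois descent from $K^{\sep}$ (using that $V(K^{\sep})\neq\emptyset$ for geometrically reduced $V$, uniqueness of the pointed Albanese up to translation, and effectivity of descent for projective varieties) is exactly how the literature proceeds. Your dimension bound is also a true statement, though not free: injectivity of $f^*$ on $H^1_{\et}(\cdot,\mathbb{Q}_\ell)$ for generating $f$ needs an argument (generation gives a surjection $V^{2n}\to A$ by summing and subtracting, primitivity of $H^1$ of an abelian variety reduces its pullback to pullbacks of $f^*\alpha$, and a dominant morphism onto the normal integral $A$ is injective on $H^1(\cdot,\mathbb{Q}_\ell)$ by passing to generic points), and finite-dimensionality of $H^1_{\et}(V_{\bar K},\mathbb{Q}_\ell)$ for $V$ open and possibly singular needs the finiteness theorem for prime-to-characteristic torsion coefficients plus a Nakayama argument.

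The genuine gap is the degree bound, i.e.\ precisely the step you defer, and in characteristic $p>0$ it cannot be closed with the tools you name. First, the stated inference is inverted: if $f_0$ lifts through $\pi$, the pulled-back torsor class in $H^1_{\et}(V,\ker\pi)$ is \emph{zero}, so finiteness of these groups bounds nothing by itself; the correct prime-to-$p$ mechanism is that generation forces the image of the pro-$\ell$ abelianized $\pi_1^{\et}(V_{\bar K})\to T_\ell A_0$ to have full rank, while an unbounded tower of lifts would confine that image to $\bigcap_N T_\ell A_N$, a submodule of rank $<2\dim A_0$. That repairs the $\ell\neq p$ part. But initiality of your maximal pair requires ruling out \emph{all} isogeny covers admitting lifts, and in characteristic $p$ these include inseparable isogenies (kernels $\mu_p$, $\alpha_p$, Frobenius kernels). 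Such isogenies are universal homeomorphisms, hence induce isomorphisms of \'etale sites and are invisible to \'etale cohomology and to $\pi_1^{\et}$; the natural fppf substitute has no finiteness (e.g.\ $H^1_{\mathrm{fppf}}(\mathbb{A}^1,\alpha_p)$ is infinite), and even \'etale $p$-kernels are problematic because $H^1_{\et}(V,\mathbb{Z}/p)$ is infinite for affine $V$ in characteristic $p$ (Artin--Schreier covers). Such covers genuinely occur: for $E$ supersingular, the generating pair $(E,[p])$ lifts through the inseparable isogeny $\mathrm{Ver}\colon E^{(p)}\to E$ via the Frobenius $F_E$. Without controlling them, your chosen maximal pair need not be initial. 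The classical resolution, which is in substance what the argument cited by the paper does, replaces cohomology by geometry: one finds a curve $C\subseteq V$ through the base point whose image still generates the same abelian variety, composes with the normalization $\widetilde{C}\to C$, and uses the universal property of the Jacobian $\operatorname{Pic}^0_{\widetilde{C}}$, so that every lift factors through it; then $\dim A\le g(\widetilde{C})$ and $\deg\pi$ is bounded by the number of connected components of $\ker\bigl(\operatorname{Pic}^0_{\widetilde{C}}\to A_0\bigr)$, uniformly in all characteristics.
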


 \subsection{Base change for Albanese varieties}
 If $L/K$ is any field extension, then the universal property provides
 a base change morphism of torsors
 \[
 \xymatrix{\beta^1_{V,L/K}: \alb^1_{V_L/L} \ar[r] &(\alb^1_{V/K})_L}
 \]
 over a base change morphism of abelian varieties
 \[
 \xymatrix{\beta_{V,L/K}: \alb_{V_L/L} \ar[r] &(\alb_{V/K})_L}
 \]
 Our goal in this section is to
 understand $\beta_{V,L/K}$ and $\beta^1_{V,L/K}$, \emph{i.e.}, to understand the
 behavior of the
 Albanese variety under base change.
 We say that the Albanese variety of $V$ is \emph{stable under (separable) base change of
  field}  if $\beta_{V,L/K}$ and $\beta^1_{V,L/K}$ are isomorphisms for all
 (separable) field extensions $L/K$. \medskip

 The first observation is that Albanese varieties are not stable under base change of
 field\,:

 \begin{exa}[Albanese varieties are not stable under base change]
  \label{E:badbc}
  Let $L/K$ be a finite, purely inseparable extension.  Let $A/L$ be an
  abelian variety whose $L/K$-image, $\im_{L/K}A$, is trivial.  Let $G =
  \res_{L/K}A$ be the Weil restriction, which is a smooth connected commutative algebraic
  group over $K$.  Then $\alb_{G_L/L} = A$, while
  $\alb_{G/K}=\operatorname{Spec}K$, so that  $\alb_{G_L/L}\ne
  (\alb_{G/K})_L$\,;
  see \cite[Ex.~4.2.7]{brionstructure}, which goes back to Raynaud.
 \end{exa}

 There are two possible pathologies to focus on in this example.  First, the
 variety $G/K$ is not proper, and second, the extension $L/K$ is not
 separable.  Regarding the former pathology, it has been understood that if one
 assumes $V$ is proper, then the Albanese variety is stable under base change\,:

 \begin{teo}[Grothendieck--Conrad]\label{T:GrConBC-1}
  Let $V$ be a geometrically reduced and geometrically connected  \emph{proper}
  scheme over a field $K$.  Then the Albanese torsor of $V$ is stable under base change
  of field, and if $V$ admits a $K$-point, then the pointed Albanese
  variety is stable
  under base change of field. \qed
 \end{teo}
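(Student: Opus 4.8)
The plan is to reduce both assertions of the theorem to a single statement about the structure group of the Albanese torsor, and then to extract that statement from the base-change compatibility of the Picard scheme. First I would note that the pointed statement and the torsor statement both follow formally from the claim that the homomorphism of structure groups $\beta_{V,L/K}\colon \alb_{V_L/L}\to(\alb_{V/K})_L$ is an isomorphism. For the torsor statement, $\beta^1_{V,L/K}$ is a morphism of torsors lying over $\beta_{V,L/K}$, and a morphism of torsors covering an isomorphism of structure groups is itself an isomorphism (check fppf-locally, where both torsors trivialize and the map becomes a coset map of the structure group). For the pointed statement, a $K$-point identifies the pointed Albanese variety with $\alb_{V/K}$, so stability is again exactly the assertion about $\beta_{V,L/K}$. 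Note that $\alb_{V/K}$ exists with no hypothesis of a rational point, by Theorem~\ref{T:Serre-Alb}. Moreover $\beta_{V,L/K}$ is surjective: the image of $V$ generates $\alb_{V/K}$, and generation is a geometric condition, so the image of $V_L$ generates $(\alb_{V/K})_L$.

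Next I would identify $\alb_{V/K}$ through its duality with the Picard scheme. For $V$ proper over $K$ the relative Picard functor is representable (reducing to the projective case via Chow's lemma), and $\alb_{V/K}$ is canonically dual to the maximal abelian subvariety of $\pic^0_{V/K}$. The essential input is that for a \emph{proper} scheme the formation of $\pic^0_{V/K}$, as a group scheme, commutes with base change of field: cohomology and base change applied to $\mathcal O_V$ keeps $H^1(V,\mathcal O_V)$, hence the tangent space $\mathrm{Lie}\,\pic_{V/K}$, stable, and the identity component is cut out by a purely geometric condition. Thus $\dim\pic^0_{V_L/L}=\dim\pic^0_{V/K}$, and since duality of abelian varieties preserves dimension we get $\dim\alb_{V_L/L}=\dim\alb_{V/K}=\dim(\alb_{V/K})_L$. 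The surjection $\beta_{V,L/K}$ is therefore an isogeny, and it remains only to show its kernel is trivial.

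I expect this last step to be the main obstacle, and it is exactly where properness (contrast the non-proper Example~\ref{E:badbc}) and the separable/inseparable dichotomy enter. After spreading out one reduces to $L/K$ finitely generated, and factoring $L/K$ into a separable and a purely inseparable part, the separable part is handled by Galois descent together with Serre's theorem (Theorem~\ref{T:Serre-Alb}). The remaining purely inseparable case over a possibly imperfect field $K$ is delicate: $\pic^0_{V/K}$ may be non-reduced, and its maximal abelian subvariety, to which $\alb_{V/K}$ is dual, cannot in general be extracted in a base-change-compatible way by simply passing to the reduced subscheme. The resolution is that the base-change isomorphism $\pic^0_{V_L/L}\cong(\pic^0_{V/K})_L$ of group schemes is canonical, so transporting the duality through it produces a two-sided inverse to $\beta_{V,L/K}$; verifying this compatibility upgrades the isogeny to an isomorphism. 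This is the technical core of the Grothendieck--Conrad theorem, and for the imperfect-field bookkeeping I would cite Conrad's treatment rather than reproduce it here.
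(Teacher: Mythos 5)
First, a framing remark: the paper offers no proof of Theorem~\ref{T:GrConBC-1} at all; it is stated with attribution and without proof, and the remark following it records that the result is \cite[Thm.~VI.3.3(iii)]{FGA} in the geometrically normal case and \cite[Prop.]{ConradMathOver} in general, together with Conrad's identification of $\alb_{V/K}$ as the dual of the maximal abelian subvariety of $\pic_{V/K}$. So your overall plan --- reduce to the structure-group map $\beta_{V,L/K}$ (this matches Lemma~\ref{L:betavsbeta1}), pass through Picard duality, and cite Conrad for the delicate part --- is the same route the paper gestures at, except that the paper cites the sources for the whole statement rather than for a final step.

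The problem is that your intermediate ``isogeny'' step is wrong, and it silently relocates the entire difficulty. You argue that $\dim\pic^0_{V_L/L}=\dim\pic^0_{V/K}$ and then, ``since duality of abelian varieties preserves dimension,'' that $\dim\alb_{V_L/L}=\dim\alb_{V/K}$. But $\alb_{V/K}$ is \emph{not} dual to $\pic^0_{V/K}$; it is dual to the maximal abelian subvariety of $\pic_{V/K}$, and these differ in general: $\pic^0_{V/K}$ need not be an abelian variety, since it can be non-reduced over imperfect fields and non-proper when $V$ is not normal (for a nodal cubic, $\pic^0\iso\mathbb{G}_m$ has dimension $1$ while the Albanese is trivial). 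So stability of $\dim\pic^0$ under base change says nothing about $\dim\alb$; what is needed is that formation of the \emph{maximal abelian subvariety} of $\pic_{V/K}$ commutes with (possibly inseparable) base change of field, and that is precisely the crux of Conrad's argument, not ``imperfect-field bookkeeping.'' Consequently your claim that $\beta_{V,L/K}$ is an isogeny is unsupported, and the final step, presented as merely killing a finite kernel, in fact still owes both the equality of dimensions and the injectivity --- that is, the theorem itself. A smaller quibble: ``the separable part is handled by Galois descent'' is also too quick for non-algebraic separable extensions; making separable base change work without properness is exactly the labor of the paper's Appendix (Theorem~\ref{T:SepBC-Alb}), though for proper $V$ this is subsumed in the citation.
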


 \begin{rem}
  Recall that Grothendieck provides an Albanese torsor for any
  geometrically normal proper scheme $X$ over a field $K$ in the following way.
  As $X/K$ is proper and geometrically normal, one has that
  $\operatorname{Pic}^0_{X/K}$ is proper \cite[Thm.~VI.2.1(ii)]{FGA}\,; then by
  \cite[Prop.~VI.2.1]{FGA}, one has that $(\operatorname{Pic}^0_{X/K})_{\operatorname{red}}$ is a
  group scheme (\emph{i.e.}, without the usual hypothesis that $K$ be perfect).
  It then follows from \cite[Thm.~VI.3.3(iii)]{FGA} that
  $(\operatorname{Pic}^0_{X/K})^\wedge$ 
\Yano{Better to write $((\operatorname{Pic}^0_{X/K})_{\operatorname{red}})^\vee$.}
   is an Albanese variety.
  Conrad has generalized Grothendieck's argument to show that any
  geometrically reduced geometrically proper
  \Yano{Typo -this should be ``geometrically reduced proper scheme $X$'' -- geometrically connected is just needed for base change of the Albanese.  This paragraph essentially explains, but I will write out all of the details soon.}
   scheme $X$ over a field $K$ admits an
  Albanese torsor and Albanese variety. For lack of a better reference, we direct the reader to
  \cite[Thm.]{ConradMathOver}.
  His argument is to show that the Albanese variety is the dual abelian variety to the
  maximal abelian subvariety of the (possibly non-reduced and non-proper) Picard
  scheme  $\operatorname{Pic}_{X/K}$.  Grothendieck's theorem can then be
  summarized in this context by saying that his additional hypothesis that $X$ be
  geometrically normal implies that the maximal abelian subvariety of
  $\operatorname{Pic}_{X/K}$ is $\operatorname{Pic}^0_{X/K}$.
  That Grothendieck's and Conrad's Albanese varieties are stable under arbitrary field
  extension 
\Yano{when $X$ is assumed further to be geometrically connected (a necessary hypothesis for base change).} 
  is  \cite[Thm.~VI.3.3(iii)]{FGA} and \cite[Prop.]{ConradMathOver},
  respectively.  In fact, the Albanese variety enjoys an even
                stronger universal property; see \S \ref{S:appuniv} below.
 \end{rem}

 Returning to Example \ref{E:badbc}, and the second pathology, namely that the
 field extension $L/K$ in the example is not separable, our goal in this appendix
 is to use Theorem \ref{T:GrConBC-1} to prove\,:

 \begin{teo}[Base change for separable extensions]\label{T:SepBC-Alb}
  Let $V$ be a geometrically integral separated
  scheme of finite type over a field $K$. Then the Albanese torsor of $V$ is stable under
  \emph{separable} base change of field, and if $V$ admits a $K$-point, then the
  pointed Albanese variety is stable under \emph{separable} base change of field.
 \end{teo}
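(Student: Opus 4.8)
The plan is to first dispose of surjectivity, which needs no separability. For any extension $L/K$ the image of $\beta_{V,L/K}$ is an abelian subvariety $B\subseteq(\alb_{V/K})_L$, and the composite $V_L\to(\alb^1_{V/K})_L$ lands in a translate of $B$; since the image of the Albanese morphism of $V$ generates $\alb_{V/K}$ and this generation property is preserved under any field base change, we get $B=(\alb_{V/K})_L$, so $\beta_{V,L/K}$ and $\beta^1_{V,L/K}$ are surjective. Because $\alb_{V/K}$, $\alb_{V_L/L}$ and the maps $\beta,\beta^1$ are all of finite presentation, a standard spreading-out argument lets me assume $L/K$ is \emph{finitely generated} separable; such an extension is separably generated, hence is the function field $\kappa(T)$ of a smooth integral separated $K$-scheme $T$, and the general case follows by passing to the colimit over finitely generated subextensions.

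\textbf{The smooth case via a normal compactification.}
When $V$ is \emph{smooth}, I would argue as follows. Choose, by Nagata compactification followed by normalization, a normal proper geometrically integral $K$-scheme $\bar V$ containing $V$ as a dense open subscheme (normalization is an isomorphism over the already-normal locus, which contains the smooth $V$). Every $K$-morphism from $V$ to a torsor under an abelian variety extends uniquely to $\bar V$ by the classical extension theorem for rational maps from normal schemes to abelian torsors (the normal strengthening of Proposition~\ref{P:BLR}$(i)$); combined with the functorial map $\alb^1_{V/K}\to\alb^1_{\bar V/K}$ induced by the open immersion, this yields a canonical isomorphism $\alb^1_{V/K}\xrightarrow{\sim}\alb^1_{\bar V/K}$, natural in the base field. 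Crucially, separable base change preserves normality (if $R$ is a normal domain and $L/K$ is separable then $R\otimes_KL$ is normal), so $\bar V_L$ is again a normal proper geometrically integral compactification of $V_L$; hence the isomorphism $\alb^1_{V_L/L}\cong\alb^1_{\bar V_L/L}$ also holds. Now Theorem~\ref{T:GrConBC-1} (Grothendieck--Conrad) gives $\alb^1_{\bar V_L/L}\cong(\alb^1_{\bar V/K})_L$, and concatenating these natural isomorphisms produces $\beta^1_{V,L/K}\colon \alb^1_{V_L/L}\xrightarrow{\sim}(\alb^1_{V/K})_L$. Note that this argument is already at the level of torsors, so it needs no auxiliary rational point.

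\textbf{The general (possibly singular) case, and the main obstacle.}
For general geometrically integral $V$ the smooth locus $U=V^{\mathrm{sm}}$ is dense open and smooth, and one has compatible surjections $\alb_{U/K}\twoheadrightarrow\alb_{V/K}$ and $\alb_{U_L/L}\twoheadrightarrow\alb_{V_L/L}$ together with the isomorphism $\beta_{U,L/K}$ from the smooth case (using $(V_L)^{\mathrm{sm}}=(V^{\mathrm{sm}})_L$). A diagram chase then reduces the theorem to showing that the kernel $N=\ker(\alb_{U/K}\to\alb_{V/K})$ satisfies $\ker(\alb_{U_L/L}\to\alb_{V_L/L})=N_L$; equivalently, that $\dim\alb_{V_L/L}=\dim\alb_{V/K}$, i.e.\ the Albanese dimension does not jump under separable base change. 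This non-jumping is precisely the crux, and it is exactly the phenomenon that genuinely \emph{fails} for inseparable extensions, as in Raynaud's Example~\ref{E:badbc} where $G=\res_{L/K}A$ acquires a larger Albanese after base change. I expect this to be the hard part. The cleanest route I see is to interpret the obstruction group-theoretically: the condition ``a morphism $U\to A$ extends across the singularities of $V$'' is governed by the generalized Albanese group scheme of $V$, and $\alb_{V/K}$ is its maximal abelian-variety quotient; one must show that formation of this maximal abelian quotient (equivalently, of the maximal connected linear normal subgroup) commutes with separable base change. Here separability enters essentially — separable base change preserves the Chevalley dévissage and does not enlarge the abelian part, in contrast to the unipotent/Weil-restriction behavior responsible for Example~\ref{E:badbc} — and this is the step I would expect to require the most care, to be established by combining Theorem~\ref{T:GrConBC-1} on a proper compactification with the stability of normalization and of the maximal abelian subvariety under separable extensions.
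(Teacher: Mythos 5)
There is a genuine gap, and it sits exactly where your argument leans hardest. In your ``smooth case'' you extend morphisms from $V$ to its normal compactification $\bar V$ by invoking a ``normal strengthening'' of Proposition~\ref{P:BLR}$(i)$. No such strengthening exists: the extension theorem for rational maps to abelian schemes requires the source to be \emph{regular}, not merely normal, and the extension genuinely fails at normal singular points. For example, let $\bar V$ be the projective cone over a projectively normal elliptic curve $E$ and $V$ its smooth locus; the projection $V \to E$ does not extend to the vertex (resolving by the blow-up, the exceptional copy of $E$ would have to map both isomorphically and constantly to $E$), and indeed $\alb_{V/K}$ surjects onto $E$ while $\alb_{\bar V/K}$ is trivial, since every ruling line through the vertex must map to a point. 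Since Nagata plus normalization gives no control over boundary singularities, your isomorphism $\alb^1_{V/K}\cong\alb^1_{\bar V/K}$ fails in general. To run your argument you need a \emph{regular} compactification, which exists in characteristic $0$ (Hironaka) but is not known in positive characteristic. This is precisely the difficulty the paper's proof is built to circumvent: de Jong's alterations produce a smooth compactification only of an alteration $U$ of $V$, and only after a finite \emph{purely inseparable} extension $L/K$; the paper then needs Mochizuki's kernel argument (Lemma~\ref{L:albBCautLK}) to pass from $U$ down to $V$, and Chow's $L/K$-image (Lemmas~\ref{L:image} and~\ref{L:albBCinsep}) to descend back through the purely inseparable extension.

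Second, your ``general (possibly singular) case'' is not a proof: you reduce to the assertion that $\ker(\alb_{U_L/L}\to\alb_{V_L/L})=N_L$, i.e.\ that the Albanese dimension does not jump under separable base change, and then explicitly defer it as ``the hard part,'' offering only a heuristic about maximal abelian quotients and the Chevalley d\'evissage. That deferred statement is essentially the whole theorem -- Raynaud's Example~\ref{E:badbc} shows exactly how delicate it is -- so the proposal does not close. Note also that the paper never routes through the smooth locus of $V$ at all: Lemma~\ref{L:albBCautLK} compares $\alb_{V/K}$ with $\alb_{U/K}$ for a surjective alteration $U\to V$, controlling the kernel $\Xi_{V/K,U}$ by a Galois-stability argument over extensions with $L^{\aut(L/K)}=K$, and the remaining purely inseparable step is absorbed by the $L/K$-image. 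Your surjectivity remarks and the colimit reduction to finitely generated separable extensions are fine, and your smooth-case outline does work in characteristic $0$; but in characteristic $p$ you should expect to need the alteration and inseparable-descent machinery rather than a normal compactification.
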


 This generalizes the abelian (as opposed to semi-abelian) case
        of \cite[Cor.~A.5]{wittenberg08}, which requires $V$ to be an open
 subset of a smooth geometrically integral proper scheme over $K$.
 The remainder of this appendix is devoted to proving this theorem.

 \subsection{Proof of Theorem \ref{T:SepBC-Alb}}

 Our first observation is that it turns out that  $\beta_{V,L/K}$ is an
 isomorphism if and only if  $\beta^1_{V,L/K}$ is an isomorphism\,:

 \begin{lem}
  \label{L:betavsbeta1}
  Let $V/K$ be a geometrically reduced and geometrically connected separated
  scheme of finite type over a field $K$, and let $L/K$ be an extension of fields.
  Then $\beta_{V,L/K}$ is an isomorphism if and only if $\beta^1_{V,L/K}$ is an
  isomorphism.
 \end{lem}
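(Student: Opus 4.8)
The lemma is essentially formal, and the single idea driving it is that $\beta^1_{V,L/K}$ is a morphism of torsors lying over the homomorphism of abelian varieties $\beta_{V,L/K}$ (this is precisely the content of ``$\beta^1_{V,L/K}$ is a base change morphism of torsors over the base change morphism of abelian varieties $\beta_{V,L/K}$''). The general principle to exploit is that a morphism of torsors which is equivariant over a homomorphism of its structure groups is an isomorphism exactly when that homomorphism is. The plan is to make this precise by trivializing both torsors after base change to an algebraic closure $\bar{L}$ of $L$, where the equivariant morphism becomes literally the structure-group homomorphism.

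First I would reduce to working over $\bar{L}$. Since $\operatorname{Spec}\bar{L}\to\operatorname{Spec}L$ is faithfully flat, and the property of a morphism of separated finite-type $L$-schemes being an isomorphism is fppf-local on the base, $\beta_{V,L/K}$ (resp.\ $\beta^1_{V,L/K}$) is an isomorphism if and only if its base change $(\beta_{V,L/K})_{\bar L}$ (resp.\ $(\beta^1_{V,L/K})_{\bar L}$) is. Thus it suffices to show that $(\beta^1_{V,L/K})_{\bar L}$ is an isomorphism if and only if $(\beta_{V,L/K})_{\bar L}$ is.

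Next I would trivialize the two torsors over $\bar{L}$. Each of $\alb^1_{V_L/L}$ and $(\alb^1_{V/K})_L$ is a torsor under an abelian variety over a field, hence a smooth proper geometrically integral variety, and so acquires a $\bar{L}$-point (alternatively, a $\bar{L}$-point of $V_{\bar{L}}$, which exists because $V$ is geometrically reduced, maps to such a point under the Albanese morphism). I would choose any $p_0\in\alb^1_{V_L/L}(\bar{L})$ and set $p_0':=\beta^1_{V,L/K}(p_0)$, a $\bar{L}$-point of $(\alb^1_{V/K})_L$. The assignments $g\mapsto g\cdot p_0$ and $g'\mapsto g'\cdot p_0'$ are isomorphisms trivializing $(\alb^1_{V_L/L})_{\bar L}\cong(\alb_{V_L/L})_{\bar L}$ and $(\alb^1_{V/K})_{\bar L}\cong((\alb_{V/K})_L)_{\bar L}$. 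The equivariance of $\beta^1_{V,L/K}$ over $\beta_{V,L/K}$ gives, for every $g$, the identity $\beta^1_{V,L/K}(g\cdot p_0)=\beta_{V,L/K}(g)\cdot\beta^1_{V,L/K}(p_0)=\beta_{V,L/K}(g)\cdot p_0'$; in other words, under the two trivializations the morphism $(\beta^1_{V,L/K})_{\bar L}$ is carried precisely to the homomorphism $(\beta_{V,L/K})_{\bar L}$. Since the two trivializations are isomorphisms, $(\beta^1_{V,L/K})_{\bar L}$ is an isomorphism if and only if $(\beta_{V,L/K})_{\bar L}$ is, and combining with the reduction of the previous paragraph finishes the proof.

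There is no genuine obstacle here: the only geometric input beyond formal torsor bookkeeping is the nonemptiness of an abelian-variety torsor over the algebraically closed field $\bar{L}$. The step that requires the most care is the compatible choice of base points ($p_0' := \beta^1_{V,L/K}(p_0)$) ensuring that the equivariant morphism becomes the structure-group homomorphism after trivialization, so I would state the equivariance of $\beta^1_{V,L/K}$ over $\beta_{V,L/K}$ explicitly at the outset, as the whole argument rests on it.
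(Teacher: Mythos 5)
Your proof is correct, but it takes a genuinely different route from the paper's. The paper handles the two directions separately by categorical arguments staying over $L$: for the direction ($\beta^1_{V,L/K}$ iso $\Rightarrow$ $\beta_{V,L/K}$ iso) it recovers an abelian variety from any torsor $T$ under it as $\aut^0_T$, the identity component of the automorphism group scheme, so an isomorphism of Albanese torsors forces an isomorphism of the Albanese varieties; for the converse it observes that once $\beta_{V,L/K}$ is an isomorphism, $\beta^1_{V,L/K}$ becomes a morphism of torsors under a single fixed abelian variety, and the category of such torsors is a groupoid, so the map is automatically an isomorphism. You instead prove both directions at once: base change to $\bar{L}$, trivialize the two torsors compatibly via $p_0$ and $p_0'=\beta^1_{V,L/K}(p_0)$, use the equivariance of $\beta^1_{V,L/K}$ over $\beta_{V,L/K}$ to identify $(\beta^1_{V,L/K})_{\bar L}$ with $(\beta_{V,L/K})_{\bar L}$ under these trivializations, and then descend the isomorphism property along the faithfully flat morphism $\operatorname{Spec}\bar{L}\to\operatorname{Spec}L$. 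Your argument is essentially an explicit unwinding of the groupoid fact the paper invokes; what it buys is a uniform, self-contained treatment of both directions with the equivariance (the real engine of the lemma) made visible, at the cost of invoking fppf descent of isomorphisms. The paper's proof is shorter and avoids base change, though its first direction is terser: as written it concludes that the abelian varieties are abstractly isomorphic, and one needs the functoriality of $\aut^0$ to see that the isomorphism produced is $\beta_{V,L/K}$ itself, a point your compatible choice of base points handles automatically.
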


 \begin{proof}
  On one hand, let $T$ be a torsor under an abelian variety $A$\,; then $A$ can
  be recovered from~$T$ as  $\aut_T^0$, the connected component of identity of the
  automorphism group scheme of $T$.  Consequently, if $\alb^1_{V_L/L}$ and
  $(\alb^1_{V/K})_L$ are isomorphic, then so are $\alb_{V_L/L}$ and
  $(\alb_{V/K})_L$.

  On the other hand, suppose $\beta_{V,L/K}$ is an isomorphism.  Then
  $\beta^1_{V,L/K}$ is a nontrivial map of torsors over an isomorphism of abelian
  varieties.  Since the category of torsors under a given abelian variety is a
  groupoid, $\beta^1_{V,L/K}$ is an isomorphism.
 \end{proof}

 For \emph{finite} separable extensions, an easy argument shows\,:

 \begin{lem}
  \label{L:albBCfinsep}
  Let $V/K$ be a geometrically reduced and geometrically connected separated
  scheme of finite type over a field $K$.
  If $L/K$ is finite and separable, then $\beta^1_{V,L/K}$ and
  $\beta_{V,L/K}$ are isomorphisms.
 \end{lem}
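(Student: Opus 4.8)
The plan is to reduce to the case of a finite Galois extension and then apply Galois descent, identifying the descended object via the universal property of the Albanese. By Lemma~\ref{L:betavsbeta1} it suffices to prove that $\beta_{V,L/K}\colon \alb_{V_L/L}\to(\alb_{V/K})_L$ is an isomorphism; in fact the Galois-descent argument below descends the entire Albanese datum, so it also yields the statement for $\beta^1_{V,L/K}$ directly.

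First I would record the transitivity of the base-change maps: for a tower $M/L/K$, the uniqueness in the universal property gives
\[
\beta_{V,M/K}=(\beta_{V,L/K})_M\circ\beta_{V_L,M/L},
\]
where we use the identification $(V_L)_M=V_M$. Now embed the given finite separable extension $L/K$ into a finite Galois extension $M/K$; since $M/K$ is Galois, so is $M/L$. Granting the Galois case, both $\beta_{V,M/K}$ and $\beta_{V_L,M/L}$ are isomorphisms, whence $(\beta_{V,L/K})_M$ is an isomorphism, and since $\spec M\to\spec L$ is faithfully flat, $\beta_{V,L/K}$ is an isomorphism. This reduces everything to the Galois case.

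For $M/K$ finite Galois with group $G=\gal(M/K)$, I would exploit the natural semilinear $G$-action on $V_M=V\times_K M$ coming from the action on the second factor. By functoriality of the Albanese datum under base change along the automorphisms $\spec\sigma$ for $\sigma\in G$, this induces a descent datum on the whole datum $(\alb_{V_M/M},\alb^1_{V_M/M},a_M)$, compatibly with the $G$-equivariant Albanese morphism $a_M$. Since abelian varieties and their torsors over a field are projective, this descent datum is effective, so it descends to a triple $(B,B^1,a)$ over $K$ with $B$ an abelian variety and $a\colon V\to B^1$ a morphism whose base change to $M$ recovers $a_M$. The crux is to check that $(B,B^1,a)$ satisfies the universal property of the Albanese of $V/K$: given any $K$-morphism $f\colon V\to P$ to a torsor under an abelian variety $A/K$, the base change $f_M$ factors uniquely through $a_M$ over $M$, and because $f_M$ is $G$-equivariant the factorization is $G$-equivariant by the uniqueness clause, hence descends to the required factorization over $K$; uniqueness over $K$ follows from uniqueness over $M$. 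Thus $(B,B^1,a)\cong(\alb_{V/K},\alb^1_{V/K},a)$, and unwinding the identification shows that the descent isomorphism is exactly $\beta_{V,M/K}$, which is therefore an isomorphism.

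The main obstacle I expect is the bookkeeping in the Galois-descent step: verifying that base change along the $\spec\sigma$ really furnishes an \emph{effective} descent datum on the Albanese datum (one must check the cocycle condition and invoke projectivity for effectivity), and then transporting the universal property across the descent by a careful use of the uniqueness in the universal property to force $G$-equivariance of the comparison morphisms. The reduction from the Galois case to an arbitrary finite separable extension, and the passage between the torsor and the variety \emph{via} Lemma~\ref{L:betavsbeta1}, are routine by comparison.
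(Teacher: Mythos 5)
Your proposal is correct, but it takes a genuinely different route from the paper. The paper's proof is a short Weil-restriction argument: given any $L$-morphism $\alpha : V_L \to T$ to a torsor $T$ under an abelian variety $A/L$, the adjunction for $\res_{L/K}$ converts $\alpha$ into a $K$-morphism $V \to \res_{L/K}(T)$; since $L/K$ is finite \emph{separable}, $\res_{L/K}(A)$ is an abelian variety and $\res_{L/K}(T)$ a torsor under it, so the universal property of $\alb^1_{V/K}$ gives a factorization over $K$, and adjunction transports it back to the desired factorization $V_L \to (\alb^1_{V/K})_L \to T$; one then concludes with Lemma~\ref{L:betavsbeta1}. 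This pinpoints exactly where separability enters (smoothness, hence abelian-ness, of the Weil restriction --- the failure of which for inseparable extensions is precisely the Raynaud phenomenon of Example~\ref{E:badbc}), it needs no reduction to the Galois case, and it avoids all effectivity questions. Your argument instead reduces to a finite Galois extension \emph{via} transitivity of the base-change maps and fpqc descent of isomorphisms, then performs effective Galois descent on the whole Albanese datum and transports the universal property across the descent; all of these steps are sound, and the cocycle conditions and $G$-equivariance of the comparison maps do follow from uniqueness in the universal property, as you say. What your route costs is the input that torsors under abelian varieties over a field are quasi-projective (needed for effectivity of Galois descent); this is true --- e.g., if $n$ is the order of $[T]$ in $H^1(K,A)$, the $n$-fold sum map gives a finite morphism $T \to A$, so $T$ is projective --- but it is a nontrivial fact you assert without proof. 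What it buys is independence from Weil restriction and a descent-theoretic flavor consistent with the Galois-descent formalism used elsewhere in the paper (\S\ref{S:Galois}); the paper's route buys brevity and a cleaner localization of the separability hypothesis.
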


 \begin{proof}
  By the universal property, it suffices to show that if $A/L$ is any
  abelian variety, $T$ a torsor under $A$, and $\alpha: V_L \to T$ a
  morphism, then $\alpha$ factors through $(\alb^1_{V/K})_L$.

  Since $L/K$ is finite and separable, the Weil restriction $\res_{L/K}(A)$ is an
  abelian
  variety, and $\res_{L/K}(T)$ is a torsor under $\res_{L/K}(A)$.
  By the adjoint property of Weil restriction \cite[p.~191, Lemma 1]{BLR},
  there's a map $V \to \res_{L/K}(T)$.  By the universal property of
  $\alb_{V/K}$, this factors as
  \[
  \xymatrix{
   V \ar[r] \ar[rd]& \alb^1_{V/K} \ar[d] \\
   &\res_{L/K}(T)
  }
  \]
  Again by the adjoint property of $\res_{L/K}$, this induces a diagram
  \[
  \xymatrix{
   V_L \ar[r] \ar[rd]& (\alb^1_{V/K})_L \ar[d] \\
   &T
  }
  \]
  and $(\alb^1_{V/K})_L$ is the universal torsor receiving a map from
  $V$.  One concludes with Lemma \ref{L:betavsbeta1}.
 \end{proof}

 The following result only requires that $V$ be regular and geometrically
 connected\,; but since we
 insist on working with geometrically reduced varieties, it is
 equivalent to assume that $V$ is smooth and irreducible.

 \begin{lem}
  \label{L:albopen}
  Let $V/K$ be a  geometrically integral smooth separated scheme of finite type
  over a field $K$.
  If $U\hookrightarrow V$ is an open immersion, then $\alb_{U/K} \iso \alb_{V/K}$
  and $\alb^1_{U/K} \iso \alb^1_{V/K}$.
 \end{lem}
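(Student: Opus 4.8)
The plan is to construct morphisms in both directions between the two Albanese data and to check, by the usual uniqueness arguments, that they are mutually inverse. First I would restrict the Albanese morphism $a_V\colon V\to \alb^1_{V/K}$ to the dense open subset $U$. By the universal property of the Albanese torsor of $U$ (Theorem~\ref{T:Serre-Alb}), the $K$-morphism $a_V|_U\colon U\to \alb^1_{V/K}$ factors uniquely as $a_V|_U=\phi\circ a_U$ for a morphism of torsors $\phi\colon \alb^1_{U/K}\to \alb^1_{V/K}$ lying over a homomorphism of abelian varieties $\alb_{U/K}\to \alb_{V/K}$.

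For the map in the other direction, the essential point is that the Albanese morphism $a_U\colon U\to \alb^1_{U/K}$, viewed as a rational map $V\dashrightarrow \alb^1_{U/K}$, extends to a morphism $\til a_U\colon V\to \alb^1_{U/K}$. Granting this extension, the universal property of the Albanese torsor of $V$ produces a unique morphism of torsors $\psi\colon \alb^1_{V/K}\to \alb^1_{U/K}$ with $\til a_U=\psi\circ a_V$.

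I would then verify that $\phi$ and $\psi$ are inverse to one another. On the one hand, $\psi\circ\phi\circ a_U=\psi\circ(a_V|_U)=(\psi\circ a_V)|_U=\til a_U|_U=a_U$, so by the uniqueness in the universal property of $\alb^1_{U/K}$ we get $\psi\circ\phi=\mathrm{id}$. On the other hand, $\phi\circ\til a_U$ and $a_V$ are two $K$-morphisms $V\to\alb^1_{V/K}$ that agree on the dense open $U$ (since $\phi\circ\til a_U|_U=\phi\circ a_U=a_V|_U$); as $V$ is reduced and $\alb^1_{V/K}$ is separated, they agree on all of $V$, whence $\phi\circ\psi\circ a_V=\phi\circ\til a_U=a_V$ and so $\phi\circ\psi=\mathrm{id}$. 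Thus $\phi$ is an isomorphism of torsors, and the homomorphism $\alb_{U/K}\to\alb_{V/K}$ underlying it is an isomorphism as well; this gives both assertions of the lemma.

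The main obstacle is the extension step, where the smoothness of $V$ enters. Proposition~\ref{P:BLR}$(i)$ extends rational maps to abelian schemes from regular sources, but here the target is a torsor rather than an abelian variety, so I would first trivialize it: choose a finite Galois extension $K'/K$ over which $\alb^1_{U/K}$ acquires a rational point, so that $(\alb^1_{U/K})_{K'}\iso (\alb_{U/K})_{K'}$. Then $(a_U)_{K'}$ is a rational map from the smooth variety $V_{K'}$ to an abelian variety, hence a morphism $\bar a\colon V_{K'}\to (\alb^1_{U/K})_{K'}$ by Proposition~\ref{P:BLR}$(i)$. Composing with the projection to $\alb^1_{U/K}$, the resulting morphism agrees on the dense open $U_{K'}$ with the base change of $a_U$, which is defined over $K$ and hence $\mathrm{Gal}(K'/K)$-equivariant there; by density, reducedness of $V_{K'}$, and separatedness of the target, the equivariance holds on all of $V_{K'}$, and Galois descent of morphisms then produces the desired extension $\til a_U\colon V\to \alb^1_{U/K}$ of $a_U$.
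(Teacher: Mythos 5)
Your proof is correct and follows essentially the same route as the paper: both reduce the lemma to extending morphisms $U \to T$ to torsors under abelian varieties across the open immersion (the formal step you spell out with $\phi$ and $\psi$, which the paper leaves implicit in its ``it suffices'' reduction), trivialize $T$ over a finite extension so that Proposition~\ref{P:BLR}$(i)$ applies, and descend the extension back to $K$. The only difference is the descent mechanism: you use Galois descent along a finite Galois splitting field---which exists because the torsor, being a form of an abelian variety, is smooth and hence has a point over a finite \emph{separable} extension, a fact worth stating explicitly---whereas the paper uses fpqc descent (Lemma~\ref{L:fpqc}) along an arbitrary finite splitting field, with the descent datum verified by the same density-of-$U$ argument you use to check Galois equivariance.
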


 \begin{proof} It suffices to show that, if $f:U \to T$ is a morphism
  to a torsor under an abelian variety, then $f$ extends to a morphism
  $\tilde f: V \to T$.  If $T$ is an abelian variety, this is a special
  case of \cite[\S 8.4, Cor.~6]{BLR} (recalled in Proposition~\ref{P:BLR}$(i)$).
  Otherwise, let $L/K$ be a finite
  extension which splits~$T$, so that $f_L$ extends to $\tilde f_L: V_L
  \to T_L$\,; the statement then follows immediately from fpqc descent, as
  recalled below (Lemma \ref{L:fpqc}).
 \end{proof}

 \begin{lem}
  \label{L:fpqc}
  Let $S$ be a separated scheme over $K$ and let $X$ and $Y$ be separated
  schemes  over $S$.
  Let $U\subset S$ be an open, dense subscheme, and let $S' \to S$ be
  faithfully flat and quasicompact.  Suppose $f: X_U \to Y_U$ is a morphism of
  schemes over $U$.  If $f_{S'}: X_U \times_S S' \to Y_U\times_S S'$ extends to a
  morphism $\til f':X_{S'} \to Y_{S'}$, then $\til f'$ descends to a morphism
  $\til f: X \to Y$ over $S$, and $\til f_U = f$.
 \end{lem}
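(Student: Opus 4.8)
The plan is to deduce the statement from fpqc descent for morphisms of schemes, the only genuine work being the verification of the cocycle condition. Write $S''=S'\times_S S'$ with projections $p_1,p_2\colon S''\to S'$, and recall Grothendieck's theorem that schemes over a fixed base form a subcanonical site for the fpqc topology (already invoked in \S\ref{SS:AlbRegHom}); concretely, $T\mapsto\Hom_T(X_T,Y_T)$ is an fpqc sheaf on the category of schemes over $S$. As $S'\to S$ is faithfully flat and quasi-compact, it is an fpqc cover, so to produce $\til f\colon X\to Y$ with $\til f_{S'}=\til f'$ it suffices to check that the two pullbacks $p_1^*\til f'$ and $p_2^*\til f'$ agree as morphisms $X_{S''}\to Y_{S''}$. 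The separatedness of the presheaf (injectivity of restriction along the cover) then gives uniqueness of $\til f$ and, applied over the open $U$, yields $\til f_U=f$, since both have the same pullback $f_{S'}$ to the preimage of $U$ in $S'$.

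First I would check that $p_1^*\til f'$ and $p_2^*\til f'$ agree on the open subscheme $W:=X_{S''}\times_S U$, the preimage of $U$. By hypothesis the restriction of $\til f'$ over the preimage of $U$ in $S'$ equals $f_{S'}$, the pullback of $f$ along $S'\to S$. Since $f$ is already defined over $U\subseteq S$, its two pullbacks to $S''$ along $p_1$ and $p_2$ coincide --- the composites $S''\rightrightarrows S'\to S$ are equal by the definition of the fibre product --- so $p_1^*\til f'|_W=f_{S''}=p_2^*\til f'|_W$.

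The main step is to propagate this equality from $W$ to all of $X_{S''}$. For this I would use that $Y_{S''}\to S''$ is separated, whence the equalizer of $p_1^*\til f'$ and $p_2^*\til f'$ is a \emph{closed} subscheme $E\hookrightarrow X_{S''}$ containing the open $W$; a closed subscheme containing a schematically dense open must be the whole scheme. Thus everything comes down to the schematic density of $W$ in $X_{S''}$, which is exactly where the hypothesis that $U$ is dense in $S$ is used. Because $U$ is dense open in $S$ and $S'\to S$ is flat, hence generizing, the preimage $U_{S''}\subseteq S''$ contains every generic point of $S''$ and is therefore dense; when the structure morphism $X_{S''}\to S''$ is flat (as it is in the applications --- indeed an isomorphism in Lemma~\ref{L:albopen}) and the schemes involved are reduced, density of $U_{S''}$ pulls back to schematic density of $W$. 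Granting this, $E=X_{S''}$, the descent datum is effective, and the resulting $\til f$ satisfies $\til f_{S'}=\til f'$ and $\til f_U=f$.

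I expect the schematic-density verification to be the only delicate point. Topological density of $U$ does not by itself guarantee that its preimage meets every irreducible component of $X_{S''}$, so one genuinely needs flatness of $X_{S''}\to S''$ (or reducedness of $X_{S''}$) to conclude; both hold in the intended application, where $X=S$ and the base change is along a finite separable extension of fields, so that $X_{S''}$ is a base change of the geometrically reduced $V$ along the reduced ring $L\otimes_K L$.
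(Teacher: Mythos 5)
Your proof is correct and takes essentially the same route as the paper's: both reduce, by fpqc descent, to checking that the two pullbacks of $\til{f}'$ to $S''=S'\times_S S'$ agree; both note that they agree over the preimage $W$ of $U$ because $f$ is already defined over $U$, so its two pullbacks to $S''$ coincide; and both propagate this agreement from $W$ to all of $X_{S''}$ by a density argument. The paper phrases the cocycle condition as an equality of graphs $p_1^*(\Gamma_{\til{f}'})=p_2^*(\Gamma_{\til{f}'})$, which contain the common subscheme $p_i^*(\Gamma_{f_{S'}})$ ``as a dense set''; your formulation via the closed equalizer $E\subseteq X_{S''}$ (closed by separatedness of $Y_{S''}\to S''$) containing $W$ is the same argument, since equality of the graphs as closed subschemes is equivalent to $E=X_{S''}$. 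The genuine difference is that you isolate and justify the density step that the paper's proof merely asserts, and you are right to flag it: \emph{schematic} (not merely topological) density of $W$ in $X_{S''}$ does not follow from the stated hypotheses. Indeed, if $X$ is the origin of $S=\mathbb{A}^1_K$, $U=S\setminus\{0\}$, $Y=\mathbb{A}^1_S$, and $S'=\mathbb{A}^1_{K'}$ for a nontrivial finite Galois extension $K'/K$, then $X_U=\emptyset$, so any $\til{f}'$ vacuously extends $f_{S'}$, and one may choose $\til{f}'$ sending the origin to a point of $Y_{S'}$ with coordinate in $K'\setminus K$, which does not descend. So an input such as flatness of $X\to S$ over a reduced base (or reducedness of $X_{S''}$ combined with your generization argument) is genuinely needed; as you observe, it is available in both places the lemma is invoked --- in Lemma \ref{L:albopen} the structure map $X\to S$ is the identity of a smooth variety, and in Theorem \ref{T:groconrad-open} it is a torsor under an abelian scheme over a reduced base --- so your proof is complete exactly where the lemma is used, and is in fact more careful than the paper's own.
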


 \begin{proof}
  Let $S'' = S' \times_S S'$, equipped with the two projections $p_i:S'' \to
  S'$. Let $\Gamma_{\til f'} \subset X_{S'} \times_{S'} Y_{S'}$ be the graph of
  $\til f'$.  By Grothendieck's theory of fpqc descent (\emph{e.g.}, \cite[\S
  6.1]{BLR} or \cite[Thm.~3.1]{conradtrace}), it suffices to demonstrate an
  equality of closed subschemes $p_1^*(\Gamma_{\til f'}) = p_2^*(\Gamma_{\til
   f'})$. However, $p_i^*(\Gamma_{\til f'})$ contains $p_i^*(\Gamma_{f_{S'}})$ as a
  dense set\,; and $p_1^*(\Gamma_{f_{S'}}) = p_2^*(\Gamma_{f_{S'}})$, because
  $f_{S'}$ descends to $f$.
 \end{proof}

 For a variety $V$ which admits a smooth
 alteration, formation of the Albanese torsor commutes with separable base
 change\,:

 \begin{lem}
  \label{L:albBCautLK}
  Let $V/K$ be a geometrically integral
          separated
  scheme  of finite type over a field $K$.  Suppose that there is a
  diagram
  \[
  \xymatrix{
   U \ar@^{^{(}->}[r]^\iota \ar[d]^\pi & X \\
   V
  }
  \]
  of $K$-schemes  with $\pi$ surjective, $\iota$ an open
                immersion, and $X$ a geometrically integral separated scheme
                which is smooth and proper over $K$.  Let $L/K$ be any field
  extension such that $L^{\aut(L/K)} = K$.  Then $\beta_{V,L/K}$ and
  $\beta^1_{V,L/K}$ are isomorphisms.
 \end{lem}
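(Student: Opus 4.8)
The plan is to transport base-change stability from the smooth proper $X$ down to $V$ along $\pi$, using the hypothesis $L^{\aut(L/K)}=K$ to carry out the descent. By Lemma~\ref{L:betavsbeta1} it suffices to show that $\beta_{V,L/K}$ is an isomorphism; write $\beta:=\beta_{V,L/K}$ and $\beta^1:=\beta^1_{V,L/K}$. First I would record that the Albanese of $U$ is stable under the base change $L/K$: since $\iota:U\hookrightarrow X$ is an open immersion and $X$ is smooth and geometrically integral, Lemma~\ref{L:albopen} gives $\alb_{U/K}\cong\alb_{X/K}$ and, over $L$, $\alb_{U_L/L}\cong\alb_{X_L/L}$; as $X$ is proper, Theorem~\ref{T:GrConBC-1} makes $\alb_{X/K}$ stable under base change, whence $\beta_{U,L/K}\colon\alb_{U_L/L}\to(\alb_{U/K})_L$ is an isomorphism. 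The surjectivity of $\pi$ yields surjections $\bar\pi\colon\alb_{U/K}\twoheadrightarrow\alb_{V/K}$ and $\bar\pi_L\colon\alb_{U_L/L}\twoheadrightarrow\alb_{V_L/L}$, and the commuting relation $\beta\circ\bar\pi_L=(\bar\pi)_L\circ\beta_{U,L/K}$, together with $(\bar\pi)_L$ surjective and $\beta_{U,L/K}$ an isomorphism, shows that $\beta$ is surjective. The remaining, and essential, point is injectivity.

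For injectivity I would use $\beta_{U,L/K}$ to identify $\alb_{U_L/L}=(\alb_{U/K})_L$, compatibly with the natural (semilinear) $\aut(L/K)$-actions on the two sides. Because $\bar\pi_L$ is the base change of the $K$-morphism $\pi$, it is $\aut(L/K)$-equivariant, so its kernel $M:=\ker(\bar\pi_L)$ is an $\aut(L/K)$-stable closed subgroup scheme of $(\alb_{U/K})_L$. Here the hypothesis $L^{\aut(L/K)}=K$ is exactly what is needed, and this descent is the main obstacle: since $L/K$ is in general neither algebraic nor a Galois covering, ordinary Galois or fppf descent does not apply. Instead one uses that an $\aut(L/K)$-stable closed subscheme of a base change $Y_L$ of a $K$-scheme $Y$ descends to $K$ (view $M$ as an $\aut(L/K)$-fixed $L$-point of a Hilbert scheme of $\alb_{U/K}$ over $K$ and descend that point), and that $\aut(L/K)$-equivariant morphisms between base changes likewise descend; both rest on the elementary identity $(\mathcal O_Y\otimes_K L)^{\aut(L/K)}=\mathcal O_Y$, valid because $\mathcal O_Y$ is free over $K$ and invariants may be computed coordinatewise. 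Thus $M=N'_L$ for a closed subgroup scheme $N'\subseteq\alb_{U/K}$ defined over $K$.

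With $N'$ at hand I would set $Q:=\alb_{U/K}/N'$ and let $Q^1:=\alb^1_{U/K}/N'$ be the associated torsor, both over $K$. Then $(Q^1)_L=\alb^1_{U_L/L}/M=\alb^1_{V_L/L}$ canonically, so the Albanese morphism $a_{V_L}\colon V_L\to\alb^1_{V_L/L}=(Q^1)_L$ is $\aut(L/K)$-equivariant and hence descends to a $K$-morphism $b\colon V\to Q^1$. By the universal property of $\alb^1_{V/K}$ over $K$, $b$ factors as $b=\psi\circ a_V$ for a morphism of torsors $\psi\colon\alb^1_{V/K}\to Q^1$. Base-changing, and combining $b_L=a_{V_L}$ with the defining relation $(a_V)_L=\beta^1\circ a_{V_L}$, gives $a_{V_L}=\psi_L\circ\beta^1\circ a_{V_L}$; as the image of $a_{V_L}$ generates $\alb^1_{V_L/L}$, the torsor endomorphism $\psi_L\circ\beta^1$ is the identity. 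Hence $\beta^1$ is a split monomorphism, and being also surjective it is an isomorphism; by Lemma~\ref{L:betavsbeta1} so is $\beta_{V,L/K}$, as desired.
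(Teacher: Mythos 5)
Your proposal has essentially the same architecture as the paper's proof, and in one respect it is more explicit. The paper, following Mochizuki, works with $\Xi_{V_L/L,U}=\ker\bigl(\alb_{U_L/L}\to\alb_{V_L/L}\bigr)$ (presented there as an intersection of kernels over all maps from $V_L$ to torsors), shows it is $\aut(L/K)$-stable because $\aut(L/K)$ permutes that collection of maps, and then asserts that stability plus $L^{\aut(L/K)}=K$ makes it descend to $K$; this kernel is exactly your $M$. Your Hilbert-scheme mechanism for that descent (an $\aut(L/K)$-stable closed subscheme of $(\alb_{U/K})_L$ is an $\aut(L/K)$-fixed $L$-point of $\operatorname{Hilb}_{\alb_{U/K}/K}$, and a fixed $L$-point of a $K$-scheme factors through $\spec L^{\aut(L/K)}=\spec K$) is correct and is a genuine improvement in precision over the paper's bare assertion. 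The surjectivity argument, the identification $(Q^1)_L\iso\alb^1_{V_L/L}$, and the closing universal-property argument (split monomorphism plus epimorphism implies isomorphism) are all sound. One small wording issue: $\bar\pi_L$ is not literally ``the base change of a $K$-morphism'' (that would presuppose the theorem for $V$); its equivariance follows from the equivariance of $\pi_L$ together with uniqueness in the universal property of the Albanese, which is clearly what you intend.

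The genuine gap is the step ``$a_{V_L}\colon V_L\to (Q^1)_L$ is $\aut(L/K)$-equivariant and hence descends to a $K$-morphism $b$.'' The identity $(\mathcal O_Y\otimes_K L)^{\aut(L/K)}=\mathcal O_Y$ proves descent of equivariant morphisms into \emph{affine} targets (the ring map on functions lands in the invariants) and proves your fixed-point descent, but it does not prove descent of morphisms into the projective torsor $Q^1$: such a morphism is not determined by its effect on global functions, and your Hilbert-scheme trick is unavailable here because $V\times_K Q^1$ is not projective over $K$ ($V$ is not proper) and the graph of $a_{V_L}$ is not proper over $L$. Since this is precisely the crux of the lemma, it cannot be waved through. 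The statement you need is true, but it requires a real argument; two repairs are: (1) prove that $\aut(L/K)$-stable closed \emph{subschemes} of $Y_L$ descend for an \emph{arbitrary} finite-type $K$-scheme $Y$ --- on an affine chart $\spec A$, a stable ideal $I\subseteq A\otimes_K L$ equals $\bigl(I\cap (M_0\otimes_K L)\bigr)\cdot (A\otimes_K L)$ for a finite-dimensional $K$-subspace $M_0\subseteq A$ containing a generating set, and a stable $L$-subspace of $M_0\otimes_K L$ is a fixed $L$-point of a Grassmannian over $K$, hence descends --- and then apply this to the graph $\Gamma_{a_{V_L}}\subseteq (V\times_K Q^1)_L$, using faithfully flat descent to see the descended subscheme is a graph; or (2) bypass equivariance entirely: $a_{V_L}\circ\pi_L$ equals $h_L$ for the $K$-morphism $h\colon U\to\alb^1_{U/K}\to Q^1$, so the two pullbacks of $a_{V_L}$ to $V_{L\otimes_K L}$ agree after composing with the surjection $\pi_{L\otimes_K L}$; their equalizer is closed ($Q^1$ is separated), so they agree provided $V_{L\otimes_K L}$ is reduced, which holds when $L\otimes_K L$ is reduced, \emph{i.e.}\ when $L/K$ is separable (true in every application of this lemma in the paper, though not implied by $L^{\aut(L/K)}=K$ alone); then conclude by fpqc descent as in Lemma~\ref{L:fpqc}. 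For what it is worth, the paper's own proof elides this same factorization-descent point by leaning on Mochizuki's framework, so your write-up has the virtue of exposing the step --- but as justified, it is a gap.
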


 \begin{proof}
  We adapt Mochizuki's strategy \cite[App.~A]{mochizuki12} to
  understand the
  Albanese torsor of $V$
  in terms of that of its alteration $U$. Note that, by Lemma \ref{L:albopen} and
  Theorem \ref{T:GrConBC-1}, $\beta^1_{U,L/K}$ and $\beta_{U,L/K}$ are
  isomorphisms.
  Let $\mathcal I(V)$ denote the collection of all maps $\alpha: V \to T_\alpha$,
  where $T_\alpha$ is a torsor under an abelian variety $A_\alpha$.
  By the universal property, each such $\alpha$ induces a diagram
  \[
  \xymatrix@R=1.5em{
   U \ar[r]^a \ar[d]^\rho & \alb^1_{U/K}\ar[d]^{\delta^1_\alpha}\\
   V \ar[r]^\alpha& T_\alpha
  }
  \]
  and $\delta^1_\alpha$ is equivariant with respect to the induced
  morphism $\delta_\alpha: \alb_{U/K} \to A_\alpha$ of abelian
  varieties.  Let $\Xi_\alpha = \ker \delta_\alpha$, and let
  $\Xi_{V/K,U} = \cap_\alpha \Xi_\alpha$.  A Noetherian argument
  (\emph{e.g.}, \cite[Lem.~A.9]{mochizuki12}) shows that there exists some
  $\alpha\in \mathcal I(V)$ with $\Xi_{V/K,U} = \Xi_\alpha$, and then
  $\alb_{V/K} \iso \alb_{U/K}/\Xi_{V/K,U}$
  \cite[Cor.~A.11]{mochizuki12}.

  Base change yields $\cali(V)_L \hookrightarrow \mathcal I(V_L)$, and
  thus $(\Xi_{V/K,U})_L\supseteq \Xi_{V_L/L,U}$\,; to prove the lemma for
  the Albanese abelian variety, it suffices to prove the opposite
  containment.  Suppose $\alpha\in \mathcal I(V_L)$ and $\sigma \in
  \operatorname{Aut}(L/K)$.  Then we have $(\alpha^\sigma: V_L \to
  (A_\alpha)^\sigma)\in \mathcal I(V_L)$, and $\Xi_{\alpha^\sigma} =
  (\Xi_\alpha)^\sigma$.  In particular, $\Xi_{V_L/L,U} = \cap_\alpha
  \Xi_\alpha$ is stable under $\operatorname{Aut}(L/K)$, and thus
  descends to $K$.  Base changing back to $L$ (and then invoking Lemma
  \ref{L:betavsbeta1}) shows that $\beta_{V,L/K}$ and $\beta^1_{V,L/K}$ are
  isomorphisms.
 \end{proof}

 We now collect a few elementary results on separable extensions which will allow
 us to extend Lemma \ref{L:albBCautLK}
 to arbitrary separable extensions.

 \begin{lem}
  \label{L:autLKsepclosed}
  Let $\Omega/k$ be an extension of separably closed fields. Then
  $\Omega/k$ is separable if and only if $\Omega^{\aut(\Omega/k)} = k$.
 \end{lem}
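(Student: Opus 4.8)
The plan is to treat the two implications separately, reducing at once to positive characteristic: in characteristic $0$ a separably closed field is algebraically closed, every extension is separable, and the forward argument below (which is characteristic-free) already yields $\Omega^{\aut(\Omega/k)}=k$, so the equivalence is automatic. Assume then $\operatorname{char}k=p>0$. For the forward direction ($\Omega/k$ separable $\Rightarrow \Omega^{\aut(\Omega/k)}=k$), I would first note that since $k$ is separably closed, $\Omega/k$ is automatically primary, so separable forces regular; in particular $k$ is relatively algebraically closed in $\Omega$, whence every element of $\Omega\setminus k$ is transcendental over $k$ (an algebraic element would be purely inseparable over the separably closed $k$, and a nontrivial purely inseparable subextension contradicts separability). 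Given $a\in\Omega\setminus k$, extend $\{a\}$ to a transcendence basis $B$ of $\Omega/k$. As $\Omega/k$ is separable so is $\Omega/k(B)$, and being algebraic it is separable algebraic; since $\Omega$ is separably closed this exhibits $\Omega$ as a separable closure $k(B)^{\sep}$. The $k$-automorphism $a\mapsto a+1$ of $k(B)$ (fixing $B\setminus\{a\}$) extends to an automorphism $\sigma$ of $\Omega=k(B)^{\sep}$, and $\sigma(a)=a+1\neq a$. Hence no element of $\Omega\setminus k$ is fixed.

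For the converse I would prove the contrapositive: if $\Omega/k$ is not separable then $\Omega^{\aut(\Omega/k)}\neq k$. The crucial input is that every $\sigma\in\aut(\Omega/k)$ fixes each purely inseparable element of $\Omega$ over $k$ (if $x^{p^n}\in k$ then $(\sigma x-x)^{p^n}=0$), and that its canonical extension $\sigma^{\mathrm{perf}}$ to the perfect closure $\Omega^{\mathrm{perf}}$ fixes every element of $k^{1/p}$ (unique $p$-th roots). Fix a $p$-basis of $k$ and let $\{m_\alpha\}\subset k^{1/p}$ be the associated monomial $k$-basis of $k^{1/p}$, so $m_\alpha^p\in k$. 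By MacLane's criterion, non-separability means precisely that the $m_\alpha$, though $k$-linearly independent, become $\Omega$-linearly dependent in $\Omega^{\mathrm{perf}}$. I would then select an $\Omega$-linear dependence $\sum_{\alpha\in S}\omega_\alpha m_\alpha=0$ with all $\omega_\alpha\in\Omega^\times$ whose support $S$ has minimal cardinality ($\lvert S\rvert\ge 2$), and normalize $\omega_{\alpha_0}=1$ for some $\alpha_0\in S$.

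The key step is the standard fact that a minimal linear dependence is unique up to an overall scalar. Applying $\sigma^{\mathrm{perf}}$ to $\sum\omega_\alpha m_\alpha=0$, and using that it fixes each $m_\alpha$, produces a second dependence $\sum\sigma(\omega_\alpha)m_\alpha=0$ again supported on $S$; minimality forces $\sigma(\omega_\alpha)=\mu\,\omega_\alpha$ for a common $\mu\in\Omega^\times$, and the normalization $\sigma(\omega_{\alpha_0})=\sigma(1)=1$ gives $\mu=1$. Thus every $\omega_\alpha$ is fixed by every $\sigma\in\aut(\Omega/k)$. Finally, not all $\omega_\alpha$ can lie in $k$, since $\sum\omega_\alpha m_\alpha=0$ with $\omega_\alpha\in k$ would be a nontrivial $k$-linear relation among the $k$-independent $m_\alpha$. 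Hence some $\omega_{\alpha_1}\in\Omega\setminus k$ is fixed by all of $\aut(\Omega/k)$, so $\Omega^{\aut(\Omega/k)}\neq k$.

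The main obstacle—and the reason the converse is subtler than it looks—is that the fixed witnesses produced are generally \emph{transcendental} over $k$, not purely inseparable; the naive hope that non-separability forces a purely inseparable element outside $k$ is false (one can have $k$ relatively algebraically closed in $\Omega$ yet $\Omega/k$ inseparable). It is exactly the minimality/uniqueness argument for linear dependences that rigidifies these transcendental elements, the purely inseparable coefficients $m_\alpha$ being the "anchors" fixed by every automorphism. The remaining points—transitivity of separability for $\Omega/k(B)$ and the extension of $k(B)$-automorphisms to the separable closure in the forward direction—are standard and characteristic-free.
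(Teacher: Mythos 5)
Your forward direction contains a genuine gap, and it sits exactly at the step that carries all the weight. The claim ``as $\Omega/k$ is separable so is $\Omega/k(B)$'' is false: separability does not pass to the top of a tower over an intermediate field, and a transcendence basis of a separable extension need not be separating. Cheapest counterexample: $k=\overline{\mathbb{F}_p}$, $\Omega=\overline{k(t)}$, $a=t$, $B=\{t\}$. Here $\Omega/k$ is separable (since $k$ is perfect) and both fields are separably closed, yet $t^{1/p}\in\Omega$ is inseparable over $k(t)$, so $\Omega/k(B)$ is not separable and $\Omega\supsetneq k(B)^{\sep}$; in fact this $\Omega$ admits no separating transcendence basis at all, since $s^{1/p}\notin k(s)^{\sep}$ for every transcendental $s\in\Omega$. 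So your identification $\Omega=k(B)^{\sep}$ breaks down, and with it the extension of the automorphism from $k(B)$ to $\Omega$.

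Worse, the conclusion you want cannot be rescued by rewording, because for an arbitrary basis $B$ through $a$ the translation genuinely may fail to extend. Take $k=\overline{\mathbb{F}_p}$, $F=k(\alpha,\beta)^{\sep}$, $\Omega=F\bigl((\alpha\beta)^{1/p}\bigr)$, $a=\alpha$, $B=\{\alpha,\beta\}$; all hypotheses of the lemma hold. Since $p$-th roots are unique in characteristic $p$, any automorphism of $\Omega$ extending $\alpha\mapsto\alpha+1$, $\beta\mapsto\beta$ must send $(\alpha\beta)^{1/p}$ to $((\alpha+1)\beta)^{1/p}$, which requires $\alpha\beta+\beta\in\Omega^p=F^p(\alpha\beta)$, i.e.\ $\beta\in F^p(\alpha\beta)$. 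But applying to a putative relation $\beta=\sum_{i<p}f_i^p(\alpha\beta)^i$ the derivation of $F$ extending $\partial/\partial\alpha$ (it kills $F^p$ and $\beta$, and sends $\alpha\beta\mapsto\beta$) forces $f_i=0$ for all $i\ge1$, hence $\beta\in F^p$, contradicting $\beta^{1/p}\notin F$. So no such automorphism exists: the basis must be chosen \emph{adapted} to $\Omega$ --- here $B'=\{\alpha,(\alpha\beta)^{1/p}\}$ works, because $\beta\in k(B')$, hence $\Omega=k(B')^{\sep}$ and the translation fixing $(\alpha\beta)^{1/p}$ does extend --- or one must argue along different lines (as in the first example, where the extension exists only because $\Omega$ is the full algebraic closure). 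This is precisely the step the paper's own proof compresses into the unproved assertion that $\alpha\mapsto\alpha+1$ ``extends to an automorphism of $\Omega$''; your attempt makes the step explicit, but the justification you supply is the part that is false. By contrast, your converse direction is correct and is a genuinely different route from the paper's: where the paper cites Bourbaki for the fact that $L$ is always separable over $L^{\aut(L/K)}$, you give a self-contained contrapositive via MacLane's criterion and the uniqueness, up to a scalar, of a minimal linear dependence, with the normalization $\omega_{\alpha_0}=1$ pinning the coefficients in the fixed field; that argument, and your preliminary reduction showing every element of $\Omega\setminus k$ is transcendental, are sound.
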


 \begin{proof}
  In general, if $L/K$ is a separable extension, then $L$ is separable
  over $L^{\operatorname{Aut}(L/K)}$\,; see \emph{e.g.} \cite[\S 15.3,
  Prop.~7]{bourbakifields}. In particular, if $L^{\operatorname{Aut}(L/K)} =
  K$, then $L/K$ is separable.

  Conversely, assume that $\Omega/k$ is a separable extension of
  separably closed fields. Since $k$ is separably closed and since any
  sub-extension $\Omega/K/k$ satisfies $K/k$ separable, in order to show
  that $\Omega^{\operatorname{Aut}(\Omega/k)} =k$, it is enough to show
  that $\Omega^{\operatorname{Aut}(\Omega/k)} /k$ is algebraic. Let
  $\alpha \in \Omega$ be a transcendental element over $k$.  Since
  $\alpha$ extends to a transcendence basis of $\Omega/k$, the map
  $\alpha \mapsto \alpha+1$ extends to an automorphism of $\Omega$ which
  fixes $k$.  Consequently, no element of $\Omega$ transcendental over
  $k$ is fixed by all of $\aut(\Omega/k)$, and
  $\Omega^{\operatorname{Aut}(\Omega/k)} /k$ is algebraic, as desired.
 \end{proof}

 Recall that if $L'/L/K$ is a tower of field extensions with $L'/K$ separable,
 then $L/K$ is separable \cite[Prop.~8, p.V.116]{bourbakifields} but that $L'/L$
 may not be separable (\emph{e.g.},
 $\mathbb{F}_p(T)/\mathbb{F}_p(T^p)/\mathbb{F}_p$). However, we have\,:

 \begin{lem}\label{L:L/KsepCl}
  Suppose that $L/K$ is a separable extension of fields.  Then $L^{\sep}/K^{\sep}$
  is separable.
 \end{lem}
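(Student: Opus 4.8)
The plan is to reduce everything to the base-change stability of separability, after arranging the separable closures compatibly. If $\operatorname{char} K = 0$ there is nothing to prove, so I would assume $\operatorname{char} K = p > 0$. Fixing a separable closure $L^{\sep}$ of $L$ and taking $K^{\sep}$ to be the separable algebraic closure of $K$ inside $L^{\sep}$ (this is separably closed, hence a separable closure of $K$, and automatically sits inside $L^{\sep}$), I would form the compositum $M := K^{\sep}\cdot L \subseteq L^{\sep}$.

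The heart of the argument, and the step I expect to be the main obstacle, is to show that $M/K^{\sep}$ is separable. This is precisely where the hypothesis that $L/K$ be separable enters: separability of a field extension is preserved under extension of the base field, so the compositum $M = L\,K^{\sep}$ of the separable extension $L/K$ with $K^{\sep}/K$ is separable over $K^{\sep}$ (equivalently, $L$ is geometrically reduced over $K$, a property stable under extending $K$ to $K^{\sep}$); see \cite[\S 15]{bourbakifields}. I emphasize that this cannot be obtained from the naive transitivity fact recalled just before the statement: although $M/K$ is separable (a compositum of separable extensions) and $K^{\sep}\subseteq M$, the top of a separable tower need not be separable, which is exactly the subtlety that makes the lemma nontrivial and forces the genuine base-change input.

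With $M/K^{\sep}$ separable in hand, I would finish in two short steps. First, $L^{\sep}/M$ is separable: every element of $L^{\sep}$ is separable algebraic over $L$, hence over the larger field $M \supseteq L$, so $L^{\sep}/M$ is even separable algebraic. Second, I would invoke transitivity of separability for the tower $K^{\sep}\subseteq M\subseteq L^{\sep}$, building up (which, unlike the top-of-tower phenomenon above, is unproblematic). Concretely, using the characterization that a separable extension is a filtered colimit of smooth finite-type algebras \cite[\href{https://stacks.math.columbia.edu/tag/037X}{Tag 037X}]{stacks-project}: writing $M = \varinjlim_j B_j$ with $B_j$ smooth of finite type over $K^{\sep}$ and $L^{\sep} = \varinjlim_k C_k$ with $C_k$ smooth of finite type over $M$, each $C_k$ descends by a standard spreading-out argument to a smooth finite-type algebra over some $B_j$, hence smooth over $K^{\sep}$; therefore $L^{\sep}$ is a filtered colimit of smooth finite-type $K^{\sep}$-algebras and $L^{\sep}/K^{\sep}$ is separable, as desired.
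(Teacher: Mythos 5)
You have correctly identified the crux, but your treatment of it contains a genuine gap: the principle you invoke to prove that $M=LK^{\sep}$ is separable over $K^{\sep}$ is false in the generality in which you state it. Base-change stability of separability is a statement about the base-changed \emph{ring}: if $L/K$ is separable, then $L\otimes_K K'$ is geometrically reduced over $K'$ for every extension $K'/K$. It says nothing directly about the \emph{compositum} $LK'$, which is a residue field of $L\otimes_K K'$ rather than the ring itself, and residue fields of geometrically reduced algebras need not be separable over the base. Concretely, take $K=\mathbb{F}_p$, $L=\mathbb{F}_p(t)$ and $K'=\mathbb{F}_p(t^p)$, all inside $\Omega=\mathbb{F}_p(t)$: every extension of the perfect field $K$ is separable, and $L\otimes_K K'$ is geometrically reduced over $K'$, yet the compositum $LK'=\mathbb{F}_p(t)$ is purely inseparable of degree $p$ over $K'$. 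Your parenthetical justification (``equivalently, $L$ is geometrically reduced over $K$, a property stable under extending $K$ to $K^{\sep}$'') conflates $L\otimes_K K^{\sep}$ with $LK^{\sep}$; this conflation is exactly the top-of-tower subtlety you flag one sentence earlier, so you have relocated the difficulty rather than resolved it. Moreover, since your Steps 2 and 3 are routine, the claim that $LK^{\sep}/K^{\sep}$ is separable is essentially equivalent to the lemma itself, so resting it on an unproved (and, as stated, false) general principle leaves the proof without content.

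What makes your intermediate claim true --- and what your argument never uses --- is that $K^{\sep}/K$ is separable \emph{algebraic}. One way to complete your Step 1: for a finite separable subextension $K_i/K$ of $K^{\sep}/K$, the ring $L\otimes_K K_i$ is a finite \'etale $L$-algebra, hence a finite product of fields, so the compositum $LK_i$ is a \emph{direct factor} of it; therefore $LK_i\otimes_{K_i}\bar{K}$ is a direct factor of $L\otimes_K\bar{K}$, which is reduced because $L/K$ is separable, whence $LK_i/K_i$ is separable; passing to the filtered colimit over $i$ gives that $M/K^{\sep}$ is separable, and your Steps 2 and 3 then finish. The paper proceeds differently and never forms a compositum: it proves directly, via a MacLane-type linear-disjointness computation (injectivity of $F^p\otimes_{E^p}E\to F^pE$, using flatness and localization), that for any tower $F/E/K$ with $F/K$ separable and $E^pK=E$ the extension $F/E$ is separable; it then applies this with $E=K^{\sep}$ and $F=L^{\sep}$ (note that $L^{\sep}/K$ is separable by building-up transitivity), the point being that a separable algebraic extension $E/K$ automatically satisfies $E^pK=E$. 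Either route works, but both require a genuine argument exploiting the algebraicity of $K^{\sep}/K$ at precisely the spot where your proposal substitutes a citation.
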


 \begin{proof} In characteristic $0$ there is nothing to show.  So let
  $p=\operatorname{char}K>0$.
  We start with a small observation \cite[Exe.~4 p.V.165]{bourbakifields}\,:   If
  $F/E/K$ is a tower of field extensions, with $F/K$ separable, then if $E^pK=E$,
  then $F/E$ is separable.  To prove this, it suffices to show that the natural
  map $F^p\otimes_{E^p} E \to F^pE$ is injective. By the assumption $E=E^pK$, we
  therefore must show $F^p \otimes_{E^p}E^pK\to F^p(E^pK)$ is injective.   Since
  $E^p/K$ is separable~\cite[Prop.~8, p.V.116]{bourbakifields},
  we have that $E^p\otimes_{K^p}K\hookrightarrow E^pK$ is injective.   Since
  field extensions are (faithfully) flat, tensoring by $F^p\otimes_{E^p}(-)$ we
  obtain
  \begin{equation}\label{E:L/KsepCl}
  F^p \otimes_{E^p} (E^p \otimes_{K^p}K) \hookrightarrow F^p\otimes_{E^p}E^pK \to
  F^p(E^pK)
  \end{equation}
  The composition is identified with the map $F^p\otimes_{K^p}K\to F^pK\subseteq
  F^p(E^pK)$, which is injective since $F/K$ is assumed to be separable.  However,
  since $E^pK$ is the field of fractions of $E^p \otimes_{K^p} K$ under the
  inclusion $E^p \otimes_{K^p} K\hookrightarrow E^pK$, we see that the right hand
  map $ F^p \otimes_{E^p} KE^p \to F^p (E^pK)$ in \eqref{E:L/KsepCl} is injective,
  as claimed, since it is obtained from the composition   $F^p \otimes_{E^p} (E^p
  \otimes_{K^p}K)  \to F^p(E^pK)$ in~\eqref{E:L/KsepCl} by localization.

  To prove the lemma, we apply the observation in the previous paragraph with
  $F=L^{\sep}$ and $E=K^{\sep}$.  Thus we just need to show that
  $(K^{\sep})^pK=K^{\sep}$.    Thus we have reduced to the following\,: if $E/K$
  is a separable algebraic extension\,;  then $E=E^pK$. Indeed, we have a tower of
  extensions $E/E^pK / K$. The extension $E/E^pK$ is purely inseparable (the
  $p$-th power of every element of $E$ belongs to $E^pK$) while the extension
  $E/K$ is separable. This implies $E=E^pK$.
 \end{proof}

 \begin{lem}
  \label{L:albBCsmoothalteration}
  Suppose $U$, $V$, and $X$ are as in Lemma \ref{L:albBCautLK}.  If $L/K$
  is any separable extension, then $\beta_{V,L/K}$ is an isomorphism.
 \end{lem}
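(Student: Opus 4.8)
The plan is to bootstrap from the separably closed case handled in Lemma~\ref{L:albBCautLK} by passing to separable closures and exploiting the transitivity of the base change homomorphisms for Albanese varieties. Fix compatible separable closures $K \subseteq K^{\sep} \subseteq L^{\sep}$, where $L^{\sep}$ is a separable closure of $L$ containing a separable closure $K^{\sep}$ of $K$. By Lemma~\ref{L:L/KsepCl} the extension $L^{\sep}/K^{\sep}$ is separable, and since both fields are separably closed, Lemma~\ref{L:autLKsepclosed} gives $(L^{\sep})^{\aut(L^{\sep}/K^{\sep})} = K^{\sep}$. The hypotheses of Lemma~\ref{L:albBCautLK} are stable under the base change $K \to K^{\sep}$ (open immersions, surjectivity, smoothness, properness and geometric integrality are all preserved), so applying that lemma to $V_{K^{\sep}}/K^{\sep}$ and the extension $L^{\sep}/K^{\sep}$ shows that $\beta_{V_{K^{\sep}},\,L^{\sep}/K^{\sep}}$ is an isomorphism.

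Next I would record the transitivity of base change: for any tower $K \subseteq E \subseteq F$ of field extensions, the universal property of the Albanese variety yields $\beta_{V,F/K} = (\beta_{V,E/K})_F \circ \beta_{V_E,\,F/E}$, and similarly for torsors. I would then establish, as an auxiliary step, that $\beta_{V,M/K}$ is an isomorphism whenever $M/K$ is an \emph{algebraic} separable extension (not necessarily finite). Writing $M = \varinjlim_i K_i$ as the filtered union of its finite separable subextensions $K_i/K$, each $\beta_{V,K_i/K}$ is an isomorphism by Lemma~\ref{L:albBCfinsep}. To pass to the limit, given a torsor $T$ under an abelian variety $A/M$ and a morphism $f \colon V_M \to T$, one spreads $A$, $T$, and $f$ out to finite-type data over some $K_i$ (standard finite presentation descent, since $V$ is of finite type); applying the universal property of $\alb^1_{V_{K_i}/K_i} = (\alb^1_{V/K})_{K_i}$ over $K_i$ and base changing back to $M$ produces the required factorization through $(\alb^1_{V/K})_M$. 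Thus $(\alb^1_{V/K})_M$ solves the universal problem for $V_M$, so $\beta^1_{V,M/K}$ and hence, by Lemma~\ref{L:betavsbeta1}, $\beta_{V,M/K}$ are isomorphisms. In particular $\beta_{V,K^{\sep}/K}$ and $\beta_{V_L,\,L^{\sep}/L}$ are isomorphisms.

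Combining these ingredients finishes the argument. Applying transitivity along $K \to K^{\sep} \to L^{\sep}$ gives $\beta_{V,L^{\sep}/K} = (\beta_{V,K^{\sep}/K})_{L^{\sep}} \circ \beta_{V_{K^{\sep}},\,L^{\sep}/K^{\sep}}$, a composite of two isomorphisms, so $\beta_{V,L^{\sep}/K}$ is an isomorphism. Applying transitivity along $K \to L \to L^{\sep}$ gives $\beta_{V,L^{\sep}/K} = (\beta_{V,L/K})_{L^{\sep}} \circ \beta_{V_L,\,L^{\sep}/L}$, and since $\beta_{V,L^{\sep}/K}$ and $\beta_{V_L,\,L^{\sep}/L}$ are isomorphisms, so is $(\beta_{V,L/K})_{L^{\sep}}$. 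Because $L^{\sep}/L$ is faithfully flat and being an isomorphism descends along faithfully flat base change, $\beta_{V,L/K}$ is itself an isomorphism.

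I expect the main obstacle to be the auxiliary algebraic-separable step: one must check carefully that the universal property of the Albanese torsor is compatible with the filtered colimit $M = \varinjlim_i K_i$, i.e. that an arbitrary torsor-valued morphism from $V_M$ genuinely descends to finite level so that Lemma~\ref{L:albBCfinsep} applies. The remaining points---stability of the hypotheses under $K \to K^{\sep}$, the cocycle identity for $\beta$, and faithfully flat descent of the isomorphism property---are formal once this compatibility is in place.
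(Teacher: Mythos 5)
Your proposal is correct, and its skeleton agrees with the paper's: both arguments chase the square of extensions $K \subseteq K^{\sep} \subseteq L^{\sep}$, $K \subseteq L \subseteq L^{\sep}$, both apply Lemma~\ref{L:albBCautLK} to the extension $L^{\sep}/K^{\sep}$ of separably closed fields (justified by Lemmas~\ref{L:L/KsepCl} and~\ref{L:autLKsepclosed}, plus stability of the hypotheses under base change to $K^{\sep}$), and both finish by descending the isomorphism from $L^{\sep}$ down to $L$ along the faithfully flat map $\spec L^{\sep} \to \spec L$. Where you genuinely diverge is in the treatment of the two vertical extensions $K^{\sep}/K$ and $L^{\sep}/L$. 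The paper disposes of these with the \emph{same} Lemma~\ref{L:albBCautLK}, simply observing that a separable closure is Galois over its base, so $(K^{\sep})^{\aut(K^{\sep}/K)} = K$ and $(L^{\sep})^{\aut(L^{\sep}/L)} = L$ and the fixed-field hypothesis of that lemma holds verbatim. You instead prove an auxiliary statement---base change along an arbitrary \emph{algebraic} separable extension $M/K$---by writing $M = \varinjlim_i K_i$ as a filtered union of finite separable subextensions, invoking Lemma~\ref{L:albBCfinsep} at each finite level, and spreading out a torsor-valued morphism $V_M \to T$ (together with the abelian variety, the torsor structure, and the uniqueness of the factorization) to some finite level. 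That auxiliary argument is valid: the spreading out you flag as the main obstacle is standard finite-presentation limit theory, and uniqueness of the factorization also descends to finite level, so $(\alb^1_{V/K})_M$ does satisfy the universal property over $M$. But the detour is avoidable, since the Galois-theoretic observation above is exactly what makes Lemma~\ref{L:albBCautLK} applicable to $K^{\sep}/K$ and $L^{\sep}/L$ directly. In exchange for the extra limit machinery, your route yields a slightly more self-contained intermediate result (stability of the Albanese torsor under all algebraic separable extensions, deduced from the finite case alone), whereas the paper's route is shorter because it reuses one lemma three times.
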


 \begin{proof}
  Let $K^{\sep}$ and $L^{\sep}$ be separable closures of, respectively, $K$ and
  $L$, and consider the diagram of separable (thanks to Lemma \ref{L:L/KsepCl})
  field extensions
  \[
  \xymatrix@R=.5em{
   L^{\sep} \ar@{-}[dd] \ar@{-}[dr] \\
   & L \ar@{-}[dd]\\
   K^{\sep}\ar@{-}[dr]&\\
   &K
  }
  \]
  We have $(K^{s})^{\operatorname{Aut}(K^{s}/K)} = K$,
  $(L^{s})^{\operatorname{Aut}(L^{s}/L)} = L$ and by
  Lemma \ref{L:autLKsepclosed} we also have
  $(L^{s})^{\operatorname{Aut}(L^{s}/K^{s})} = K^{s}$. Hence we can
  apply Lemma \ref{L:albBCautLK} to all three extensions.
  Using the
  universal property of the Albanese morphism, we obtain a diagram with canonical
  arrows
  $$\xymatrix{\operatorname{Alb}_{V/L^{\sep}} \ar[r]^\cong \ar[d]_\cong &
   (\operatorname{Alb}_{V/L})_{L^{\sep}} \ar[d] \\
   ((\operatorname{Alb}_{V/K})_{K^{\sep}})_{L^{\sep}} \ar@{=}[r] &
   ((\operatorname{Alb}_{V/K})_L)_{L^{\sep}}.  }$$ It follows that
  $\beta_{V, L/K}$  becomes an isomorphism after base-change to $L^{\sep}$, and
  hence that it
  is an isomorphism.
 \end{proof}

 Recall \cite{conradtrace} that if $L/K$ is a primary extension of fields and if
 $A$ is an
 abelian variety, then the $L/K$-image of $A$, $\im_{L/K}(A)$, is the
 abelian variety over $K$ which is universal for maps from $A$ to
 abelian varieties which are defined over $K$.
 The formation of the image is insensitive to separable field
        extensions.  Indeed, a
 special case of \cite[Thm.~5.4]{conradtrace} states\,:

 \begin{lem}
  \label{L:image}
  If $L/K$ is purely inseparable, if $M/K$ is separable, and if $A/L$ is
  an abelian variety, then
  \[
  (\im_{L/K}(A))_{M} \iso \im_{LM/M}(A_{LM}).
  \]
 \end{lem}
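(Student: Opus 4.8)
The plan is to identify this statement with the special case of \cite[Thm.~5.4]{conradtrace} in which the primary extension $L/K$ is in fact purely inseparable, and to explain how the natural comparison map arises and why separability of $M/K$ is exactly what forces it to be an isomorphism. First I would record the field theory needed to make sense of the right-hand side. Because $M/K$ is separable and $L/K$ is purely inseparable, the two extensions are linearly disjoint over $K$, so the compositum $LM = L\otimes_K M$ is a field; moreover $LM/M$ is purely inseparable (it is the base change of $L/K$ along $K\to M$) while $LM/L$ is separable. In particular $LM/M$ is primary, so Conrad's existence theorem guarantees that $\im_{LM/M}(A_{LM})$ is defined, and the asserted isomorphism has content.

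Next I would build the canonical comparison morphism out of the two universal properties. The distinguished map $\rho\colon A\to (\im_{L/K}(A))_L$ characterizing the $L/K$-image base-changes along $-\otimes_L LM$ to a homomorphism $\rho_{LM}\colon A_{LM}\to (\im_{L/K}(A))_{LM}=\big((\im_{L/K}(A))_M\big)_{LM}$, i.e.\ a map from $A_{LM}$ to the $LM$-base change of the abelian variety $(\im_{L/K}(A))_M$, which is defined over $M$. By the universal property of the $LM/M$-image, $\rho_{LM}$ then factors uniquely through $\im_{LM/M}(A_{LM})$, producing a canonical $M$-homomorphism $\psi\colon \im_{LM/M}(A_{LM})\to (\im_{L/K}(A))_M$. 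This $\psi$ is the morphism appearing in the statement.

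The hard part is to show that $\psi$ is an isomorphism, and this is precisely the assertion of \cite[Thm.~5.4]{conradtrace}, whose hypotheses—a primary extension $L/K$ together with a separable extension $M/K$—hold here. The real obstacle, and the reason separability of $M/K$ cannot be dropped, is that a priori $\im_{LM/M}(A_{LM})$ might be strictly larger than $(\im_{L/K}(A))_M$, owing to abelian quotients of $A_{LM}$ that descend to $M$ but not all the way to $K$. Separability lets one control this by reducing to the case of a finite separable extension (writing $M/K$ as a filtered union of finitely generated separable subextensions) and then, after passing to a Galois closure, to a finite Galois extension, where the statement can be checked and descended along the Galois action—the same pattern used in Lemmas~\ref{L:albBCautLK} and~\ref{L:albBCsmoothalteration}. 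This descent is the mechanism underlying Conrad's proof, so invoking \cite[Thm.~5.4]{conradtrace} completes the argument.
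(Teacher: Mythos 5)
Your proposal is correct and takes essentially the same route as the paper, which offers no independent proof at all: the lemma is stated there precisely as a special case of \cite[Thm.~5.4]{conradtrace}, exactly the citation you invoke. Your additional verifications (that $L$ and $M$ are linearly disjoint so $LM$ is a field with $LM/M$ purely inseparable, hence primary, and the construction of the canonical comparison map $\psi$ from the two universal properties) are accurate and only make explicit what the paper leaves implicit in the phrase ``a special case of.''
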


 Armed with this, we can explain the Raynaud Example \ref{E:badbc}\,:

 \begin{lem}
  \label{L:albBCinsep}
  Let $V/K$ be a geometrically reduced and geometrically connected separated
  scheme of finite type over a field $K$.  Suppose $L/K$ is a finite purely inseparable extension.
  Then $\alb_{V/K} \iso \im_{L/K}\alb_{V_L/L}$.
 \end{lem}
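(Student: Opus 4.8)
The plan is to show that $\im_{L/K}(\alb_{V_L/L})$, which I abbreviate $B$, satisfies the defining universal property of the Albanese variety of $V/K$; the asserted isomorphism is then the canonical comparison map. First, applying the universal property of the $L/K$-image \cite{conradtrace} to the base-change homomorphism $\beta_{V,L/K} : \alb_{V_L/L} \to (\alb_{V/K})_L$, one obtains a $K$-homomorphism $h : B \to \alb_{V/K}$ with $h_L \circ \iota = \beta_{V,L/K}$, where $\iota : \alb_{V_L/L} \twoheadrightarrow B_L$ is the (surjective) universal map of the image. Since $\alb_{V_L/L}$ and $(\alb_{V/K})_L$ are both generated by the image of $V_L$, and $\beta_{V,L/K}$ is compatible with the two Albanese morphisms, $\beta_{V,L/K}$ is surjective; as $\iota$ is surjective, this forces $h$ to be surjective. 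It thus remains to produce an inverse, for which I would verify that $B$ has the Albanese universal property.

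Second, I would establish the ``co-representing'' half of that property. Given an abelian variety $C$ over $K$, a torsor $P$ under $C$, and a $K$-morphism $g : V \to P$, I base change to $L$ and use the universal property of $\alb_{V_L/L}$ to factor $g_L$ through $\alb^1_{V_L/L}$, producing a homomorphism $\alb_{V_L/L} \to C_L$. Because the target $C_L$ is defined over $K$, the universal property of the image factors this uniquely through $\iota$, yielding a canonical $K$-homomorphism $B \to C$; its uniqueness follows from uniqueness in the two universal properties just invoked. Taking $(C,P,g)$ to be the Albanese datum $(\alb_{V/K}, \alb^1_{V/K}, a)$ recovers $h$.

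Third --- and this is the crux --- I must exhibit the universal morphism $a_B : V \to Q$ to a torsor $Q$ under $B$ realizing these homomorphisms, equivalently descend along $L/K$ the morphism $V_L \to Q_L$ obtained by pushing the Albanese morphism of $V_L$ out along $\iota$. The torsor itself descends, since $B$ is smooth and $H^1_{\et}$ is insensitive to the purely inseparable (radicial) extension $L/K$; the difficulty is the morphism. Morphism descent along $L/K$ fails in general --- this failure is precisely Raynaud's Example~\ref{E:badbc}, where $\alb_{G_L/L} \ne (\alb_{G/K})_L$ --- so the whole point is that it must succeed after replacing $\alb_{V_L/L}$ by its image $B$, and I expect this to be the main obstacle. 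The plan to overcome it is to route through Weil restriction: the unit $V \to \res_{L/K}(V_L)$ composed with $\res_{L/K}$ of $V_L \to Q_L$ gives a genuine $K$-morphism from $V$ to a torsor under $G = \res_{L/K}(B_L)$, after which one uses the relation (due to Conrad, and underlying Lemma~\ref{L:image}) between $B = \im_{L/K}(\alb_{V_L/L})$ and the smooth connected group $G$ to project canonically onto a torsor under $B$, producing $a_B$. Concretely, after reducing to separably closed $K$ --- which is permissible because both sides commute with the separable extension $K^{\sep}/K$ by Theorem~\ref{T:SepBC-Alb} and Lemma~\ref{L:image}, because $L K^{\sep}/K^{\sep}$ remains purely inseparable, and which moreover furnishes a $K$-point so that one may work with pointed Albaneses --- the obstruction to descending the morphism is an infinitesimal class over the local artinian ring $L \otimes_K L$, and this class vanishes exactly because $B$ is the maximal quotient of $\alb_{V_L/L}$ descending to $K$. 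This verifies the universal property and shows $h$ is an isomorphism.
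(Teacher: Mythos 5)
Your global strategy---showing that $B=\im_{L/K}(\alb_{V_L/L})$ satisfies the Albanese universal property of $V/K$---is also the paper's strategy, and your first two steps (the surjection $h$, and the ``co-representing'' half obtained by chaining the universal properties of $\alb_{V_L/L}$ and of the $L/K$-image) are sound. But your step three, which you correctly identify as the crux, contains genuine gaps. The first is a circularity: you reduce to separably closed $K$ by invoking Theorem~\ref{T:SepBC-Alb}, but the proof of that theorem \emph{uses} Lemma~\ref{L:albBCinsep}, so it cannot be assumed here. What you actually need is only a rational point, and the paper gets one by passing to a \emph{finite} separable extension $M/K$, which is legitimate by Lemmas~\ref{L:image} and~\ref{L:albBCfinsep}; this also trivializes all torsors in sight, so one can work with pointed Albanese varieties throughout.

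The second gap is that the Weil-restriction route does not produce the map you need. The relation from \cite{conradtrace} underlying Lemma~\ref{L:image} goes the wrong way: since $L/K$ is primary, Chow rigidity identifies $B$ with the $L/K$-\emph{trace} of $B_L$, that is, with the maximal abelian \emph{subvariety} of $G=\res_{L/K}(B_L)$ via the unit $B\hookrightarrow G$. There is no canonical projection $G\to B$: the quotient $G/B$ is unipotent and the resulting extension admits no canonical splitting. The only canonical map in the other direction is a norm/trace for the finite flat covering $\spec L \to \spec K$, and it composes with the unit to multiplication by $[L:K]=p^n$; routing your morphism through it would at best identify $B$ with $\alb_{V/K}$ up to a $p$-power isogeny, which is strictly weaker than the asserted isomorphism. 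Relatedly, your closing assertion---that the obstruction over the Artin local ring $L\otimes_K L$ vanishes ``exactly because $B$ is the maximal quotient of $\alb_{V_L/L}$ descending to $K$''---is not an argument and names the wrong mechanism: maximality of $B$ plays no role in the descent. What makes $p_1^*$ and $p_2^*$ of the pointed map $V_L \to B_L$ agree over $L\otimes_K L$ is rigidity of abelian schemes: the two pullbacks agree at the closed point of this local ring, and (after compactifying $V$ via Nagata) pointed maps to an abelian scheme over an Artin local base admit no nontrivial deformations; this is the paper's Lemma~\ref{L:rigidity} (see also Remark~\ref{R:AlbDefArg}). Note that this descent works for maps to \emph{any} abelian variety defined over $K$; the maximality of $B$ enters only afterwards, to promote the descended map $V\to B$ to the universal one. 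Repairing your step three along these lines recovers precisely the paper's proof.
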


 \begin{proof}
  Since $V$ is geometrically reduced, it admits a point over some finite
  separable extension $M/K$.  By Lemmas \ref{L:image} and
  \ref{L:albBCfinsep}, it suffices to verify the lemma after base change
  to $M$.  Thus, we may and do assume that $V$ admits a $K$-point, and
  consequently  that the Albanese torsor and the Albanese abelian variety
  coincide.

  To ease notation slightly let $\ubar A = \im_{L/K}(\alb_{V_L/L})$, and let
  $\ubar a$ be the composite map
  \[
  \xymatrix{
   \ubar a: V_L \ar[r] & \alb_{V_L/L} \ar[r] & \ubar A_L
  }
  \]
  It is universal for pointed maps from $V_L$ to abelian varieties which
  are defined over $K$. Since the base change of any morphism $V\to B$
  from $V$ to a $K$-abelian variety is such a map, it suffices to show
  that $\ubar a$ descends to a
  morphism $V \to \ubar A$ over $K$.

  As before, we argue using fpqc descent.  Let $p_1,p_2: \spec(L\otimes_K L) \to
  \spec L$ be the two projections.  It suffices to show that
  $p_1^*(\ubar a) = p_2^*(\ubar a)$ as morphisms from $V_{L\otimes_KL}$
  to $\ubar A_{L\otimes_KL}$.
  This follows from rigidity\,; see Lemma \ref{L:rigidity} below, where
  $(V,K,R,f,g) = (V, L, L\otimes_KL,
  p_1^*(a), p_2^*(a))$, and $L\otimes_KL$ is local because $L/K$ is
  purely inseparable.  (See also Remark \ref{R:AlbDefArg}.)
 \end{proof}

 \begin{lem}
  \label{L:rigidity}
  Let $K$ be a field and let  $(V,P)/K$ be a  geometrically reduced and
  geometrically
  connected separated scheme of finite type over a field $K$ with a $K$-point $P$.
  Let $R$ be a Noetherian local $K$-algebra, and let $\circ$ be
  the closed point of $\spec R$.
  Let $A/R$ be an abelian
  scheme.    Given two pointed morphisms
  $f,g: V_R \to A$, if $f_\circ = g_\circ$, then $f=g$.
 \end{lem}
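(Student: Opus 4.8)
The plan is to run the classical rigidity argument for morphisms into an abelian scheme, isolating the pointed hypothesis as the device that rules out translations. First I would pass to a single morphism: writing the group law on $A$ multiplicatively, letting $e\colon\spec R\to A$ be the identity section and $\pi\colon V_R\to\spec R$ the structure map, set $h:=f\cdot g^{-1}\colon V_R\to A$. The pointedness of $f$ and $g$ gives $h\circ P_R=e$, and the hypothesis $f_\circ=g_\circ$ gives $h_\circ=e_\circ$; so it suffices to prove that $h$ is the constant morphism $e\circ\pi$. Since $A\to\spec R$ is separated, the locus on which $h$ agrees with $e\circ\pi$ is a closed subscheme $Z\hookrightarrow V_R$, namely the pullback of the diagonal $\Delta_{A/R}$ along $(h,e\circ\pi)\colon V_R\to A\times_R A$; by construction $Z$ contains the closed fibre $V_{\kappa(\circ)}$, and the goal becomes $Z=V_R$.

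The heart of the matter is to propagate this equality from the closed fibre into the nilpotent and generic directions of $\spec R$. Here I would use that $R$ is Noetherian local. Replacing $R$ by its completion is legitimate, since $R\to\widehat R$ is faithfully flat and $A$ is separated, so that $\Hom_R(V_R,A)\hookrightarrow\Hom_{\widehat R}(V_{\widehat R},A_{\widehat R})$; I then filter by the powers of the maximal ideal $\mathfrak m$ and argue inductively over the Artinian quotients $R/\mathfrak m^{n}$. On each square-zero step the difference between $h$ and $e\circ\pi$ over $R/\mathfrak m^{n+1}$, given agreement over $R/\mathfrak m^{n}$, is classified by a global section of $\operatorname{Lie}(A)\otimes\mathcal O_{V_{\kappa(\circ)}}$ which, because of the normalisation $h\circ P_R=e$, vanishes along $P$. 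Showing that this class vanishes is exactly rigidity for the abelian scheme $A$ in the form of \cite[Prop.~6.1]{mumfordGIT} (equivalently, the theorem of the cube), the pointing pinning the surviving constant to be $e$ rather than an arbitrary section. Combined with the separatedness already used to form $Z$, this forces $Z=V_R$.

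The step I expect to be the genuine obstacle is precisely the vanishing of this infinitesimal class: over a field the equality $h_\circ=e$ is immediate, but controlling the class on each thickening is where the content sits. This is where the geometric hypotheses on $V$ must be brought to bear, namely to guarantee the fibrewise input that the special fibre admits no nonconstant pointed morphism to $A$ reducing to $e$ — i.e.\ that the relevant global sections over $V_{\kappa(\circ)}$ are scalars and hence killed by the pointing. An alternative that I would keep in reserve, matching Remark~\ref{R:AlbDefArg}, is to carry out this deformation-theoretic computation directly rather than through \cite{mumfordGIT}: one verifies that two pointed lifts of $e$ over a local Artinian base differ by a class which the pointed and connectedness hypotheses force to vanish, and then concludes, as above, by the separatedness of $A/R$.
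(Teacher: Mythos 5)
Your reduction to $h=f-g$ and the closed locus $Z$, and the passage to $\widehat R$ with its filtration by Artinian quotients, are fine as far as they go (though note that deducing $Z=V_{\widehat R}$ from agreement over every $R/\mathfrak m^n$ itself needs a small argument, \emph{e.g.}\ Krull intersection on an affine cover pulled back from $V$). The genuine gap is exactly the step you flagged, and it cannot be filled the way you propose: making the class in $H^0\bigl(V_{\kappa(\circ)},h_\circ^*T_{A_\circ}\bigr)\otimes\mathfrak m^n/\mathfrak m^{n+1}$ vanish from pointedness would require $H^0\bigl(V_{\kappa(\circ)},\mathcal O_{V_{\kappa(\circ)}}\bigr)=\kappa(\circ)$, which is a consequence of \emph{properness}, not of geometric reducedness and connectedness --- and $V$ is not assumed proper. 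Concretely, take $V=\mathbb{A}^1_K$, $P=0$, $R=K[\epsilon]/(\epsilon^2)$, and $A=E_R$ for an elliptic curve $E/K$. Lifts of the constant map $e\colon\mathbb{A}^1_K\to E_K$ across the square-zero extension form a torsor under $H^0(\mathbb{A}^1_K,\mathcal O)\otimes_K\operatorname{Lie}(E)\cong K[t]\otimes_K\operatorname{Lie}(E)$, and pointedness only imposes vanishing at $t=0$. Explicitly, choose an affine neighbourhood $\spec B_0\subseteq E$ of $e$ and a nonzero tangent vector $\partial\in T_eE$, viewed as a derivation $B_0\to K$ at $e$; then $b\mapsto b(e)+\epsilon\,t\,\partial(b)$ is a $K[\epsilon]$-algebra homomorphism $B_0\otimes_KK[\epsilon]\to K[\epsilon][t]$, hence defines a pointed $R$-morphism $f\colon\mathbb{A}^1_R\to E_R$ that is nonconstant yet restricts to the constant map $e$ on the closed fibre. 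So your square-zero inductive step fails at the first stage; relatedly, \cite[Prop.~6.1]{mumfordGIT} cannot be invoked for $V_R\to\spec R$ at all, since rigidity requires the source to be proper with $p_*\mathcal O_X=\mathcal O_S$ --- precisely the hypothesis at issue. Indeed, taking $g=e\circ\pi$, this pair shows that no argument using only the fibrewise hypotheses on $V$ can succeed.

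The idea your proposal omits --- and the first, essential move in the paper's proof --- is compactification: by Nagata together with \cite[Lem.~2.2]{lutkebohmert93} or \cite[Rem.~2.5]{conradNagata} (closure of the graph), one embeds $V_R$ as a dense open of a proper $R$-scheme $X$ to which $f$ and $g$ both extend, and rigidity is applied to $\delta=f-g$ on the proper scheme $X$, not on $V_R$. Your ``reserve'' alternative is likewise not an alternative to compactifying: Remark~\ref{R:AlbDefArg} first embeds $V$ in a proper, geometrically reduced and connected $W$ and extends the morphism, and the finite-dimensionality $H^0(W,a^*T_{A})\cong\operatorname{Lie}(A)$ that kills the infinitesimal class is exactly where properness enters, via $H^0(W,\mathcal O_W)=L$. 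Even then, the example above shows the point is delicate: the special fibre of a graph-closure compactification of $V_R$ can acquire embedded components, so the hypothesis $p_*\mathcal O_X=\mathcal O_S$ of the rigidity lemma still has to be secured; it is automatic when, as in the paper's actual application (Lemma~\ref{L:albBCinsep}, Remark~\ref{R:AlbDefArg}), $f$ and $g$ are pullbacks of a single morphism defined over the residue field, so that the compactification can be taken of the form $W_R$ with $W$ proper, geometrically connected and geometrically reduced over that field. Your instinct about where the content sits was therefore right, but that content is properness obtained by compactifying; it cannot be generated from connectedness and reducedness of $V$ alone, which is why your proposal does not close.
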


 \begin{proof}
  Using Nagata compactification and \cite[Lem~2.2]{lutkebohmert93}, or
  \cite[Rem.~2.5]{conradNagata} directly, we find a proper scheme $X \to
  \spec R$ admitting an open immersion $V_R \hookrightarrow X$, and such
  that $f$ and $g$ extend to morphisms $X \to A$.  Consider the
  composite morphism
  \[
  \xymatrix{
   \delta = (f-g): X \ar[r]^-{(f,g)} & A \times A \ar[r]^-{-} & A
  }
  \]
  of proper $R$-schemes.  By hypothesis, $\delta_\circ(X_\circ)$ is the
  zero section.  By rigidity (\emph{e.g.}, \cite[Prop.~6.1]{mumfordGIT}),
  $\delta$ is constant, and thus $f =g$ schematically.  Restricting to $V_R$ gives
  the desired result.
 \end{proof}

 \begin{rem}\label{R:AlbDefArg}
  In the proof of Lemma \ref{L:albBCinsep}, the fact that there is an equality of morphisms
  $p_1^*(\ubar a) = p_2^*(\ubar a):V_{L\otimes_KL}\to \ubar A_{L\otimes_KL}$ can
  also be shown \emph{via} deformation theory.
  Set $R = L \otimes_K L$ viewed as a local Artinian $L$-algebra.
  We have that  $p_1^*a,p_2^*a:V_R \to A_R$ are both deformations of the morphism
  $a : V_L \to A_L$.
  Using Nagata's compactification theorem,
   and then
  \cite[Lem~2.2]{lutkebohmert93} or
  \cite[Rem.~2.5]{conradNagata},  we can   embed $V_L$ as an open subset of  a
  (geometrically \cite[Lem~32.6.8 (04KS)]{stacks-project})
        reduced proper scheme $W$ of finite type over $L$ admitting an $L$-morphism
  $a:W\to A_L$ extending the morphism $a:V_L\to A_L$.
  We can now consider $p_1^*a, p_2^*a:W_R \to  A_R$, both deformations of the map
  $a : W \to A_L$, and extending the maps $p_1^*a, p_2^*a:V_R \to A_R$.
  The first order deformations of the map a are given by $H^0(W, a^*T_{A_L})$
  (\cite[Thm.~6.4.12]{FGAexplained}, or  \cite[Prop.~24.9]{HartshorneDef}).
  If $\dim (A) = g$, then $T_{A_L}$ is a trivial rank $g$ vector bundle, and so
  the first order deformation space, $H^0(W,a^*T_{A_L})$, is a $g$ dimensional
  vector space over $L$.
  Since there is a $g$-dimensional space of first order  deformations given by the
  translations in $A_L$, every first order deformation of $a:W\to A_L$ that
  preserves the base points is trivial.
  The standard arguments reducing to square-zero extensions show that every
  infinitesimal deformation of $a:W\to A_L$ that preserves the base points  is
  trivial.
  One then concludes that $p_1^*a=p_2^*a:W_R\to A_R$, which forces
  $p_1^*a=p_2^*:V_R\to A_R$, which was what we had to prove.
 \end{rem}

 Finally, we can prove Theorem \ref{T:SepBC-Alb}\,:

 \begin{proof}[Proof of Theorem \ref{T:SepBC-Alb}]
  The plan is to chase Albanese varieties among the diagram of fields
  \[
  \xymatrix@R=.5em{
   LM \ar@{-}[dd]_{\text{insep}} \ar@{-}[dr]^{\text{sep}} \\
   & L \ar@{-}[dd]^{\text{insep}}\\
   M\ar@{-}[dr]_{\text{sep}}&\\
   &K
  }
  \]
  Using Nagata compactification \cite{conradNagata}, embed $V \hookrightarrow Y$
  into a
  proper geometrically integral variety.  Using \cite{deJong}, there is a
  diagram
  \[
  \xymatrix{
   U \ar[d] \ar@{^{(}->}[r] & X \ar[d] \\
   V  \ar@{^{(}->}[r] & Y
  }
  \]
  in which the vertical arrows are alterations\,;  moreover, there is a
  finite, purely inseparable $L/K$ such that the structural morphism $X
  \to \spec K$ factors through $\spec L$, and $X \to \spec L$ is
  smooth.  Note that $U$, $X$ and $V_L$ satisfy the hypotheses of Lemma
  \ref{L:albBCautLK}.

  We then compute canonical isomorphisms
  \begin{align*}
  \alb_{V_M/M} &\iso \im_{LM/M}(\alb_{V_{LM}/LM}) \quad \text{(Lemma
   \ref{L:albBCinsep})}\\
  &\iso \im_{LM/M}((\alb_{V_L/L})_{LM})  \quad \text{(Lemma \
   \ref{L:albBCsmoothalteration})}\\
  &\iso (\im_{L/K} \alb_{V_L/L})_M \quad \text{(Lemma \ref{L:image})}\\
  &\iso ( \alb_{V/K})_M \quad \text{(Lemma \ref{L:albBCinsep})}.
  \end{align*}
 \end{proof}

\subsection{The universal property of Albanese varieties}
\label{S:appuniv}

In fact, Theorem \ref{T:GrConBC-1} as stated above is weaker than what
Grothendieck \cite[Thm.~VI.3.3(iii)]{FGA} and Conrad
\cite[Thm.]{ConradMathOver} actually prove\,:

 \begin{teo}[Grothendieck--Conrad]
  \label{T:groconrad}
  Let $V/K$ be a geometrically reduced and geometrically connected proper scheme
  over $K$.  Then for any $S
  \to \spec K$, and any $f: V_S \to T$ to a torsor under an abelian
  scheme $B/S$, there exists a unique $S$-map $g: (\alb^1_{V/K})_S \to
  T$ such that $g\circ a_S = f$. \qed
 \end{teo}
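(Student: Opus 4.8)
The plan is to deduce the relative universal property from the field-level statement (Theorem~\ref{T:GrConBC-1}) together with the duality theory of the Picard and dual abelian schemes, reducing the torsor case to a pointed case by fppf descent. Throughout I regard both sides of the desired bijection
\[
\Hom_S\big(V_S,T\big) \quad\text{and}\quad \Hom_S\big((\alb^1_{V/K})_S,T\big)
\]
as sheaves on the big fppf site of $\spec K$. Since $V$, $\alb^1_{V/K}$ and $T$ are of finite presentation over their bases and $T$ is separated, both constructions commute with filtered limits of rings and satisfy fppf descent, so existence and uniqueness of the comparison map $g$ may both be checked fppf-locally on $S$; a limit argument further reduces to $S$ Noetherian if desired.

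First I would reduce to the pointed case. Because $V$ is geometrically reduced it acquires a rational point after a finite separable extension $K'/K$, and by Theorem~\ref{T:SepBC-Alb} the formation of $\alb^1_{V/K}$ is compatible with this extension; after an additional fppf cover of $S$ trivializing the $B$-torsor $T$, I may assume $V$ has a $K$-point, that $T=B$ is the trivial torsor, and (using Lemma~\ref{L:betavsbeta1}) work with the pointed Albanese variety $\alb_{V/K}$. The problem then becomes: for every $S/K$, the Albanese morphism $a\colon V\to\alb_{V/K}$ is initial among pointed $S$-morphisms $V_S\to B$ to abelian $S$-schemes, i.e.\ the map $\Hom_{S\text{-gp}}\big((\alb_{V/K})_S,B\big)\xrightarrow{\ \sim\ }\Hom_S(V_S,B)_{\mathrm{pt}}$, $h\mapsto h\circ a_S$, is bijective.

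Next I would identify both sides through the Picard scheme. Following Conrad, $\alb_{V/K}=\big(\operatorname{Pic}^{0,\mathrm{ab}}_{V/K}\big)^\vee$, the dual of the maximal abelian subvariety of $\operatorname{Pic}_{V/K}$, which is representable by \cite{kleimanPic} and, being an fppf sheaf, satisfies $(\operatorname{Pic}_{V/K})_S=\operatorname{Pic}_{V_S/S}$. A pointed $S$-morphism $V_S\to B$ produces, by pulling back the Poincaré bundle on $B\times_S B^\vee$ along $V_S\times_S B^\vee\to B\times_S B^\vee$, a rigidified family of degree-zero line bundles on $V_S$ parameterized by $B^\vee$, hence an $S$-homomorphism $B^\vee\to (\operatorname{Pic}^0_{V/K})_S$; since $B^\vee$ is an abelian scheme this factors through the maximal abelian subscheme, giving $B^\vee\to(\operatorname{Pic}^{0,\mathrm{ab}}_{V/K})_S$, and the construction is reversible using the universal property of the dual abelian scheme as the scheme representing $\operatorname{Pic}^0$. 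Dualizing yields the bijection with $\Hom_{S\text{-gp}}\big((\operatorname{Pic}^{0,\mathrm{ab}}_{V/K})^\vee_S,B\big)=\Hom_{S\text{-gp}}\big((\alb_{V/K})_S,B\big)$, and a direct check shows the resulting $h$ satisfies $h\circ a_S=f$. Finally, descending along the fppf covers of the first step — using the uniqueness just established to glue the locally constructed $g$ — recovers the torsor statement over the original $S$.

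The main obstacle I anticipate is the relative Picard input rather than any formal manipulation: verifying over an arbitrary (possibly non-reduced, non-Noetherian) base $S$ that $\operatorname{Pic}_{V/K}$ and its maximal abelian subvariety base-change correctly, and that the Poincaré-bundle correspondence is an isomorphism of functors without properness or normality hypotheses beyond those assumed. This is precisely the content assembled in \cite[Thm.~VI.3.3(iii)]{FGA} and \cite[Thm.]{ConradMathOver}, so in practice I would reduce to citing these once the descent and duality formalism above has been put in place.
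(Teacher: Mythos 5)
Your proposal is correct and in the end takes the same route as the paper: the paper states this theorem without proof, as being precisely what Grothendieck \cite[Thm.~VI.3.3(iii)]{FGA} and Conrad \cite[Thm.]{ConradMathOver} actually prove, and your argument likewise defers all the non-formal content (representability, base change, and Poincar\'e duality for $\operatorname{Pic}_{V/K}$ and its maximal abelian subvariety over an arbitrary base $S$) to those same two references. The fppf-descent and duality scaffolding you build on top is sound and essentially reconstructs how those sources argue, but it does no mathematical work beyond the citation itself.
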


If one is willing to retrict to base change by smooth morphisms, one
can derive a similar statement for varieties without a properness
hypothesis.

 \begin{teo}
  \label{T:groconrad-open}
  Let $V/K$ be a geometrically integral separated
  scheme of finite type over a field $K$.  Then for any (inverse limit of) smooth
  morphism $S
  \to \spec K$, and any $f: V_S \to T$ to a torsor under an abelian
  scheme $B/S$, there exists a unique $S$-map $g: (\alb^1_{V/K})_S \to
  T$ such that $g\circ a_S = f$.
 \end{teo}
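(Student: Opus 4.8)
The plan is to reduce to the generic point of $S$, apply Theorem~\ref{T:SepBC-Alb} there, and then spread out and extend the resulting map over all of $S$. First I would dispose of uniqueness and make the standard reductions. The Albanese morphism $a\colon V\to \alb^1_{V/K}$ has Zariski-dense image (otherwise $\alb_{V/K}$ would fail to be minimal), and density is preserved under the flat base change $S\to\spec K$, so $a_S$ has dense image in the integral scheme $(\alb^1_{V/K})_S$ (working one connected, hence integral, component of $S$ at a time). Given two $S$-maps $g_1,g_2$ with $g_i\circ a_S=f$, their equalizer is closed in $(\alb^1_{V/K})_S$ because $T/S$ is separated, and $a_S$ factors through it; as this closed set contains the dense image of $a_S$ and $(\alb^1_{V/K})_S$ is reduced, we get $g_1=g_2$. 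By spreading out the finitely presented data $(B,T,f)$ to a finite level (as in \S\ref{S:FunLim-1}) I may assume $S$ is of finite type over $K$, and by the above I may assume $S$ integral.

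Next I would construct the map over the generic point $\eta_S$, whose residue field $L=\kappa(S)$ is separable over $K$ since $S$ is smooth over $K$. As $T_{\eta_S}$ is a torsor under the abelian variety $B_{\eta_S}$ over $L$, the universal property of the Albanese torsor (Theorem~\ref{T:Serre-Alb}) factors $f_{\eta_S}$ uniquely as $V_L\to \alb^1_{V_L/L}\to T_{\eta_S}$; since $L/K$ is separable, Theorem~\ref{T:SepBC-Alb} identifies $\alb^1_{V_L/L}$ with $(\alb^1_{V/K})_{\eta_S}$. This yields $g_\eta\colon (\alb^1_{V/K})_{\eta_S}\to T_{\eta_S}$ with $g_\eta\circ a_{\eta_S}=f_{\eta_S}$, which spreads out to a morphism $g_U$ over some dense open $U\subseteq S$, i.e.\ a rational $S$-map $g\colon (\alb^1_{V/K})_S\dashrightarrow T$.

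The hard part will be extending $g$ to a morphism on all of $(\alb^1_{V/K})_S$, since Proposition~\ref{P:BLR}$(i)$ extends rational maps from a regular scheme only to \emph{abelian} schemes, whereas $T$ is merely a torsor under the abelian scheme $B$. I would resolve this by splitting the torsor over the cover $S'=T\to S$: this is smooth and surjective because $B/S$ is smooth, and proper (hence quasi-compact), so it is faithfully flat and quasi-compact, and over $S'$ the pullback $T_{S'}=T\times_S T$ acquires the diagonal section and becomes isomorphic to $B_{S'}$. The source $(\alb^1_{V/K})_{S'}$ is regular, being smooth over the regular scheme $S'=T$, so Proposition~\ref{P:BLR}$(i)$ extends $(g_U)_{S'}$ to an $S'$-morphism $\widetilde g'\colon (\alb^1_{V/K})_{S'}\to T_{S'}\cong B_{S'}$. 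Then Lemma~\ref{L:fpqc}, applied with $X=(\alb^1_{V/K})_S$, $Y=T$, the dense open $U\subseteq S$, and the cover $S'=T$, descends $\widetilde g'$ to a morphism $g\colon (\alb^1_{V/K})_S\to T$ extending $g_U$.

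Finally I would verify $g\circ a_S=f$ and treat the inverse-limit case. By construction the two $S$-morphisms $g\circ a_S$ and $f$ from $V_S\to T$ agree on the generic fiber $V_{\eta_S}$, which contains the generic point of the integral scheme $V_S$ (note $V_S$ is integral, being the product of the geometrically integral $V$ with the integral $S$); their equalizer is closed because $T/S$ is separated and contains this dense set, hence equals $V_S$, giving $g\circ a_S=f$. For $S=\varprojlim S_n$ an inverse limit of smooth $K$-schemes, the finitely presented data $(B,T,f)$ descend to some $S_n$, where the finite-type case just proved applies; pulling back and invoking the uniqueness established above yields the statement over $S$, completing the proof.
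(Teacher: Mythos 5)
Your uniqueness step rests on a false claim: the Albanese morphism $a\colon V\to \alb^1_{V/K}$ does \emph{not} in general have Zariski-dense image. For $V$ a smooth projective geometrically integral curve of genus $g\geq 2$ with a $K$-point, $\alb^1_{V/K}$ is the $g$-dimensional Jacobian and $a(V)$ is a closed curve inside it. What ``minimality'' actually gives is that the image \emph{generates}: $a(V)$ is contained in no translate of a torsor under a proper abelian subvariety of $\alb_{V/K}$ (equivalently, the differences of geometric points of $a(V)$ generate $\alb_{V/K}$); this follows from the uniqueness clause in the universal property, but it is strictly weaker than density. So the ``closed equalizer contains a dense set'' argument fails, both where you first prove uniqueness and where you re-invoke it in the inverse-limit step. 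Uniqueness is still true, but it needs a different mechanism, for instance: after base change to $\bar K$ (equality of morphisms can be checked after a faithfully flat base change), trivialize $\alb^1_{V/K}$ and $T$ at a point $a(v_0)$, resp.\ $f(v_0)$; two maps $g_1,g_2$ with $g_i\circ a=f$ then become pointed morphisms of abelian varieties, hence homomorphisms by rigidity (\emph{e.g.}, \cite[Prop.~6.1]{mumfordGIT}), and their difference kills the differences $a(v)-a(v_0)$, hence vanishes by the generation property. One can run this over $\kappa(S)$ and then propagate equality to $S$ using that the \emph{generic fiber} of the integral scheme $(\alb^1_{V/K})_S$ is dense in it --- that is the correct dense subset to use, exactly as you do (correctly) in your final verification of $g\circ a_S=f$, which relies on density of the generic fiber of $V_S$ rather than of the image of $a_S$ and is therefore unaffected.

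The existence part of your argument is correct, and it follows the same skeleton as the paper's proof: restrict to $\eta_S$, where $\kappa(S)/K$ is separable by smoothness of $S$, and use Theorem~\ref{T:Serre-Alb} together with Theorem~\ref{T:SepBC-Alb} to factor $f_{\eta_S}$ through $(\alb^1_{V/K})_{\eta_S}$; spread out over a dense open $U\subseteq S$; extend after an fpqc cover splitting the torsor; descend by Lemma~\ref{L:fpqc}. Your implementation of the extension step is in fact a little cleaner than the paper's: by taking the cover $S'=T$ itself, only the target torsor needs to be trivialized, the source $(\alb^1_{V/K})_{S'}$ is regular because $S'=T$ is smooth over $K$ (after your reduction to $S$ of finite type), and a single application of Proposition~\ref{P:BLR}$(i)$ produces the extension. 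The paper instead chooses $S'$ splitting \emph{both} torsors and, in addition, extends the induced homomorphism of abelian schemes $(\alb_{V/K})_U\to B_U$ over all of $S$ \emph{via} Proposition~\ref{P:BLR}$(ii)$; your route dispenses with that step. So: repair the uniqueness argument as above, and the proof is complete.
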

 \begin{proof} By assumption on the morphism $S \to \operatorname{Spec} K$,
  $\kappa(S)/K$ is a separable extension.
  Consider then the restriction $f_{\eta_S}$ of  $f: X_S \to T$ to the generic
  point $\eta_S$ of $S$. By Theorem~\ref{T:SepBC-Alb}, $f_{\eta_S}$ factors
  through $(\mathrm{alb}^1_{V/K})_{\eta_S}$. This gives a canonical
  $\eta_S$-morphism of torsors $(\mathrm{Alb}^1_{V/K})_{\eta_S} \to T_{\eta_S}$
  over $(\mathrm{Alb}_{V/K})_{\eta_S}$.  Let $U\subset S$ be an open dense
  subscheme to which these morphisms extend as $g^1: (\alb^1_{V/K})_U \to T_U$
  over $g: (\alb_{V/K})_U \to B_U$.
  By Proposition \ref{P:BLR}$(ii)$ \cite[I.2.7]{FC},
  $g$ extends to a morphism of abelian schemes over $S$.
  Let $S' \to S$ be an fpqc morphism such that $(\alb^1_{V/K})_{S'}\to S'$ and
  $T_{S'}\to S'$ admit sections.  Then $(\alb^1_{V/K})_{S'}$ and $T_{S'}$ are
  trivial torsors under abelian schemes over $S'$, and so $g^1_{U\times_S S'}$
  extends to a morphism $(g^1)': (\alb^1_{V/K})_{S'} \to T_{S'}$.  By fpqc descent
  (Lemma~\ref{L:fpqc}), $(g^1)'$ descends to a morphism $g^1: (\alb^1_{V/K})_S \to
  T$, as desired.
 \end{proof}

\newcommand{\etalchar}[1]{$^{#1}$}
\def\cprime{$'$}
\providecommand{\bysame}{\leavevmode\hbox to3em{\hrulefill}\thinspace}
\providecommand{\MR}{\relax\ifhmode\unskip\space\fi MR }
\providecommand{\MRhref}[2]{%
	\href{http://www.ams.org/mathscinet-getitem?mr=#1}{#2}
}
\providecommand{\href}[2]{#2}

\end{document}